\documentclass[12pt]{amsart}
\usepackage{amscd,amsthm,amssymb,amsfonts,amsmath,euscript, bbm}
\usepackage{tikz-cd}
\usepackage{mathrsfs,mathtools}
\usepackage{xcolor}
\usepackage[shortlabels]{enumitem}
\usepackage{comment}
 \usepackage{hyperref}
\theoremstyle{plain}
\newtheorem{thm}{Theorem}[section]
\newtheorem{lemma}[thm]{Lemma}
\newtheorem{prop}[thm]{Proposition}
\newtheorem{cor}[thm]{Corollary}

\theoremstyle{definition}
\newtheorem{defn}[thm]{Definition}

\theoremstyle{remark}
\newtheorem{remark}[thm]{Remark}

\newtheorem*{thank}{{\bf Acknowledgments}}
\newcommand{\nc}{\newcommand}

\makeatletter
\newcommand*{\Relbarfill@}{\arrowfill@\Relbar\Relbar\Relbar}
\newcommand*{\xeq}[2][]{\ext@arrow 0055\Relbarfill@{#1}{#2}}
\makeatother

\def\makeop#1{\expandafter\def\csname#1\endcsname
  {\mathop{\rm #1}\nolimits}\ignorespaces}
\makeop{Hom}   \makeop{End}   \makeop{Aut}   \makeop{Isom}
\makeop{Emb}
\makeop{Pic}
\makeop{Gal}   \makeop{ord}   \makeop{Char}  \makeop{Div}   \makeop{Lie}
\makeop{PGL}   \makeop{Corr}  \makeop{PSL}   \makeop{sgn}   \makeop{Spf}
\makeop{Spec}  \makeop{Tr}    \makeop{Nr}    \makeop{Fr}    \makeop{disc}
\makeop{Proj}  \makeop{supp}  \makeop{ker}   \makeop{im}    \makeop{dom}
\makeop{coker} \makeop{Stab}  \makeop{SO}    \makeop{SL}    \makeop{SL}
\makeop{Cl}    \makeop{cond}  \makeop{Br}    \makeop{inv}   \makeop{rank}
\makeop{id}    \makeop{Fil}   \makeop{Frac}  \makeop{GL}    \makeop{SU}
\makeop{Nrd}   \makeop{Sp}    \makeop{Tr}    \makeop{Trd}   \makeop{diag}
\makeop{Res}   \makeop{ind}   \makeop{depth} \makeop{Tr}    \makeop{st}
\makeop{Ad}    \makeop{Int}   \makeop{tr}    \makeop{Sym}   \makeop{can}
\makeop{length}\makeop{SO}    \makeop{torsion} \makeop{GSp} \makeop{Ker}
\makeop{Adm}   \makeop{Mat}
\makeop{Q-isog}
\makeop{Rad}
\makeop{Ind}

\def\makebb#1{\expandafter\def
  \csname bb#1\endcsname{{\mathbb{#1}}}\ignorespaces}
\def\makebf#1{\expandafter\def\csname bf#1\endcsname{{\bf
      #1}}\ignorespaces}
\def\makegr#1{\expandafter\def
  \csname gr#1\endcsname{{\mathfrak{#1}}}\ignorespaces}
\def\makescr#1{\expandafter\def
  \csname scr#1\endcsname{{\EuScript{#1}}}\ignorespaces}
\def\makecal#1{\expandafter\def\csname cal#1\endcsname{{\mathcal
      #1}}\ignorespaces}

\def\doLetters#1{#1A #1B #1C #1D #1E #1F #1G #1H #1I #1J #1K #1L #1M
                 #1N #1O #1P #1Q #1R #1S #1T #1U #1V #1W #1X #1Y #1Z}
\def\doletters#1{#1a #1b #1c #1d #1e #1f #1g #1h #1i #1j #1k #1l #1m
                 #1n #1o #1p #1q #1r #1s #1t #1u #1v #1w #1x #1y #1z}
\doLetters\makebb   \doLetters\makecal  \doLetters\makebf
\doLetters\makescr
\doletters\makebf   \doLetters\makegr   \doletters\makegr

\normalsize

\makeop{Bl}

\def\Fpbar{\overline{\bbF}_p}
\def\Fp{{\bbF}_p}
\def\Fq{{\bbF}_q}

\def\Qp{{\bbQ}_p}

\def\Zp{{\bbZ}_p}
\def\Qbar{\overline{\bbQ}}

\newcommand{\Z}{\mathbb Z}
\newcommand{\Q}{\mathbb Q}

\newcommand{\C}{\mathbb C}

\renewcommand{\O}{\mathcal O}
\newcommand{\F}{\mathbb F}

\newcommand{\whD}{\widehat{\mathcal{D}}}

\newcommand{\whO}{\widehat{O}}

\newcommand{\wbZ}{\widehat{\mathbb{Z}}}
\newcommand{\wbQ}{\widehat{\mathbb{Q}}}

\newcommand{\npr}{\noindent }

\DeclareMathOperator{\Min}{Min}

\newcommand{\<}{\langle}
\renewcommand{\>}{\rangle}

\newcommand{\isoto}{\stackrel{\sim}{\longrightarrow}}
\nc{\embed}{\hookrightarrow}

\newcommand{\dieu}{Dieudonn\'{e} }

\nc{\ol}{\overline}
\nc{\wt}{\widetilde}
\nc{\opp}{\mathrm{opp}}

\def\wh{\widehat}

\makeop{Ram}
\makeop{Rep}

\def\sfF{\mathsf{F}}
\def\sfV{\mathsf{V}}
\def\sp{{\rm sp}}

\usepackage{color}
\usepackage{marvosym}

\begin{document}
\numberwithin{equation}{section}

\title
{Superspecial Points on Shimura Curves}
 \author{Yasuhiro Terakado}
\address{
(Terakado) Faculty of Education,
Teikyo University
\\
359 Otsuka
\\
Hachioji-shi, Tokyo, Japan, 192-0395}
\email{terakado.yasuhiro.nr@teikyo-u.ac.jp}

 \author{Jiangwei Xue}
 \address{(Xue)   School of
  Mathematics and Statistics, Wuhan University \\
   Luojiashan
   \\
  Wuhan, Hubei, P.R. China, 430072}
  \email{xue\_j@whu.edu.cn}

\author{Chia-Fu Yu}
\address{
(Yu) Institute of Mathematics, Academia Sinica \\
Astronomy Mathematics Building \\
No.~1, Sec.~4, Roosevelt Rd. \\
Taipei, Taiwan, 106319}
\email{chiafu@math.sinica.edu.tw}

\date{\today}

\subjclass[2020]{11G18, 11G10, 11R52}
\keywords{Shimura curves, superspecial abelian surfaces, quaternion orders, trace formulas}
\begin{abstract}
Let $X$ be the Shimura curve attached to an indefinite quaternion $\mathbb{Q}$-algebra $B$ with a maximal order $O_B$. 
This paper investigates the reduction $X\otimes \mathbb{F}_p$ of $X$ modulo an arbitrary prime $p$, focusing 
particularly on its superspecial locus.
We give an explicit criterion for
the existence of superspecial $\mathbb{F}_q$-rational points on $X$. Furthermore, we compute both 
the number of geometric superspecial points and the number of $\mathbb{F}_p$-rational superspecial points, through the Eichler class number formula and the Selberg trace formula. 
As a key ingredient, we classify the Dieudonn\'e modules attached to superspecial $O_B$-abelian surfaces, which generalizes Ribet’s classification of admissible quaternion bimodules of rank $2$ by dropping the admissible hypothesis. These results generalize Deuring's explicit formula for supersingular elliptic curves over $\mathbb{F}_p$ and give the Shimura-curve analogue of the Ibukiyama–Katsura formulas for principally polarized superspecial abelian surfaces over $\mathbb{F}_p$.
\end{abstract} 

\maketitle
\tableofcontents
\section{Introduction}\label{sec:I}

Throughout this paper, $p\in \bbN$ denotes a rational prime number, $\Fpbar$ denotes a fixed algebraic closure of the  prime field $\F_p$, and for each power $q=p^a$ of $p$,  $\Fq\subseteq \Fpbar$ denotes the finite subfield with $q$ elements.

An abelian variety $A$ over $\F_q$  is called {\it supersingular} if $A\otimes_{\Fq} \Fpbar$ is isogenous to a product of supersingular elliptic curves, and {\it superspecial} if it is isomorphic to such a product.
The arithmetic of supersingular abelian varieties  is closely related to quaternion algebras.
Indeed, for a  supersingular elliptic curve $E$ over $\Fpbar$,  its endomorphism algebra $\End^0(E)\coloneqq \End(E)\otimes_\Z\Q$ is the unique quaternion $\Q$-algebra $D\coloneqq D_{p, \infty}$ ramified exactly at $p$ and $\infty$, and  $\End(E)$ is a maximal order in $D$ by \cite[Theorem~4.2]{waterhouse:thesis}.
Conversely, every maximal order in $D$ occurs as the endomorphism ring of a supersingular elliptic curve by \cite[Theorem~3.13]{waterhouse:thesis}.
Fix a maximal order $O_D \subseteq D$. The classical Deuring correspondence (\cite{Deuring-1941}\cite[Corollary~42.3.7]{voight-quat-book}) establishes a bijection between the set of isomorphism classes of supersingular elliptic curves over $\Fpbar$ and the ideal class  set $\Cl(O_D)$ of fractional right $O_D$-ideal classes.
Consequently, the number of such isomorphism classes  is  the \emph{class number} $h(D)=\lvert \Cl(O_D) \rvert$, which is independent of the choice of the maximal order $O_D$.
If $t(D)$ denotes the \emph{type number} of $D$, namely the number of isomorphism classes of maximal orders in $D$, then the number of isomorphism classes of supersingular elliptic curves admitting models over $\F_p$ is equal to $2t(D)-h(D)$ (\cite{Deuring-1941},\cite[\S2]{Gross-Height-L-series}).
 Explicit formulas for $h(D)$ and $t(D)$ were obtained by Eichler \cite{eichler-CNF-1938} and Deuring \cite{Deuring1950}.

Deuring's correspondence has been generalized to higher-dimensional supersingular and superspecial abelian varieties.
By the Deligne-Ogus-Shioda theorem \cite[Theorem 6.2]{Ogus-1979}\cite[Theorem~3.5]{Shioda-K3surf-1979}, for every fixed $n>1$  there is a unique isomorphism class of superspecial abelian varieties over  $\Fpbar$ of dimension $n$.
In the polarized setting,
the work of Ibukiyama, Katsura, and Oort \cite{Ibukiyama-Katsura-1994, Ibukiyama-Katsura-Oort-1986} further relates  principally polarized superspecial abelian varieties to quaternion  hermitian lattices.
Together with related work  \cite{Hashimoto-ternary, Hashimoto-Ibukiyama-1,
Ibukiyama:Osaka2018, Ibukiyama:tohoku2019, Ibukiyama-Katsura-1994},
these results lead to explicit class- and type-number formulas and, consequently, to formulas for  superspecial points
and irreducible components of supersingular loci of Siegel modular varieties of low dimensions.\\

Shimura curves are natural analogues of modular
curves.
The reduction of these curves modulo $p$ provides an instructive example for understanding Newton and Ekedahl-Oort stratifications of more general Shimura varieties. 
More precisely, let $B$ be an indefinite quaternion algebra over $\Q$, let $O_B \subseteq B$ be a maximal order, and let
$X=X_{O_B}$ be the coarse moduli scheme over $\Z$  parametrizing
$O_B$-abelian surfaces satisfying the determinant condition (see Section \ref{ss:exist}).
If $B$ splits at $p$, then $X$ has good reduction at $p$, and every supersingular point is superspecial, as for classical modular curves.
If $B$ ramifies at $p$, then $X$ has bad reduction, and the entire special fiber is supersingular.
This raises two natural questions: how superspecial points are classified within the special fiber, and how
many of them are defined over a prescribed finite field.

Our main results answer these questions as follows:
\begin{enumerate}
    \item We give an explicit criterion for the existence of $\F_q$-rational superspecial points on $X$ (Theorem~\ref{thm:Fq-pts}).
   \item We classify
superspecial
$O_B$-abelian surfaces over $\Fpbar$ (Proposition \ref{prop:genus} and Theorem \ref{thm:5-emb}).
    \item  We explicitly determine the numbers of geometric superspecial points and of $\F_p$-rational superspecial points (Theorem \ref{thm:main}).
    In the bad reduction case, we also determine the numbers of irreducible components and singular points of the special fiber $X_{\Fpbar}$.
\end{enumerate}
 The formulas in (3) generalize the classical formulas of Eichler and Deuring from  supersingular elliptic curves to superspecial $O_B$-abelian surfaces.
 They also provide an analogue of the formulas of Hashimoto, Ibukiyama, and Katsura \cite{Hashimoto-Ibukiyama-1,
Ibukiyama:Osaka2018, Ibukiyama-Katsura-1994} for superspecial polarized abelian surfaces. \\

We now outline the main ideas of the proofs. The existence criterion in (1) is obtained
from Honda–Tate theory. The possible supersingular isogeny classes over finite fields are described by supersingular Weil numbers, and the existence of quaternion multiplication by
$B$ is translated into an embedding problem for the corresponding endomorphism algebras.
Together with a suitable integral realization of the $O_B$-action, this yields Theorem \ref{thm:Fq-pts} in Section \ref{ss:exist}.

For (2), we classify
superspecial
$O_B$-abelian surfaces by investigating their \dieu modules endowed with an action of  the quaternion $\Z_p$-order  $O_B \otimes \Z_p$.
When $B$ ramifies at $p$, put $\O_p\coloneqq O_B \otimes \Z_p$; then $\O_p$ is the unique maximal order in the division quaternion algebra $B \otimes _{\Q}\Q_p$.
The classification  of superspecial $\O_p$-\dieu modules reduces to determining the $\GL_2(\O_p)$-conjugacy classes of embeddings
$\O_p \hookrightarrow \Mat_2(\O_p).$
Ribet \cite{Ribet-Bimodules} classified the ``admissible''  embeddings by reducing the problem to the representation theory of a hereditary order (cf.~Remark \ref{rem:adm}).
Without the admissibility assumption, additional integral structures occur. We prove that there are exactly five distinct conjugacy classes, and that they are distinguished by their Lie types and critical indices (Section \ref{ss:conj}).
 These five local classes give rise to all the genera of superspecial $O_B$-abelian surfaces that occur in the bad reduction case.

 For (3),  we first determine the endomorphism rings of the superspecial $O_B$-abelian surfaces  in each genus and give  descriptions of these genera as adelic double coset spaces in Section~\ref{ss:as}. In particular, each genus is identified with the ideal class set of a definite quaternion order. The work of Zink \cite{zink:thesis} and Ribet \cite{Ribet-Bimodules} then identifies, in the bad reduction case, the relevant genera with the superspecial points, the singular points, and the irreducible components of the special fiber. Thus, the geometric counting problem is reduced to the computation of class numbers of  quaternion orders.

We evaluate these class numbers in Section~\ref{ss:count} using Eichler's class number formula and mass formulas together with local optimal embedding numbers. 
This is related to our previous work \cite{terakado-xue-yu:2023} where a relevant more general Shimura variety with a prime-to-$p$ level structure is studied and the number  of irreducible components of the supersingular locus is calculated. 
We also need to calculate the cardinality of a similar class set, and the level structure trivializes the relevant automorphism groups so that the corresponding count is expressed directly by a mass. In the present setting, however, without auxiliary level structures,
nontrivial automorphisms cause the mass formula to differ from the unweighted class number; the theory of optimal embeddings supplies the correction terms needed to pass from the mass to the desired explicit class number formulas.

The count of $\F_p$-rational superspecial points requires a more  involved treatment.  Under the adelic description, Frobenius acts on the relevant double coset space, so $|X^{\rm sp}(\F_p)|$ is expressed as the trace of an Atkin--Lehner type operator $R(\varpi)$ on algebraic modular forms on a definite quaternion algebra. We evaluate this trace directly by applying the Selberg trace formula for compact quotients. The resulting orbital integrals are expressed in terms of optimal embeddings of quadratic orders, leading to the modified Hurwitz class numbers that appear in Theorem~\ref{thm:main}. In the good reduction case this trace is also accessible through the classical Eichler trace formula for Brandt matrices; the Selberg-trace-formula approach is essential for treating the ramified case uniformly. This differs from the classical Eichler--Deuring approach, and from the approach of Ibukiyama and collaborators, in which the corresponding rational-point count is recovered indirectly from class and type numbers.

We finally mention a related result of
Jordan and Livn\'{e} \cite{Jordan-Livne}, who
established a criterion for the existence of rational points on Shimura curves  over local fields using a combinatorial description of the dual graph of the special fiber.
Our criterion in Theorem~\ref{thm:Fq-pts}  concerns superspecial rational points and arises instead from the moduli interpretation, Honda--Tate theory, and quaternionic embedding conditions. Combined with deformation theory, it gives a complementary, more moduli-theoretic approach to the corresponding existence problem over local fields.

The present paper is organized as follows.
Section~\ref{ss:Pre}  reviews \dieu modules with additional structures, and prepares preliminary results on abelian varieties with additional structures. Section~\ref{ss:exist} studies supersingular and superspecial $O_B$-abelian surfaces over finite fields and proves the existence criterion. Section~\ref{ss:conj} classifies superspecial  $\calO_p$-\dieu modules through embeddings $\O_p\hookrightarrow\Mat_2(\O_p)$. Section~\ref{ss:as} determines the corresponding endomorphism rings and gives adelic descriptions of the superspecial genera together with the Galois action. Finally, Section~\ref{ss:count} proves the explicit counting formulas, including the relation $|X^{\rm sp}(\F_p)|=\Tr R(\varpi)$, by applying the Eichler class number formula and the Selberg trace formula for compact quotients.

\ \\
\npr Notation. Let $\Q_\ell$ denote the $\ell$-adic completion of $\Q$ at a prime $\ell$, and let $\Z_\ell$ be its ring of integers.
For a finitely generated $\Z$-module or a finite-dimensional $\Q$-vector space $M$, we set  $M_\ell \coloneqq M\otimes_\Z \Z_\ell$ or $M\otimes_\Q \Q_\ell$, respectively. If $R\to R'$ is a homomorphism between commutative rings, and
$M$ is an $R$-module or an $R$-scheme, then we write $M_{R'}\coloneqq M\otimes_{R} R'$.
For an abelian scheme $A$ over a base scheme $S$, we write $\End(A)=\End_S(A)$ for its endomorphism ring over $S$, and $\End^{0}(A)\coloneqq \End(A)_\Q$ for the endomorphism algebra.
If  $A_1$ and $A_2$ are isogenous abelian varieties over a field $k$ (with the isogeny definable over $k$ as well), then we write  $A_1\sim A_2$.
An abelian scheme $A/S$ of relative dimension two is called an {\it{abelian surface}}.

\section{\dieu modules and supersingular abelian varieties}\label{ss:Pre}
In this section, we review \dieu  modules and abelian varieties equipped with additional structures, and recall some of their basic properties. Throughout this section, $p\in \bbN$ denotes a prime, and $k$ denotes a perfect field of characteristic $p$.

\subsection{\dieu modules}\label{subsec:dieu-modules}
We begin by recalling covariant \dieu module theory \cite[Section 5]{li-oort}.
 Let
 $W(k)$ be the ring of Witt vectors over $k$, equipped with the absolute Frobenius automorphism $\sigma: W(k)\to W(k)$.
 Let $W(k)[{\sf F, V}]$ be the \emph{Dieudonn\'e ring},  that is, the quotient of the free associative  $W(k)$-algebra on two indeterminates ${\sf F, V}$ by the two-sided ideal generated by the relations
 \begin{equation}
     \label{eq:W[FV]}
 {\sf {FV}=VF}=p,
 \quad {\sf F}a=a^{\sigma}{\sf F},
 \quad {\sf V}a^{\sigma}
 =a{\sf V} \quad  {\text{for all}} \quad a \in W(k).
 \end{equation}

 \begin{defn}\label{def:dmod}
 (1) A \emph{Dieudonn\'e module $M$ over $k$} is a left $W(k)[{\mathsf F,\mathsf V}]$-module that is
 finitely generated and free as a $W(k)$-module.
  We write $\End_{\rm DM}(M)$ for the ring of endomorphisms of $M$ over $W(k)[{\sf F, V}]$.
The tensor product $M\otimes_{W(k)} W(k)[1/p]$ is called the  {\it rational \dieu module}  of $M$ (also known in the literature as an $F$-isocrystal).
It is a finite-dimensional $W(k)[1/p]$-vector space equipped  with a bijective $\sigma$-linear operator $\sfF$.

 (2)
 Two \dieu modules  are said to be {\it isogenous} if their associated  rational \dieu modules are isomorphic.

\end{defn}
Let $A$ be an abelian variety over $k$.
Let $M(A)$ and $\Lie(A)$ denote its {\it covariant} \dieu module and its Lie algebra, respectively.
There is a  canonical isomorphism of $k$-vector spaces
\[\Lie(A) \simeq M(A)/\sfV M(A).\]
The \emph{$a$-number} of $A$ is defined as
\[ a(A)=a(M(A))\coloneqq \dim_k M(A)/(\sfF,\sfV) M(A). \]

For a positive integer $n$,
let $\Q_{p^n}$ denote the unique unramified extension of $\Qp$ of degree $n$, and let $\Z_{p^n}=W(\F_{p^n})$ denote its  ring of integers.
Assume that $k \supseteq \F_{p^n}$.
We write $\Sigma_{\Z_{p^n}}$ for the set of $\Z_p$-embeddings of $\Z_{p^n}$ into $W(k)$, that is,
\[ \Sigma_{\Z_{p^n}}\coloneqq \Hom_{\Zp}(\Z_{p^n}, W(k))=\{\tau_i \mid  i\in \Z/n\Z\}, \]
whose elements are indexed so that $\tau_0$ corresponds to the fixed inclusion $\F_{p^n} \subseteq k$ and $\sigma \circ  \tau_i=\tau_{i+1}$ for all $i\in \Z/n\Z$.
There is a natural decomposition
\[ \Z_{p^n}\otimes_{\Zp} W(k)\xrightarrow{\sim} \prod_{i\in \Z/n\Z} W(k), \quad  a\otimes b \mapsto (\tau_i(a) b)_{i\in \Z/n\Z}, \quad \forall\, a\in \Z_{p^n}, b\in W(k).  \]
For any $\Z_{p^n}\otimes_{\Z_p} W(k)$-module $M$,
this induces a  decomposition
\begin{equation}\label{eq:dec}
     M=\bigoplus_{i\in \Z/n\Z} M^i.
     \end{equation}
Here $M^i\coloneqq \{x\in M\mid (a\otimes 1)x=(1\otimes \tau_i(a))x, \,\forall a\in \Z_{p^n}\}$ denotes  the $\tau_i$-component of $M$.

\begin{defn} \label{defn:O-dieu-mod}

(1)
Let $\calO$ be a $\Zp$-algebra that is finite free as a $\Zp$-module.
An {\it $\calO$-\dieu module}  over $k$ is a pair $(M, \varphi)$, where $M$ is a
 \dieu module over $k$, and $\varphi : \calO \to \End_{\rm DM}(M)$ is a monomorphism  of $\Z_p$-algebras.

(2) Let $(M, \varphi)$ be a  $\Z_{p^n}$-\dieu module  over $k\supseteq \F_{p^n}$. The \emph{Lie algebra} of
$M$ is defined by $\Lie(M)\coloneqq M/\sfV M$, which admits an  $(\F_{p^n}, k)$-bimodule structure induced by $\varphi$.   It admits a  decomposition
\begin{equation}\label{eq:Lie-decomp}
 \Lie(M)=\bigoplus_{i\in \Z/n\Z}\Lie(M)^i=\bigoplus_{i\in \Z/n\Z} (M/\sfV M)^i,
\end{equation}
where $\Lie(M)^i\coloneqq \{x\in \Lie(M)\mid \varphi(a)x=a^{p^i} x, \,\forall a\in \F_{p^n}\}$ as usual.
The \emph{Lie type} of $(M,\varphi)$ is defined to be the row vector
$(\dim_k (M/\sfV M)^i)_{i\in \Z/n\Z}$ of $n$ non-negative integers.
\end{defn}
           \begin{thm}[{\cite[Theorem 2 and Remark 3]{Oort-MathAnn-1975}}]\label{thm:sp}
           Let $A$ be an abelian variety of dimension $g$ over $k$.
           Then the following are equivalent:
           \begin{itemize}
               \item[(i)] $A$ is  isomorphic over an algebraic closure $\bar{k}$ to a product of $g$ supersingular elliptic curves;
               \item[(ii)] The \dieu module
           $M \coloneqq M(A)$ satisfies ${\sf V}^2M=pM$;
           \item[(iii)] The $a$-number satisfies $a(A)=g$.
           \end{itemize}
           \end{thm}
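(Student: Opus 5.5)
The plan is to prove the cycle (i)$\Rightarrow$(iii)$\Rightarrow$(ii)$\Rightarrow$(i). All three conditions are invariant under base change to $\bar k$: $M(A_{\bar k})=M(A)\otimes_{W(k)}W(\bar k)$, and both $\dim_k M/(\sfF,\sfV)M$ and the equality $\sfV^2M=pM$ are preserved by this faithfully flat extension. So we may assume $k=\bar k$ throughout.

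\textbf{(i)$\Rightarrow$(iii).} For a supersingular elliptic curve $E$, the Dieudonné module $M(E)$ has rank $2$ with $\sfF M=\sfV M$, because $\ker F=\ker V$ is the unique subgroup scheme $\alpha_p$ of order $p$. Hence $a(E)=1$. The $a$-number is additive under products, so $a(E_1\times\cdots\times E_g)=g$.

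\textbf{(iii)$\Rightarrow$(ii).} Suppose $a(A)=g$. Since $M/\sfV M=\Lie(A)$ has dimension $g$, the quotient $M/(\sfF,\sfV)M$ is a quotient of the $g$-dimensional space $M/\sfV M$ of the same dimension. Hence $\sfF M\subseteq \sfV M$. Both $\sfF M$ and $\sfV M$ have colength $g$ in $M$ (as $\sfF\sfV=p$ and $M$ has rank $2g$), so $\sfF M=\sfV M$. Apply $\sfV$ to get $\sfV^2M=\sfV\sfF M=pM$.

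\textbf{(ii)$\Rightarrow$(i).} This is the main step. Assume $\sfV^2M=pM$. Then $\sfF M=\sfF\sfV(\sfV M)/p\cdot$ ... more cleanly, $\sfF M=p\sfV^{-1}M$, so $\sfF M=\sfV M$. Set $N=\{x\in M: \sfF x=\sfV x\}$. The operator $\sfV^{-1}\sfF$ is a $\sigma^2$-linear bijection of $M$, since $\sfF M=\sfV M$. By Dieudonné–Lang descent over the algebraically closed field $k$, its fixed points form a $W(\F_{p^2})$-lattice $N$ with $M=N\otimes_{W(\F_{p^2})}W(k)$. The operator $\sfF$ preserves $N$ and satisfies $\sfF^2=p$ on $N$, so $N$ is a module over $W(\F_{p^2})[\sfF]/(\sfF^2-p)$. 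This ring is the maximal order of the ramified quaternion algebra over $\Q_p$, a noncommutative DVR, so $N$ is free over it, of rank $g$. Consequently $M\cong M(E)^{\oplus g}$ for a supersingular elliptic curve $E$ over $k$ (whose module arises from rank-one $N$).

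It remains to pass from Dieudonné modules to abelian varieties. Choose a product $E^g$. Its module is isomorphic to $M(A)$, and both are supersingular, so there is an isogeny $\phi\colon E^g\to A$; one can take $\phi$ to be an isogeny of $p$-power degree composed with a prime-to-$p$ part. Adjust $\phi$ using the $p$-divisible-group correspondence: the quotient of $E^g$ by the finite subgroup scheme corresponding to the sublattice $\phi_*M(E^g)\subseteq M(A)$ (or its inverse) gives an abelian variety $A'$ with $A'[p^\infty]\cong A[p^\infty]$. Then $A'$ is still isogenous to $E^g$ with Dieudonné module $\cong M(E)^g$.

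The final step is to show that any abelian variety with that Dieudonné module is isomorphic to a product of supersingular elliptic curves. Here I will use that every such $A$ is a quotient of $E^g$ by a subgroup scheme $G\subseteq E^g[p]$ killed by $F$ and $V$ (an $\alpha$-group), and that quotients of $E^g$ by $\alpha$-subgroups are again products of supersingular curves. This holds because $\alpha$-subgroups of $E^g$ correspond to $\F_{p^2}$-rational subspaces up to the $\GL_g(O_D)$-action, where $O_D=\End E$. This is the hardest part, and it is where Oort's argument (or the Deligne–Ogus–Shioda theorem) is genuinely needed; I would cite it if a self-contained proof becomes too long.
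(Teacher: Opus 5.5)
You prove (i)$\Rightarrow$(iii)$\Rightarrow$(ii) correctly. The Dieudonn\'e-theoretic half of (ii)$\Rightarrow$(i) is also correct: descending along the $\sigma^2$-linear bijection $\sfV^{-1}\sfF$, and using that $W(\F_{p^2})[\sfF]/(\sfF^2-p)$ is a non-commutative DVR, does give $M(A)\simeq M(E)^{\oplus g}$, i.e.\ $A[p^\infty]\simeq E[p^\infty]^g$. The paper gives no argument of its own here; it only quotes Oort. The gap is the passage from $p$-divisible groups to abelian varieties, which is the real content of the theorem, and your sketch of that step does not hold up.
\begin{itemize}
\item ``Both are supersingular, so there is an isogeny $E^g\to A$'' is not available. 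You only know that $A[p^\infty]$ is isoclinic of slope $1/2$. That such an $A$ is isogenous to $E^g$ is a separate theorem of Oort, and its usual proof goes through the superspecial case you are trying to prove.
\item The ``adjust $\phi$'' step is vacuous: the subgroup scheme attached to $\phi_*M(E^g)\subseteq M(A)$ is just the $p$-part of $\ker\phi$.
\item Most seriously, the claim that quotients of $E^g$ by $\alpha$-subgroups are again products of supersingular elliptic curves is false. The $\alpha$-subgroups of order $p^r$ lie in the Frobenius kernel $\simeq\alpha_p^g$, and they are parametrized by \emph{all} $r$-dimensional subspaces of a $g$-dimensional $k$-vector space. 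In terms of your lattice $N$, the quotient is superspecial exactly when the subspace is rational for the $\F_{p^2}$-structure coming from $N$.
\end{itemize}
For $g=2$, $r=1$ this is the Moret-Bailly family $E^2/\alpha_{p,t}$, $t\in\mathbb{P}^1(k)$. For $t\notin\mathbb{P}^1(\F_{p^2})$ the quotient has $a$-number $1$, so by your own (i)$\Rightarrow$(iii) it is not a product. Nor can $\GL_g(O_D)$ help: it acts on $\Lie(E^g)$ through $\GL_g(\F_{p^2})$, so it never moves a non-rational subspace to a rational one.

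Even restricted to rational subspaces, two global facts remain unproved. The first is that $A$ is an $\alpha$-quotient of a \emph{fixed} $E^g$ at all; this is false for $g=1$, where only $E$ and $E^{(p)}$ arise. The second is that automorphisms of $E^g$ act transitively on rational subspaces of a given dimension; for $g\ge 2$ this is a strong-approximation statement for $\SL_g(D)$, not a formal fact. So the missing idea is global, not local.

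One standard way to supply it:
\begin{enumerate}
\item The condition $\sfV^2M=pM$ means $F_A^2=u\circ[p]$ with an isomorphism $u\colon A\isoto A^{(p^2)}$ preserving any polarization. Hence the point of a fine moduli space with level structure defined by $A$ is fixed by a power of $\sigma^2$, and $A$ descends to a finite field $\F_q$.
\item Frobenius acts on $M$ as a power of $\sfV$, so it equals $\sqrt q$ times a root of unity; after enlarging $q$ it equals $\sqrt q$.
\item Take a supersingular $E/\F_q$ with Frobenius $\sqrt q$, so that $\End(E)=O_D$ is maximal. Tate's theorem, together with $M(A)\simeq M(E)^{\oplus g}$ at $p$, shows that the natural map $\Hom(E,A)\otimes_{O_D}E\to A$ is an isomorphism.
\item Since $O_D$ is hereditary, the projective right $O_D$-module $\Hom(E,A)$ is a direct sum of right ideals $I_1,\dots,I_g$. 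Hence $A\simeq\prod_i I_i\otimes_{O_D}E$ is a product of supersingular elliptic curves.
\end{enumerate}
Without such an argument, or an honest citation of Oort as the paper makes, (ii)$\Rightarrow$(i) is not proved.
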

               An abelian variety $A$ (resp.~a \dieu module $M$) over $k$  is called \emph{superspecial} if it satisfies the above equivalent conditions (resp. condition (ii)).
\subsection{Abelian varieties with additional structures}\label{ss:av}
         Let $B$ be a semisimple $\Q$-algebra and let $O$ be a $\Z$-order in $B$.
\begin{defn}\label{def:ab}
An \emph{$O$-abelian scheme} over a base scheme $S$ is a pair $(A, \iota)$, where $A$ is an abelian scheme over $S$,  and $\iota  : O\hookrightarrow \End(A)$   is an injective ring homomorphism.
\end{defn}
\begin{lemma}\label{lm:move-to-OB}
    Let $A$ be an abelian variety over a field $k$, and let $\iota^0: B\to \End^0(A)$ be  a homomorphism of $\Q$-algebras.
    Then there exists an $O$-abelian variety $(A_1,\iota_1)$ over $k$ such that $A_1\sim A$ over $k$.
\end{lemma}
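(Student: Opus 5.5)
The plan is the standard "saturate the lattice" argument. Since $\iota^0(1)=1$ is presumably implicit (homomorphism of unital $\Q$-algebras), I will carry out the construction and then handle the unit.

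\textbf{Step 1: choose an isogeny $N$.} Since $O$ is a $\Z$-order, it is a finitely generated $\Z$-module spanning $B$. Fix $\Z$-generators $b_1,\dots,b_r$ of $O$. Each $\iota^0(b_j)$ lies in $\End^0(A)=\End(A)\otimes\Q$, so there is an integer $N\ge1$ with $N\iota^0(b_j)\in\End(A)$ for all $j$. Hence $N\iota^0(O)\subseteq\End(A)$.

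\textbf{Step 2: build $A_1$ as a quotient of $A$.} Consider the finite $k$-subgroup scheme
\[
G\coloneqq\sum_{j=1}^r (N\iota^0(b_j))\bigl(A[N]\bigr)\subseteq A,
\]
the scheme-theoretic image of $A[N]^{r}$ under $(x_j)\mapsto\sum_j N\iota^0(b_j)x_j$. Equivalently, $G$ is the sum of the images $\phi(A[N])$ over $\phi\in N\iota^0(O)$. It is finite because it is a quotient of the finite group scheme $A[N]^r$, and it is defined over $k$ because the endomorphisms are. Put $A_1\coloneqq A/G$ and let $\pi\colon A\to A_1$ be the quotient isogeny, which is defined over $k$. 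Thus $A_1\sim A$ over $k$.

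\textbf{Step 3: show $O$ acts on $A_1$.} For $b\in O$ I want $\iota_1(b)\in\End(A_1)$ with $\iota_1(b)\circ\pi=\pi\circ\iota^0(b)$ in $\Hom^0$. Write $\iota^0(b)=\psi/N$ with $\psi\coloneqq N\iota^0(b)\in\End(A)$. The natural candidate is the map induced by $\psi$ on $A\to A/G$, divided by $N$.

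It is cleaner to use $A[N]\subseteq G$, which holds because $1\in O$ gives $N\cdot A[N]=0$, wait, that contributes nothing. Instead I change the construction: let $\Lambda\subseteq\End(A)$ be the image of $O\otimes\Z$ under $N\iota^0$, and define
\[
H\coloneqq\sum_{b\in O}\iota^0(b)\text{-}\text{preimage}
\]
more carefully as follows. Let $K\coloneqq\sum_{b\in O}\bigl(N\iota^0(b)\bigr)^{-1}\bigl(\ker\bigr)$. The cleanest formulation is: take
\[
G'\coloneqq\bigcap_{b\in O}\ker\bigl(N\iota^0(b)\colon A[N^2]\to A\bigr)\ \cap\ldots
\]

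This is getting messy, so I switch to the lattice-theoretic route, which handles any field uniformly. Use the contravariant equivalence between abelian varieties isogenous to $A$ with $k$-isogenies and "lattices". Concretely, for $\phi\in O$ set
\[
A_1\coloneqq\text{the image of }\sum_j N\iota^0(b_j)\colon A^r\to A,
\]
no: instead define $A_1$ to be the image of the homomorphism
\[
A\longrightarrow A^r,\qquad x\mapsto\bigl(N\iota^0(b_j)x\bigr)_j,
\]
which is an abelian subvariety of $A^r$ defined over $k$.

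\textbf{Step 4: injectivity and the isogeny.} If $1\in\{b_j\}$, this homomorphism has kernel contained in $A[N]$, which is finite. Hence $A_1\sim A$ over $k$.

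\textbf{Step 5: the $O$-action on $A_1$.} Each $b\in O$ satisfies $b\,b_j=\sum_i c_{ij}b_i$ with $c_{ij}\in\Z$, because $O$ is a ring. The integer matrix $(c_{ij})$ defines an endomorphism of $A^r$. On points of $A_1$ it sends $(N\iota^0(b_i)x)_i$ to $(N\iota^0(bb_j)x)_j$, which lies in $A_1$. So it preserves the abelian subvariety $A_1$, and the restriction is $\iota_1(b)$.

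\textbf{Step 6: ring homomorphism and injectivity.} Under the isogeny $A\to A_1$, $\iota_1(b)$ corresponds to $\iota^0(b)$. Therefore $\iota_1$ is a ring homomorphism. It is injective since $\iota^0$ is (a nonzero $\Q$-algebra map from a simple algebra; for semisimple $B$ one uses $\iota^0(1)=1$ and injectivity as needed).

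The main obstacle is the bookkeeping of Steps 5 and 6: checking that $A_1$ is stable under the integer matrices $(c_{ij})$, and that the action is multiplicative. Both follow from working in $\End^0$ after transport along the isogeny.
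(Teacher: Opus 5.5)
Your final construction, $A_1=f(A)$ for $f\colon A\to A^r$, $x\mapsto(N\iota^0(b_j)x)_j$ with $b_1=1$, can be made to work, but Step~5 fails as written. You build the integer matrix from \emph{left} multiplication, $bb_j=\sum_i c_{ij}b_i$. Then $c\circ f=(N\iota^0(b)\iota^0(b_j))_j$, so $c$ acts on $f(A)$ like $\iota^0(b)$ applied in every coordinate. Such a point has no reason to be of the form $(N\iota^0(b_j)x')_j$ when $b$ does not commute with the $b_j$, and for quaternion orders it genuinely fails. Take $A=E^2$, $O=\Mat_2(\Z)$ acting naturally, $N=1$, generators $1,e_{12},e_{21},e_{11}$, and $b=e_{12}$. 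Your matrix sends $f(x)$ to $(e_{12}x,0,e_{11}x,0)$. Membership in $A_1$ would force $x'=e_{12}x$, and then $e_{21}x'=e_{22}x$ would have to equal $e_{11}x$, which is false. So $c$ does not preserve $A_1$, and the claim in Step~6 that $\iota_1(b)$ corresponds to $\iota^0(b)$ fails as well.

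The repair is to use \emph{right} multiplication, $b_jb=\sum_i c_{ij}b_i$. Then $c\circ f=(N\iota^0(b_jb))_j=f\circ\iota^0(b)$ in $\Hom^0(A,A^r)$. Hence $N(c\circ f)=f\circ N\iota^0(b)$ in the torsion-free group $\Hom(A,A^r)$. Even so, $c(A_1)\subseteq A_1$ is not a literal pointwise computation, because $\iota^0(b)$ is only a quasi-endomorphism. Argue instead that $c(A_1)=(c\circ f)(A)$ is an abelian variety, so $c(A_1)=N\cdot c(A_1)=f\bigl(N\iota^0(b)(A)\bigr)\subseteq A_1$. Now set $\iota_1(b)\coloneqq c|_{A_1}$, so that $\iota_1(b)\circ f=f\circ\iota^0(b)$. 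Since $f$ is an isogeny, this identity determines $\iota_1(b)$ independently of the non-unique $c_{ij}$, and additivity and multiplicativity follow. You should also delete the abandoned Steps~2--3, which contribute nothing.

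Once fixed, your argument is a genuinely different, self-contained route. The paper sets $O'\coloneqq(\iota^0)^{-1}(\End(A))\cap O$ and takes the Serre tensor construction $A_1=O\otimes_{O'}A$, with the canonical map $A\to O\otimes_{O'}A$ as the isogeny, quoting a theorem of Yu for this. Your version realizes $A_1$ as an abelian subvariety of $A^r$ rather than as a quotient of $O\otimes_{\Z}A$. It needs only the elementary fact that the image of a homomorphism of abelian varieties over $k$ is an abelian subvariety defined over $k$. The paper's construction is more canonical and functorial, but it rests on the cited result. Both arguments tacitly use that $\iota^0$ is unital and injective, which holds in the paper's application where $B$ is a quaternion algebra.
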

\begin{proof}
   Set $O'\coloneqq ({\iota^0})^{-1}(\End (A))\cap O$.
   Then $O'$ is an order contained in $O$ with $\iota^0(O')\subseteq\End (A)$.
   By \cite[Theorem 6.5]{yu:mo}, there  exists a $k$-isogeny $A\to A_1$  such that, under the natural identification $\End^0 (A)\simeq \End^0 (A_1)$, the map $\iota^0$ sends $O$ into $\End (A_1)$.
   In fact, one may take $A_1\coloneqq O\otimes_{O'} A$ by applying the Serre tensor construction \cite[\S6.1]{yu:mo}, and the $k$-isogeny is given by the canonical map $A\to O\otimes_{O'} A$.
\end{proof}
\begin{remark}\label{rem:Waterhouse-constr}
  When the ground field $k$ is finite, the $O$-abelian variety $(A_1, \iota_1)$ can also be constructed using  Waterhouse's method. Indeed, there always exists a maximal order $O_0 \subseteq \End^0(A)$ that contains $\iota^0(O)$.
  In the proof of  \cite[Theorem~3.13]{waterhouse:thesis}, Waterhouse constructs a quotient abelian variety $A_1\coloneqq A/H(I)$ of $A$ by a suitable finite $k$-subgroup scheme $H(I)$ such that $\End (A_1)=O_0$, where  we identify $\End^0(A)$ with $\End^0(A_1)$ via the quotient map $A\to A_1$.
  Then $\iota_1$ is obtained by composing  this identification with $\iota^0$.
\end{remark}
Let $\phi$ be an  endomorphism of $k$.
For an algebraic variety $Y$ over $k$, set ${Y}^{\phi}\coloneqq Y\otimes_{k, \phi}k$,   the $\phi$-twist of $Y$.
  Given a morphism $f: Y\to Z$ of algebraic varieties over $k$, we write $f^{\phi}: {Y}^{\phi}\to {Z}^{\phi}$ for its   $\phi$-twist.

Now let $\phi: x \mapsto x^q$  be the $q$-power Frobenius endomorphism of $k$, and
let  $\pi_{Y}=\pi_{Y/k}: Y \to {Y}^{\phi}$ denote the associated relative $q$-power Frobenius morphism over $k$.
If $Y$ is  defined over $k=\F_q$, then there is a canonical isomorphism ${Y}^{\phi}\simeq Y$.
If $A$ is an abelian variety  over $\F_q$, then $\pi_A : A \to {A}^{\phi}\simeq A$ is a homomorphism of group schemes,  called the {\it{Frobenius endomorphism} of $A$ over $\F_q$}.
\begin{prop}\label{prop:sp_Can}
    Let $\mathrm{Sp}_{\F_{p^2}}(-p)$ denote the category of superspecial abelian varieties $A$ over $\F_{p^2}$ satisfying  $\pi_A=[-p]$.
    Let $k$ be an algebraically closed field of characteristic $p$ with a fixed embedding $\F_{p^2}\hookrightarrow k$, and let $\mathrm{Sp}_k$ be the category of superspecial abelian varieties over $k$.
     Then
    the base change functor
    \[\calF: \mathrm{Sp}_{\F_{p^2}}(-p)\to \mathrm{Sp}_k, \quad A\mapsto A\otimes_{\F_{p^2}} k,\]
    defines an equivalence of categories.
\end{prop}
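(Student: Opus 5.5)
We must show that $\calF$ is fully faithful and essentially surjective.

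\textbf{Full faithfulness.} Let $A, A'\in \mathrm{Sp}_{\F_{p^2}}(-p)$. Homomorphisms $A_k\to A'_k$ are defined over some finite extension $\F_{q}\supseteq \F_{p^2}$. By the standard Galois descent criterion, a homomorphism $f: A_{\Fpbar}\to A'_{\Fpbar}$ descends to $\F_{p^2}$ if and only if $f\circ \pi_A = \pi_{A'}\circ f$, where $\pi$ denotes the $p^2$-Frobenius. This holds automatically, since $\pi_A=[-p]$ and $\pi_{A'}=[-p]$ and $[-p]$ commutes with every homomorphism. Hence
\[
\Hom_{\F_{p^2}}(A,A')=\Hom_{\Fpbar}(A_{\Fpbar},A'_{\Fpbar}).
\]
For a general algebraically closed $k\supseteq \Fpbar$, homomorphisms between abelian varieties over $\Fpbar$ do not grow under extension to $k$ (rigidity: $\Hom$-schemes of abelian varieties are étale and unramified, and in fact constant over algebraically closed fields). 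Therefore $\calF$ is fully faithful.

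\textbf{Essential surjectivity.} Let $A_0\in \mathrm{Sp}_k$ have dimension $g$. By Theorem~\ref{thm:sp}, $A_0\simeq E_k^g$ for a supersingular elliptic curve $E$ over $k$. Since supersingular $j$-invariants lie in $\F_{p^2}$, $E$ may be taken with a model over $\Fpbar$, and it then suffices to exhibit one supersingular elliptic curve $E_0/\F_{p^2}$ with $\pi_{E_0}=[-p]$. Indeed, this gives $E_0^g\in \mathrm{Sp}_{\F_{p^2}}(-p)$ with $(E_0^g)_k\simeq E_k^g$, because all supersingular elliptic curves over $k$ are isogenous and, for $g\ge 2$, superspecial abelian varieties of dimension $g$ are unique up to isomorphism (Deligne–Ogus–Shioda). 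For $g=1$ we need every supersingular curve, so the argument is different: take $E_0$ with $\pi=[-p]$. For any supersingular $E'/k$, choose an isogeny $E_{0,k}\to E'$ with kernel $H$. Since $\End(E_{0,k})=\End_{\F_{p^2}}(E_0)$ by the full faithfulness step, the kernel $H$ is stable under $\pi=[-p]$, hence defined over $\F_{p^2}$. Then $E_0/H$ is defined over $\F_{p^2}$, and its Frobenius is induced from that of $E_0$, so it is still $[-p]$. The same argument handles arbitrary $g$, so the Deligne–Ogus–Shioda theorem is not actually needed: any superspecial $A_0$ is a quotient of $E_{0,k}^g$ by a finite subgroup, and that subgroup descends because it is $[-p]$-stable. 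More precisely, a finite subgroup scheme of $E_{0,k}^g$ descends to $\F_{p^2}$ when it is stable under the $\sigma^2$-twist, and this twist corresponds exactly to the action of $\pi=[-p]$. This descent of subgroup schemes (including the non-reduced, $p$-power part) is the technical heart. To handle it, I would use the Dieudonné module for the local part: a $W(k)[\sfF,\sfV]$-submodule of $M(E_0^g)\otimes W(k)$ descends if it is stable under $\sigma^2$ acting through $\pi$, which acts by $-p$ and so preserves everything.

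\textbf{Existence of $E_0$.} By Honda–Tate theory, $-p$ is a $p^2$-Weil number. The corresponding simple isogeny class over $\F_{p^2}$ consists of elliptic curves: the invariants of $\Q(\pi)=\Q$ are $1/2$ at $p$ and at $\infty$, so the endomorphism algebra is $D_{p,\infty}$ and the dimension is $1$. Alternatively, take a supersingular curve over $\F_p$, which has $\pi^2=-p$; its base change to $\F_{p^2}$ has $\pi_{\F_{p^2}}=[-p]$.

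I expect the main obstacle to be the descent of a possibly non-reduced kernel $H$ in the essential surjectivity step.
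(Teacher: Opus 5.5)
Your full-faithfulness argument is the paper's: Galois descent using that $[-p]$ commutes with every homomorphism, then rigidity (the paper cites Chow's theorem) to pass from $\Fpbar$ to $k$. Your first treatment of $g\ge 2$, via one $E_0/\F_{p^2}$ with $\pi_{E_0}=[-p]$ plus Deligne--Ogus--Shioda, also matches the paper. (Theorem~\ref{thm:sp} only gives $A_0\simeq E_1\times\cdots\times E_g$, so writing $E_k^g$ already uses DOS.)

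\textbf{The gap.} The problem is the descent step you use for $g=1$, and again to claim DOS is unnecessary: ``$H$ is stable under $\pi=[-p]$, hence defined over $\F_{p^2}$.'' Every subgroup scheme is $[-p]$-stable, so this cannot be a descent criterion. The twist $H\mapsto H^{\phi}$ agrees with $H\mapsto \pi(H)$ only on $\Fpbar$-points, i.e.\ for \'etale $H$. For infinitesimal $H$ one has $\pi(H)=pH$, which may be $0$. On Dieudonn\'e modules, $M(E_0^g)\otimes_{W(\F_{p^2})}W(k)$ satisfies $\sfV^2=-p\,(1\otimes\sigma^{-2})$, so the semilinear twist acts as $-p^{-1}\sfV^2$, not as $-p$. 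Hence a Dieudonn\'e lattice $M'$ is $\sigma^2$-stable if and only if $\sfV^2M'=pM'$, i.e.\ if and only if it is superspecial.

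Your claim is in fact false as stated. For $g=2$, the $\alpha_p$-subgroups of $E_{0,k}^2$ are parametrized by $\bbP^1(k)$, and for a point outside $\bbP^1(\F_{p^2})$ the quotient has $a$-number $1$. That kernel is trivially $[-p]$-stable, yet it cannot descend: a descended quotient would have Frobenius $[-p]$, hence be superspecial by Lemma~\ref{lem:ssp-zeta-stable}.

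\textbf{The fix.} Use that the \emph{target} is superspecial. If $E_{0,k}^g\to A_0$ is an isogeny, then the lattice $M(A_0)$ satisfies $\sfV^2M(A_0)=pM(A_0)$. It is therefore $(1\otimes\sigma^2)$-stable and, by Lang's theorem over $W(k)$, spanned by fixed vectors, so it descends to $W(\F_{p^2})$. The prime-to-$p$ part of the kernel descends for the reason you gave: Galois acts on prime-to-$p$ torsion points through $[-p]$. For $g=1$ this is automatic, since the only $p$-power subgroup schemes of $E_{0,k}$ are the kernels of the iterated relative Frobenius, which are defined over $\F_{p^2}$.

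With this correction your route proves essential surjectivity for all $g$ without DOS, since it only needs descent, not uniqueness of the superspecial class. The paper instead treats $g=1$ as well known and cites DOS for $g>1$.

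One minor point: for $p=2,3$ a supersingular curve over $\F_p$ need not have $\pi^2=-p$, because the trace can be $\pm p$. Choose one of trace $0$, or rely on your Honda--Tate argument, which is fine.
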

\begin{proof}
By~\cite[Chap.~IV, Section 4, Theorem 1, p.~93]{MacLane}, it suffices to show that the functor $\calF$ is fully faithful and essentially surjective.
 Since  $[-p]$ commutes with all elements of  $\Hom(A\otimes_{\F_{p^2}} \Fpbar,A'\otimes_{\F_{p^2}} \Fpbar)$, the natural map $\Hom(A, A')\to \Hom(A\otimes_{\F_{p^2}} \Fpbar,A'\otimes_{\F_{p^2}} \Fpbar)$ is an isomorphism by Galois descent. Fix an embedding $\Fpbar\hookrightarrow k$ that induces the given embedding $\F_{p^2}\hookrightarrow k$.
 The base change functor from $\Fpbar$ to $k$ induces an  isomorphism  $\Hom(A\otimes_{\F_{p^2}} \Fpbar,A'\otimes_{\F_{p^2}} \Fpbar)\isoto \Hom(A\otimes_{\F_{p^2}} k, A'\otimes_{\F_{p^2}} k)$ by Chow's Theorem (see~\cite{Chow-1955} and~\cite[Theorem~1.2]{yu:chow}).
 Composing these two canonical isomorphisms shows that $\calF$ is fully faithful.
 Essential surjectivity is well known in the case  $\dim A=1$,  since every supersingular elliptic curve admits a model over $\F_{p^2}$ with Frobenius equal to $[-p]$.
 For $\dim A>1$, it follows from the results of Deligne, Ogus and Shioda; see~\cite[Theorem 6.2]{Ogus-1979}.
\end{proof}
By Proposition~\ref{prop:sp_Can}, any  superspecial abelian variety $A$ over $\Fpbar$ canonically descends to a superspecial abelian  variety $\widetilde{A}$ over  $\F_{p^2}\subseteq \Fpbar$ such that $A\simeq \widetilde{A} \otimes_{\F_{p^2}}\Fpbar$ and  $\pi_{\widetilde{A}}=[-p]$.
\begin{cor}\label{cor:sp_fod}
    Let $B$ be a semisimple $\Q$-algebra with a positive involution $*$,  and let $O$ be  an order stable under $*$.
    Then the base change functor  $\calF$ of  Proposition~\ref{prop:sp_Can} is an equivalence  from the category of superspecial  $O$-abelian varieties over $\F_{p^2}$ satisfying  $\pi_A=[-p]$, to the category of superspecial  $O$-abelian varieties over $k$. The same holds true if  ``superspecial  $O$-abelian varieties" is replaced by ``polarized superspecial  $O$-abelian varieties".
\end{cor}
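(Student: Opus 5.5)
The plan is to deduce the corollary from Proposition~\ref{prop:sp_Can} by transporting the additional structures along the equivalence $\calF$. For full faithfulness, the relevant categories are as follows. An object of the source category is a pair $(A,\iota)$, where $A\in \mathrm{Sp}_{\F_{p^2}}(-p)$ and $\iota: O\hookrightarrow \End(A)$. A morphism $(A,\iota)\to (A',\iota')$ is an $f\in \Hom(A,A')$ with $f\circ\iota(a)=\iota'(a)\circ f$ for all $a\in O$. Proposition~\ref{prop:sp_Can} gives a bijection
\[
\Hom(A,A')\isoto \Hom(A_k,A'_k),
\]
and this bijection is compatible with composition. Since $\iota(a)$ maps to $\iota_k(a)$, a morphism $f$ is $O$-linear if and only if its base change $f_k$ is. Hence $\calF$ is fully faithful on $O$-abelian varieties.

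For essential surjectivity, I start with a superspecial $O$-abelian variety $(A,\iota)$ over $k$. By Proposition~\ref{prop:sp_Can} there is $\widetilde A\in \mathrm{Sp}_{\F_{p^2}}(-p)$ with an isomorphism $\widetilde A_k\simeq A$. Full faithfulness applied with $A'=A$ shows that the map $\End(\widetilde A)\to\End(\widetilde A_k)\simeq \End(A)$ is a ring isomorphism. Composing $\iota$ with its inverse gives an injective ring homomorphism $\widetilde\iota: O\hookrightarrow \End(\widetilde A)$. Then $(\widetilde A,\widetilde\iota)$ is an object of the source category, and its image under $\calF$ is isomorphic to $(A,\iota)$.

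For the polarized version, I take a polarization $\lambda: A\to A^t$ that is compatible with the involution, meaning $\lambda\circ\iota(a^*)=\iota(a)^t\circ\lambda$. The first step is to check that the dual $A^t$ of an object of $\mathrm{Sp}_{\F_{p^2}}(-p)$ again lies in that category, and that duality commutes with base change to $k$. The dual is superspecial, and $\pi_{A^t}=(\pi_A)^t$ up to the identification given by Verschiebung. More precisely, since $\pi_A=[-p]$, the Verschiebung of $A$ is also $[-p]$. Therefore $\pi_{A^t}=(V_A)^t=[-p]$.

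With this in hand, full faithfulness gives $\Hom(\widetilde A,\widetilde A^t)\isoto \Hom(A,A^t)$. So $\lambda$ descends uniquely to a homomorphism $\widetilde\lambda$, and $\widetilde\lambda$ is symmetric because $\widetilde\lambda^t=\widetilde\lambda$ can be checked after base change. It is also a polarization, because ampleness of the associated line bundle $(1,\widetilde\lambda)^*\mathcal P$ descends along the field extension $k/\F_{p^2}$. The compatibility of $\widetilde\lambda$ with $*$ is an identity of homomorphisms, so it likewise descends. Morphisms respecting polarizations are handled the same way as in the unpolarized case.

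I expect the main obstacle to be this polarization step, specifically confirming that $A^t$ lies in $\mathrm{Sp}_{\F_{p^2}}(-p)$ and that $\lambda$ descends to a genuine polarization rather than just a symmetric homomorphism. The rest is formal.
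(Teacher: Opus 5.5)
Your proposal is correct and takes the route the paper intends. The paper gives no separate proof and treats the corollary as an immediate consequence of Proposition~\ref{prop:sp_Can}: the $O$-action, and the polarization, are transported along the fully faithful, essentially surjective functor $\calF$. Your check that $\widetilde A^t$ again lies in $\mathrm{Sp}_{\F_{p^2}}(-p)$, via $\pi_{\widetilde A^t}=V_{\widetilde A}^t=[-p]$, and that symmetry, ampleness and the Rosati compatibility descend, supplies the details the paper leaves implicit.
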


\subsection{Supersingular isogeny classes over finite fields}\label{subsec:Weil-no}
Let $q=p^a$ be a power of $p$.
\begin{defn}
   (1) An algebraic integer $\pi \in \Qbar$ is called  a {\it{Weil $q$-number}} if  $\lvert \tau(\pi) \rvert=\sqrt{q}$ for every  embedding $\tau : \Q(\pi)\hookrightarrow \C$.
    Two Weil $q$-numbers $\pi_1, \pi_2$ are said to be {\it conjugate} if there exists an isomorphism $\Q(\pi_1) \simeq \Q(\pi_2)$  sending $\pi_1$ to $\pi_2$.

    (2) A Weil $q$-number $\pi$ is called   {\it supersingular} if it is of the form $\pi=\sqrt{q} \zeta$ for some  root of unity $\zeta$.
\end{defn}

For a Weil $q$-number $\pi$,
the field $\Q(\pi)$ is either totally real or CM~\cite[Proposition 4]{honda}.
Let $A$ be an abelian variety over $\F_q$, and denote by  $\pi_A \in \End(A)$  its  Frobenius endomorphism over $\F_q$.
The Honda-Tate theorem asserts that the assignment $A \mapsto \pi_A$ induces a bijection between the set of $\F_q$-isogeny classes of simple abelian varieties over $\F_q$ and  the set of conjugacy classes of Weil $q$-numbers.
A simple abelian variety $A$ over $\F_q$ is supersingular if and only if its Frobenius  $\pi_A$ is supersingular  (cf.~\cite[Theorem 2.9]{Yu-End-QM-2013}).
Moreover, every supersingular Weil  $q$-number $\sqrt{q} \zeta$ is conjugate to either $\sqrt{q} \zeta_n$
or $-\sqrt{q} \zeta_n$ (or both), where  $n\not\equiv 2 \pmod 4$ and  $\zeta_n \coloneqq e^{2\pi i/n}$; see \cite[Corollary~2.4]{xue-yang-yu:sp_as}.

Now  fix a Weil $q$-number $\pi$, and let $A_\pi$ be a simple abelian variety over $\Fq$ in the isogeny class corresponding to  $\pi$.
Set
\[d(\pi)\coloneqq \dim A_{\pi}, \quad \calE \coloneqq \End^0(A_{\pi}), \quad {\text{and}} \quad K \coloneqq \Q(\pi) \subseteq\calE.\]
 Then  $\calE$ is a central division algebra over $K$, and
 \begin{equation}\label{eq:dim}
2d(\pi)=[\calE : K]^{\frac{1}{2}}[K : \Q].
\end{equation}

Let $D_{p, \infty}$ denote  the  quaternion algebra over $\Q$  ramified precisely at $p$ and $\infty$.
If $K$ is an imaginary quadratic field in which $p$ splits, denote by  $\calQ_{K, p}$  the  quaternion $K$-algebra ramified exactly at the two places of $K$ lying above $p$.
By \cite[Propositions 3.1 and 3.5]{xue-yang-yu:sp_as}, the supersingular Weil $q$-numbers $\pi$ with $d(\pi)=1,2$ are classified as follows.

\begin{prop}\label{prop:summ-Weil-num}
    Let $n\not\equiv 2 \pmod 4$, and let $\pi=\pm \sqrt{q}\zeta_n$ be a supersingular Weil $q$-number, where  $q=p^a$.
\begin{itemize}
    \item[(1)]
    Suppose that $a$ is even.
    \begin{itemize}
        \item[(i)] $d(\pi)=1$ if and only if one of the following holds:
        \begin{itemize}
            \item[(a)] $n=1$, in which case $K=\Q$
            and $\calE=D_{p, \infty}$; or
            \item[(b)] $n=3, 4$ and $p\not\equiv 1\pmod n$, in which case $K=\Q(\sqrt{-n})$ and $\calE=K$.
        \end{itemize}
        \item[(ii)] $d(\pi)=2$ if and only if one of the following holds:
        \begin{itemize}
            \item[(a)] $n=3,4$ and $p \equiv 1 \pmod n$, in which case $K=\Q(\sqrt{-n})$ and $\calE=\calQ_{K, p}$; or
            \item[(b)]
            $n=5,8,12$ and $p\not\equiv 1 \pmod n$, in which case  $[K:\Q]=4$ and hence $\calE=K=\Q(\zeta_n)$.
        \end{itemize}
    \end{itemize}
    \item[(2)] Suppose that  $a$ is odd.
    \begin{itemize}
        \item[(i)] $d(\pi)=1$ if and only if $\pi \in
        \{\sqrt{q}\zeta_4$, $\pm \sqrt{q}\zeta_8 (p=2), \pm \sqrt{q}\zeta_{12} (p=3)\}$.
        \item[{(ii)}] $d(\pi)=2$ if and only if
        \[ \pi \in \{\sqrt{q}, \sqrt{q}\zeta_3, \pm\sqrt{q}\zeta_5 \ (p=5),
        \sqrt{q}\zeta_8 \ (p\neq 2), \sqrt{q}\zeta_{12} \  (p\neq 3), \pm \sqrt{q}\zeta_{24} \ (p=2) \}. \]
        With the exception of $\pi=\sqrt{q}$, we have $\calE=K$ in all remaining cases for odd $a$; in the exceptional case, $K=\Q(\sqrt{p})$ and $\calE$ is the unique quaternion $\Q(\sqrt{p})$-algebra $D_{\infty_1, \infty_2}$ ramified exactly at the two infinite places of $\Q(\sqrt{p})$.
        \end{itemize}
\end{itemize}
\end{prop}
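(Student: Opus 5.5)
We will prove Proposition~\ref{prop:summ-Weil-num} by computing, for each supersingular Weil $q$-number $\pi=\pm\sqrt q\zeta_n$, the field $K=\Q(\pi)$, the local invariants of $\calE$ via Tate's formula, and then $d(\pi)$ from \eqref{eq:dim}. Tate's theorem says that $\calE$ is central over $K$ with invariants $1/2$ at every real place of $K$, $0$ at places not above $p$, and $\inv_v(\calE)=\frac{v(\pi)}{v(q)}[K_v:\Qp]$ at $v\mid p$. For supersingular $\pi$ we have $\pi^2/q$ a root of unity, so $v(\pi)/v(q)=1/2$ for all $v\mid p$. Hence $\inv_v(\calE)=\frac12[K_v:\Qp] \bmod \Z$, and the order of $\calE$ in $\Br(K)$ is $1$ or $2$. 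Consequently $2d(\pi)=[K:\Q]$ or $2[K:\Q]$. The second case occurs exactly when $K$ has a real place, or some $v\mid p$ has odd local degree $[K_v:\Qp]$.

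\emph{Step 1: determine $K$.} For $a$ even, $\sqrt q\in\Z$, so $K=\Q(\zeta_n)$ and $[K:\Q]=\varphi(n)$. For $a$ odd, $K=\Q(\sqrt p\,\zeta_n)$ (or with the minus sign). Its degree is $\varphi(n)$ or $2\varphi(n)$, according to whether $\sqrt p\,\zeta_n$ generates a field of the same degree as $\Q(\zeta_n)$. This is controlled by whether $\sqrt{p}$, or $\sqrt{\pm p}$, lies in $\Q(\zeta_{n'})$ for suitable $n'$. Using $\Q(\sqrt{p^*})\subseteq\Q(\zeta_p)$ and $\sqrt2\in\Q(\zeta_8)$, $\sqrt{-1}\in\Q(\zeta_4)$, $\sqrt3\in\Q(\zeta_{12})$, we compute $K$ in the small cases. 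The condition $2d(\pi)\le4$ forces $[K:\Q]\le4$, so only $\varphi(n)\le4$, i.e.\ $n\in\{1,3,4,5,8,12\}$ (with $n\not\equiv2\pmod4$), together with the $p$-dependent exceptional cases ($p=2,3,5$, where $n=24$ or $\zeta_5$ can enter), need checking.

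\emph{Step 2: case-by-case invariants.}
\begin{itemize}
\item $a$ even, $n=1$: $K=\Q$, the invariant at $p$ is $1/2$ and at $\infty$ is $1/2$, so $\calE=D_{p,\infty}$ and $d=1$.
\item $a$ even, $n=3,4$: $K$ is imaginary quadratic. If $p$ is inert or ramified, then $[K_v:\Qp]=2$ and $\calE=K$, so $d=1$. If $p$ splits, i.e.\ $p\equiv1\pmod n$, the local degrees are $1$, so $\calE=\calQ_{K,p}$ and $d=2$.
\item $a$ even, $n=5,8,12$: $K$ is a quartic CM field. $\calE=K$ (so $d=2$) iff all local degrees above $p$ are even. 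Since $\Q(\zeta_n)/\Q$ is abelian, the local degree is the order of $p$ in $(\Z/n)^\times$, which is odd only when $p\equiv1\pmod n$. In that case $d=4$ and the case is excluded.
\item $a$ even, larger $n$: $d\ge\varphi(n)/2\ge3$ is excluded, since $\varphi(n)\ge6$ or $n\equiv2\pmod4$.
\item $a$ odd: $\sqrt q\in\R$ gives $K=\Q(\sqrt p)$, which is real. So $\calE=D_{\infty_1,\infty_2}$; the local degree at the ramified prime above $p$ is $2$, giving invariant $0$. Hence $d=2$.
\item $a$ odd, other $n$: $\pi$ is non-real, so $K$ is CM. Handle each small $n$, computing $[K:\Q]$ and the decomposition of $p$. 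For instance, $\sqrt q\zeta_4=\sqrt{-q}$ gives $K=\Q(\sqrt{-p})$ with $p$ ramified, so $d=1$. Likewise $\sqrt q\zeta_3$ generates $\Q(\sqrt{-3},\sqrt p)$, of degree $4$ for $p\ne3$ and degree $2$ for $p=3$ (the latter being the $\zeta_{12}$ or $d=1$ overlap; the sign choices must be tracked).
\end{itemize}

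\emph{Main obstacle.} The hard part is the odd-$a$ bookkeeping. We must determine exactly when $\pm\sqrt p\,\zeta_n$ has degree $\le4$, especially at $p=2,3,5$, where $\sqrt{\pm p}$ already lies in a small cyclotomic field. We must also check evenness of all local degrees above $p$ in these biquadratic or cyclic quartic fields, and track which sign representatives are conjugate. Here we would use the normalization of \cite[Corollary~2.4]{xue-yang-yu:sp_as} to list one representative per conjugacy class. We would then compute splitting of $p$ in each explicit quartic field via its quadratic subfields.
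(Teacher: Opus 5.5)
\textbf{Overall.} The paper does not prove this proposition itself; it quotes \cite[Propositions~3.1 and~3.5]{xue-yang-yu:sp_as}. Your framework is the right one: Tate's invariants give $\inv_v(\calE)=\frac12[K_v:\Qp]$ at $v\mid p$, and hence $2d(\pi)\in\{[K:\Q],2[K:\Q]\}$. The even-$a$ half is essentially fine. One caveat there: ``local degree $=$ order of $p$ in $(\Z/n\Z)^\times$'' requires $p\nmid n$. The cases $(p,n)=(5,5),(2,8),(2,12),(3,12)$ must be handled separately; they are ramified with local degree $4$, so $d=2$ as claimed.

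\textbf{The gap in the odd-$a$ reduction.} Your step ``$[K:\Q]\le 4$, so $\varphi(n)\le 4$'' is false when $a$ is odd, because $K=\Q(\pm\sqrt p\,\zeta_n)$ need not contain $\zeta_n$. For example, $\sqrt2\,\zeta_8=1+i$, $\sqrt3\,\zeta_{12}=(3+\sqrt{-3})/2$, and $\sqrt2\,\zeta_{24}=(1+i)\zeta_{12}^{-1}$. Naming a few ``exceptional'' $n$ by inspection does not show that no others occur, so the ``only if'' directions of (2)(i) and (2)(ii) remain unproved.

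The missing observation is that $\zeta_n^2=\pi^2/q\in K$.
\begin{enumerate}
\item For odd $n$ this gives $K=\Q(\zeta_n,\sqrt p)$.
\item For $4\mid n$ it gives $\Q(\zeta_{n/2})\subseteq K$ with $[K:\Q(\zeta_{n/2})]\le 2$. Here $[K:\Q]=\varphi(n)/2$ holds if and only if $\pm\sqrt p\,\zeta_n\in\Q(\zeta_{n/2})$.
\end{enumerate}
Combined with $[K:\Q]\le 4$, this leaves only $n\in\{1,3,5,4,8,12,16,20,24\}$. One then checks that $\pm\sqrt p\,\zeta_n\in\Q(\zeta_{n/2})$ happens exactly for $(n,p)=(8,2),(12,3),(24,2)$. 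This is what excludes $n=16$, $n=20$, and $n=24$ with $p\neq 2$.

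\textbf{An error in your worked example.} Your one odd-$a$ example is miscomputed, in a way that contradicts the statement. For $p=3$, $\Q(\sqrt3\,\zeta_3)=\Q(\sqrt{-3},\sqrt3)=\Q(\zeta_{12})$ has degree $4$, not $2$, since $3$ is not a square in $\Q(\sqrt{-3})$. So $d(\sqrt q\,\zeta_3)=2$ for every $p$, as in (2)(ii). The degree-$2$ case at $p=3$ is $\pm\sqrt q\,\zeta_{12}$, with $K=\Q(\sqrt{-3})$.

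\textbf{Sign bookkeeping is not done.} Both signs must be listed for $\zeta_8,\zeta_{12},\zeta_{24},\zeta_5$ at $p=2,3,2,5$ respectively, but only one sign otherwise, and you do not show why. The normalization in \cite[Corollary~2.4]{xue-yang-yu:sp_as} only guarantees a representative of the form $\pm\sqrt q\,\zeta_n$. It does not tell you when $\sqrt q\,\zeta_n$ and $-\sqrt q\,\zeta_n$ are conjugate.

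\textbf{The local part is automatic.} The local analysis you list as an obstacle in the odd case needs no case work. Since $2v(\pi)=v(q)=a\,e_v$ with $a$ odd, every ramification index $e_v$ with $v\mid p$ is even. Hence $\inv_v(\calE)=0$ at all $v\mid p$. It follows that for odd $a$ one has $\calE=K$ and $d(\pi)=[K:\Q]/2$ whenever $K$ is CM. The whole odd case therefore reduces to the degree computation above plus the conjugacy check for the two signs.
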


We make an observation on the endomorphism ring of a superspecial abelian variety. Waterhouse proves in his
 thesis \cite[Theorem~4.2]{waterhouse:thesis} that if $A$ is a supersingular elliptic curve over $\F_q$ such that $\Q(\pi_A)$ is an  imaginary quadratic field (equivalently, $\pi_A\not\in\Z$), then  $\End(A)$ is an order in $\Q(\pi_A)$ that is maximal at $p$.
This result was  refined in \cite[\S4.2, p.~1620]{xue-yang-yu:sp_as},
where it was  shown that $\End(A)$ is in fact  the maximal order of $\Q(\pi_A)$,  except when $a$ is odd, $\pi_A^2=-q$, and $p\equiv 3\pmod{4}$.
In that case,  $\End(A)$ is isomorphic to either $\Z[\sqrt{-p}]$ or $\Z[(-1+\sqrt{-p})/2]$.
The following lemma extends these results  to higher-dimensional superspecial  abelian varieties.
\begin{lemma}\label{lem:ssp-zeta-stable}
Assume that $q=p^a$ with $a$ even.
Let $A$ be a supersingular abelian variety over $\Fq$ such that its Frobenius endomorphism $\pi_A$ is of the form  $\sqrt{q}\zeta$ for some root of unity $\zeta$.
If $A$ is superspecial, then $\Z[\zeta]\subseteq   \End(A)$. Conversely, if $\Z[\zeta]\subseteq  \End(A)$ and $a=2$, then $A$ is superspecial.
\end{lemma}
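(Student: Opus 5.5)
The plan is to compare $\zeta$ with Frobenius, which already acts integrally, and to check integrality of $\zeta$ prime by prime. Put $\zeta=\pi_A/p^{a/2}$; since $a$ is even, $p^{a/2}\in\Z$, so $\zeta$ is a well-defined element of $\Q(\pi_A)\subseteq\End^0(A)$. It is a root of unity, so $\Z[\zeta]=\Z[\zeta,\zeta^{-1}]$. Hence it suffices to show $\zeta\in\End(A)$, and for this it suffices to show that $\zeta$ is integral at every prime. By Tate's theorem for $\ell\neq p$, $\End(A)\otimes\Z_\ell=\End_{\Gal}(T_\ell A)$. The analogous statement at $p$ is $\End(A)\otimes\Z_p=\End_{\rm DM}(M)$ with $M=M(A)$, valid over the finite field $\F_q$. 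An element of $\End^0(A)$ that preserves $T_\ell A$ for all $\ell\neq p$ and preserves $M$ therefore lies in $\End(A)$.

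\emph{Primes $\ell\neq p$.} Here $p^{a/2}$ is a unit in $\Z_\ell$. Since $\pi_A$ preserves $T_\ell A$ and commutes with Galois, $\zeta=p^{-a/2}\pi_A$ also preserves $T_\ell A$ and commutes with Galois.

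\emph{The prime $p$.} This is the main step. First I would use superspeciality, $\sfV^2M=pM$ (Theorem~\ref{thm:sp}). Applying $\sfF$ gives $p\sfV M=\sfF\sfV(\sfV M)=\sfF(pM)=p\sfF M$, so $\sfF M=\sfV M$. Consequently $\sfF^2M=\sfF\sfV M=pM$ and likewise $\sfV^2M=pM$. The operator $u\coloneqq p^{-1}\sfV^2$ (or $p^{-1}\sfF^2$, depending on convention) is then a $\sigma^{\mp2}$-semilinear bijection of $M$ onto itself.

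On the covariant Dieudonné module, the $\F_q$-Frobenius $\pi_A$ acts as $\sfV^a$ (or as $\sfF^a$ under the other convention; the argument is symmetric). Since $\sigma^a$ is trivial on $W(\F_q)$, this action is $W(\F_q)$-linear. Writing $\sfV^a=p^{a/2}u^{a/2}$, the action of $\zeta$ on $M$ is $u^{a/2}$, which preserves $M$. Together with the previous paragraph this gives $\zeta\in\End(A)$. The point to handle carefully here is the bookkeeping of which of $\sfF,\sfV$ realizes $\pi_A$ covariantly and of semilinearity. Because $a/2$ is an integer, $u^{a/2}$ is $\sigma^{\mp a}$-semilinear, hence genuinely linear over $W(\F_q)$, so it commutes with $\sfF$ and $\sfV$ and defines an endomorphism of the Dieudonné module.

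\emph{Converse, $a=2$.} Now $\pi_A=p\zeta$ with $\zeta,\zeta^{-1}\in\End(A)$. On $M$ this says $\sfV^2=p\zeta$ (respectively $\sfF^2=p\zeta$), where $\zeta$ acts as an automorphism of $M$. Hence $\sfV^2M=p\zeta M=pM$; in the $\sfF$-convention one gets $\sfF^2M=pM$, and applying $\sfV^2$ yields $p^2M=p\sfV^2M$, i.e. $\sfV^2M=pM$. So $M$ is superspecial, and by Theorem~\ref{thm:sp} so is $A$. For larger even $a$ this argument only gives $\sfV^aM=p^{a/2}M$, which does not force $\sfV^2M=pM$; this explains the restriction to $a=2$.
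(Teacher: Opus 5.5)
Your proposal is correct and follows essentially the same route as the paper: superspeciality gives $\sfV^a M=p^{a/2}M$, and comparing this with $\pi_A M=p^{a/2}\zeta M$ forces $\zeta M=M$ (your operator $u^{a/2}$ just makes this explicit); integrality at $\ell\neq p$ is automatic since $p^{a/2}\in\Z_\ell^\times$; and the converse for $a=2$ is the same one-line computation $\sfV^2M=p\zeta M=pM$. The intermediate step $\sfF M=\sfV M$ and the appeal to Tate's theorem at $\ell\neq p$ do no harm, but they are not needed.
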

\begin{proof}
Suppose first that $A$ is superspecial.
    By Theorem \ref{thm:sp}, the \dieu module $M=M(A)$ satisfies $\sfV^2 M = pM$, and hence $\sfV^a M=p^{a/2} M$.
    On the other hand, as $\pi_A$ acts on the covariant \dieu module $M(A)$ as $\sfV^a$, we have ${\sfV}^a M =\pi_A M=p^{a/2}\zeta M$.
    Comparing these two equalities, we obtain $\zeta M=M$, and hence
    $\zeta\in \End_{\rm DM}(M)\simeq \End(A)_{\Z_p}$.
   Moreover, since $\sqrt{q}=p^{a/2}$ is an  $\ell$-adic unit for every prime $\ell\neq p$, we also have  $\zeta=\pi_A/\sqrt{q}\in \End(A)_{\Z_{\ell}}$.
   Thus, $\Z[\zeta]\subseteq \End(A)$.

    Conversely, assume that $a=2$ and $\Z[\zeta]\subseteq  \End(A)$.
    Then  $\pi_A=p\zeta$ and $\zeta M=M$.
    Hence  $\sfV^2 M=\pi_A M=p\zeta M=pM$, which implies that $A$ is superspecial.
\end{proof}

\begin{remark}
\begin{enumerate}[label=(\arabic*), align=right,   leftmargin=*]
    \item  As shown in the proof, the superspecial condition is crucial here.
Indeed, in Proposition~\ref{prop:ss-but-nonssp} we construct examples of non-superspecial supersingular $O_B$-abelian surfaces $A$ over $\F_{p^2}$ with $\pi_A=\pm p\zeta_n$ for some  $n\in \{3, 4\}$ such that $\zeta_n\not\in \End(A)$.

    \item Assume that $a$ is even,   and set $M\coloneqq W(\Fq)[\zeta, \sfF,\sfV]/(\sfV^{a/2}-\zeta \sfF^{a/2})$ where the relations in  \eqref{eq:W[FV]} hold among  $\sfF$,  $\sfV$, and the elements of $W(\Fq)$.
    Then $M$ is a supersingular $\Zp[\zeta]$-\dieu module of $W(\Fq)$-rank $a [\Zp[\zeta]:\Zp]$, whose $a$-number is given by $a(M)= [\Zp[\zeta]:\Zp]$.
    Therefore,
   if $a\ge 4$ is even, then $a(M) < (\rank_{W(\Fq)} M)/2$, and hence $M$ is not superspecial.
\end{enumerate}
\end{remark}
\section{Existence of superspecial $O_B$-abelian surfaces over $\F_q$}\label{ss:exist}
Throughout  the rest of this paper, we write $B$ for an indefinite quaternion algebra  over $\Q$ with discriminant  $\Delta$, and fix a maximal order  $O_B\subseteq B$.
\subsection{$O_B$-abelian surfaces with the determinant condition}\label{ss:QM}
\begin{defn}\label{defn:QM-absurf}
 An abelian surface $A$ over a base scheme $S$ is said to admit \emph{quaternion multiplication (QM) by $B$} if there exists an injective homomorphism
$\iota^0:B\hookrightarrow \End^0 (A)$ of $\Q$-algebras.
The pair $(A, \iota^0)$ is called a  {\it {QM abelian surface}} over $S$.
\end{defn}

\begin{prop}\label{prop:ssQMFq}
Let $A$ be a supersingular abelian surface over $\F_q$ with $q=p^a$. Then $A$ admits QM by $B$ if and only if one of the following conditions holds depending on the  parity of $a$.
\begin{enumerate}
    \item[(1)] Case $a$ even, which is further divided into two subcases:
    \begin{itemize}
        \item[(i)] $A\sim A_{\pi}^2$, where $\pi=\pm \sqrt{q} \zeta_n$, $d(\pi)=1$, and either
    \begin{itemize}
            \item[(a)] $n=1$, $K=\Q$, or
            \item[(b)] $n \in \{3,4\}$, $p\not\equiv 1 \pmod n$,
            and $K=\Q(\sqrt{-n})$ splits $B$.
        \end{itemize}

        \item[(ii)] $A \sim A_{\pi}$, where   $\pi=\pm \sqrt{q} \zeta_n$, $d(\pi)=2$, with $n \in \{3,4\}$, $p\equiv 1 \pmod n$, $K=\Q(\sqrt{-n})$, and there exists an isomorphism of $K$-algebras $B\otimes_\Q K \isoto  \calQ_{K, p}$.
        \end{itemize}
        \item[(2)] Case  $a$ odd: $A\sim A_\pi^2$,  $d(\pi)=1$,  and  $B$ is split by the imaginary quadratic field $K=\Q(\pi)$.

\end{enumerate}
\end{prop}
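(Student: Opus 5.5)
We reduce to a question about the endomorphism algebra: a supersingular abelian surface $A$ over $\F_q$ admits QM by $B$ if and only if there is a $\Q$-algebra embedding $B\hookrightarrow \End^0(A)$. By Poincaré reducibility and Honda--Tate, $A$ is isogenous to $A_\pi^m$ or to a product $A_{\pi_1}\times A_{\pi_2}$ of non-isogenous simple factors. We rule out the second possibility first. If $A\sim A_{\pi_1}\times A_{\pi_2}$ with $\pi_1\not\sim\pi_2$ and both $d(\pi_i)=1$, then $\End^0(A)=\calE_1\times \calE_2$. An embedding of the simple algebra $B$ composed with either projection is injective, so $B\hookrightarrow \calE_i$, which has $\Q$-dimension at most $4$. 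This forces $\calE_i\simeq B$. But $\calE_i$ is either a quadratic field or $D_{p,\infty}$, and $D_{p,\infty}\not\simeq B$ because $B$ is indefinite. So $A\sim A_\pi^{m}$ with $m\,d(\pi)=2$, and $\End^0(A)\simeq \Mat_m(\calE)$.

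Next we treat the case $m=2$, $d(\pi)=1$, where $\End^0(A)=\Mat_2(\calE)$. If $\calE=D_{p,\infty}$ (case (1)(i)(a)), then $\Mat_2(D_{p,\infty})$ is a central simple $\Q$-algebra of degree $4$. Any quaternion algebra $B$ embeds into it, since $B\otimes D_{p,\infty}^{\opp}$ has degree $4$ and so its centralizer-type obstruction vanishes; the double centralizer theorem gives an embedding iff $B^{\opp}\otimes \Mat_2(D_{p,\infty})$ has index dividing $2$, and that algebra is Brauer equivalent to a quaternion algebra. If $\calE=K$ is imaginary quadratic (cases (1)(i)(b) and (2)), then $\Mat_2(K)$ has center $K$ and dimension $8$ over $\Q$. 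An embedding $B\hookrightarrow \Mat_2(K)$ extends $K$-linearly to $B\otimes_\Q K\to \Mat_2(K)$, which is an isomorphism because the source is simple of the same dimension. Hence such an embedding exists iff $K$ splits $B$, and conversely a splitting gives one. Proposition~\ref{prop:summ-Weil-num} then lists which $\pi$ occur with $d(\pi)=1$ for each parity of $a$. For $a$ odd, the only $d(\pi)=1$ case with $K=\Q$ is absent, since $\sqrt q\notin\Q$, so $K$ is always imaginary quadratic and we land in case (2).

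Finally we treat $m=1$, $d(\pi)=2$, where $\End^0(A)=\calE$ must contain $B$. If $\calE=K$ is a field, this is impossible since $B$ is noncommutative, which excludes the quartic cyclotomic cases. For $a$ odd and $\pi=\sqrt q$, we have $\calE=D_{\infty_1,\infty_2}$ over $\Q(\sqrt p)$. An embedding $B\hookrightarrow\calE$ would give $B\otimes\Q(\sqrt p)\simeq\calE$ by the same dimension count, but $B\otimes_\Q \R$ is split while $\calE$ is ramified at the real places, a contradiction. This leaves case (1)(ii), with $\calE=\calQ_{K,p}$ and $K=\Q(\sqrt{-n})$. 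Here an embedding of $B$ extends to a $K$-algebra isomorphism $B\otimes_\Q K\isoto\calQ_{K,p}$, and conversely. The main point requiring care throughout is the consistent use of this dimension argument, namely that $B\otimes_\Q Z(\calE)\to\calE$ is an isomorphism when the dimensions match, together with the bookkeeping against Proposition~\ref{prop:summ-Weil-num}. I expect the $D_{p,\infty}$ case to be the only step that genuinely needs the Brauer-group argument.
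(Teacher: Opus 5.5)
Your proof is correct and follows essentially the same route as the paper: you rule out non-isotypic $A$ because $B\not\simeq D_{p,\infty}$, rule out commutative $\calE$ and $\calE=D_{\infty_1,\infty_2}$ (the latter via the real places), and read off the remaining embedding conditions from the dimension argument $B\otimes_\Q Z(\calE)\isoto\calE$, exactly as in Remark~\ref{rem:exists-emb}. Your one addition is a Brauer-group justification that $B$ always embeds in $\Mat_2(D_{p,\infty})$, which the paper simply asserts; the clean version is the index criterion you state (the index of $B^{\opp}\otimes D_{p,\infty}$ divides $2$, since every $2$-torsion class in $\Br(\Q)$ is represented by a quaternion algebra), and the preceding phrase about a ``centralizer-type obstruction'' is muddled and can be dropped.
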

\begin{remark}\label{rem:exists-emb}
By Proposition \ref{prop:summ-Weil-num},
these conditions imply that
$\End^0(A)$ is isomorphic to $\Mat_2(D_{p, \infty})$, $\Mat_2(K)$, or $\calQ_{K,p}$, according to the cases (1)(i)(a), (1)(i)(b) and (2), or (1)(ii), respectively.
Note that there always exists an embedding of $\Q$-algebras $B \hookrightarrow \Mat_2(D_{p, \infty})$.
If $K$ is a field, then the following are equivalent:
\begin{enumerate}
    \item  There exists an injective $\Q$-algebra homomorphism $B \hookrightarrow \Mat_2(K)$;
    \item
    The field $K$ splits  $B$, i.e., $B\otimes_{\Q} K\simeq \Mat_2(K)$ as $K$-algebras;
    \item No prime  $\ell \mid \Delta$  splits   in $K$.
\end{enumerate}
Indeed, the equivalence (1) $\Leftrightarrow$ (2) follows from a simple dimension count, while
    (2) $\Leftrightarrow$ (3) follows from the local-global principle \cite[Proposition 14.6.7]{voight-quat-book}.
The relevant imaginary quadratic field $K$ in Proposition~\ref{prop:ssQMFq} is $\Q(\sqrt{-n})$ with
 $n\in \{3, 4\}$.
 For such a $K$, condition (3) above is equivalent to  $\ell \not\equiv 1 \pmod n$
  for every prime $\ell \mid \Delta$.

Similarly, if $K$ is a field in which $p$ splits,
then the following  are equivalent:
\begin{enumerate}[(i)]
    \item There exists an injective $\Q$-algebra homomorphism  $B\hookrightarrow \calQ_{K, p}$;
    \item There exists a $K$-algebra isomorphism  $B\otimes_\Q K \isoto  \calQ_{K, p}$;
    \item $p\mid \Delta$, and
    no prime $\ell\mid (\Delta/p)$ splits in $K$.
\end{enumerate}
The proof is similar to the case above, noting for the equivalence (ii) $\Leftrightarrow$ (iii) that, by definition, $\calQ_{K, p}$ is ramified exactly at the two places of $K$ lying above $p$ and splits elsewhere.
\end{remark}

\begin{proof}[Proof of Proposition~\ref{prop:ssQMFq}]
 By Remark~\ref{rem:exists-emb}, the conditions listed in the statement of Proposition~\ref{prop:ssQMFq} are sufficient for $A$ to admit QM by $B$.

Suppose conversely that $(A, \iota^0)/\Fq$ is an abelian surface with QM by $B$. Necessarily,
\begin{enumerate}[(i)]
    \item $\End^0(A)$ is  non-commutative; and
    \item $A$ is isotypic, that is, $A\sim A_\pi^{2/d(\pi)}$ for some supersingular Weil number $\pi$ with $d(\pi)\in \{1, 2\}$. Moreover, $\pi\neq \sqrt{q}$ when $a$ is odd.
\end{enumerate}
To prove the first part of (ii), suppose on the contrary that $A\sim A_1\times A_2$ with $\dim A_1=\dim A_2=1$ and $A_1\not\sim A_2$. Then $\End^0(A)=\End^0(A_1)\times \End^0(A_2)$. The embedding $\iota^0: B\hookrightarrow \End^0(A)$ induces an embedding $\iota_i^0: B\hookrightarrow \End^0(A_i)$ for each $i=1,2$.  Thus $\dim_\Q\End^0(A_i)\geq 4$,  and hence $\End^0(A_i)\simeq D_{p, \infty}$ for both $i$.
This  leads to a contradiction since $B$ is indefinite by our assumption. For the second part of (ii),   if $a$ is odd and $A\sim A_\pi$ with $\pi=\sqrt{q}$, then $\End^0(A)=D_{\infty_1, \infty_2}$. The embedding  $\iota^0: B\hookrightarrow \End^0(A)$ induces an isomorphism $B\otimes_\Q \Q(\sqrt{p})\simeq D_{\infty_1, \infty_2}$ of quaternion $\Q(\sqrt{p})$-algebras, which is clearly nonsense since $B\otimes_\Q \Q(\sqrt{p})$ is  totally indefinite  by our assumption on $B$ again.

A comparison with the list of Weil numbers $\pi=\pm\sqrt{q}\zeta_n$ with $d(\pi)\in \{1, 2\}$ in Proposition~\ref{prop:summ-Weil-num} shows that conditions (i) and (ii) above rule out the following cases:
\begin{enumerate}
    \item $a$ is even,  $n=5,8,12$ and $p\not\equiv 1 \pmod n$;
    \item $a$ is odd, and $d(\pi)=2$.
\end{enumerate}
For the remaining Weil numbers, the additional conditions listed in Proposition~\ref{prop:ssQMFq} are  necessary for  the existence of an embedding $B\hookrightarrow \End^0(A_\pi^{2/d(\pi)})$ by Remark~\ref{rem:exists-emb}.
\end{proof}

\begin{defn}
 An $O_B$-abelian surface $(A, \iota)$ over $S$ is said to satisfy the {\it determinant condition} if, for every $b\in O_B$, the characteristic polynomial of $\iota(b)$ acting on $\Lie(A)$ coincides with  the image of the reduced characteristic polynomial of $b\in O_B$ under the
natural  map $\Z[T] \to \calO_S[T]$, where $\calO_S$ denotes the structure sheaf of $S$.
\end{defn}

We are primarily concerned with $O_B$-abelian surfaces $(A, \iota)$ defined over a perfect field $k$ of characteristic $p$. The embedding $\iota: O_B\hookrightarrow \End(A)$ induces a $\Z_p$-embedding $\iota_p: O_B\otimes\Z_p\hookrightarrow \End_{\mathrm{DM}}(M(A))$, where $M(A)$ denotes the Dieudonn\'e module of $A$ as usual.  For simplicity,
we write  $B_p \coloneqq B \otimes_\Q \Qp$ and $O_{B, p} \coloneqq O_B \otimes \Z_p$ for the $p$-adic completions of $B$ and $O_B$,  respectively.
If $p \nmid \Delta$, then $B_p \simeq \Mat_2(\Q_p)$ and $O_{B, p} \simeq \Mat_2(\Z_p)$. If   $p\mid \Delta$, then $B_p$ is the quaternion  division  $\Q_p$-algebra, and $O_{B, p}$ coincides with the unique maximal $\Z_p$-order of $B_p$. In this case, we further abbreviate $O_{B, p}$ as $\calO_p$  to distinguish it from the split case.  In order to explicitly write down the action of $\calO_p$ on the Dieudonn\'e module $M(A)$, we fix a presentation of $\calO_p$ as follows.  Recall from Section~\ref{subsec:dieu-modules} that  $\Q_{p^2}$ denotes the unique unramified quadratic extension of $\Q_p$,  with the unique nontrivial automorphism $a\mapsto a^\sigma$ and  ring of integers $\Z_{p^2}\coloneq W(\F_{p^2})$. We  identify $\Q_{p^2}$ with a subfield of $B_p$ via a fixed $\Q_p$-embedding $\Q_{p^2}\hookrightarrow B_p$.
By  \cite[Corollaire~II.1.7]{vigneras},
there exists   $\Pi \in \calO_p$ such that
\begin{equation}\label{eq:O_p}
\calO_p=\Z_{p^2} \oplus \Z_{p^2}\Pi, \quad \Pi^2=-p, \quad \Pi a=a^{\sigma} \Pi \quad  \forall\, a\in \Z_{p^2}.
\end{equation}
Clearly, $\Pi$ generates the unique two-sided maximal ideal of $\calO_p$ with residue field   $\calO_p/(\Pi)=\Z_{p^2}/p\Z_{p^2}=\F_{p^2}$.
Observe that $\Lie(A)$ always carries an $\F_{p^2}\otimes_{\F_p} k$-module structure since
the action of $\Z_{p^2}\subseteq \calO_p$ on $\Lie(A)=M(A)/\sfV M(A)$ factors through $\Z_{p^2}/p\Z_{p^2}$.
If we further assume  that $k\supseteq \F_{p^2}$, then
restricting the $\O_p$-action on $M(A)$ to $\Z_{p^2}$ induces a decomposition of $M(A)$ as in \eqref{eq:dec}, which in turn  defines the  {\it{Lie type}} of $M(A)$ as in Definition \ref{defn:O-dieu-mod} (2).
In this case,  we define the Lie type of an $O_B$-abelian surface $(A, \iota)$ over $k$ to be that of the associated module $(M(A), \iota_p)$.
We will make use of the following fact.
\begin{lemma}\label{lem:det}
Let $(A, \iota)$ be an $O_B$-abelian surface over a perfect field $k$ of characteristic $p$.

(1) If $p\nmid \Delta$, then $(A, \iota)$ satisfies the
determinant condition.

(2) If $p \mid \Delta$, then the following are equivalent:
\begin{itemize}
\item[(a)] $(A, \iota)$ satisfies the determinant condition;
\item[(b)] $\Lie(A)$ is a free $\F_{p^2} \otimes_{\F_p} k$-module;
\item[(c)]  $(A, \iota)$ is of Lie type $(1,1)$, provided $k \supseteq \F_{p^2}$.
\end{itemize}
\end{lemma}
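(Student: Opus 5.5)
Both parts rest on one fact: $\Lie(A)$ is a $2$-dimensional $k$-vector space carrying a $k$-linear action of $O_B$, via $\iota$, with values in $\End_k(\Lie(A))$. The determinant condition compares characteristic polynomials in $k[T]$. Since $\Lie(A)=M(A)/\sfV M(A)$ and $\iota_p$ commutes with $\sfV$, the action factors through $O_B\otimes\Z_p\to O_B\otimes \F_p$. So the $O_B$-action on $\Lie(A)$ is a module structure over $O_B/pO_B\otimes_{\F_p}k$, and the reduced characteristic polynomial of $b$ reduces modulo $p$ compatibly with this.

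For (1), $p\nmid\Delta$ gives $O_B/pO_B\simeq \Mat_2(\F_p)$, hence $O_B/pO_B\otimes_{\F_p}k\simeq \Mat_2(k)$. Every module over $\Mat_2(k)$ is a direct sum of copies of the standard module $k^2$. Since $\dim_k\Lie(A)=2$, $\Lie(A)\simeq k^2$ is the standard representation. The reduced characteristic polynomial of $b\in O_B\subseteq \Mat_2(\Z_p)$ is its ordinary characteristic polynomial as a $2\times2$ matrix, and this reduces to the characteristic polynomial of the image of $b$ acting on $k^2$. This gives the determinant condition.

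For (2), $O_B/pO_B=\calO_p/\Pi^2\calO_p$ is nonsemisimple, so I would reduce to the semisimple quotient. First reduce to $k\supseteq\F_{p^2}$: the conditions (a) and (b) are invariant under extending $k$, since characteristic polynomials and freeness can both be checked after faithfully flat base change. Then write $\Lie(A)=L^0\oplus L^1$ for the $\tau_i$-eigenspaces under $\Z_{p^2}$, so that freeness over $\F_{p^2}\otimes k\simeq k\times k$ means $\dim L^0=\dim L^1=1$. This already gives (b)$\Leftrightarrow$(c).

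For (a)$\Leftrightarrow$(c):
\begin{itemize}
\item Take $b=a\in\Z_{p^2}$. Its reduced characteristic polynomial is $(T-a)(T-a^\sigma)$, and its image in $k[T]$ is $(T-\tau_0(\bar a))(T-\tau_1(\bar a))$.
\item The characteristic polynomial of $a$ on $\Lie(A)$ is $(T-\tau_0(\bar a))^{d_0}(T-\tau_1(\bar a))^{d_1}$, where $(d_0,d_1)$ is the Lie type.
\item Choosing $a$ with $\bar a\notin\F_p$, so that $\tau_0(\bar a)\ne\tau_1(\bar a)$, the determinant condition forces $(d_0,d_1)=(1,1)$. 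Hence (a)$\Rightarrow$(c).
\item For (c)$\Rightarrow$(a), check general $b=a_1+a_2\Pi$. Since $\Pi$ interchanges the eigenspaces ($\Pi a=a^\sigma\Pi$), $\Pi$ maps $L^0\to L^1$ and $L^1\to L^0$.
\item With respect to the decomposition $L^0\oplus L^1$, $b$ therefore acts by the matrix $\begin{pmatrix}\tau_0(\bar a_1)& *\\ *&\tau_1(\bar a_1)\end{pmatrix}$. Its off-diagonal entries come from $\Pi$, and their product is the scalar by which $\Pi^2=-p$ acts on $L^0$, namely $0$.
\item So the characteristic polynomial is $(T-\tau_0(\bar a_1))(T-\tau_1(\bar a_1))$. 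This equals the reduction of $\mathrm{Nrd}$ and $\mathrm{Trd}$ data: $\mathrm{Trd}(b)=a_1+a_1^\sigma$ and $\mathrm{Nrd}(b)=a_1a_1^\sigma+pa_2a_2^\sigma\equiv a_1a_1^\sigma$ mod $p$.
\end{itemize}

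The main delicate point is the off-diagonal computation: that the product of the two off-diagonal entries equals the action of $\Pi^2=-p$, which vanishes in characteristic $p$. This requires being careful that $\Pi$ acts $k$-linearly on $\Lie(A)$ (it does, being an endomorphism) while $\sigma$-semilinearly with respect to $\Z_{p^2}$, which is exactly what makes it swap the two eigenspaces.
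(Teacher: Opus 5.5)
Your proof is correct, but it takes a different route from the paper. The paper gives no argument of its own here and simply deduces the lemma from \cite[Lemmas~5.2 and 5.3]{Yu:Grenoble2021} and \cite[Proposition~4.3]{Drinfeld}.

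You instead give a self-contained computation over the field. For (1), semisimplicity of $(O_B/pO_B)\otimes_{\F_p}k\simeq\Mat_2(k)$ forces $\Lie(A)$ to be the standard module. For (2), after passing to $k\supseteq\F_{p^2}$, a single $a\in\Z_{p^2}$ with $\bar a\notin\F_p$ pins down the Lie type. Conversely, since $\Pi$ interchanges $L^0$ and $L^1$, the $\Pi$-part of $b=a_1+a_2\Pi$ is purely off-diagonal, and the product of its entries is the scalar of $(a_2\Pi)^2=-p\,a_2a_2^\sigma$, i.e.\ zero.

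The citation route is shorter and defers to the literature. Yours buys transparency: it shows that the determinant condition sees only the $\Z_{p^2}$-action and imposes nothing on how $\Pi$ acts. That is exactly why the three genera $\Sigma_1,\Sigma_2,\Sigma_3$, with their different critical indices, all lie on $X$.

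Two small points should be made explicit.
\begin{enumerate}
\item The determinant condition quantifies over $b\in O_B$, whereas you test with $p$-adic elements of $\calO_p$. This is justified because both sides depend only on the image of $b$ in $O_B/pO_B=\calO_p/p\calO_p$ (note $\Nrd(b+px)\equiv\Nrd(b)\pmod p$), and $O_B$ surjects onto this quotient.
\item ``Freeness can be checked after faithfully flat base change'' is not true in general, but it is harmless here. Indeed, $\F_{p^2}\otimes_{\F_p}k$ is either a field, in which case (b) is automatic, or $k\times k$, in which case freeness is the rank condition $\dim L^0=\dim L^1$, which is invariant under extension of $k$.
\end{enumerate}
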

\begin{proof}
   These assertions follow from \cite[Lemmas  5.2 and 5.3]{Yu:Grenoble2021} and \cite[Proposition 4.3]{Drinfeld}.
\end{proof}

\begin{lemma}\label{lem:odd-degree-bdred}
    Let $(A, \iota)$ be an $O_B$-abelian surface   over a perfect field $k$ of characteristic $p$.
If $p\mid \Delta$, then $A$ is supersingular. If further $k=\Fq$ is a finite field of odd degree $a\coloneqq [\Fq:\Fp]$ over $\Fp$, then $(A, \iota)$ is a superspecial $O_B$-abelian surface satisfying the determinant condition.
Moreover, in this case $\Pi\in \calO_p$ vanishes  on $\Lie(A)$, and hence
the action of $\calO_p$ on $\Lie(A)$ factors through the quotient map $\calO_p\to \F_{p^2}$.
\end{lemma}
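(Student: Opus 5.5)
The plan has three parts: supersingularity, the odd-degree superspecial claim with the vanishing of $\Pi$ on $\Lie(A)$, and the determinant condition.

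\textbf{Supersingularity.} I will use the $p$-adic action. Since $p\mid\Delta$, the division algebra $B_p$ embeds into $\End^0(M(A))$ through $\iota_p$. Decompose the rational Dieudonn\'e module (after extending to $\bar k$) into isoclinic parts by slope. Each isoclinic part is stable under $B_p$, since $B_p$ commutes with $\sfF$. The isocrystal has height $4$ and dimension $2$, and each part is a $B_p$-module, so its height is a multiple of $4$. Hence there is a single slope, and it equals $1/2$. By the Manin classification $A$ is then supersingular. Alternatively, via Honda–Tate one could argue that $B_p$ must embed in the $p$-adic endomorphism algebra, which rules out ordinary and mixed slopes.

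\textbf{Odd degree, and $\Pi$ killing $\Lie(A)$.} Now let $k=\F_q$ with $a$ odd. I will avoid Dieudonn\'e-theoretic case analysis and use Proposition~\ref{prop:ssQMFq}(2) together with Proposition~\ref{prop:summ-Weil-num}(2)(i). These give $A\sim A_\pi^2$ with $\pi\in\{\sqrt q\zeta_4,\pm\sqrt q\zeta_8\ (p=2),\pm\sqrt q\zeta_{12}\ (p=3)\}$, and $K=\Q(\pi)$ is imaginary quadratic.

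In all these cases $p$ is ramified in $K$. Indeed $\pi^2=-q$ when $\pi=\sqrt q\zeta_4$, and for $p=2,3$ one checks directly that $K=\Q(\sqrt{-p})$. Then $\End^0(A)\simeq\Mat_2(K)$, and $\pi_A$ is central, so it commutes with $\iota(O_B)$. Moreover $\pi_A^2=-q\cdot(\text{unit root of unity})$, up to the exact form.

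On $M=M(A)$ the Frobenius $\pi_A$ acts as $\sfV^a$. Write $\pi_A=u\,p^{(a-1)/2}\varpi$ with $\varpi^2=\epsilon p$ and $u,\epsilon$ units, where $\varpi\in K_p$ is a uniformizer. Then $\sfV^a M=p^{(a-1)/2}\varpi M$.

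The key step is to locate $\varpi$ inside $\iota_p(B_p)$-commutant considerations. The element $\varpi$ commutes with $\calO_p$. The centralizer of $B_p$ in $\End^0(M)\simeq\Mat_2(D_p)$ is again a quaternion division algebra, namely $B_p^{\mathrm{op}}\otimes D_p\sim$ split times division. I would instead argue by lattices: $M$ is an $\calO_p\otimes\calO_{K_p}$-module, with $\calO_{K_p}=\Z_p[\varpi]$. Since $\Q_p(\varpi)\otimes B_p\simeq\Mat_2(\Q_p(\varpi))$, the lattice $M$ is stable under a hereditary-type order and is free of rank one over $\calO_{K_p}[\Pi]$-structure. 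From this I will deduce $\varpi M=\Pi M$.

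This yields $\sfV^aM=p^{(a-1)/2}\Pi M$. Comparing with $\sfV^{a}M\subseteq \sfV^{a-1}(\ldots)$ and using the height/length count $\mathrm{length}(M/\sfV M)=2$ and $\mathrm{length}(M/\Pi M)=2$ over $W$, an induction argument gives $\sfV M=\Pi M$. Hence $\sfV^2M=\Pi^2M=pM$, so $A$ is superspecial. Also $\Lie(A)=M/\Pi M$, so $\Pi$ acts as zero there. As an $\F_{p^2}\otimes k$-module, $M/\Pi M$ is free of rank one, because $M$ is locally free over $\calO_p\otimes W$. Lemma~\ref{lem:det}(b) then gives the determinant condition.

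\textbf{Expected obstacle.} The hardest step is proving $\varpi M=\Pi M$, i.e.\ that the central element $\pi_A/p^{(a-1)/2}$ generates the same lattice as $\Pi$. If the lattice argument fails, my fallback is to use $\sfF\sfV=p$ and $\sfV^a=\pi_A$ to show that $N=\Pi^{-1}\sfV M$ is an $\calO_p$-stable lattice with $\sfV^a$-invariance. I would then compare indices, using that $\pi_A$ is a unit times $p^{a/2}$ in valuation ($a$ odd), to force $N=M$.
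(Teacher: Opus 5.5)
Your odd-degree argument has a genuine gap, at the step you yourself flag, and the surrounding reasoning does not close it.

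The claim $\varpi M=\Pi M$ rests only on the assertion that $M$ is ``free of rank one over $\calO_{K_p}[\Pi]$''. Nothing shows that $M$ is even stable under $\varpi=\pi_A/p^{(a-1)/2}$. A priori only $\Z_p[\pi_A]$ is known to act, and it is non-maximal in $K_p$ once $a\ge 3$. Stability of $M$ under $\varpi$ is a consequence of the conclusion, not an input (compare Lemma~\ref{lem:ssp-zeta-stable} in even degree). Also, $\calO_p\otimes_{\Zp}\calO_{K_p}$, of reduced discriminant $p\calO_{K_p}$, is not hereditary.

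Even granting $\sfV^aM=p^{(a-1)/2}\Pi M$, the ``induction'' to $\sfV M=\Pi M$ is not formal. With $\phi\coloneqq\Pi^{-1}\sfV$ it reads $\phi^aM=M\Rightarrow\phi M=M$, which fails for lattices in general. Your fallback only compares indices, which gives equal covolume, not equality. Your derivation of the determinant condition inherits the gap, since it uses $\Lie(A)=M/\Pi M$.

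The missing idea, which is the paper's argument, is where the oddness of $a$ really enters. For $a$ odd, $\F_{p^2}\otimes_{\Fp}\Fq$ is a field. Hence $(\calO_p/p\calO_p)\otimes_{\Fp}\Fq$ is a local ring whose maximal ideal is generated by $\Pi\otimes 1$ and whose residue field has $\Fq$-dimension $2$. Since $\dim_{\Fq}\Lie(A)=2$, Nakayama forces $\Pi\Lie(A)=0$, i.e.\ $\Pi M\subseteq\sfV M$. Your own count $\mathrm{length}(M/\sfV M)=\mathrm{length}(M/\Pi M)=2$ then gives $\sfV M=\Pi M$, hence $\sfV^2M=pM$.

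Equivalently, $\calO_p\otimes_{\Zp}W(\Fq)$ is the maximal order of a quaternion division algebra over $W(\Fq)[1/p]$ because $a$ is odd. Its lattices in $M\otimes_{\Zp}\Qp$ therefore form the single chain $\{\Pi^nM\}$. With this, your fallback index comparison does force $\Pi^{-1}\sfV M=M$.

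The determinant condition needs even less. A module over the field $\F_{p^2}\otimes_{\Fp}\Fq$ of $\Fq$-dimension $2$ is free of rank one, so Lemma~\ref{lem:det} applies at once. No Honda--Tate input is needed.

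Your supersingularity argument is also wrong as stated. Since $B_p\otimes_{\Qp}W(\bar k)[1/p]\simeq\Mat_2(W(\bar k)[1/p])$, a $B_p$-stable isoclinic part need only have even height. For example, the height-$2$ isocrystal of a supersingular elliptic curve carries an action of the quaternion division algebra. Evenness does not exclude the ordinary slopes $(0,0,1,1)$. The correct reason is that a non-supersingular abelian surface has an \'etale part of height $h\in\{1,2\}$, whose endomorphism algebra $\Mat_h(\Qp)$ cannot contain the $4$-dimensional division algebra $B_p$; the paper simply cites Drinfeld and Ribet here.

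A minor slip: for $p=2$ and $\pi=\pm\sqrt q\,\zeta_8$ one has $K=\Q(\sqrt{-1})$, not $\Q(\sqrt{-2})$, although $2$ is still ramified in $K$.
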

\begin{proof}
Suppose that $p\mid \Delta$.  It is well known that every $O_B$-abelian surface over $k$ in this case is supersingular; see \cite[Proposition~4.1]{Drinfeld} or \cite[Lemma~4.1]{Ribet-Bimodules}.

Next, suppose further $k=\Fq$ with $a\coloneqq [\Fq:\Fp]$ odd. Then $\F_{p^2}\otimes_{\Fp} \Fq$ is a field. Hence $\Lie(A)$ is automatically a  free $\F_{p^2}\otimes_{\Fp} \Fq$-module, so
     $(A,\iota)$ satisfies the determinant condition by Lemma~\ref{lem:det} (2).

     It remains to show that $A$ is superspecial in this case.
     Clearly, the action of $\calO_p$ on $\Lie(A)$ factors through $\calO_p/p\calO_p$, and hence $\Lie(A)$ carries an $(\calO_p/p\calO_p)\otimes_{\F_p} \Fq$-module structure.  From \eqref{eq:O_p}, $(\calO_p/p\calO_p)\otimes_{\F_p} \Fq$ is a local $\Fq$-algebra with the unique maximal  ideal generated by the image of $\Pi\otimes 1$ and with residue field $\F_{p^2}\otimes_{\F_p}\F_q$.  Necessarily,  $\Lie(A)$ is a simple $(\calO_p/p\calO_p)\otimes_{\F_p} \Fq$-module  since $\dim_{\Fq}\Lie(A)=2$. In particular, $\Pi$ vanishes on $\Lie(A)$. Now it follows from \cite[Proposition~4.2]{Ribet-Bimodules} that $A$ is superspecial.
\end{proof}

\begin{prop}\label{prop:move-to-sp}
    Let $(A,\iota^0:B\to \End^0(A))$ be a QM abelian surface over $\Fq$ with $q=p^a$.
    Then there exists an abelian surface $A'$ over $\Fq$, isogenous to $A$, and a homomorphism $\iota': O_B\to \End(A')$ such that $(A',\iota')$ is an $O_B$-abelian surface satisfying the determinant condition. Moreover, if $A$ is supersingular, one can choose  $A'$ to be  superspecial.
\end{prop}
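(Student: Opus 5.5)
First, Lemma~\ref{lm:move-to-OB} applied to $\iota^0$ and $O=O_B$ gives an $O_B$-abelian surface $(A_1,\iota_1)$ over $\Fq$ with $A_1\sim A$. It remains to arrange the determinant condition and, when $A$ is supersingular, superspeciality. The argument depends on whether $p$ divides $\Delta$.

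\emph{Case $p\nmid\Delta$.} Lemma~\ref{lem:det}(1) shows that $(A_1,\iota_1)$ already satisfies the determinant condition. Now suppose $A$ is supersingular; we must pass to a superspecial surface while keeping the $O_B$-action. Here $O_{B,p}\simeq\Mat_2(\Zp)$, so the idempotent $e=\mathrm{diag}(1,0)\in O_{B,p}$ gives a Morita decomposition
\[
M(A_1)=eM(A_1)\oplus(1-e)M(A_1)\simeq N^{\oplus 2},
\]
with $N$ a Dieudonn\'e module of rank $2$ over $W(\Fq)$. Thus $O_{B,p}$-stable Dieudonn\'e submodules of $M(A_1)[1/p]$ correspond to Dieudonn\'e lattices in $N[1/p]$. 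Since $N[1/p]$ is supersingular of height $2$, it contains a superspecial Dieudonn\'e lattice $N'$ defined over $\Fq$. One can take $N'$ to be the $W(\Fq)[\sfF,\sfV]$-module generated by $\sfV^iN$ for $i\ge 0$, which satisfies $\sfF N'=\sfV N'$, or use the minimal or maximal lattice of Li--Oort. After scaling by a power of $p$ we may assume $N'\subseteq N$, so $M'=N'^{\oplus2}\subseteq M(A_1)$ is $O_{B,p}$-stable with $p$-power index. The submodule $M'$ corresponds to an $O_B$-stable finite subgroup scheme of $A_1[p^m]$, and the associated isogeny $A_1\to A'$ yields $(A',\iota')$ with $M(A')\simeq M'$ superspecial. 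The determinant condition again follows from Lemma~\ref{lem:det}(1).

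\emph{Case $p\mid\Delta$.} By Lemma~\ref{lem:odd-degree-bdred}, $A$ is automatically supersingular, and for $a$ odd the pair $(A_1,\iota_1)$ is already superspecial with the determinant condition. For $a$ even I would work with $M=M(A_1)$ as an $\calO_p$-Dieudonn\'e module over $W(\Fq)$. By \eqref{eq:dec} it decomposes as $M=M^0\oplus M^1$, with $\Pi\colon M^i\to M^{i+1}$ and each $M^i$ of rank $2$. The goal is to replace $M$ by an $\calO_p$- and $\sfF,\sfV$-stable lattice $M'$ in $M[1/p]$ that is superspecial and of Lie type $(1,1)$; by Lemma~\ref{lem:det}(2)(c) the latter is the determinant condition. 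The model is a lattice with $\sfV M'=\Pi M'$. Such a lattice has $\sfV^2M'=\Pi^2M'=pM'$, so it is superspecial. Moreover $\Lie(M')=M'/\Pi M'$ is free of rank one over $\F_{p^2}\otimes\Fq$, so it has Lie type $(1,1)$.

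To build $M'$ I would study the $\sigma^2$-linear operator $\Phi\coloneqq\Pi^{-1}\sfV$ on $M[1/p]$. It commutes with $\Pi$ up to twist and preserves each component $M^i[1/p]$. The operator $\Pi^{-1}\sfV$ has slope $0$, because the isocrystal is supersingular with $\sfV$ of slope $1/2$ and $\Pi$ of valuation $1/2$. By Dieudonn\'e--Manin, each $M^i[1/p]$ therefore contains a $\Phi$-stable lattice $L^i$ with $\Phi L^i=L^i$, and this lattice can be chosen over $W(\Fq)$ because $\Fq$ is finite and $\Phi$ is defined over it. Set $M'=L^0+\Pi L^0$, after checking that the choices are compatible, i.e.\ that $\Pi L^0$ is $\Phi$-stable. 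This $M'$ is $\calO_p$-stable, and $\sfV M'=\Pi\Phi M'=\Pi M'$; one also checks $\sfF M'\subseteq M'$. Scaling into $M$ and taking the quotient isogeny as before gives $(A',\iota')$.

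The main obstacle is this last construction. One must produce a $\Phi$-fixed lattice over the finite field $\Fq$ and not merely over $\Fpbar$, and make it compatible with $\Pi$. I expect to handle this by a Lang-type argument: the set of $\Phi$-stable unit lattices is a torsor under a compact group, so it has a Galois-fixed point over $\Fq$. Alternatively, one can average a given lattice over its finitely many $\Phi$-translates until it stabilizes.
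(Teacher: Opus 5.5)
There is a genuine gap, in the case $p\mid\Delta$, $a$ even. Your other cases are fine. For $p\nmid\Delta$ the extra lattice is unnecessary: a supersingular Dieudonn\'e module of rank $2$ and dimension $1$ already has $a$-number $1$, so $M(A_1)\simeq N^{\oplus 2}$ is superspecial and your $N'$ is just $N$.

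\textbf{The lattice you build has Lie type $(2,0)$.} For $M'=L^0\oplus\Pi L^0$ with $L^0\subseteq N^0$ we get $\Pi M'=\Pi L^0\oplus pL^0$. Hence $\Lie(M')=M'/\Pi M'\simeq L^0/pL^0$ lies entirely in the $\tau_0$-component. This is the Lie type of $\varphi_5$, so the determinant condition fails. The identity $\sfV M'=\Pi M'$ only says that $\Pi$ kills $\Lie(M')$. It holds for all three admissible types $\varphi_3,\varphi_4,\varphi_5$, so it does not make $M'/\Pi M'$ free over $\F_{p^2}\otimes\Fq$.

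\textbf{The natural repair also fails in general.} One could choose $\Phi$-stable $L^0\subseteq N^0$ and $L^1\subseteq N^1$ separately, with $\Pi L^0\subsetneq L^1\subsetneq \Pi^{-1}L^0$. That gives $\sfV M'=\Pi M'$ with Lie type $(1,1)$, i.e.\ critical indices $\{0,1\}$, i.e.\ type $\varphi_3$. Such a lattice need not exist over $\Fq$.

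Take $\pi_A=\pm p^{a/2}\zeta_3$ with $p\equiv 2\pmod 3$. Theorem~\ref{thm:Fq-pts}(1)(ii)(a) allows this with $p\mid\Delta$, e.g.\ $\Delta=10$, $p=5$. Suppose $(A',\iota')$ were a type-$\varphi_3$ model over $\Fq$. By Lemma~\ref{lem:ssp-zeta-stable}, $\Z[\zeta_3]\subseteq\End(A',\iota')$. But $\End(A'_{\Fpbar},\iota')\otimes\Z_p$ is an Eichler order of level $p$ (Corollary~\ref{cor:ram}). Such an order contains no copy of $\Z_p[\zeta_3]\simeq\Z_{p^2}$, since $1+\left(\frac{-3}{p}\right)=0$ in \eqref{eq:emb}.

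The same obstruction is visible on lattices. Your $\Phi=\Pi^{-1}\sfV$ is $\sigma^{-1}$-linear (not $\sigma^2$-linear) and commutes with $\Pi$. Its power $\Phi^a=(-p)^{-a/2}\pi_A$ generates $\Z_{p^2}$. So nested $\Phi$-stable $W(\Fq)$-lattices in $N^1$ always have quotient of even length, and no $L^1$ with $L^1/\Pi L^0$ of length one exists.

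In such isogeny classes, every superspecial lattice of Lie type $(1,1)$ is of type $\varphi_1$ or $\varphi_2$. These have $\sfV M'\neq\Pi M'$, so no construction built on $\Pi^{-1}\sfV$ can reach them.

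The paper avoids this as follows. By Skolem--Noether it needs only \emph{some} superspecial $M'\subseteq N$ with \emph{some} embedding $\calO_p\hookrightarrow\End_{\rm DM}(M')$ of Lie type $(1,1)$. It takes $\End_{\rm DM}(M')$ maximal via Waterhouse. In the unramified case (b$'$) it uses the non-admissible embedding $a+b\Pi\mapsto\begin{bmatrix} a & -pb\\ b^\sigma & a^\sigma\end{bmatrix}$, which is exactly of type $\varphi_1$/$\varphi_2$.

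The step you flagged as the main obstacle is actually easy. The sum $\sum_{n\ge 0}\Phi^n L$ is a $\Phi$-stable $W(\Fq)$-lattice, and compatibility with $\Pi$ is automatic because $\Phi\Pi=\Pi\Phi$.
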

\begin{proof}
    By Lemma~\ref{lm:move-to-OB}, after replacing $A$ by an isogenous abelian surface if necessary,  we may assume that $(A,\iota)$ is already an $O_B$-abelian surface.

    Assume that $p\nmid \Delta$.
    By Lemma \ref{lem:det} (1), every $O_B$-abelian surface satisfies the determinant condition. Moreover, in this case any supersingular $O_B$-abelian surface is automatically superspecial.

    Now assume that $p \mid \Delta$.  We separate the discussion into two cases according to the parity of $a=[\Fq:\Fp]$. If $a$ is odd, then from Lemma~\ref{lem:odd-degree-bdred} every $O_B$-abelian surface $(A, \iota)/\Fq$ is superspecial and satisfies the determinant condition.

   Now assume that $a$ is even.   Let $M\coloneqq M(A)$ be the covariant \dieu module of $A$ and set $N\coloneqq M\otimes_{\Zp} \Qp$.  We claim that
      it suffices to construct a superspecial   \dieu module $M'\subseteq N$  together with a $\Zp$-embedding $\varphi': \calO_p\to \End_{\rm DM}(M')$ such that $(M', \varphi')$ has Lie type $(1,1)$.
   By Proposition~\ref{prop:ssQMFq}, $\End_{\rm DM}(N)$ is isomorphic to one of the following semisimple $\Q_p$-algebras: (a) $\Mat_2(B_p)$, (b) $\Mat_2(K_p)$, or (c) $B_p\times B_p$, where $K_p/\Q_p$ denotes a quadratic field extension.
   By the Skolem-Noether theorem,  any two $\Qp$-embeddings $B_p\to \End_{\rm DM}(N)$ are conjugate under $\Aut_{\rm DM}(N)$.
   It follows that  $(M',\varphi')$ and $(M,\varphi)$ are $\calO_{p}$-linearly isogenous.
  Consequently,  there exists an $O_B$-linear isogeny  $\rho: (A',\iota')\to (A,\iota)$ with $M(A',\iota') \simeq (M',\varphi')$, verifying our claim.

It remains to construct the desired $\calO_p$-Dieudonn\'e module $(M', \varphi')$.   Write $\calO_p=\Z_{p^2}\oplus \Z_{p^2}\Pi$ as in \eqref{eq:O_p}.  From Waterhouse's construction recalled in Remark~\ref{rem:Waterhouse-constr}, we may assume that $\End_{\mathrm{DM}}(M)$ is  a maximal order in $\End_{\mathrm{DM}}(N)$. We take $M'=M$ for such an $M$, and define $\varphi': \calO_p\hookrightarrow \End_{\mathrm{DM}}(M')$ explicitly case by case as follows.  From the classification of $\End_{\mathrm{DM}}(N)$  above, there are three cases to consider:  (a) $\End_{\mathrm{DM}}(M')=\Mat_2(\calO_p)$, (b) $\End_{\mathrm{DM}}(M')=\Mat_2(O_{K_p})$, or  (c) $\End_{\mathrm{DM}}(M')=\calO_p\times \calO_p$.
Correspondingly,  we define $\varphi'$ as follows:
    \begin{itemize}
        \item [(a)] $\varphi'(\alpha
        )=\begin{bmatrix}
            \alpha & 0\\
            0 & \Pi \alpha \Pi^{-1}
        \end{bmatrix}$, for all $\alpha\in \calO_p$.
        \item[(b')] If $K_p=\Q_{p^2}$ is unramified over $\Q_p$, then we
        set $\varphi'(a+b\Pi)=\begin{bmatrix}
            a  & -p b \\
            b^\sigma & a^\sigma
        \end{bmatrix}$  for all $a,b\in \Z_{p^2}$.
        \item[(b'')] If $K_p$ is ramified over $\Q_p$, then we write $\calO_{p}=O_{K_p}\oplus O_{K_p} x$ for some $x\in \calO_p$, and define $\varphi':\calO_{p}\to \Mat_2(O_{K_p})$ to be the embedding given  by the regular representation.
        \item[(c)] $\varphi'(\alpha)=(\alpha, \Pi \alpha \Pi^{-1})$ for all $\alpha\in \calO_p$.
    \end{itemize}
    In cases (a), (b') and (c), the resulting $\calO_{p}$-\dieu module $(M',\varphi')$ clearly has Lie type $(1,1)$.
    In case (b''),  the action of $\Mat_2(O_{K_p})$ on $M'/\sfV M'$ factors through $\Mat_2(O_{K_p})\to \Mat_2(\Fp)$. Hence the action of $\calO_p$  on $M'/\sfV M'$ factors through $\calO_p \to \F_{p^2} \to \Mat_2(\Fp)$. In particular, $\Tr(\varphi'(a)\vert_{(M'/\sfV M')})\in \Fp$ for every $a\in \F_{p^2}$.   Therefore,  $M'/\sfV M'$ is a free $\F_{p^2}\otimes_{\Fp}\Fq$-module of rank $1$, and hence $(M',\varphi')$ has Lie type $(1,1)$.  Lastly, from the classification  of $\End_{\mathrm{DM}}(M')$, in all cases $M'$ decomposes as a direct sum of two supersingular Dieudonn\'e submodules. It follows that the $a$-number of $M'$ is  two, i.e.,~$M'$ is superspecial.
\end{proof}

We obtain the following theorem by combining Propositions~\ref{prop:summ-Weil-num}, \ref{prop:ssQMFq} and \ref{prop:move-to-sp}, where  the conditions in Proposition~\ref{prop:ssQMFq} for the existence of an embedding $B\hookrightarrow \End^0(A_\pi^{2/d(\pi)})$ have been reformulated into more concrete equivalence conditions according to  Remark~\ref{rem:exists-emb}.

\begin{thm}\label{thm:Fq-pts}
Let $n\geq 1$ with $n\not\equiv 2 \pmod 4$.
Let $q=p^a, \pi=\pm\sqrt{q}\zeta_n$, and $K=\Q(\pi)$.
Then there exists a superspecial $O_B$-abelian surface $(A, \iota)$ over $\Fq$ satisfying the determinant condition such that $A$ is isogenous to a power of $A_\pi$
if and only if one of the following  holds:

\begin{enumerate}
\item $a$ is even and either
\begin{itemize}
\item[(i)] $n=1$, or
\item[(ii)] $n= 3 \text{ or } 4$ and either
\begin{itemize}
\item[(a)] $p \not\equiv 1 \pmod n$ and
no prime $\ell \mid \Delta$ splits in $K$, or
\item[(b)] $p \equiv 1 \pmod n, p \mid \Delta$, and
no prime  $\ell \mid (\Delta/p)$  splits in $K$.
\end{itemize}
\end{itemize}
\item $a$ is odd, $\pi=\sqrt{q}\zeta_4$ or $\pm \sqrt{q}\zeta_{4p}\ (p=2, 3)$,
and
no prime  $\ell \mid \Delta$ splits in $K$.
\end{enumerate}
\end{thm}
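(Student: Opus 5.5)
The plan is to deduce Theorem~\ref{thm:Fq-pts} from three earlier results. Proposition~\ref{prop:ssQMFq} characterizes the isogeny classes of supersingular abelian surfaces with QM by $B$. Proposition~\ref{prop:move-to-sp} upgrades QM to an integral, superspecial $O_B$-structure satisfying the determinant condition. Proposition~\ref{prop:summ-Weil-num} lists the relevant Weil numbers.

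For the ``only if'' direction, let $(A,\iota)$ be as in the statement. Then $\iota\otimes\Q$ gives QM by $B$ on the supersingular surface $A$. Hence $A$ and $\pi$ fall into one of the cases of Proposition~\ref{prop:ssQMFq}. By Remark~\ref{rem:exists-emb}, the embedding condition in each case becomes a concrete arithmetic condition:
\begin{itemize}
\item In case (1)(i)(b), ``$K$ splits $B$'' becomes ``no $\ell\mid\Delta$ splits in $K$''.
\item In case (1)(ii), ``$B\otimes K\simeq\calQ_{K,p}$'' becomes ``$p\mid\Delta$ and no $\ell\mid\Delta/p$ splits in $K$''.
\end{itemize}
In case (2), I use the list in Proposition~\ref{prop:summ-Weil-num}(2)(i) of $\pi$ with $d(\pi)=1$ for odd $a$, namely $\sqrt q\zeta_4$, $\pm\sqrt q\zeta_8$ ($p=2$) and $\pm\sqrt q\zeta_{12}$ ($p=3$). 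I then rewrite $\zeta_8,\zeta_{12}$ as $\zeta_{4p}$ for $p=2,3$. The field $K=\Q(\pi)$ is imaginary quadratic in these cases, and the condition that $K$ splits $B$ is again translated by Remark~\ref{rem:exists-emb}. The case $n=1$ with $a$ even needs no condition, since $B\hookrightarrow\Mat_2(D_{p,\infty})$ always exists. The cases $n=3,4$ with $a$ even split according to whether $p\equiv 1\pmod n$, via $d(\pi)=1$ or $2$. This reproduces the dichotomy (ii)(a)/(ii)(b).

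For the ``if'' direction, each listed condition gives an embedding $B\hookrightarrow\End^0(A_\pi^{2/d(\pi)})$, again by Remark~\ref{rem:exists-emb}. This produces a supersingular QM abelian surface isogenous to a power of $A_\pi$. Proposition~\ref{prop:move-to-sp} then replaces it, within its $\Fq$-isogeny class, by a superspecial $O_B$-abelian surface satisfying the determinant condition. Since the new surface is isogenous to the old one, it is still isogenous to a power of $A_\pi$.

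The main obstacle is bookkeeping for conjugate Weil numbers. For odd $a$, the field $K=\Q(\sqrt q\zeta_n)$ must be identified correctly, for instance $\Q(\sqrt{-p})$ for $\sqrt q\zeta_4$ and the appropriate imaginary quadratic fields for $p=2,3$. I also need the statement ``isogenous to a power of $A_\pi$'' to be insensitive to replacing $\pi$ by a conjugate. I would check this directly with the explicit forms in Proposition~\ref{prop:summ-Weil-num}. I would also make sure the dimension constraint $2=\dim A$ forces the power to be $2/d(\pi)$, which excludes the remaining cases with $d(\pi)=2$ for odd $a$ exactly as in the proof of Proposition~\ref{prop:ssQMFq}.
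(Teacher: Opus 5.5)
Your proposal is correct and follows the paper's route: the paper obtains the theorem by combining Propositions~\ref{prop:summ-Weil-num}, \ref{prop:ssQMFq} and \ref{prop:move-to-sp}, and it turns the embedding conditions of Proposition~\ref{prop:ssQMFq} into the splitting conditions via Remark~\ref{rem:exists-emb}, just as you do in both directions. One small correction: the odd-$a$, $d(\pi)=2$ cases are excluded not by the dimension count but by the noncommutativity of $\End^0(A)$ and the definite-versus-indefinite argument for $\pi=\sqrt q$; since you already invoke Proposition~\ref{prop:ssQMFq} as a black box, this is covered.
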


\subsection{Moduli interpretation}
We now recall the moduli interpretation of the Shimura curve (\cite{Drinfeld}).
 Let $X=X_{O_B}$ be  the contravariant functor from the category of schemes to the category of sets that assigns to a  scheme $S$ the set of isomorphism
classes of $O_B$-abelian surfaces $(A, \iota)$ over $S$ satisfying  the determinant condition.
This functor is coarsely represented by a $\Z$-scheme which is flat, quasi-projective and of relative dimension one, again denoted by $X$.
It is proper if and only if $B$ is a division algebra.

For an $O_B$-abelian surface $(A, \iota)$ over $\Fpbar$ and an automorphism $\phi$ of $\Fpbar$, we write ${A}^\phi$ for the $\phi$-twist of $A$,
and ${\iota}^\phi: O_B\to \End({A}^\phi)$ for the induced $O_B$-action (see  \S\ref{ss:av}).
The {\it{field of moduli}} $\F_{(A, \iota)}$
of $(A, \iota)$ is defined to be  the minimal subfield $\F_{(A, \iota)}\subseteq \Fpbar$ such that for every  $\phi \in \Gal(\Fpbar/\F_{(A, \iota)})$,    there exists an isomorphism $(A, \iota) \simeq ({A}^\phi, {\iota}^\phi)$ over $\Fpbar$.
The set $X(\F_q)$ of $\F_q$-rational points of $X$ consists precisely of all the points  $[(A, \iota)] \in X(\Fpbar)$ such that $\F_{(A, \iota)}\subseteq \F_q$.
By definition, any $(A, \iota)$ defined over
$\F_q$
 determines a point of
$X(\F_q)$.
The following lemma shows that  over finite fields the converse also holds.
\begin{lemma}
\label{lm:fomfod}
  Let $(A,\iota)$ be an $O_B$-abelian surface over
  $\Fpbar$.
  If $\F_{(A, \iota)}=\Fq$, then $(A, \iota)$ admits a model over
$\F_q$.
  In particular, every  element of  $X(\Fq)$ is represented by an $O_B$-abelian surface defined over $\Fq$.
\end{lemma}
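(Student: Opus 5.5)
The argument is Galois descent for the pro-cyclic group $\Gal(\Fpbar/\Fq)\simeq\widehat{\Z}$, topologically generated by the $q$-power Frobenius $\phi$. First, $(A,\iota)$ is already defined over some finite extension $\F_{q^m}$: the abelian surface and the finitely many endomorphisms $\iota(b_1),\dots,\iota(b_4)$ for a $\Z$-basis of $O_B$ are defined over a finite field. Enlarging $m$, we may also assume that every endomorphism of $A_{\Fpbar}$ is defined over $\F_{q^m}$. This is possible because $\End(A_{\Fpbar})$ is finitely generated, or, in the superspecial case, directly from Proposition~\ref{prop:sp_Can}. Let $(A_0,\iota_0)$ denote this model over $\F_{q^m}$.

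By hypothesis there is an $O_B$-linear isomorphism $f\colon (A,\iota)\to (A^{\phi},\iota^{\phi})$ over $\Fpbar$. A descent datum relative to $\Fpbar/\Fq$ amounts to such an $f$ satisfying the cocycle condition
\[
f^{\phi^{m-1}}\circ\cdots\circ f^{\phi}\circ f=\mathrm{id}
\]
in the appropriate sense after passing to $\F_{q^m}$. The issue is that $N(f)\coloneqq f^{\phi^{m-1}}\circ\cdots\circ f$ is only some automorphism $u\in\Aut(A,\iota)=(\End_{O_B}(A))^\times$. The key input here is that $\Aut(A,\iota)$ is a finite group: $\End_{O_B}(A)$ is an order in the centralizer of $B$ in $\End^0(A)$, which is a positive-definite algebra (a definite quaternion algebra or an imaginary quadratic field), because the Rosati involution is positive.

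This reduces the problem to a Lang-type statement. We want to modify $f$ by some $g\in\Aut(A,\iota)$, replacing $f$ with $f'= g^{\phi}\circ f\circ g^{-1}$ or with $f\circ g$, so that the resulting norm becomes trivial, possibly after replacing $m$ by a multiple. The cleanest route I would try uses the finite $\Gal$-set $I$ of $O_B$-linear isomorphisms $(A,\iota)\to(A^{\phi},\iota^{\phi})$, together with the action of $\Gal(\Fpbar/\Fq)$ on the set of $\Fpbar$-isomorphism classes of forms. Concretely, $\F_{q^m}$-forms of $(A_0,\iota_0)$ that become isomorphic over $\Fpbar$ are classified by $H^1(\Gal(\Fpbar/\F_{q^m}),\Aut(A,\iota))$. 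The obstruction to descending from $\F_{q^m}$ to $\Fq$ lies in the exact sequence attached to the extension
\[
1\to \Aut(A,\iota)\to \mathcal{G}\to \Gal(\Fpbar/\Fq)\to 1,
\]
where $\mathcal{G}$ consists of pairs $(\tau,h)$ with $h\colon (A,\iota)\isoto (A^{\tau},\iota^{\tau})$. This sequence is exact on the right precisely by the field-of-moduli hypothesis.

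The main step is to split this extension. Since $\Aut(A,\iota)$ is finite and $\Gal(\Fpbar/\Fq)\simeq\widehat{\Z}$ is free profinite of rank one, any continuous surjection $\mathcal{G}\to\widehat{\Z}$ with finite kernel splits: choose any lift $\tilde\phi$ of $\phi$ and map the topological generator to it, which gives a continuous section because $\mathcal{G}$ is profinite. Here one needs to check that $\mathcal{G}$ is naturally profinite, which follows since all isomorphisms are defined over $\F_{q^m}$. A continuous section $s$ is exactly an effective descent datum, and Weil descent (valid since $A$ is quasi-projective) then gives an abelian surface $A'/\Fq$ with an $O_B$-action $\iota'$ and $(A',\iota')\otimes\Fpbar\simeq(A,\iota)$.

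I expect the main obstacle to be the continuity of the section. The chosen lift $\tilde\phi$ need not generate a subgroup on which the composite with the finite-level action is trivial. To handle this, I would choose $\tilde\phi$ and note that $\tilde\phi^{m'}$ lies in the finite-index subgroup acting trivially for suitable $m'$ (using $|\Aut(A,\iota)|<\infty$): replace $m$ by $m\cdot|\Aut(A,\iota)|$, so that $N(f)^{|\Aut|}$-type powers become the identity. The second assertion then follows from the first together with the description of $X(\Fq)$ as points whose field of moduli is contained in $\Fq$, applying the first part over the field of moduli and base-changing to $\Fq$.
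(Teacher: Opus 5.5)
Your proposal is correct and is essentially the paper's proof. Both arguments take the field-of-moduli isomorphism $a_\phi\colon(A,\iota)\isoto(A^\phi,\iota^\phi)$, use the finiteness of $\Aut(A,\iota)$ to pass to a suitable $\F_{q^n}$ on which $\phi^{n-1}(a_\phi)\cdots\phi(a_\phi)a_\phi=1$ holds (this is your ``replace $m$ by $m\cdot|\Aut(A,\iota)|$'' step, and your splitting of the group extension is a repackaging of it), and then apply Weil's descent criterion, carrying the $O_B$-action over to the descended model. One small caveat: the finiteness of $\Aut(A,\iota)$ comes from the centralizer of $\iota(B)$ being a definite quaternion algebra or an imaginary quadratic field, not from positivity of the Rosati involution alone, since a real quadratic field also carries a positive involution yet has infinitely many units.
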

\begin{proof}
  Analogous results for certain polarized abelian varieties were obtained in \cite[Lemma 2.2]{Ibukiyama-Katsura-1994}, \cite[Lemma 4.1]{Ibukiyama:Osaka2018},  \cite[Lemma 2.1]{yu:field-of-def-2017}, and \cite[Lemma 5.4]{Yu-Indiana-ArithSSlocus}.
  Let $\phi:x\mapsto x^q$ denote the Frobenius automorphism in
  $\Gal(\Fpbar/\Fq)$.
Since $\F_{(A, \iota)}=\Fq$, there exists an
  isomorphism
  \[ a_\phi: (A,\iota)\xrightarrow{\;\sim\;}
  (\,{A}^\phi,\,{\iota}^\phi)\]
  of $O_B$-abelian surfaces over $\Fpbar$.

  Since the automorphism group $\Aut(A,\iota)$ is finite,  there exists a positive integer $n$ such that both of the following conditions hold:
  \begin{equation}
   \label{eq:611}
  (A^{\phi^n},\iota^{\phi^n})=(A,\iota)\quad\text{and}\quad  \phi^{n-1}(a_\phi)\cdots \phi(a_\phi) a_\phi=1;
  \end{equation}
  see the argument in \cite[Lemma 5.4]{Yu-Indiana-ArithSSlocus}.
Thus, by Weil's descent criterion \cite[Theorem 3]{weil:fod} (also see \cite[Section 2.4]{Yu-Indiana-ArithSSlocus}), there exists an abelian
surface $A'$ over $\Fq$ and an isomorphism $f:A'\otimes_{\Fq} \Fpbar\simeq
A$ of abelian surfaces over $\Fpbar$ such that $a_\phi={f}^\phi \circ f^{-1}$.

For each $b \in O_B$, we define
\[ \iota'(b)\coloneqq  f^{-1} \circ \iota(b) \circ f:A'\otimes_{\Fq} \Fpbar\to A'\otimes_{\Fq} \Fpbar. \]
Then the identity $\iota(b) =a_\phi^{-1} \circ {\iota^\phi(b)} \circ a_{\phi}$ implies that
\begin{equation*}
  \label{eq:612}
  \begin{split}
    \iota'(b)
      = f^{-1} \circ a_\phi^{-1} \circ {\iota^\phi(b)} \circ  a_\phi \circ f ={(f^{-1})}^\phi
      \circ{\iota^\phi(b)} \circ  {f}^\phi={\iota'^\phi(b)}.
  \end{split}
\end{equation*}
Hence $\iota'$ is defined over $\F_q$, and therefore  $(A',\iota')$ is a model  of
$(A,\iota)$  over $\Fq$.
\end{proof}

\subsection{Superspecial and supersingular loci}
\begin{defn}\label{defn:sp-ss-loci}
 Let $X_{\Fp}\coloneqq X\otimes_\Z \Fp$ denote the special fiber of the $\Z$-scheme $X$ at the prime $p$.
  The
 {\it superspecial locus} and {\it supersingular locus}
 \[ X^{\rm sp} \subseteq X^{\rm ss} \subseteq X_{\F_p} \]
 are defined as the largest reduced closed subschemes of $X_{\F_p}$ such that
 \begin{align*}
 X^{\rm sp}(\Fpbar) & = \{[(A, \iota)]  \in X(\Fpbar) \mid   \text{$A$ is superspecial}\},
 \\
  X^{\rm ss}(\Fpbar) & = \{[(A, \iota)]  \in X(\Fpbar) \mid   \text{$A$ is supersingular}\}.
 \end{align*}
 \end{defn}
 By abuse of notation, we also write $X^{\rm sp}$ for the set  $X^{\rm sp}(\Fpbar)$, which is  non-empty by Theorem \ref{thm:Fq-pts}.
 By Lemma \ref{lm:fomfod}, the existence of $\F_q$-rational superspecial points is equivalent to the existence of a superspecial $O_B$-abelian surface over $\F_q$ satisfying the determinant condition.
  Theorem~\ref{thm:Fq-pts} provides a criterion for the existence of such a surface.
Moreover,
we have $X^{\rm sp}(\Fpbar)=X^{\rm sp}(\F_{p^2})$.
This follows from Corollary \ref{cor:sp_fod}, which states that any superspecial $O_B$-abelian surface
over $\Fpbar$ has a  canonical model
over $\F_{p^2}$.
Finally, the supersingular locus satisfies  $X^{\rm ss}=X^{\rm sp}$ if $p \nmid \Delta$, and $X^{\rm ss}=X_{\Fp}$ if $p \mid \Delta$ (see \cite[\S 3]{Ribet-Bimodules} and Lemma \ref{lem:odd-degree-bdred}).
\begin{prop}\label{prop:K_odd}
Suppose that $p \mid \Delta$, and let $a\in \Z_{>0}$ be an odd integer. Then
\[X^{\mathrm{sp}}(\Fp)=X(\Fp)=X(\F_{p^a}).\]
\end{prop}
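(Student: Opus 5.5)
We must show the chain $X^{\mathrm{sp}}(\Fp)\subseteq X(\Fp)\subseteq X(\F_{p^a})$ consists of equalities. The two inclusions are formal: the first holds because $X^{\rm sp}$ is a closed subscheme of $X_{\Fp}$, and the second because $\Fp\subseteq \F_{p^a}$. Hence it suffices to prove two reverse inclusions: every point of $X(\F_{p^a})$ is superspecial, and every point of $X(\F_{p^a})$ is already $\Fp$-rational.

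\emph{Superspeciality.} Take $x\in X(\F_{p^a})$. By Lemma~\ref{lm:fomfod} (applied with $\F_{(A,\iota)}\subseteq \F_{p^a}$, and passing to the field of moduli itself, which is some $\F_{p^b}$ with $b\mid a$, hence $b$ odd), $x$ is represented by an $O_B$-abelian surface $(A,\iota)$ defined over $\F_{p^b}$. Since $b$ is odd and $p\mid\Delta$, Lemma~\ref{lem:odd-degree-bdred} shows that $A$ is superspecial. Thus $X(\F_{p^a})\subseteq X^{\rm sp}(\Fpbar)$.

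\emph{Descent to $\Fp$.} Now let $x=[(A,\iota)]$ with $(A,\iota)$ superspecial over $\Fpbar$ and $\F_{(A,\iota)}\subseteq\F_{p^a}$. Let $\phi$ denote the $p$-power Frobenius. By Corollary~\ref{cor:sp_fod}, $(A,\iota)$ has a canonical model over $\F_{p^2}$, so $(A,\iota)\simeq(A^{\phi^2},\iota^{\phi^2})$, and by hypothesis $(A,\iota)\simeq (A^{\phi^a},\iota^{\phi^a})$. Since $\gcd(2,a)=1$, the subgroup of $\Gal(\Fpbar/\Fp)$ stabilizing the isomorphism class contains both $\phi^2$ and $\phi^a$, hence contains $\phi$. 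Therefore $\F_{(A,\iota)}=\Fp$ and $x\in X(\Fp)$.

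Combining the two steps, $X(\F_{p^a})\subseteq X(\Fp)$ and every such point is superspecial, which gives the stated equalities. The only point requiring care is the use of the field of moduli: membership in $X(\Fq)$ is a condition on isomorphism classes under twisting, and stability under $\phi^2$ and $\phi^a$ is exactly what is needed, so the descent step is a one-line group-theoretic argument. The subtle input is therefore the superspecial statement of Lemma~\ref{lem:odd-degree-bdred}, which requires an actual model over an odd-degree field. This is supplied by Lemma~\ref{lm:fomfod}, and that lemma applies because the field of moduli $\F_{p^b}$, with $b\mid a$, is itself of odd degree over $\Fp$.
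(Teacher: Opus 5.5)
Your proof is correct and follows essentially the paper's argument. As in the paper, Lemma~\ref{lm:fomfod} and Lemma~\ref{lem:odd-degree-bdred} give $X(\F_{p^a})\subseteq X^{\rm sp}$, and the canonical $\F_{p^2}$-model gives $X^{\rm sp}\subseteq X(\F_{p^2})$. Your $\gcd(2,a)=1$ stabilizer argument is the same as the paper's observation that $\F_{p^a}\cap\F_{p^2}=\Fp$.
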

\begin{proof}
    From Lemma~\ref{lem:odd-degree-bdred},  every $O_B$-abelian surface $(A, \iota)$ over a finite field $\F_{p^a}$ of odd degree is automatically superspecial in this case.
    We then have inclusions $X(\Fp)\subseteq X(\F_{p^a})\subseteq X^{\rm sp}\subseteq X(\F_{p^2})$.
   Since $\F_{p^a}\cap\F_{p^2}=\Fp$, the assertion follows.
\end{proof}

\begin{cor}\label{cor:XFodd}
    Let $\Fp^{\rm odd}$ be the union of all finite extensions of $\Fp$ of odd degree in $\Fpbar$. If $p \mid \Delta$, then $X(\Fp^{\rm odd})$ is finite.
\end{cor}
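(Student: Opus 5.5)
The plan is to reduce the statement to Proposition~\ref{prop:K_odd} together with the finiteness of the superspecial locus. Let $\F_{p^a}\subseteq \Fp^{\rm odd}$ be any finite subfield of odd degree $a$. Every point of $X(\Fp^{\rm odd})$ is defined over some such $\F_{p^a}$, because $\Fp^{\rm odd}$ is the union of these fields and a point of the finite-type $\Z$-scheme $X$ with values in a directed union of fields factors through one of them. Hence
\[
X(\Fp^{\rm odd})=\bigcup_{a\ \mathrm{odd}} X(\F_{p^a}).
\]
By Proposition~\ref{prop:K_odd}, each term equals $X(\Fp)$. So the union is just $X(\Fp)$, and it remains to show that $X(\Fp)$ is finite.

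For finiteness, I would use that $X(\Fp)=X^{\rm sp}(\Fp)\subseteq X^{\rm sp}(\Fpbar)$, again by Proposition~\ref{prop:K_odd}. I would then argue that the set of superspecial points is finite in either of two ways.

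\begin{enumerate}
\item \emph{Via the coarse moduli scheme.} Since $X_{\Fp}$ is of finite type over $\Fp$, its $\Fp$-points are finite in number. This is the quickest route and needs only that $X$ is of finite type, which holds because $X$ is quasi-projective over $\Z$.
\item \emph{Via the moduli interpretation.} Superspecial $O_B$-abelian surfaces over $\Fpbar$ all descend to $\F_{p^2}$ by Corollary~\ref{cor:sp_fod}. So $X^{\rm sp}(\Fpbar)=X^{\rm sp}(\F_{p^2})$, a subset of $X(\F_{p^2})$, which is finite for the same finite-type reason.
\end{enumerate}

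The one point requiring care is the directed-union step, namely that $X(\Fp^{\rm odd})=\varinjlim X(\F_{p^a})$. This is standard for schemes locally of finite presentation. Alternatively, one can argue directly through Proposition~\ref{prop:K_odd}: the field of moduli of any $\Fp^{\rm odd}$-point lies in some $\F_{p^a}$ with $a$ odd. Everything else is a direct citation, so I expect no real obstacle.
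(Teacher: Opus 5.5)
Your proposal is correct and is essentially the paper's argument: the corollary is stated as an immediate consequence of Proposition~\ref{prop:K_odd}. That proposition gives $X(\F_{p^a})=X(\Fp)$ for every odd $a$, so $X(\Fp^{\rm odd})=\bigcup_{a\ \mathrm{odd}}X(\F_{p^a})=X(\Fp)$, which is finite because it lies in $X^{\rm sp}\subseteq X(\F_{p^2})$ (or simply because $X_{\Fp}$ is of finite type).
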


\begin{remark}
   By the Weil conjectures, the number of $\F_{p^a}$-rational points of any smooth projective curve over $\F_p$ tends to infinity as $a \to \infty$.
   In contrast, Proposition~\ref{prop:K_odd}  presents a projective curve  $X_{\F_p}$ with only ordinary double points such that $\lvert X(\F_{p^a}) \rvert$  remains bounded along all odd integers $a$.
   This behavior occurs because $X(\F_{p^a})$ is contained in the singular locus
   $X^{\mathrm{sing}}$ for all odd integers $a$  (see \S\ref{ss:bad} and Lemma \ref{lem:twist} (2) for $X(\Fp)\subseteq \Sigma_3=X^{\rm sing}$).
\end{remark}

For the remainder of this section, we assume that $p \mid \Delta$.
In contrast to the odd degree case, when $a$ is even, not all $O_B$-abelian surfaces over $\F_{p^a}$ are superspecial.
\begin{prop}\label{prop:ss-but-nonssp}
  Suppose that  there exists a QM abelian surface $(A, \iota^0: B\hookrightarrow \End^0(A))$ over $\F_{p^2}$ such that $A\sim A_\pi^{2/d(\pi)}$ for some supersingular Weil $p^2$-number $\pi\in \{\pm p\zeta_n\mid n=3, 4\}$;  equivalently, suppose that condition (1.ii) of Theorem~\ref{thm:Fq-pts} holds for some $\pi=\pm p\zeta_n$.
    Then $X^{\mathrm{sp}}$ is \emph{properly} contained in $X(\F_{p^2})$. In other words,  there exists a supersingular but non-superspecial $O_B$-abelian surface $(A', \iota': O_B\hookrightarrow \End(A'))$ over $\F_{p^2}$ with Lie type $(1, 1)$.
\end{prop}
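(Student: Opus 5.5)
The plan is to use Proposition~\ref{prop:move-to-OB}'s style of argument: start from the QM abelian surface $(A,\iota^0)$ over $\F_{p^2}$ with $\pi_A=\pm p\zeta_n$, $n\in\{3,4\}$. By Lemma~\ref{lm:move-to-OB} we may replace $A$ by an $\F_{p^2}$-isogenous surface and assume $\iota^0$ comes from an $O_B$-action $\iota$. Then everything reduces to a local problem. We want an $\calO_p$-stable Dieudonn\'e lattice $M'\subseteq N:=M(A)\otimes\Q_p$ over $W(\F_{p^2})=\Z_{p^2}$ with the following three properties: $M'$ has Lie type $(1,1)$, $M'$ is \emph{not} superspecial, and the prime-to-$p$ part is unchanged. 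By the standard correspondence between $\F_{p^2}$-isogenies and Dieudonn\'e lattices (the quotient by the finite group scheme corresponding to $M'/p^mM$), such an $M'$ gives an $O_B$-abelian surface $(A',\iota')$ over $\F_{p^2}$ isogenous to $A$. By Lemma~\ref{lem:det}(2) it satisfies the determinant condition, so its point lies in $X(\F_{p^2})\smallsetminus X^{\rm sp}$.

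The key observation is Lemma~\ref{lem:ssp-zeta-stable}. Since $a=2$ and $\pi_{A'}=\pm p\zeta_n$, the surface $A'$ is superspecial if and only if $\zeta_n\in\End(A')$. Since $\zeta_n$ is prime-to-$p$ integral automatically, this holds if and only if $\zeta_n M'=M'$. So it suffices to find an $\calO_p$-stable Dieudonn\'e lattice of Lie type $(1,1)$ that is \emph{not} stable under $\Z_p[\zeta_n]$. Note that $\sfV^2=\pi=\pm p\zeta_n$ on $N$, so a lattice is a Dieudonn\'e module exactly when it is stable under $\sfV$ and $\sfF=p\sfV^{-1}$.

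To build $M'$, work in $N$, which is a module over $\calE_p=\End_{\rm DM}(N)\simeq(\calQ_{K,p})_p\simeq B_p\times B_p$; the $p\equiv1\pmod n$ case of Remark~\ref{rem:exists-emb} puts us in case (c) of the proof of Proposition~\ref{prop:move-to-sp}. Write $N=N_1\oplus N_2$ according to the two places $v_1,v_2$ of $K$ above $p$, so that $\zeta_n$ acts by scalars $\zeta^{(1)}\ne\zeta^{(2)}$ in $\Z_p^\times$ on the two factors. The point is that $\Z_p[\zeta_n]\otimes\Z_p=\Z_p\times\Z_p$, and $\zeta_n$-stability of $M'$ is the same as $M'=(M'\cap N_1)\oplus(M'\cap N_2)$. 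Start from the superspecial lattice $M=M_1\oplus M_2$ of Proposition~\ref{prop:move-to-sp}(c), where each $M_i$ is a rank-$2$ $\Z_{p^2}$-lattice carrying an $\calO_p$-action. Its Lie type is $(1,1)$ with $\Lie(M_i)$ one-dimensional. Then take a non-split lattice
\[ M' = \{(x_1,x_2)\in M_1\oplus M_2 : \bar x_1 = \theta(\bar x_2)\} + pM \]
or a similar gluing along a common simple quotient. Here $\theta$ is an $\calO_p$- and $\sfF,\sfV$-equivariant isomorphism between suitable subquotients such as $M_1/\Pi M_1\simeq M_2/\Pi M_2$, so that $M'$ is $\calO_p$-, $\sfF$-, $\sfV$-stable but does not split.

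The main obstacle is checking that such a gluing is compatible with $\sfV$ and keeps Lie type $(1,1)$, rather than $(2,0)$ or $(0,2)$. I would first try the concrete model in which $M_i=\calO_p\otimes_{\Z_{p^2}}W$ with $\sfV$ acting as right multiplication by $\Pi$ times a unit, twisted on $M_2$ by $\zeta^{(2)}/\zeta^{(1)}$. I would then glue modulo $\Pi$ and compute $M'/\sfV M'$ in its $\tau_0,\tau_1$ components directly. If the naive gluing gives the wrong Lie type, I would instead take the $\Pi$-saturation $M'' = M' + \Pi M'$ or shift one factor by $\Pi$, and check again.
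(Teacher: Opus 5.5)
\textbf{Gap: one of the two subcases in the hypothesis is never treated.} Condition (1.ii) of Theorem~\ref{thm:Fq-pts} allows two subcases.
\begin{enumerate}[(a)]
\item $p\not\equiv 1\pmod n$, with no prime $\ell\mid\Delta$ split in $K=\Q(\zeta_n)$.
\item $p\equiv 1\pmod n$.
\end{enumerate}
You only treat (b), where $p$ splits in $K$, $\End_{\rm DM}(N)\simeq B_p\times B_p$ and $N=N_1\oplus N_2$. Subcase (a) is fully compatible with the standing assumption $p\mid\Delta$, for example $p=3$, $\Delta=6$, $n=4$.

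In subcase (a) one has $d(\pi)=1$, $A\sim A_\pi^2$, and $\End_{\rm DM}(N)\simeq\Mat_2(K_p)$, where $K_p=\Q_p(\zeta_n)$ is a quadratic \emph{field}. Then $\Z_p[\zeta]$ is a discrete valuation ring, and $N$ has no decomposition along places of $K$ above $p$. So your criterion ``$\zeta$-stable if and only if $M'=(M'\cap N_1)\oplus(M'\cap N_2)$'' is unavailable, and there is nothing to glue. For $p=2,3$ and $p\equiv 11\pmod{12}$ only subcase (a) can occur, so your proposal proves nothing for those primes.

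What is missing is a lattice that is $\calO_p$-stable, of Lie type $(1,1)$, and not $\zeta$-stable, built without using how $p$ decomposes in $K$. The paper gets this uniformly from the model in Lemma~\ref{lem:F-isocrystal-str}:
\begin{itemize}
\item Write $N=\Q_{p^2}(\zeta)e_1\oplus\Q_{p^2}(\zeta)e_2$ with $\sfV=\begin{bmatrix}0&p\zeta\\1&0\end{bmatrix}\sigma^{-1}$.
\item Take $M=Re_1\oplus R_0e_2$, where $R_0=\Z_{p^2}[\zeta]$ and $R=\Z_{p^2}+pR_0$ is the suborder of conductor $p$.
\item $M$ is stable under $\sfF$ and $\sfV$, but not under $\zeta$, since $\zeta\notin R$.
\item $M$ carries $\calO_p$ via $\Pi\mapsto\begin{bmatrix}0&p\zeta c_0^\sigma\\c_0&0\end{bmatrix}$ with $c_0c_0^\sigma=-\zeta^{-1}$, and has Lie type $(1,1)$.
\item Skolem--Noether then moves this action onto $\iota_p^0$.
\end{itemize}

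\textbf{The split case works, but not as first written.} For the lattice $M_1\oplus M_2$ of Proposition~\ref{prop:move-to-sp}(c), one has $\sfV M_i=\sfF M_i=\Pi M_i$. Lie type $(1,1)$ then forces the one-dimensional quotients $M_1/\Pi M_1$ and $M_2/\Pi M_2$ into different $\tau$-components. Hence no $\calO_p$-equivariant $\theta$ between them exists.

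The fix is to replace $M_2$ by $\Pi M_2$ and take the graph lattice $\{(x_1,x_2)\in M_1\oplus\Pi M_2:\bar x_1=\theta(\bar x_2)\}$. This lattice:
\begin{itemize}
\item is stable under $\calO_p$, $\sfF$ and $\sfV$, since $\sfF$ and $\sfV$ kill both quotients;
\item has Lie type $(1,1)$;
\item is not $\zeta$-stable, because $\zeta^{(1)}/\zeta^{(2)}$ is a nontrivial root of unity of order prime to $p$.
\end{itemize}
This is a legitimate alternative to the paper's lattice when $p\equiv 1\pmod n$, but subcase (a) remains open.
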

\begin{proof}
The idea of the proof is similar to  that  of Proposition~\ref{prop:move-to-sp}.
By Lemma~\ref{lm:move-to-OB}, we may assume that $(A, \iota)$ is already an $O_B$-abelian surface.
Let $M(A)$ denote  the Dieudonn\'e module of $A$, and set  $N\coloneqq M(A)\otimes_{\Z_p}\Q_p$.
Mapping the Weil $p^2$-number $\pi$ to the Frobenius endomorphism $\pi_A\in \End(A)$ induces an embedding $\Q(\pi)\hookrightarrow \End^0(A)$.
By the argument in the proof of  \cite[Lemma~X.1.3]{van-der-Geer-HMS}, the rational \dieu module $N$ is a free module of rank two over $\Q_{p^2}(\zeta)\coloneqq \Q_{p^2}\otimes_\Q\Q(\pi)$,  where $\zeta\coloneqq \pi/p$.
The embedding $\iota^0: B\hookrightarrow \End^0(A)$ endows  $N$ with the structure of a  left $B_p\otimes_{\Q_p}\Q_{p^2}(\zeta)$-module.
 The operators $\sfF$ and $\sfV$ act $B_p\otimes_{\Q} \Q(\zeta)$-linearly on $N$ and satisfy  the usual   relations  \eqref{eq:W[FV]}, together with the identity  $\sfV^2=p\zeta$.

Let $\calO_p=O_B\otimes\Z_p$ be the unique maximal order in the quaternion division $\Q_p$-algebra $B_p$.
In Lemma~\ref{lem:const-dieu-mod}  we   construct a non-superspecial  $\calO_p$-stable  Dieudonn\'e module $M'\subseteq N$ of full rank with  Lie type $(1,1)$.
This construction gives rise to an  $O_B$-linear quasi-isogeny $\rho: (A', \iota')\to (A, \iota)$ over $\F_{p^2}$, uniquely determined  up to isomorphism by the  conditions:
\[\rho_p(M(A'))=M' \qquad \text{and}\qquad \rho_\ell(T_\ell(A'))=T_\ell(A), \qquad \forall \ell\neq p,  \]
where $T_{\ell}(A)$ is the $\ell$-adic Tate module of $A$.
Then $(A', \iota')$ is  the desired supersingular but   non-superspecial $O_B$-abelian surface over $\F_{p^2}$ with Lie type $(1,1)$.
 To carry out the construction explicitly, we need a more concrete description of the structure of $N$;   this is provided  in  Lemma~\ref{lem:F-isocrystal-str} below.
\end{proof}

\begin{lemma}\label{lem:F-isocrystal-str}
    Keep  the notation and assumptions of Proposition~\ref{prop:ss-but-nonssp}.
    In particular, let   $N\coloneqq M(A)\otimes_{\Z_p}\Q_p$ be the covariant rational Dieudonn\'e module  of $A/\F_{p^2}$, which is a free module of rank two over $\Q_{p^2}(\zeta)\coloneqq \Q_{p^2}\otimes \Q(\pi)$, where $\zeta\coloneqq \pi/p$.
    Then
    there exists an element $e\in N$ such that $e_1\coloneqq e$ and  $e_2\coloneqq \sfV e $ form a $\Q_{p^2}(\zeta)$-basis of $N$.
    With respect to the basis $(e_1, e_2)$,
    the Frobenius   and Verschiebung  operators $\sfF$ and $\sfV$ are given by
\begin{equation}
  \label{eq:2}
  \sfF=
  \begin{bmatrix}
    0 & p \\ \zeta^{-1} & 0
  \end{bmatrix}\sigma, \qquad \sfV=\zeta \sfF=  \begin{bmatrix}
    0 & p\zeta \\ 1 & 0
  \end{bmatrix}\sigma^{-1}.
\end{equation}
Here $\sigma$ denotes the automorphism of $\Q_{p^2}(\zeta)\coloneqq\Q_{p^2}\otimes_{\Q} \Q(\pi)$ acting as the usual Frobenius automorphism on the first factor  $\Q_{p^2}$ and as the identity on the second factor  $\Q(\pi)$.
Moreover, any two $\Q_p$-embeddings $B_p\hookrightarrow \End_{\mathrm{DM}}(N)$ are conjugate under $\Aut_{\mathrm{DM}}(N)$.
\end{lemma}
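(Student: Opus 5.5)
To prove the lemma, I would first fix the structure of $N$. Since $A/\F_{p^2}$ has Frobenius $\pi_A=\pi=p\zeta$, and $\pi_A$ acts on the covariant module as $\sfV^2$ (as in the proof of Lemma~\ref{lem:ssp-zeta-stable}), we have $\sfV^2=p\zeta$ on $N$. Here $W(\F_{p^2})[1/p]=\Q_{p^2}$. Since $\pi\in\End(A)$ is central in $\End^0(A)$, the operators $\sfF,\sfV$ are $\Q(\pi)$-linear, and they are $\sigma^{\pm1}$-semilinear over $\Q_{p^2}$. As stated, $N$ is free of rank two over $L\coloneqq \Q_{p^2}(\zeta)=\Q_{p^2}\otimes_\Q\Q(\pi)$ (by the argument of \cite[Lemma~X.1.3]{van-der-Geer-HMS}). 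Also $\sfF=p\sfV^{-1}$, so $\sfF=\zeta^{-1}\sfV$ on $N$. Note that $\zeta^{-1}\sfV=\sfV\zeta^{-1}$, since $\sfV$ is $\Q(\pi)$-linear.

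The key step is to find $e\in N$ with $e$ and $\sfV e$ forming an $L$-basis. Write $\Q(\pi)_p\coloneqq\Q(\pi)\otimes\Q_p$, so $L=\Q_{p^2}\otimes_{\Q_p}\Q(\pi)_p$. I would split into cases according to whether $\Q(\pi)_p$ is a field or $\Q_p\times\Q_p$.

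\emph{Case $p\equiv1\pmod n$.} This is the situation in condition (1.ii), where $p$ splits in $K$. Then $L\simeq\Q_{p^2}\times\Q_{p^2}$, and $N=N_1\oplus N_2$ decomposes into two $\sfV$-stable $\Q_{p^2}$-spaces of dimension $2$. On each $N_j$ we have $\sfV^2=p\zeta_j$ with $\zeta_j\in\Z_p^\times$, so each $N_j$ is isoclinic of slope $1/2$ and is therefore a simple isocrystal; this agrees with $\End_{\rm DM}(N)\simeq B_p\times B_p$. A simple slope-$1/2$ isocrystal over $\F_{p^2}$ contains no nonzero $\Q_{p^2}$-line stable under $\sfV$. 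So for any $e_j\neq0$, the vectors $e_j$ and $\sfV e_j$ are $\Q_{p^2}$-independent, since an eigen-relation $\sfV e_j=ce_j$ would span a $\sfV$-stable line. Taking $e=e_1+e_2$ then works.

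\emph{Case $\Q(\pi)_p$ a field.} Here $L$ is a field or a product of two fields, and $N$ has rank two over it. I would argue that the $L$-subspace $L e+L\sfV e$ is $\sfV$-stable, since $\sfV^2e=p\zeta e$. If it were a proper nonzero subspace for every choice of $e$, then every $e$ would generate a $\sfV$-stable $L$-line. Such a line is a sub-isocrystal carrying extra structure, and I would rule it out by a dimension/slope count: the line would have $\Q_p$-dimension equal to $[L:\Q_p]$, while the slope must be $1/2$ and $\sfV$ induces a $\sigma^{-1}$-semilinear map. I would then show that a generic $e$ avoids this degenerate locus, which is a proper Zariski-closed condition. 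I expect this step to be the main obstacle. The case analysis on how $\Q_{p^2}\otimes\Q(\pi)$ splits is where care is needed.

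Once the basis is chosen, the matrices in \eqref{eq:2} follow by computation. We have $\sfV e_1=e_2$ and $\sfV e_2=\sfV^2e=p\zeta e_1$, and $\sfV$ acts $\sigma^{-1}$-semilinearly, which gives the stated $\sfV$. Then $\sfF=\zeta^{-1}\sfV$ gives $\sfF e_1=\zeta^{-1}e_2$ and $\sfF e_2=p e_1$, matching the stated $\sfF$; the relation $\sfV=\zeta\sfF$ is consistent with this. For the final claim, $\End_{\rm DM}(N)=\End^0(A)\otimes\Q_p$ is one of the semisimple algebras listed in Proposition~\ref{prop:move-to-sp}, with centre $K_p$ or $\Q_p\times\Q_p$. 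An embedding $B_p\hookrightarrow\End_{\rm DM}(N)$ induces $B_p\otimes_{\Q_p}K_p\simeq\End_{\rm DM}(N)$ componentwise. Applying Skolem--Noether to each simple factor, as in Proposition~\ref{prop:move-to-sp}, then gives that any two such embeddings are conjugate under $\Aut_{\rm DM}(N)$.
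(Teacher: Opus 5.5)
Your split case ($p\equiv 1\pmod n$) is correct, as are the matrix computation once $e$ is found and the Skolem--Noether step. The gap is in the non-split case, which you flag as the main obstacle. The mechanism you propose there cannot work, because $\sfV$-stable free $L$-lines genuinely exist. Such a line is a $2$-dimensional sub-isocrystal of slope $1/2$, so no dimension or slope count can exclude it. Both sub-cases (ramified and inert) actually occur under the hypotheses of Proposition~\ref{prop:ss-but-nonssp}:
\begin{itemize}
\item \emph{Ramified case.} If $p$ ramifies in $\Q(\pi)$, then $L$ is the unramified quadratic extension of $\Q(\pi)_p$ and $\sigma$ is its nontrivial automorphism. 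Since $p\zeta$ has even valuation in $\Q(\pi)_p$, we can write $p\zeta=c\,\sigma^{-1}(c)$ for some $c\in L^\times$. Then $c^{-1}\sfV$ is a $\sigma^{-1}$-semilinear involution of $N$, and each of its nonzero fixed vectors $e$ satisfies $\sfV e=ce$.
\item \emph{Inert case.} If $p$ is inert in $\Q(\pi)$, then $L\simeq\Q_{p^2}\times\Q_{p^2}$ with $\sigma$ swapping the factors. Correspondingly $N=N^0\oplus N^1$, with $\sfV$ interchanging the summands. For any $0\neq e^0\in N^0$, the vector $e=e^0+\sfV e^0$ spans a free $\sfV$-stable $L$-line. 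Moreover, your implication ``$Le+L\sfV e$ proper $\Rightarrow$ $Le$ is a $\sfV$-stable $L$-line'' fails here: take $e=e^0$.
\end{itemize}
So the degenerate set of $e$ is nonempty, and the whole content of the step is to show it is not all of $N$. Your appeal to a ``proper Zariski-closed condition'' assumes exactly that.

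The paper avoids the cyclic-vector search altogether. It observes that $N$ is a module over $\grD^0_\pi=(\Q_{p^2}\otimes\Q(\pi))[\sfV]/(\sfV^2-\pi)$ that is free of rank $4$ over $\Q_p\otimes\Q(\pi)$. Each factor $\grD^0_{\pi,v}$ is central simple over $\Q(\pi)_v$, so such a module is unique up to isomorphism. The explicit model $N_0$ with the matrices \eqref{eq:2} satisfies the same conditions, hence $N\simeq N_0$, and one transports $e_1$.

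If you prefer to keep your direct route, the gap can be closed elementarily:
\begin{itemize}
\item \emph{When $L$ is a field.} Suppose $\sfV e\in Le$ for every $e$. Take an $L$-basis $e,e'$ with $\sfV e=ce$ and $\sfV e'=c'e'$. Applying the hypothesis to $\lambda e+e'$ gives $\sigma^{-1}(\lambda)c=\lambda c'$ for all $\lambda\in L$. Taking $\lambda=1$ gives $c=c'$, so $\sigma$ is trivial on $L$, a contradiction.
\item \emph{In the inert case.} The pair $\{e,\sfV e\}$ is an $L$-basis if and only if $\{e^0,\sfV e^1\}$ spans $N^0$ and $\{e^1,\sfV e^0\}$ spans $N^1$. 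Both hold whenever $e^0\neq 0$ and $e^1\notin\Q_{p^2}\sfV e^0$. For the first condition, apply $\sfV$ to a putative relation $\sfV e^1=\mu e^0$ and use $\sfV^2=p\zeta$ to get $e^1\in\Q_{p^2}\sfV e^0$.
\end{itemize}
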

\begin{proof}
 Since $A$ is an abelian surface over $\F_{p^2}$ that is isogenous to a power of $A_\pi$ for some supersingular Weil $p^2$-number $\pi=p\zeta$,  its covariant rational Dieudonn\'e module $N$
 satisfies the following conditions:
\begin{enumerate}[(i)]
    \item $N$ is a left module over the Dieudonn\'e algebra $\grD_\pi^0\coloneqq \Big(\Q_{p^2}\otimes \Q(\pi)\Big)[\sfV]/(\sfV^2-\pi)$, where $\sfV\pi=\pi\sfV$  and  $\sfV a=a^{\sigma^{-1}}\sfV$ for every   $a\in \Q_{p^2}$;
    \item  restriction of scalars to the central subalgebra $\Q_p\otimes \Q(\pi)\subseteq \grD_\pi^0$ makes  $N$ a free $\Q_p\otimes \Q(\pi)$-module of rank $4=\frac{2[\Q_{p^2}:\Q_p]\dim A}{[\Q(\pi):\Q]}$.
\end{enumerate}
Indeed, it was already shown in the proof of Proposition~\ref{prop:ss-but-nonssp} that  $N$ is free  of rank two over $\Q_{p^2}(\zeta)=\Q_{p^2}\otimes \Q(\pi)$.
Write  $\Q_p\otimes \Q(\pi)=\prod_{v\mid p} \Q(\pi)_v$, where $v$ runs over the  places of $\Q(\pi)$ above $p$, and $\Q(\pi)_v$ denotes the $v$-adic completion of $\Q(\pi)$.
Correspondingly,
\[\grD_\pi^0=\prod_{v\mid p} \grD_{\pi, v}^0, \qquad \text{where}\qquad  \grD_{\pi, v}^0\coloneqq \Big(\Q_{p^2}\otimes_{\Q_p} \Q(\pi)_v\Big)[\sfV]/(\sfV^2-\pi).\]
By \cite[Theorem~A.1.2.1]{Chai-Conrad-Oort}, each algebra  $\grD_{\pi, v}^0$ is central simple over $\Q(\pi)_v$.
Up to isomorphism, there is at most one left $\grD_{\pi, v}^0$-module of a given dimension over $\Q(\pi)_v$.
It follows that the rational \dieu module $N$ is uniquely characterized (up to isomorphism) by conditions (i)--(ii)\footnote{The same argument applies to any abelian variety $A$ over a finite field $\F_q$ isogenous to  a power of $A_\pi$ for some Weil $q$-number $\pi$. }.

On the other hand, let $N_0$ be a free $\Q_{p^2}\otimes \Q(\pi)$-module of rank two with basis $\{e_1, e_2\}$,
and define a rational \dieu module structure over $\F_{p^2}$ by requiring  $\sfF$ and $\sfV$ to act  as in \eqref{eq:2}.
A direct verification shows that $N_0$ satisfies  conditions (i)--(ii).
Hence $N\simeq N_0$.

Finally,  Proposition~\ref{prop:move-to-sp} shows that $\End_{\mathrm{DM}}(N)$ is $\Q_p$-isomorphic to either $B_p\times B_p$ or $\Mat_2(\Q(\zeta)\otimes \Q_p)$ depending on whether $p$ splits in $\Q(\zeta)$ or not. By the Skolem–Noether Theorem,
any two $\Q_p$-embeddings $B_p\hookrightarrow \End_{\mathrm{DM}}(N)$ are conjugate under $\End_{\mathrm{DM}}(N)^\times$.
\end{proof}

  We construct a
supersingular $\O_p$-Dieudonn\'e module  over $\F_{p^2}$
that is \emph{not superspecial}.

\begin{lemma}\label{lem:const-dieu-mod}
 Keep the rational Dieudonn\'e module $N$ over $\F_{p^2}$ fixed as above.
Let $\iota^0_p: B_p\hookrightarrow \End_{\mathrm{DM}}(N)$ be the embedding induced from $\iota^0: B\hookrightarrow \End^0(A)$.
Then there exists a  non-superspecial $\iota_p^0(\calO_p)$-Dieudonn\'e module $M'$ of full rank in $N$ with Lie type $(1,1)$.
\end{lemma}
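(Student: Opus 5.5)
The plan is to reduce to a single explicit embedding and then write down the lattice by hand in the basis of Lemma~\ref{lem:F-isocrystal-str}.

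\textbf{Reduction to one embedding.} By the last assertion of Lemma~\ref{lem:F-isocrystal-str}, any two $\Q_p$-embeddings $B_p\hookrightarrow\End_{\mathrm{DM}}(N)$ are conjugate by some $g\in\Aut_{\mathrm{DM}}(N)$. Suppose $\iota^0_p=g\psi g^{-1}$ and $M'$ is a full-rank $\psi(\calO_p)$-stable Dieudonn\'e lattice. Then $gM'$ is $\iota^0_p(\calO_p)$-stable. Since $g$ commutes with $\sfF$ and $\sfV$, it preserves the Dieudonn\'e structure, the $a$-number (hence superspeciality) and the Lie type. So it suffices to construct one convenient embedding $\psi$ together with a suitable lattice.

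\textbf{Choice of $\psi$ and a superspecial lattice $M_0$.} Work in the basis $e_1,e_2=\sfV e_1$ of \eqref{eq:2}. Let $R\subseteq\Q_{p^2}(\zeta)$ be the maximal order of $\Z_{p^2}\otimes\Z[\zeta]$, and take
\[
M_0\coloneqq Re_1\oplus Re_2 .
\]
Since $\zeta$ is a unit, \eqref{eq:2} gives $\sfV e_1=e_2$ and $\sfV e_2=p\zeta e_1$. Hence $\sfV M_0=Re_2\oplus pRe_1$ and $\sfV^2M_0=pM_0$, so $M_0$ is a superspecial Dieudonn\'e module and $\End_{\mathrm{DM}}(M_0)$ is a maximal order in $\End_{\mathrm{DM}}(N)$.

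As in the proof of Proposition~\ref{prop:move-to-sp}, I then define $\psi$ on $\calO_p=\Z_{p^2}\oplus\Z_{p^2}\Pi$ according to the structure of $\End_{\mathrm{DM}}(M_0)$:
\begin{itemize}
\item In case (c), $\End_{\mathrm{DM}}(M_0)=\calO_p\times\calO_p$ (when $p$ splits in $\Q(\zeta)$), and $\psi(\alpha)=(\alpha,\Pi\alpha\Pi^{-1})$.
\item In case (b$'$), $\psi(a+b\Pi)=\left[\begin{smallmatrix} a&-pb\\ b^\sigma&a^\sigma\end{smallmatrix}\right]$.
\end{itemize}
In each case I will write $\psi(\Pi)$ and $\psi(\Z_{p^2})$ concretely as $R$-semilinear-free matrices commuting with $\sfF$ in the basis $(e_1,e_2)$. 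Then $M_0$ is $\psi(\calO_p)$-stable of Lie type $(1,1)$.

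\textbf{Construction of $M'$.} I look for $M'$ with $pM_0\subsetneq M'\subsetneq M_0$, that is, for a $k$-subspace $\overline{M'}\subseteq \overline{M}\coloneqq M_0/pM_0$, with $k=\F_{p^2}$. This subspace must be stable under $\sfF$, $\sfV$ and $\psi(\calO_p)$. Note that $\overline{M}$ is free of rank $2$ over $R/pR$, and the conditions are checked on the finite-dimensional space $\overline{M}$.

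The natural candidate is the lattice generated over $W[\sfF,\sfV]$ and $\psi(\calO_p)$ by $pM_0$ together with a vector such as $x=e_2+\epsilon e_1$. Here I would choose $\epsilon\in R$ to mix the two $\Z_{p^2}$-isotypic components, and more precisely the two eigenlines of $\zeta$ modulo the maximal ideals above $p$ in $R$. The aim is to obtain an $M'$ with:
\begin{itemize}
\item $\dim_k M'/(\sfF,\sfV)M'=1$, so that $a(M')=1$ and $M'$ is not superspecial;
\item $M'/\sfV M'$ free of rank one over $\F_{p^2}\otimes k$, i.e.\ Lie type $(1,1)$.
\end{itemize}

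Freeness over $\F_{p^2}\otimes k$ is checked by computing traces of $\psi(a)$ on $M'/\sfV M'$, as in case (b$''$) of Proposition~\ref{prop:move-to-sp}. For Lie type $(1,1)$ I will compare lengths: $\sfV M'$ and $M'$ should have the same index drop in each $\tau_i$-component, using the fact that $\sfV$ interchanges the components $M^0$ and $M^1$.

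\textbf{Main obstacle.} The hard step is choosing $\epsilon$ so that three requirements hold simultaneously: $\psi(\Pi)$-stability, a Lie type of exactly $(1,1)$ rather than $(2,0)$ or $(0,2)$, and failure of $\sfV^2M'=pM'$. If a codimension-$2$ choice inside $M_0$ fails, I would instead try $M'=\sfV M_0+\psi(\calO_p)\sfF^{-1}\ldots$-type lattices lying between $pM_0$ and $\sfV^{-1}pM_0$. The hypothesis $p\equiv1\pmod n$ (condition (1.ii)) should be used exactly here: it makes $\zeta$ reduce to two distinct roots of unity in $\F_p$. This produces a non-scalar $\sfV^2=p\zeta$ on $\overline{M}$, and the resulting eigenline decomposition supplies the non-superspecial flag.
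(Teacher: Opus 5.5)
\textbf{Main gap.} The content of the lemma is the existence of $M'$, and your proposal never exhibits one. You leave $\epsilon$ unspecified, you check none of the three required properties ($\psi(\calO_p)$-stability, Lie type $(1,1)$, non-superspeciality) for any candidate, and you already anticipate needing a fallback.

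What is missing is the observation that makes the construction short. By Lemma~\ref{lem:ssp-zeta-stable} (with $q=p^2$, $\pi=p\zeta$), a superspecial Dieudonn\'e lattice in $N$ must be stable under $\zeta$. Hence every $\sfF,\sfV$-stable lattice that is not a module over $R_0\coloneqq\Z_{p^2}[\zeta]$ (your $R$) is automatically non-superspecial; no $a$-number computation and no eigenline analysis is needed.

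The paper's proof runs as follows.
\begin{enumerate}
\item It takes $M=(\Z_{p^2}+pR_0)e_1\oplus R_0e_2$. This is the Dieudonn\'e submodule generated by $e_1$, and it lies between $pM_0$ and $M_0$.
\item It computes $\End_{\mathrm{DM}}(M)=\{\left[\begin{smallmatrix}a&p\zeta c^\sigma\\ c&a^\sigma\end{smallmatrix}\right] : a\in\Z_{p^2}+pR_0,\ c\in R_0\}$.
\item It embeds $\calO_p$ by $a\mapsto\diag(a,a^\sigma)$ and $\Pi\mapsto\left[\begin{smallmatrix}0&p\zeta c_0^\sigma\\ c_0&0\end{smallmatrix}\right]$, where $c_0c_0^\sigma=-\zeta^{-1}$. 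This is solvable because $\Q_{p^2}(\zeta)/\Q_p(\zeta)$ is unramified.
\item It reads off Lie type $(1,1)$ from $\Lie(M)^0=((\Z_{p^2}+pR_0)/pR_0)\,\bar e_1$ and $\Lie(M)^1=(R_0/(\Z_{p^2}+pR_0))\,\bar e_2$.
\end{enumerate}
The $\calO_p$-action here is adapted to $M$, not to $M_0$. For this action $M_0$ has Lie type $(2,0)$, and it is precisely the shrinking of $R_0e_1$ to $(\Z_{p^2}+pR_0)e_1$ that produces the $\tau_1$-component. The argument does not depend on how $p$ decomposes in $\Q(\zeta)$.

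\textbf{Errors in the set-up.}
\begin{itemize}
\item Condition (1.ii) of Theorem~\ref{thm:Fq-pts} does not force $p\equiv 1\pmod n$. Subcase (a) allows $p$ inert or ramified in $\Q(\zeta)$. For $(p,n)=(2,4),(3,3)$, $\zeta$ is congruent to $\pm1$ modulo the unique prime above $p$, so there are no two eigenlines to exploit.
\item $\End_{\mathrm{DM}}(M_0)$ (the same matrices with $a,c\in R_0$) is maximal only when $p$ splits in $\Q(\zeta)$; being superspecial does not make the $\F_{p^2}$-endomorphism ring maximal. For $p$ inert it is an Eichler order of level $p$ in $\Mat_2(\Q_p(\zeta))$.
\item In that inert case, no embedding $\psi\colon\calO_p\hookrightarrow\End_{\mathrm{DM}}(M_0)$ gives $M_0$ Lie type $(1,1)$. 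The relation $\psi(\Pi)\psi(a)=\psi(a^\sigma)\psi(\Pi)$ forces the two diagonal residue characters of $\psi|_{\Z_{p^2}}$ to differ by $\sigma$. But $W(\F_{p^2})$ acts on the two lines of $\Lie(M_0)=(R_0/pR_0)\,\bar e_1$ through characters that also differ by $\sigma$. So both lines land in the same $\tau_i$-component, and your claim that $M_0$ has Lie type $(1,1)$ fails there.
\item Case (b$''$) is not treated at all.
\end{itemize}

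In the split case your instinct is sound. A generator $e_2+\epsilon e_1$, with a suitable $\epsilon$ supported on one idempotent of $R_0$, does produce $gM$, where $g\in\Aut_{\mathrm{DM}}(N)$ conjugates the paper's action to your case-(c) action. But this still has to be found and verified, and the non-split cases need separate work in your framework, whereas the paper's single lattice handles all cases at once.
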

\begin{proof}
We construct a full $\Z_{p^2}$-lattice inside
$N$ that is stable under $\sfF$, $\sfV$, and $\iota_p^0(\O_p)$,  but not under  $\zeta$, ensuring that it is not superspecial by Lemma~\ref{lem:ssp-zeta-stable}.
Let $e_1, e_2$ be the $\Q_{p^2}(\zeta)$-basis of $N$ from Lemma~\ref{lem:F-isocrystal-str}.
Write
$R_0\coloneqq \Z_{p^2}[\zeta]$ for the maximal order of
$\Q_{p^2}(\zeta)\coloneqq \Q_{p^2}\otimes_\Q \Q(\zeta)$, and $R\coloneqq \Z_{p^2}[p\zeta]$ for its
$\Z_{p^2}$-suborder of conductor $p$.
Consider the $R$-lattice $M\coloneqq Re_1\oplus R_0e_2 \subseteq N$.
It is clearly $\sigma$-stable.
 Moreover,  $\zeta M\neq M$ since
$\zeta\not\in R$.  A direct calculation shows that
\begin{equation}
  \label{eq:4}
  \End_R(M)=\begin{bmatrix}
    R & pR_0 \\ R_0 & R_0
  \end{bmatrix}=\left\{  \begin{bmatrix}
    a & pb \\ c & d
  \end{bmatrix} \, \middle \vert\, a\in R, \, b,  c, d \in R_0  \right\}.
\end{equation}
From the matrix description of $\sfF$ and $\sfV$ in \eqref{eq:2}, it follows that  $M$ is stable under both $\sfF$ and $\sfV$.
Hence $M$ is a Dieudonn\'e module of full rank in $N$.  If $M$ were superspecial, Lemma~\ref{lem:ssp-zeta-stable} would imply
$\zeta M=M$, a contradiction.
Thus $M$ is not superspecial.

To equip $M$ with an $\calO_p$-action, we compute its  endomorphism
ring:
\begin{equation}
  \label{eq:5}
 \End_{\mathrm{DM}}(M)=\left\{  \begin{bmatrix}
    a & p\zeta c^\sigma \\ c & a^\sigma
  \end{bmatrix} \, \middle \vert\, a\in R, \,  c\in R_0  \right\}.
\end{equation}
Since $\Q_{p^2}(\zeta)/\Q_p(\zeta)$ is  \emph{unramified} quadratic, there exists $c_0\in R_0^\times$ such that
$c_0c_0^\sigma=-\zeta^{-1}$.
Define  $\varpi\coloneqq
\begin{bmatrix}
  0 & p\zeta c_0^\sigma \\ c_0 & 0
\end{bmatrix}\in \End_{\mathrm{DM}}(M)$.  Then
\[\varpi^2= \begin{bmatrix}
  -p & 0 \\ 0 & -p
\end{bmatrix},\qquad \varpi\begin{bmatrix}
  a & 0 \\ 0 & a^\sigma
\end{bmatrix} \varpi^{-1}=\begin{bmatrix}
a^\sigma   & 0 \\ 0 & a
\end{bmatrix}, \qquad \forall a\in R. \]
Recall that $\calO_p$ admits a  presentation $\O_p=\Z_{p^2}\oplus\Z_{p^2}\Pi$ with relations in  \eqref{eq:O_p}.
Thus we obtain  an embedding  $\psi: \calO_p\hookrightarrow \End_{\mathrm{DM}}(M)$ defined by
\[\Pi\mapsto \varpi=\begin{bmatrix}
  0 & p\zeta c_0^\sigma \\ c_0 & 0
\end{bmatrix}, \qquad a\mapsto \begin{bmatrix}
  a & 0 \\ 0 & a^\sigma
\end{bmatrix}, \qquad \forall a\in \Z_{p^2}. \]
 Let $\bar{e}_1, \bar{e}_2$ denote  the images of $e_1, e_2$ in $\Lie(M)=M/\sfV M$.
 Then
$\Lie(M)=\Lie(M)^0\oplus \Lie(M)^1$, where  $\Lie(M)^0=(R/pR_0)\bar{e}_1$ and $\Lie(M)^1=(R_0/R)\bar{e}_2$.
Each summand has dimension one  over $\F_{p^2}$, so $(M, \psi)$ has  Lie type $(1,1)$.

Finally,  $\psi$ induces
$\psi^0: B_p\hookrightarrow \End_{\mathrm{DM}}(N)$.
By Lemma~\ref{lem:F-isocrystal-str}, there exists $g\in \Aut_{\mathrm{DM}}(N)$ such that $\iota_p^0=g \psi^0 g^{-1}$.
Setting  $M'\coloneqq gM$, we obtain  a  non-superspecial     $\iota_p^0(\calO_p)$-stable Dieudonn\'e module of full rank in $N$ with Lie type $(1,1)$, as desired.
\end{proof}

\begin{cor}
   Keep the assumption that $p\mid \Delta$. We have  $X^{\mathrm{sp}}=X(\F_{p^2})$  if and only if there exist two  prime divisors $\ell, \ell'$ of $\Delta$ (not necessarily distinct) such that both the Kronecker symbols $\left(\frac{-3}{\ell}\right)$ and $\left(\frac{-4}{\ell'}\right)$ take the value $1$,  subject to the following additional constraints depending on the residue of $p$ modulo $12$:
   \begin{enumerate}[(i)]
       \item case $p=2, 3$ or $p\equiv 1, 11\pmod{12}$: $\ell\neq p$ and $\ell'\neq p$;
       \item case $p\equiv 5\pmod{12}$: $\ell'\neq p$;
       \item case $p\equiv 7\pmod{12}$: $\ell\neq p$.
   \end{enumerate}
\end{cor}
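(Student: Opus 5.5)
The plan is to reduce the equality $X^{\rm sp}=X(\F_{p^2})$ to a statement about Frobenius Weil numbers over $\F_{p^2}$, and then translate the resulting condition into the Kronecker-symbol form of the corollary. By Lemma~\ref{lm:fomfod}, every point of $X(\F_{p^2})$ is represented by an $O_B$-abelian surface $(A,\iota)$ over $\F_{p^2}$ satisfying the determinant condition. Since $p\mid\Delta$, Lemma~\ref{lem:odd-degree-bdred} shows that $A$ is supersingular. Proposition~\ref{prop:ssQMFq} with $a=2$ then says that $A\sim A_\pi^{2/d(\pi)}$ for some $\pi=\pm p\zeta_n$ with $n\in\{1,3,4\}$. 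Hence $X^{\rm sp}=X(\F_{p^2})$ holds if and only if no such $(A,\iota)$ is non-superspecial.

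\textbf{Step 1 (the case $n=1$).} If $\pi_A=\pm p$, then $\pi_A$ acts on $M=M(A)$ as $\sfV^2$, so $\sfV^2M=\pm pM=pM$. By Theorem~\ref{thm:sp}, $A$ is superspecial. Thus Weil numbers with $n=1$ never produce non-superspecial points.

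\textbf{Step 2 (the cases $n=3,4$).} Suppose $\pi=\pm p\zeta_n$ with $n\in\{3,4\}$ occurs for some $O_B$-abelian surface over $\F_{p^2}$. By Theorem~\ref{thm:Fq-pts} and Proposition~\ref{prop:move-to-sp}, this happens exactly when condition (1)(ii) of Theorem~\ref{thm:Fq-pts} holds for that $n$. In that case there is a QM abelian surface over $\F_{p^2}$ isogenous to $A_\pi^{2/d(\pi)}$, so Proposition~\ref{prop:ss-but-nonssp} applies and gives a supersingular, non-superspecial point, i.e.\ $X^{\rm sp}\subsetneq X(\F_{p^2})$. Combining with Step 1, we obtain
\[
X^{\rm sp}=X(\F_{p^2}) \iff \text{condition (1)(ii) of Theorem~\ref{thm:Fq-pts} fails for both } n=3 \text{ and } n=4 .
\]
The sign of $\pi$ plays no role, since the conditions in Theorem~\ref{thm:Fq-pts} do not depend on it.

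\textbf{Step 3 (translation).} Let $K_n=\Q(\sqrt{-n})$ with $n\in\{3,4\}$; here $\Q(\sqrt{-4})=\Q(i)$, and a prime $\ell$ splits in $K_n$ if and only if $\left(\frac{-n}{\ell}\right)=1$. Under the standing assumption $p\mid\Delta$, I claim condition (1)(ii) for $n$ is equivalent to: no prime $\ell\mid\Delta$ with $\ell\ne p$ splits in $K_n$.
\begin{itemize}
\item If $p\equiv1\pmod n$, this is literally subcase (b), because $p\mid\Delta$ is automatic.
\item If $p\not\equiv1\pmod n$, subcase (a) requires that no $\ell\mid\Delta$ splits in $K_n$. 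But $p$ itself does not split in $K_n$ (for $p\ne2,3$ this is $\left(\frac{-n}{p}\right)\ne1$; for $p=2,3$ the symbol is $0$ or $-1$). So excluding $\ell=p$ changes nothing.
\end{itemize}
Consequently, $X^{\rm sp}=X(\F_{p^2})$ if and only if there exist primes $\ell,\ell'\mid\Delta$, both different from $p$, with $\left(\frac{-3}{\ell}\right)=\left(\frac{-4}{\ell'}\right)=1$.

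It remains to check that this matches the case list in the corollary. The point is that the restriction $\ell\ne p$ (resp.\ $\ell'\ne p$) can only matter when $\left(\frac{-3}{p}\right)=1$ (resp.\ $\left(\frac{-4}{p}\right)=1$).
\begin{itemize}
\item $p\equiv1\pmod{12}$: both symbols at $p$ equal $1$, so both constraints are needed.
\item $p\equiv5\pmod{12}$: only $\left(\frac{-4}{p}\right)=1$, so only $\ell'\ne p$ is needed.
\item $p\equiv7\pmod{12}$: only $\left(\frac{-3}{p}\right)=1$, so only $\ell\ne p$ is needed.
\item $p\equiv11\pmod{12}$ or $p=2,3$: neither symbol at $p$ equals $1$, so the constraints are vacuous and may be stated as in (i).
\end{itemize}
This bookkeeping is routine.

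\textbf{Main obstacle.} The only delicate point is Step 2: making sure that, for $a=2$, the Weil numbers with $n=3,4$ arising from $O_B$-abelian surfaces are governed precisely by condition (1)(ii) of Theorem~\ref{thm:Fq-pts}, covering both subcases (a) and (b). This is the link that lets the existence statement of Theorem~\ref{thm:Fq-pts} feed into the hypothesis of Proposition~\ref{prop:ss-but-nonssp}, as that proposition's own statement indicates.
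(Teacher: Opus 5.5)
Your proposal is correct and follows the paper's own argument. Necessity comes from Proposition~\ref{prop:ss-but-nonssp}, whose hypothesis is exactly condition (1.ii) of Theorem~\ref{thm:Fq-pts} for $n=3$ or $4$. For sufficiency, when that condition fails for both $n$, every $O_B$-abelian surface over $\F_{p^2}$ has $\pi_A=\pm p$, so $\sfV^2M=pM$; this is the converse half of Lemma~\ref{lem:ssp-zeta-stable} with $\zeta=\pm1$.

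Your Step~3 carries out the translation into Kronecker symbols that the paper leaves to the reader, and it is accurate. It uses only that $\Delta$ is squarefree and that $p$ does not split in $\Q(\sqrt{-n})$ when $p\not\equiv 1\pmod n$.
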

\begin{proof}
    We leave it to the interested reader to check that the proposed characterization of the equality $X^\sp=X(\F_{p^2})$ here is precisely the negation of the assumption in Proposition~\ref{prop:ss-but-nonssp}, whence the necessity is immediate. Conversely, let $(A, \iota)$ be a (supersingular) $O_B$-abelian surface over $\F_{p^2}$.   As condition (1.ii) of Theorem~\ref{thm:Fq-pts} fails by our assumption, $A$ is isogenous to $A_\pi^2$ with $\pi=\pm p$. We conclude from Lemma~\ref{lem:ssp-zeta-stable} that $A$ is superspecial.
\end{proof}

\section{Conjugacy classes of Embeddings $\calO_p\hookrightarrow \Mat_2(\calO_p)$}\label{ss:conj}

Keep $B$, $\Delta$, and $O_B$ as in Section \ref{ss:exist}, and let $k$ be an algebraically closed field of characteristic $p$ with a fixed inclusion $\F_{p^2}\subseteq k$.  For the rest of this paper, we focus on the classification and counting of the isomorphism classes of \emph{superspecial} $O_B$-abelian surfaces over $k$.
\subsection{Classification of superspecial $O_B$-abelian $k$-surfaces into genera}
Fix a supersingular elliptic curve $E$ over $k$. We have seen in the introduction that $\End^0(E)$ coincides with the unique quaternion $\Q$-algebra $D=D_{p, \infty}$ ramified exactly at $p$ and $\infty$, and its endomorphism ring $O_D\coloneqq \End(E)$ is a maximal order in $D$.    From the Deligne-Ogus-Shioda Theorem, every superspecial abelian surface $A$ over $k$ is isomorphic to $E^2$, and hence $\End(A)\simeq\Mat_2(O_D)$.
Thus each embedding $\iota: O_B\hookrightarrow \End(A)$ can be described more concretely as an embedding $O_B\hookrightarrow \Mat_2(O_D)$. A different identification of $A$ with $E^2$ yields another embedding $O_B\hookrightarrow \Mat_2(O_D)$ that is $\GL_2(O_D)$-conjugate to the original one. In particular, we get the following lemma; see \cite[p.~38]{Ribet-Bimodules}.

\begin{lemma}\label{lem:isom-conj-4.1}
    There is a bijection between the following two sets
\begin{equation}\label{eq:x4.1}
 \left\{\parbox{4.2cm}{Isomorphism
classes of superspecial $O_B$-abelian surfaces over $k$}\right\} \longleftrightarrow  \left\{\parbox{4cm}{$\GL_2(O_D)$-conjugacy classes of embeddings $O_B\hookrightarrow
\Mat_2(O_D)$}\right\}.
\end{equation}
\end{lemma}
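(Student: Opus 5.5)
We define the map from left to right by choosing, for each superspecial $O_B$-abelian surface $(A,\iota)$ over $k$, an isomorphism $f: E^2\isoto A$ of abelian surfaces. Such an $f$ exists by the Deligne--Ogus--Shioda theorem, since $A$ is superspecial of dimension $2>1$. We then send $(A,\iota)$ to the embedding
\[\iota_f\colon O_B\hookrightarrow \End(E^2)=\Mat_2(O_D),\qquad b\mapsto f^{-1}\circ\iota(b)\circ f.\]
Here we use the identification $\End(E^2)=\Mat_2(\End(E))=\Mat_2(O_D)$. The map $\iota_f$ is an injective ring homomorphism because $\iota$ is.

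The first step is to check that this is well defined on isomorphism classes. If $f'\colon E^2\isoto A$ is a second choice, then $g\coloneqq f^{-1}f'\in\Aut(E^2)=\GL_2(O_D)$ and $\iota_{f'}=g^{-1}\iota_f g$. If $h\colon(A,\iota)\isoto(A',\iota')$ is an $O_B$-linear isomorphism, then $h\circ f$ is an admissible choice for $A'$ and gives $\iota'_{hf}=\iota_f$. Hence the $\GL_2(O_D)$-conjugacy class of $\iota_f$ depends only on the isomorphism class of $(A,\iota)$.

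Surjectivity is immediate: an embedding $\jmath\colon O_B\hookrightarrow\Mat_2(O_D)$ is the image of $(E^2,\jmath)$, which is a superspecial $O_B$-abelian surface. For injectivity, suppose $\iota_f=g^{-1}\iota'_{f'}g$ with $g\in\GL_2(O_D)$. Then $h\coloneqq f' g f^{-1}\colon A\isoto A'$ is an isomorphism, and the relation unwinds to $h\circ\iota(b)=\iota'(b)\circ h$ for all $b\in O_B$. So $h$ is an isomorphism of $O_B$-abelian surfaces.

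None of these steps is a genuine obstacle. The only input beyond formal manipulation is the existence of $f$, i.e.\ that every superspecial abelian surface over the algebraically closed field $k$ is isomorphic to $E^2$, together with $\End(E^2)=\Mat_2(O_D)$. Both facts were recalled just before the statement. The statement makes no reference to the determinant condition, so we work with all superspecial $O_B$-abelian surfaces, and the bijection restricts to any subclass defined by a condition invariant under isomorphism.
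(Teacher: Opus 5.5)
Your proof is correct and follows the same route as the paper. The paper only sketches the argument in the paragraph before the lemma: identify $A\simeq E^2$ via Deligne--Ogus--Shioda, transport $\iota$ to an embedding into $\Mat_2(O_D)$, note that changing the identification conjugates by $\GL_2(O_D)$, and cite Ribet. You carry out that same argument and write out the well-definedness, surjectivity and injectivity checks explicitly.
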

By the Skolem-Noether theorem, all embeddings $B\hookrightarrow \Mat_2(D)$ form a single $\GL_2(D)$-conjugacy class.  Consequently, any two superspecial $O_B$-abelian surfaces over $k$ are $O_B$-linearly isogenous.  Lemma~\ref{lem:isom-conj-4.1} shows that, to classify them up to $O_B$-linear isomorphism, we need to treat the integral version of the Skolem-Noether problem on the right hand side of \eqref{eq:x4.1}.
As usual, we take $\ell$-adic completions and treat the analogous local problem first.  Correspondingly, for each superspecial $O_B$-abelian surface $(A, \iota)$ over $k$,
we study the induced $O_B\otimes\Z_\ell$-action on the $\ell$-divisible group $A(\ell)\coloneqq A[\ell^\infty]$ for every prime $\ell$ (including $\ell=p$).

\begin{defn}\label{defn:genus}
 Two  superspecial $O_B$-abelian surfaces $(A, \iota)$ and $(A', \iota')$  over $k$ are said to \emph{belong to the same genus} if $(A(\ell), \iota_\ell)$ is isomorphic to $(A'(\ell), \iota'_\ell)$ for every prime $\ell$ (including $\ell=p$). Equivalently, $(A, \iota)$ and $(A', \iota')$ belong to the same genus if and only if both of the following conditions hold:
 \begin{enumerate}[(i)]
     \item the $O_B\otimes\Z_\ell$-Tate modules $(T_\ell(A), \iota_\ell)$ and $(T_\ell(A'), \iota_\ell')$ are isomorphic for every prime $\ell\neq p$; and
     \item the covariant $O_B\otimes\Z_p$-Dieudonn\'e modules $(M(A), \iota_p)$ and $(M(A'), \iota_p')$ are isomorphic.
 \end{enumerate}
\end{defn}
See \cite[Definition~5.5]{xue-yu:counting-av} for the generalization of the \emph{genus} concept to the PEL-type setting, and  see also \cite{xue-yang-yu:ECNF, xue-yu:ppas} for applications of it to the counting of (principally polarized) abelian surfaces with real Weil number $\pm \sqrt{p}$.

We now classify superspecial $O_B$-abelian surfaces  $(A, \iota)$ over $k$ into genera. For each prime $\ell$ distinct from $p$, the Tate module $T_\ell(A)$ is a free left module of rank one over the maximal quaternion $\Z_\ell$-order $O_B\otimes\Z_\ell$ since $\rank_{\Z_\ell}T_\ell(A)=4$.  In particular, there is a unique isomorphism class of such $O_B\otimes\Z_\ell$-Tate modules.
From the $p$-part of Tate's theorem \cite[Theorem~A.1.1.1]{Chai-Conrad-Oort}, we have $\End_{\mathrm{DM}}(M(A))\simeq \Mat_2(O_D\otimes\Z_p)$.
The same argument as that of Lemma~\ref{lem:isom-conj-4.1} shows that the classification of  superspecial $O_B\otimes\Z_p$-Dieudonn\'e modules of $W(k)$-rank $4$ is equivalent to the classification of $\GL_2(O_D\otimes\Z_p)$-conjugacy classes of embeddings $O_B\otimes \Z_p\hookrightarrow \Mat_2(O_D\otimes\Z_p)$.
We separate the classification  into two cases depending on whether $p$ is split in $B$ or not. First, suppose that
$p\nmid \Delta$. In this case, $B\otimes \Q_p\simeq \Mat_2(\Q_p)$, and $O_B\otimes \Z_p \simeq \Mat_2(\Z_p)$.
By Morita  equivalence and the maximality of $O_D\otimes \Z_p$, there is a unique  $\GL_2(O_D\otimes\Z_p)$-conjugacy class of embeddings $\Mat_2(\Z_p)\hookrightarrow \Mat_2(O_D\otimes\Z_p)$.
Next, suppose that $p\mid \Delta$. In this case, $B_p$ is the  quaternion division $\Q_p$-algebra, and $\calO_p\coloneqq O_B\otimes\Z_p$ is the unique maximal order in $B_p$. On the other hand, $O_D\otimes\Z_p$ coincides with the unique maximal order in the quaternion division $\Q_p$-algebra $D_p\coloneqq D\otimes_{\Q} \Q_p$ as well.
For the rest of the paper, we fix an identification\footnote{In \cite[\S2]{Ribet-Bimodules}, Ribet defines  \emph{the orientations of $O_B$ and $O_D$ at $p$} to be fixed choices of residue maps $O_B\to \F_{p^2}$ and $O_D\to \F_{p^2}$ at $p$ respectively. As explained in  \cite[p.~37]{Ribet-Bimodules}, the specification of the orientations of $O_B$ and $O_D$ at $p$ determines an isomorphism $O_B\otimes \Z_p\simeq O_D\otimes \Z_p$ uniquely up to $(O_B\otimes \Z_p)^\times$-conjugacy. Since we are only concerned with $\GL_2(O_D\otimes\Z_p)$-conjugacy classes of embeddings $O_B\otimes \Z_p\hookrightarrow \Mat_2(O_D\otimes\Z_p)$, this has the same effect as fixing an identification $O_B\otimes \Z_p=O_D\otimes \Z_p$ as in  \eqref{eq:orientation-ident}.}
\begin{equation}\label{eq:orientation-ident}
    O_D\otimes \Z_p=\calO_p\coloneqq O_B\otimes \Z_p
\end{equation}
whenever $p\mid \Delta$. This reduces the  classification of $\GL_2(O_D\otimes\Z_p)$-conjugacy classes of embeddings $O_B\otimes \Z_p\hookrightarrow \Mat_2(O_D\otimes\Z_p)$ to that of $\GL_2(\calO_p)$-conjugacy classes of embeddings $\calO_p\hookrightarrow \Mat_2(\calO_p)$. Given a superspecial $O_B$-abelian surface $(A, \iota)$ over $k$, we write $\varphi_{(A, \iota)}: \calO_p\hookrightarrow \Mat_2(\calO_p)$ for the embedding thus obtained. It is uniquely determined up to $\GL_2(\calO_p)$-conjugation by the above discussion. In summary, we have obtained the following result.

\begin{prop}\label{prop:genus}
    (1) If $p\nmid \Delta$, all superspecial $O_B$-abelian surfaces $(A, \iota)$ over $k$ belong to the same genus. \\
    (2) If $p\mid \Delta$, then the genus of a  superspecial $O_B$-abelian $k$-surface $(A, \iota)$ is uniquely determined by the $\GL_2(\calO_p)$-conjugacy class of the associated embedding $\varphi_{(A, \iota)}: \calO_p\hookrightarrow \Mat_2(\calO_p)$.
\end{prop}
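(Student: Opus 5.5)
The proof assembles the local discussion preceding the statement, working prime by prime with the characterization of genus in Definition~\ref{defn:genus}. By definition, two superspecial $O_B$-abelian surfaces belong to the same genus exactly when their $O_B\otimes\Z_\ell$-Tate modules agree for all $\ell\neq p$ and their $O_B\otimes\Z_p$-Dieudonn\'e modules agree. So I will show two things: the $\ell$-adic data for $\ell\neq p$ carry no information, and the $p$-adic data are governed by conjugacy classes of embeddings.

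First, take $\ell\neq p$. The Tate module $T_\ell(A)$ is a $\Z_\ell$-lattice of rank $4=2\dim A$, and it is a faithful left module over the maximal order $O_B\otimes\Z_\ell\simeq\Mat_2(\Z_\ell)$ when $\ell\nmid\Delta$, or over the maximal order of the division algebra when $\ell\mid\Delta$. In the split case, Morita equivalence shows that a $\Mat_2(\Z_\ell)$-lattice of $\Z_\ell$-rank $4$ is $(\Z_\ell^2)^{\oplus 2}$, which is free of rank one. In the ramified case, the maximal order is a noncommutative discrete valuation ring, so torsion-free finitely generated modules are free; rank $4$ then forces rank one. Either way there is a unique isomorphism class, so condition (i) of Definition~\ref{defn:genus} holds automatically for any two surfaces.

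Second, take $\ell=p$. By Theorem~\ref{thm:sp} and Deligne--Ogus--Shioda, $M(A)\simeq M(E)^2$, and the $p$-part of Tate's theorem gives $\End_{\rm DM}(M(A))\simeq\Mat_2(O_D\otimes\Z_p)$. Choosing such an identification turns the $O_B\otimes\Z_p$-action into an embedding $O_B\otimes\Z_p\hookrightarrow\Mat_2(O_D\otimes\Z_p)$. Changing the identification conjugates this embedding by $\GL_2(O_D\otimes\Z_p)$. An $O_B\otimes\Z_p$-linear isomorphism $M(A)\simeq M(A')$ corresponds to an element of $\Aut_{\rm DM}(M(E)^2)=\GL_2(O_D\otimes\Z_p)$ intertwining the two embeddings, and conversely. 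This is the local analogue of Lemma~\ref{lem:isom-conj-4.1}, and it says that the isomorphism class of the $p$-adic data is the same as the conjugacy class of the embedding.

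Now split on whether $p$ divides $\Delta$. If $p\nmid\Delta$, the embeddings in question are $\Mat_2(\Z_p)\hookrightarrow\Mat_2(O_D\otimes\Z_p)$. Via Morita equivalence, such an embedding corresponds to an $O_D\otimes\Z_p$-module $L$ with $L^2\simeq(O_D\otimes\Z_p)^2$. Such an $L$ is a torsion-free module over the discrete valuation ring $O_D\otimes\Z_p$, hence free, and comparing ranks shows it has rank one. So there is a single conjugacy class, which proves (1).

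If $p\mid\Delta$, the fixed identification \eqref{eq:orientation-ident} turns the $p$-adic data into the $\GL_2(\calO_p)$-class of $\varphi_{(A,\iota)}$, which proves (2).

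The only delicate point is checking that the Dieudonn\'e-module isomorphisms correspond exactly to $\GL_2$-conjugation. I will handle it by fixing the isomorphism $M(A)\simeq M(E)^2$ and tracking how it transports the action. The rest is bookkeeping.
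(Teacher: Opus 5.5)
Your proposal is correct and follows essentially the same route as the paper. You use rank-one freeness of $T_\ell(A)$ over the maximal order $O_B\otimes\Z_\ell$ for $\ell\neq p$, and you reduce the $p$-adic data to $\GL_2(O_D\otimes\Z_p)$-conjugacy classes of embeddings exactly as in Lemma~\ref{lem:isom-conj-4.1}; then Morita equivalence with maximality of $O_D\otimes\Z_p$ handles $p\nmid\Delta$, and the identification \eqref{eq:orientation-ident} handles $p\mid\Delta$, with you simply spelling out the Morita/valuation-ring freeness arguments that the paper only cites.
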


To emphasize the division property, for the rest of this section we write $D_p$ for the unique quaternion division $\Q_p$-algebra, whose maximal order is still denoted by $\calO_p$ thanks to \eqref{eq:orientation-ident}.
 As in  \eqref{eq:O_p}, one may write
  \begin{equation}\label{eq:6.1}
\calO_p=\Z_{p^2} \oplus \Z_{p^2}\Pi, \quad \Pi^2=-p, \quad \Pi a=a^{\sigma} \Pi \quad {\text{for all}} \quad a\in \Z_{p^2}.
\end{equation}
We derive a complete classification of $\GL_2(\calO_p)$-conjugacy classes of embeddings $\{\varphi:
\calO_p\hookrightarrow \Mat_2(\calO_p)\}$. The main results of this section, namely Theorem~\ref{thm:5-emb} and Proposition~\ref{prop:Lie},  will be applied in Section \ref{ss:as} below for the study of  the bad reduction of Shimura curves. The reader may wish to skip their proofs on a first reading.

\begin{thm}\label{thm:5-emb}
There are exactly \emph{five} $\GL_2(\calO_p)$-conjugacy classes of
embeddings $\calO_p\hookrightarrow \Mat_2(\calO_p)$,  represented by the following maps:
\begin{alignat*}{4}
 \varphi_1:&\qquad& \Pi\mapsto&   \begin{bmatrix}
    0 & -p \\ 1 & 0
                                \end{bmatrix}, &  \qquad  a&\mapsto
  \begin{bmatrix}
    a & 0 \\ 0 & a^\sigma
  \end{bmatrix}, &\quad \forall &a\in \Z_{p^2} ;\\
 \varphi_2:&\qquad&  \Pi\mapsto&   \begin{bmatrix}
    0 & -p \\ 1 & 0
                                              \end{bmatrix}, &  \qquad a&\mapsto
  \begin{bmatrix}
    a^\sigma & 0 \\ 0 & a
  \end{bmatrix},&\qquad \forall &a\in \Z_{p^2} ;\\
 \varphi_3:&\qquad&  \alpha\mapsto&   \begin{bmatrix}
    \alpha & 0 \\ 0 & \Pi^{-1} \alpha \Pi
                                      \end{bmatrix}, & &&\qquad\forall &\alpha\in \calO_p;\\
 \varphi_4:&\qquad&  \alpha\mapsto&   \begin{bmatrix}
    \Pi^{-1}\alpha \Pi  & 0 \\ 0 & \Pi^{-1} \alpha \Pi  \end{bmatrix},
  & &&\qquad\forall &\alpha\in \calO_p;\\
                       \varphi_5:&\qquad&  \alpha\mapsto&   \begin{bmatrix}
    \alpha & 0 \\ 0 &  \alpha  \end{bmatrix}, & &&\qquad\forall &\alpha\in \calO_p.
\end{alignat*}
\end{thm}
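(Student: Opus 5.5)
Embeddings $\varphi:\calO_p\hookrightarrow\Mat_2(\calO_p)$ up to $\GL_2(\calO_p)$-conjugacy correspond to isomorphism classes of $(\calO_p,\calO_p)$-bimodules $L$ that are free of rank $2$ as right $\calO_p$-modules. The left action comes from $\varphi$, and the right action is the obvious one on column vectors $\calO_p^2$. The equivalent description we will actually use is in terms of left modules over $R\coloneqq\calO_p\otimes_{\Z_p}\calO_p^{\rm opp}$ that are $\Z_p$-free of rank $8$ and free of rank $2$ over the right factor.

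\textbf{Step 1: reduce to a hereditary order.} By Skolem--Noether, every $\varphi$ extends $\Q_p$-linearly to a conjugate of a fixed embedding $D_p\hookrightarrow\Mat_2(D_p)$. So $L\otimes\Q_p$ is the unique simple module over $D_p\otimes D_p^{\rm opp}\simeq\Mat_4(\Q_p)$, and $L$ is an $R$-stable lattice in it. I first restrict to the commutative subring $\Z_{p^2}\otimes\Z_{p^2}\simeq\Z_{p^2}\times\Z_{p^2}$. This splits $L=L^0\oplus L^1$ according to whether the left and right actions of $\Z_{p^2}$ agree or differ by $\sigma$. Left multiplication by $\Pi$ and right multiplication by $\Pi$ each interchange $L^0$ and $L^1$. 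Their composite $\Pi_l\Pi_r$ preserves each $L^i$ and squares to $p^2$. Thus $L$ becomes a module over an explicit order of $\Mat_2$ of a local ring, with generators $\Pi_l,\Pi_r$ satisfying $\Pi_l^2=\Pi_r^2=-p$ and commuting. This is essentially Ribet's hereditary-order picture, but without imposing admissibility.

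\textbf{Step 2: classify the chains.} In the basis given by the decomposition, I would normalize $L^0$ to be $\Z_{p^2}$-free of rank $2$. Then $\Pi_l$ and $\Pi_r$ map $L^0$ into $L^1$, and $L^1$ into $L^0$. The lattice $L$ is then determined by the relative positions of $\Pi_lL^0$, $\Pi_rL^0$ and $L^1$, that is, by two commuting involution-type maps. The invariants are the indices $[L^1:\Pi_lL^0]$ and $[L^1:\Pi_rL^0]$, together with the index of $\Pi_lL^0+\Pi_rL^0$ in $L^1$. Freeness over the right $\calO_p$-factor forces $\Pi_rL^0$ to have index $p^2$ in $L^1$ and $\Pi_rL^1$ to have index $p^2$ in $L^0$.

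I expect this to leave only finitely many configurations. They should be distinguished by:
\begin{itemize}
\item the Lie type $(\dim L^0/\Pi_rL^1,\ \dim L^1/\Pi_rL^0)$ of the left action on $L/L\Pi$, which can be $(1,1)$, $(2,0)$ or $(0,2)$;
\item a "critical index" measuring whether $\Pi_lL=L\Pi$, and whether $\Pi_l$ induces zero or an isomorphism between the components of $L/L\Pi$.
\end{itemize}
Computing these invariants for $\varphi_1,\dots,\varphi_5$ should give five distinct values. I expect $\varphi_1$ and $\varphi_2$ to have Lie type $(1,1)$ with $\Pi_l$ acting nontrivially, $\varphi_3$ to have Lie type $(1,1)$ with $\Pi_l$ vanishing modulo $L\Pi$, and $\varphi_4,\varphi_5$ to have Lie types $(0,2)$ and $(2,0)$. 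This shows the five classes are pairwise non-conjugate, because these invariants are preserved by $\GL_2(\calO_p)$-conjugation.

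\textbf{Step 3: exhaustion.} I would show that every $L$ is isomorphic to one of the five models. The plan is to choose bases adapted to the chain $L\supseteq\Pi_lL+L\Pi\supseteq\cdots$ and use the unit group $\GL_2(\calO_p)$ to normalize the matrix of $\varphi(\Pi)$ to one of $\bigl[\begin{smallmatrix}0&-p\\1&0\end{smallmatrix}\bigr]$, $\diag(\Pi,\Pi)$, or similar.

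Concretely:
\begin{itemize}
\item When $\varphi(\Z_{p^2})$ is conjugate to $a\mapsto\diag(a,a)$, centralizer considerations inside $\Mat_2(\calO_p)$ should reduce $\varphi(\Pi)$ to a unit multiple of $\diag(\Pi,\Pi)$, giving $\varphi_5$.
\item When $\varphi(\Z_{p^2})$ is conjugate to $a\mapsto\diag(a^\sigma,a^\sigma)$, the same argument gives $\varphi_4$.
\item When $\varphi(\Z_{p^2})$ is conjugate to $a\mapsto\diag(a,a^\sigma)$, the image $\varphi(\Pi)$ has the shape $\bigl[\begin{smallmatrix}x\Pi&y\\z&w\Pi\end{smallmatrix}\bigr]$ with entries in $\Z_{p^2}$ and $\Z_{p^2}\Pi$ suitably placed. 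The valuations of the off-diagonal entries then separate $\varphi_1,\varphi_2$ from $\varphi_3$.
\end{itemize}
The first task is therefore to classify $\varphi|_{\Z_{p^2}}$ up to conjugacy. This is done by reducing modulo $\Pi$ to $\Z_{p^2}$-module structures on $\F_{p^2}^2$ and lifting with Hensel's lemma, which yields the three types above.

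The main obstacle is the last case, where one must solve norm equations of the form $uu^\sigma=-\epsilon$ in $\Z_{p^2}^\times$ to rescale entries. This is possible because the norm map on unramified units is surjective, as was already used in Lemma~\ref{lem:const-dieu-mod}. One must also verify carefully that no further classes arise when $y$ or $z$ is divisible by $p$. I expect freeness of $L$ as a right module, together with $\varphi(\Pi)^2=-p$, to force one of the off-diagonal entries to be a unit exactly in the cases $\varphi_1$ and $\varphi_2$.
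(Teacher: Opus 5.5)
\paragraph{Distinctness.} Your distinctness argument is fine and is the paper's Proposition~\ref{prop:Lie} in disguise. $L/L\Pi$ is the paper's $M/\sfV M$, and Lie type together with the critical index \emph{as a set} gives five distinct values. Note that $\{1\}$ versus $\{0\}$ is what separates $\varphi_1$ from $\varphi_2$; ``$\Pi_l$ acts nontrivially'' alone does not.

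One claim in Step~2 is false: right freeness does not force $[L^1:\Pi_rL^0]=[L^0:\Pi_rL^1]=p^2$. For $\varphi_5$ one has $L^0=\Z_{p^2}^{2}$ and $L^1=(\Z_{p^2}\Pi)^{2}$, so $L^0\Pi=L^1$ and $L^1\Pi=pL^0$. Freeness only gives $\dim_{\F_{p^2}}L/L\Pi=2$, and your own list of Lie types contradicts the claim.

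\paragraph{The gap: exhaustion in Step~3.} Step~3 is where ``exactly five'' is decided, and as written you have only ``at least five''.

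In the type $a\mapsto\diag(a,a)$ one finds $\varphi(\Pi)=X\diag(\Pi,\Pi)$ with $X\in\GL_2(\Z_{p^2})$ and $XX^\sigma=1$. The centralizer acts by $X\mapsto gX(g^\sigma)^{-1}$. So reaching $\varphi_5$ is exactly the statement $H^1(\Gal(\Q_{p^2}/\Q_p),\GL_2(\Z_{p^2}))=1$; ``centralizer considerations'' do not supply it.

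In the type $\diag(a,a^\sigma)$, write $\varphi(\Pi)=\begin{bmatrix}x_0\Pi&y\\ z&w_0\Pi\end{bmatrix}$ with $x_0,y,z,w_0\in\Z_{p^2}$. Then $yz=p(\Nr(x_0)-1)$ and $\Nr(x_0)=\Nr(w_0)$.
\begin{itemize}
\item If $z$ is a unit, a diagonal unit makes $z=1$, and then conjugating by $\begin{bmatrix}1&-x_0\Pi\\ 0&1\end{bmatrix}$ gives $\varphi_1$.
\item If $y$ is a unit, the lower-unipotent analogue followed by the swap $\begin{bmatrix}0&1\\1&0\end{bmatrix}$ gives $\varphi_2$.
\end{itemize}
So the non-admissible part is easy.

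The hard case is $y,z\in p\Z_{p^2}$, which is exactly the admissible case and must all collapse to $\varphi_3$. Here $x_0,w_0$ are units of norm $1+yz/p$, which is generally $\neq 1$. So neither diagonal rescaling nor norm equations $uu^\sigma=-\epsilon$ can reach $\diag(\Pi,\Pi)$. The entries $y,z$ must be removed by non-diagonal units of the centralizer $C$ of $\varphi(\Z_{p^2})$.

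Proving this always succeeds is a nonabelian $H^1$ statement:
\begin{itemize}
\item Write $\varphi(\Pi)=\Psi\diag(\Pi,\Pi)$ with $\Psi\in C^\times$ and $\Psi\tau(\Psi)=1$, where $\tau$ is conjugation by $\diag(\Pi,\Pi)$.
\item Conjugation acts by $\Psi\mapsto g\Psi\tau(g)^{-1}$.
\item Under the explicit isomorphism $C\simeq\calR$ (Eichler of level $p$), $\tau$ becomes entrywise $\sigma$.
\item So you need $H^1(G,\calR^\times)=1$. The paper proves this in Lemma~\ref{lem:h1} via Lang's theorem on the Borel subgroup. Alternatively, invoke Ribet's classification of admissible embeddings (Remark~\ref{rem:adm}).
\end{itemize}

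\paragraph{Comparison with the paper.} With these two vanishing results your normal-form route goes through, and it is a legitimate alternative that produces explicit conjugating elements. The paper avoids normal forms altogether. It parametrizes module structures by pairs $(L,C)$ with $L\in\{L_1,L_2,L_3\}$ and $C\in\scrS(L)$ modulo $\sigma$-conjugation. It then counts $3+1+1$ using the same $H^1$ vanishing together with $|\bbP^1(\F_{p^2})/\GL_2(\Z_p)|=2$, and combines this count with distinctness.
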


\begin{remark}\label{rem:adm}
  Following \cite[\S1]{Ribet-Bimodules}, we say that an embedding $\varphi: \calO_p\hookrightarrow \Mat_n(\calO_p)$ is \emph{admissible} if  $\varphi(\Pi)\in \Mat_n(\calO_p) \Pi$.  Ribet obtains a complete classification of the $\GL_n(\calO_p)$-conjugacy classes of admissible embeddings $\{\varphi: \calO_p\to \Mat_n(\calO_p)\}$ in \cite[Theorem~1.4]{Ribet-Bimodules} for arbitrary $n>0$. When restricted to the case $n=2$, such conjugacy classes are precisely represented by $\varphi_3, \varphi_4$ and  $\varphi_5$ above.

  Theorem~\ref{thm:5-emb} has been generalized from $n=2$ to arbitrary $n>0$ by the second named author and Xiangning Yang in \cite{xue-yang:bimodules} using a completely different method.
\end{remark}

\subsection{Classification via bimodules}
We first prove that there are exactly five conjugacy classes.
This is done by reformulating the problem in terms of  $\calO_p\otimes_{\Z_p}\calO_p$-module structures.
The argument proceeds in several steps and reduces to  Lemmas~\ref{lem:conj-emb-bimodule}, \ref{lem:e20}, and \ref{lem:mod-str-counting}.

\begin{lemma}\label{lem:conj-emb-bimodule}
  There are bijections between the following three sets:
  \begin{align*}
   \left\{\parbox{4cm}{$\GL_2(\calO_p)$-conjugacy classes of embeddings $\calO_p\hookrightarrow
\Mat_2(\calO_p)$}\right\}
  \longleftrightarrow
\left\{\parbox{3.7cm}{Isomorphism
classes of $(\calO_p, \calO_p)$-bimodule structures on  $\Z_p^8$}\right\}
   \longleftrightarrow
\left\{\parbox{3.7cm}{Isomorphism
classes of $\calO_p \otimes_{\Z_p}\calO_p$-module  structures on  $\Z_p^8$}\right\}.
\end{align*}
\end{lemma}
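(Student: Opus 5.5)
The plan is to prove both bijections by unwinding definitions. The main input is that $\calO_p^2$ is a free right $\calO_p$-module with endomorphism ring $\Mat_2(\calO_p)$, and that any left $\calO_p$-module which is free of rank $8$ over $\Z_p$ is free of rank $2$ over $\calO_p$.

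\textbf{First bijection.} Let $\calO_p$ act on $V\coloneqq\calO_p^2$ (column vectors) from the right by scalar multiplication. Then $\End_{\calO_p}(V_{\calO_p})=\Mat_2(\calO_p)$, acting on the left. An embedding $\varphi:\calO_p\hookrightarrow\Mat_2(\calO_p)$ therefore makes $V$ into an $(\calO_p,\calO_p)$-bimodule, with left action through $\varphi$ and right action by scalars, and $V\simeq\Z_p^8$ as a $\Z_p$-module. Two bimodules $V_\varphi$ and $V_{\varphi'}$ are isomorphic exactly when there is a right $\calO_p$-linear automorphism $g\in\GL_2(\calO_p)$ of $V$ with $g\varphi(\alpha)=\varphi'(\alpha)g$ for all $\alpha$. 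So conjugacy classes of embeddings inject into bimodule isomorphism classes.

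For surjectivity, take a bimodule $N$ with $N\simeq\Z_p^8$. As a right $\calO_p$-module it is finitely generated and $\Z_p$-torsion-free. Since $\calO_p$ is a noncommutative DVR (a local PID), every finitely generated torsion-free module over it is free. Rank considerations ($8=4\cdot 2$) then give $N_{\calO_p}\simeq\calO_p^2$. Transporting the left action along this isomorphism yields a homomorphism $\varphi:\calO_p\to\Mat_2(\calO_p)$.

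This map is injective. Its kernel is a two-sided ideal of $\calO_p$, hence either $0$ or contains a power of $p$. The second option is impossible because $N$ is $\Z_p$-torsion-free and the $\Z_p$-structures from the left and right actions agree.

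One point must be checked: a bimodule structure on $\Z_p^8$ should be understood with the $\Z_p$-actions through both sides agreeing. I would state this convention explicitly, as it is implicit in the lemma.

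\textbf{Second bijection.} This is the standard identification of $(R,S)$-bimodules, with central $\Z_p$-action, with left $R\otimes_{\Z_p}S^{\opp}$-modules. So we need an isomorphism $\calO_p^{\opp}\simeq\calO_p$. Here I will use the canonical involution $\alpha\mapsto\bar\alpha=\Tr(\alpha)-\alpha$, which preserves $\calO_p$ and is an anti-automorphism. Given a bimodule $N$, define the action $(\alpha\otimes\beta)\cdot x=\alpha x\bar\beta$. Because the involution is anti-multiplicative, this is an $\calO_p\otimes_{\Z_p}\calO_p$-module structure. The construction is clearly inverse to the obvious reverse construction and respects morphisms.

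\textbf{Expected difficulty.} The only non-formal step is the freeness of $N$ as a right $\calO_p$-module in the first bijection. I would prove it by choosing elements whose images form an $\F_{p^2}$-basis of $N/N\Pi$. Nakayama's lemma then shows they generate $N$, and comparing $\Z_p$-ranks shows there are no relations.
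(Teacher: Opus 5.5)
Your proposal is correct and follows the same route as the paper. Both arguments realize $\Mat_2(\calO_p)$ as the endomorphism ring of the free right module $\calO_p^{\oplus 2}$, use that every $\Z_p$-free rank-$8$ right $\calO_p$-module is free of rank $2$, and pass to $\calO_p\otimes_{\Z_p}\calO_p$-modules via the canonical involution $\calO_p^{\opp}\simeq\calO_p$. The paper simply asserts the freeness from the maximality of $\calO_p$, while you justify it via Nakayama; in your rank comparison, $\dim_{\F_{p^2}}N/N\Pi=2$ follows from $[N:N\Pi]=p^4$, since $\Pi^2=-p$. Your remarks on the injectivity of $\varphi$ and on the central $\Z_p$-action only make explicit what the paper leaves implicit.
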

\begin{proof}
This result is well known,  and it is implicit in the proof of \cite[Theorem~1.4]{Ribet-Bimodules}; see also \cite[Remark~3.2]{Molina-IsraelJ-2012}. We provide a complete proof for the reader's convenience.
  Let $L=\calO_p^{\oplus 2}$
  be the free
  rank $2$ right
  $\calO_p$-module.
  Then
  $\End_{\calO_p}(L)=\Mat_2(\calO_p)$ acts on $L$ from the left.
  Each
  embedding $\varphi: \calO_p\hookrightarrow
\Mat_2(\calO_p)$ thus endows $L$ with a left $\calO_p$-module
structure,
making it an $(\calO_p, \calO_p)$-bimodule  with
 $\rank_{\Z_p}(L)=8$.
 It is immediate that $\GL_2(\calO_p)$-conjugate embeddings give rise to  isomorphic
 bimodule structures.

Conversely, suppose that $L\coloneqq \Z_p^8$ is equipped with an
$(\calO_p, \calO_p)$-bimodule structure.  Since $\calO_p$ is the
unique maximal order in $D_p$,
there exists an isomorphism of right $\calO_p$-modules  $f: L\xrightarrow{\sim}\calO_p^{\oplus 2}$, and any two such isomorphisms differ by an element of $\Aut(\calO_p^{\oplus 2})=\GL_2(\calO_p)$.
 The left $\calO_p$-action on $L$
  induces an embedding
  $\calO_p\hookrightarrow \End_{\mathrm{Mod}\text{-}\calO_p}(L)$, which  gives rise to an embedding $\varphi: \calO_p\hookrightarrow
  \Mat_2(\calO_p)$.
  This construction is well-defined
 up to $\GL_2(\calO_p)$-conjugacy.

  Finally, giving $L$
   an $(\calO_p, \calO_p)$-bimodule structure is equivalent to giving  it a
  left $\calO_p\otimes_{\Z_p}\calO_p^{\opp}$-module structure.
  Since the
  canonical involution $x\mapsto x^t\coloneqq \Tr(x)-x$ on $D_p$ induces an isomorphism $\calO_p^\opp\simeq
  \calO_p$,
this is equivalent to  a left
 $\calO_p\otimes_{\Z_p}\calO_p$-module structure.
\end{proof}
Thus, we study the structure of $L$ as a left $(\calO_p\otimes_{\Z_p}\calO_p)$-module.
For this, it is necessary to understand more precisely the ring structure of $\calO_p\otimes_{\Z_p}\calO_p$.
  Consider the
following subring of $\calO_p\otimes_{\Z_p}\calO_p$:
\begin{equation}\label{eq:e3}
  \calR\coloneqq \calO_p\otimes_{\Z_p}\Z_{p^2}\subset
\calO_p\otimes_{\Z_p}\calO_p=\calO_p\otimes_{\Z_p}(\Z_{p^2} \oplus \Z_{p^2}\Pi).
\end{equation}
 From \cite[Lemma~2.10]{li-xue-yu:unit-gp},   $\calR$ is
$\Z_{p^2}$-isomorphic to an Eichler order of level $p\Z_{p^2}$
 in
$\Mat_2(\Z_{p^2})$, where  $\calR=\calO_p\otimes_{\Z_p}\Z_{p^2}$ is regarded as a $\Z_{p^2}$-algebra through its second factor. An explicit isomorphism is given as
follows
\begin{equation}\label{eq:e4}
\begin{aligned}
  \calR\coloneqq \calO_p\otimes_{\Z_p}\Z_{p^2}\xrightarrow{\sim}& \begin{bmatrix}
  \Z_{p^2} & p\Z_{p^2}\\  \Z_{p^2} & \Z_{p^2}
\end{bmatrix}, &       \Pi\otimes 1 &\mapsto   \begin{bmatrix}
    0 & -p \\ 1 & 0
                                \end{bmatrix},\quad \text{and } \\
 a\otimes 1\mapsto&
  \begin{bmatrix}
    a & 0 \\ 0 & a^\sigma
  \end{bmatrix},&  1\otimes b&\mapsto \begin{bmatrix}
    b & 0 \\ 0 & b
  \end{bmatrix},  \quad \forall a, b\in \Z_{p^2}.
\end{aligned}
\end{equation}
Henceforth we
identify $\calR$ with
$\begin{bmatrix} \Z_{p^2} & p\Z_{p^2}\\ \Z_{p^2} & \Z_{p^2}
 \end{bmatrix}$ via the above isomorphism.
 Applying the unique nontrivial automorphism $\sigma\in \Gal(\Q_{p^2}/\Q_p)$ entry-wise to each
 $\xi=\begin{bmatrix}
    x & py \\ z & w
  \end{bmatrix}\in \calR$ defines a $(\Z_{p^2}, \sigma)$-linear
 automorphism $\xi\mapsto \xi^\sigma\coloneqq \begin{bmatrix}
    x^\sigma & py^\sigma \\ z^\sigma & w^\sigma
  \end{bmatrix}$  of $\calR$.  A priori, there is another $(\Z_{p^2}, \sigma)$-linear automorphism defined by applying $\sigma$ to the second factor of $\calR=\calO_p\otimes_{\Z_p}\Z_{p^2}$, which is also induced from the conjugation by $1\otimes \Pi\in \calO_p\otimes_{\Z_p}\calO_p$. We reserve the notation $\xi\mapsto \xi^\sigma$ for the entry-wise one, since it is more conducive for calculation purposes.  Moreover,
  these two $\sigma$-linear automorphisms of $\calR$ are related by the second equation in \eqref{eq:e7} below, so  the second $(\Z_{p^2}, \sigma)$-linear automorphism of $\calR$ can be expressed in terms of the first.
\begin{lemma}
 Let $\Pi_\calR\coloneqq \Pi\otimes 1 =\begin{bmatrix}
   0 & -p \\ 1 & 0
 \end{bmatrix}\in \calR$, and $\varpi\coloneqq 1\otimes \Pi\in \calO_p\otimes_{\Z_p}\calO_p$.  Then  $\calO_p\otimes_{\Z_p}\calO_p$ is a free  left
$\calR$-module of rank $2$ with basis $\{1, \varpi\}$ subject to multiplication rules
     \begin{equation}
   \label{eq:e7}\varpi^2=-p, \quad\text{and}\quad \varpi \xi
\varpi^{-1}=\Pi_\calR
 \xi^\sigma \Pi_\calR^{-1}, \quad \forall\, \xi\in  \calR.
 \end{equation}
\end{lemma}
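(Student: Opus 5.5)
The plan is to prove freeness first and then verify the two relations.

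For freeness, write $\calO_p=\Z_{p^2}\oplus\Z_{p^2}\Pi$ as in \eqref{eq:6.1} and tensor on the left with $\calO_p$ over $\Z_p$. This gives a decomposition of left $\calO_p\otimes_{\Z_p}\Z_{p^2}$-modules
\[
\calO_p\otimes_{\Z_p}\calO_p=(\calO_p\otimes\Z_{p^2})\oplus(\calO_p\otimes\Z_{p^2})(1\otimes\Pi)=\calR\oplus\calR\varpi.
\]
The second summand is free of rank one over $\calR$, because right multiplication by $1\otimes\Pi$ is injective: $1\otimes\Pi$ is a unit in $\calO_p\otimes\Z_p[1/p]$, since $\Pi^2=-p$. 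Hence $\calO_p\otimes_{\Z_p}\calO_p$ is free over $\calR$ with basis $\{1,\varpi\}$. As a sanity check, the $\Z_p$-ranks agree: $16=2\cdot 8$.

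The identity $\varpi^2=1\otimes\Pi^2=-p$ is immediate.

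For the conjugation rule, both sides of $\varpi\xi\varpi^{-1}=\Pi_\calR\xi^\sigma\Pi_\calR^{-1}$ are ring automorphisms of $\calR\otimes\Q$ that are $\sigma$-semilinear over the central $\Z_{p^2}=1\otimes\Z_{p^2}$. So it suffices to check the identity on the generators $a\otimes 1$, $\Pi\otimes 1$ and $1\otimes b$, using the explicit matrices in \eqref{eq:e4}.

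\begin{itemize}
\item[(i)] On $1\otimes b$: conjugation by $\varpi$ gives $1\otimes \Pi b\Pi^{-1}=1\otimes b^\sigma$. On the right side, $\xi^\sigma=b^\sigma I$ is central, so conjugation by $\Pi_\calR$ leaves it unchanged. The two sides agree.
\item[(ii)] On $a\otimes 1$ and $\Pi\otimes1$: these commute with $\varpi$ because they live in the first tensor factor, so the left side is the identity. On the right side, take $\xi=\operatorname{diag}(a,a^\sigma)$. Then $\xi^\sigma=\operatorname{diag}(a^\sigma,a)$, and conjugating by $\Pi_\calR=\begin{bmatrix}0&-p\\1&0\end{bmatrix}$ swaps the diagonal entries, returning $\operatorname{diag}(a,a^\sigma)$. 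For $\xi=\Pi_\calR$, whose entries lie in $\Z_p$, we get $\xi^\sigma=\Pi_\calR$, which is fixed by conjugation by itself.
\end{itemize}

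Since the generators match and both maps are semilinear ring maps, the identity holds on all of $\calR$.

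The only point needing care is justifying the reduction to generators. The entrywise $\sigma$ is a ring automorphism of $\calR$ that is $\sigma$-semilinear for the $1\otimes\Z_{p^2}$-structure, so composing it with an inner automorphism gives a semilinear ring automorphism. Conjugation by $\varpi$ is semilinear in the same sense, by (i). Since $\calR$ is generated as a ring by $\Z_{p^2}\otimes1$, $\Pi\otimes1$ and $1\otimes\Z_{p^2}$, agreement on these generators forces agreement everywhere. I expect no real obstacle: the main risk is a sign or ordering slip in the matrix computation in (ii).
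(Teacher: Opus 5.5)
Your proof is correct and follows essentially the paper's argument. The paper likewise gets $\calO_p\otimes_{\Z_p}\calO_p=\calR\oplus\calR\varpi$ by tensoring $\calO_p=\Z_{p^2}\oplus\Z_{p^2}\Pi$ with $\calO_p$, then verifies $\varpi\xi\varpi^{-1}=\Pi_\calR\xi^\sigma\Pi_\calR^{-1}$ on the generators $a\otimes 1$, $\Pi\otimes 1$, $1\otimes b$; your injectivity remark for right multiplication by $\varpi$ just makes explicit a point the paper leaves implicit, though the localization you want is $(\calO_p\otimes_{\Z_p}\calO_p)[1/p]$ rather than $\calO_p\otimes\Z_p[1/p]$.
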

\begin{proof}
From \eqref{eq:e3},  we have
\[\calO_p\otimes_{\Z_p}\calO_p=\calR\oplus\calR\varpi, \quad\text{with}\quad \varpi^2=-p.\]
As mentioned above,  conjugation by $\varpi$ induces the $(\Z_{p^2}, \sigma)$-linear
 automorphism of $\calR$ uniquely characterized by the following properties
 \begin{equation}\label{eq:e6}
   \varpi(\alpha\otimes 1)\varpi^{-1}=\alpha\otimes 1,\quad\forall
   \alpha\in \calO_p, \quad \text{and}\quad
   \varpi(1\otimes b)\varpi^{-1}=1\otimes b^\sigma, \quad \forall \,b\in \Z_{p^2}.
 \end{equation}
On the other hand, from the identification rule of $\calR$ with $\begin{bmatrix} \Z_{p^2} & p\Z_{p^2}\\ \Z_{p^2} & \Z_{p^2}
 \end{bmatrix}$ in \eqref{eq:e4}, we have
 \begin{alignat*}{2}
(a\otimes 1)^\sigma&=    \begin{bmatrix}
    a & 0 \\ 0 & a^\sigma
  \end{bmatrix}^\sigma= \begin{bmatrix}
    a^\sigma & 0 \\ 0 & a
  \end{bmatrix}&=a^\sigma\otimes 1, \qquad \forall\, a\in \Z_{p^2},\\
  (\Pi\otimes 1)^\sigma&=\Pi\otimes 1,\qquad (1\otimes b)^\sigma&=1\otimes b^\sigma, \qquad \forall\, b\in \Z_{p^2}.
 \end{alignat*}
It follows immediately from \eqref{eq:6.1} and \eqref{eq:e6} that      $\Pi_\calR
 \xi^\sigma \Pi_\calR^{-1}=\varpi \xi
\varpi^{-1}$ for every  $\xi\in  \calR$.
\end{proof}

From \eqref{eq:e7}, endowing $L=\Z_p^8$ with a left
$(\calO_p\otimes_{\Z_p}\calO_p)$-module structure amounts to equipping it with a left $\calR$-module structure together with a $\Z_p$-linear
map $\rho_\varpi: L\to L$ satisfying
\begin{equation}\label{eq:e8}
\rho_\varpi^2=-p, \quad\text{and} \quad \rho_\varpi (\xi X)=(\Pi_\calR
  \xi^\sigma \Pi_\calR^{-1})\rho_\varpi(X), \quad \forall\, \xi\in \calR, X\in L.
\end{equation}
We begin by analyzing the situation after tensoring with $\Q_p$.
Since
$(\calO_p\otimes_{\Z_p}\calO_p)\otimes_{\Z_p}\Q_p=D_p\otimes_{\Q_p}D_p\simeq
\Mat_4(\Q_p)$, there is, up to isomorphism,  a unique left
$(D_p\otimes_{\Q_p}D_p)$-module of dimension $8$ over $\Q_p$.
This can be
interpreted in terms of the datum $(\calR, L, \rho_\varpi)$ as
follows.  Since $\calR\otimes_{\Z_{p^2}}\Q_{p^2}=\Mat_2(\Q_{p^2})$ and
$\rank_{\Z_p}(L)=8$, once $L$ is equipped with a left $\calR$-module
structure, then $L\otimes_{\Z_p}\Q_p$ is a free left
$\Mat_2(\Q_{p^2})$-module of rank one. Henceforth, we fix an identification  $L\otimes_{\Z_p}\Q_p$ with
$V\coloneqq \Mat_2(\Q_{p^2})$ and regard $L$ as a full left $\calR$-lattice inside $V$.
Let $\rho_\varpi: L\to L$ be a $\Z_p$-linear
map satisfying \eqref{eq:e8}, and extend it $\Q_p$-linearly to $V$.
Consider the map
  \[\vartheta:  V\to   V, \qquad X\mapsto
    \Pi_\calR^{-1}\rho_\varpi(X)^\sigma, \]
where $\sigma$ is applied entry-wise to each element $X\in V=\Mat_{2}(\Q_{p^2})$. From \eqref{eq:e8},
$\vartheta$ is $\Mat_2(\Q_{p^2})$-linear, so there exists
$C'\in \End_{\Mat_2(\Q_{p^2})}(V)^\opp=\Mat_2(\Q_{p^2})$ such
that $\vartheta(X)=XC'$ for all $X\in  V$.  From this we
deduce that
\begin{equation}\label{eq:e35}
 \rho_\varpi(X)=\Pi_\calR X^\sigma C, \qquad \forall X\in  V\quad\text{with} \quad C\coloneqq
(C')^\sigma.
\end{equation}
Now it follows from the facts  $\rho_\varpi^2=-p$ and
$\Pi_\calR^2=-p$ that $C^\sigma C=I_2$,  where $I_2\in \Mat_2(\Q_{p^2})$ denotes
the identity matrix. In particular, $C$ determines a
1-cocycle of $\Gal(\Q_{p^2}/\Q_p)$ valued in
$\GL_2(\Q_{p^2})$.
 Had we taken a different identification of $L\otimes_{\Z_p}\Q_p$ with $V=\Mat_2(\Q_{p^2})$, it would differ from the current one by an element of $\Aut_{\Mat_2(\Q_{p^2})}(V)^\opp=\GL_2(\Q_{p^2})$, and correspondingly $C$ would be replaced by an element cohomologous to $C$.
Conversely, given $C\in
\GL_2(\Q_{p^2})$ with $C^\sigma C=I_2$, we define a
$\Q_p$-linear map $\rho_\varpi^C$ satisfying \eqref{eq:e8} as follows:
\[\rho_\varpi^C: V\to V, \qquad X \mapsto \Pi_\calR X^\sigma C. \]
It is straightforward to check by Hilbert's Theorem 90
\cite[Lemma~2.2, \S2.2]{Platonov-Rapinchuk} that up to
isomorphism,  the left $(D_p\otimes_{\Q_p} D_p)$-module structure on
$V$ induced by $\rho_\varpi^C$ is independent of the
choice of $C$. The above argument not only reaffirms the uniqueness of
the left $(D_p\otimes_{\Q_p}D_p)$-module structure on $\Q_p^8$ up to
isomorphism, but also explicitly describes it in terms of the
identity $D_p\otimes_{\Q_p}
D_p=\Mat_2(\Q_{p^2})+\Mat_2(\Q_{p^2})\varpi$.

We return to the integral setting.
Since $\rho_\varpi$ is a $\Z_p$-linear endomorphism of $L$, we must have $\rho_\varpi(L)\subseteq L$, that is, $\Pi_\calR X^\sigma C\in L$ for all $X\in L$. Thus,  given an $\calR$-lattice $L$ in $V$, we define
\begin{equation}
  \label{eq:e10}
  \scrS(L)\coloneqq \{C\in \GL_2(\Q_{p^2})\mid C^\sigma C=I_2, \quad \Pi_\calR X^\sigma C\in L, \forall X\in L\}.
\end{equation}
  From the above discussion, each $\Z_p$-linear
map $\rho_\varpi: L\to L$ satisfying \eqref{eq:e8} uniquely determines  an element $C\in \scrS(L)$,  and vice versa.
In other words, extending the
 $\calR$-module structure on $L$ to an
 $(\calR \oplus \calR \varpi)$-module structure amounts to choosing
 $C\in \scrS(L)$ and defining the
 $\Z_p$-linear map $\rho_\varpi^C: X\mapsto \Pi_\calR X^\sigma C$.

In light of Lemma~\ref{lem:conj-emb-bimodule}, we need to determine which elements of $\scrS(L)$ define isomorphic $(\calR \oplus \calR \varpi)$-module structures on $L$.
Let $\grO_r(L)$ denote the right order of $L$, namely,
\[\grO_r(L)\coloneqq \End_{\calR}(L)^\opp=\{T\in \Mat_2(\Q_{p^2})\mid LT\subseteq L\}.\]
The unit group $\grO_r(L)^\times$ acts on $\scrS(L)$  by  $\sigma$-conjugation
\begin{equation}\label{eqn:Or-grp-act}
    \grO_r(L)^\times\times \scrS(L)\to \scrS(L), \qquad (T, C)\mapsto  T^\sigma  C T^{-1}.
\end{equation}
Indeed, it is straightforward to check that $(T^\sigma  C T^{-1})^\sigma (T^\sigma  C T^{-1})=I_2$.
Moreover, for any $X\in L$, we have $\Pi_\calR X^\sigma (T^\sigma  C T^{-1})=(\Pi_\calR (XT)^\sigma C) T^{-1}\in L T^{-1}=L$.
We write
\[\ol{\scrS(L)}\coloneqq \grO_r(L)^\times\backslash \scrS(L)\]
for the orbit space of the group action in \eqref{eqn:Or-grp-act}.
\begin{lemma}\label{lem:e20}
There is a bijection between the following sets:
\begin{equation}
  \label{eq:e20}
 \left\{\parbox{6.4cm}{Isomorphism
classes of $(\calO_p \otimes_{\Z_p}\calO_p)$-module structures on $\Z_p^8$}\right\} \quad
\longleftrightarrow\quad \bigsqcup_{[L]}\, \ol{\scrS(L)}
\end{equation}
where the disjoint union is taken over  isomorphism classes of $\calR$-lattices $L$ in $V$.
\end{lemma}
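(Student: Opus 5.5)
We will construct the bijection in two steps. First we reduce the classification of module structures to their underlying $\calR$-lattices. Then we show that, for a fixed lattice, the remaining freedom is exactly the $\grO_r(L)^\times$-orbit of $C$.

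\textbf{Step 1: the map from module structures to pairs.} Let $\Z_p^8$ carry a left $(\calO_p\otimes_{\Z_p}\calO_p)$-module structure. Restricting to $\calR\subseteq \calO_p\otimes_{\Z_p}\calO_p$ makes it a left $\calR$-module. The discussion preceding the lemma then does the rest:
\begin{itemize}
\item $\calR\otimes_{\Z_{p^2}}\Q_{p^2}=\Mat_2(\Q_{p^2})$ and the rank is $8$, so after choosing an identification of $\Z_p^8\otimes\Q_p$ with $V$, the module becomes a full $\calR$-lattice $L\subseteq V$;
\item the action of $\varpi$ becomes $\rho_\varpi=\rho_\varpi^C$ for a unique $C\in\scrS(L)$, by \eqref{eq:e35} and \eqref{eq:e10}.
\end{itemize}
Conversely, given any $\calR$-lattice $L$ and any $C\in\scrS(L)$, the map $\rho^C_\varpi$ preserves $L$ and satisfies \eqref{eq:e8}. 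Since $\calO_p\otimes_{\Z_p}\calO_p=\calR\oplus\calR\varpi$ with the relations \eqref{eq:e7}, this makes $L$ a left $(\calO_p\otimes_{\Z_p}\calO_p)$-module of $\Z_p$-rank $8$. So module structures on $\Z_p^8$ correspond to pairs $(L,C)$ with $C\in\scrS(L)$.

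\textbf{Step 2: isomorphisms of pairs.} We must show that two pairs $(L,C)$ and $(L',C')$ give isomorphic modules if and only if $L\simeq L'$ as $\calR$-modules and, after transporting, $C$ and $C'$ lie in the same orbit.

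An isomorphism of $(\calO_p\otimes_{\Z_p}\calO_p)$-modules is in particular an $\calR$-isomorphism $L\to L'$. Extend it $\Q_p$-linearly to an element of $\Aut_{\Mat_2(\Q_{p^2})}(V)=\GL_2(\Q_{p^2})^{\opp}$, that is, a map $X\mapsto XT$ with $T\in\GL_2(\Q_{p^2})$ and $L'=LT$.

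Right multiplication by $T$ gives an isomorphism of $\calR$-lattices $L\to LT$. It carries $\rho^C_\varpi$ to the map
\[
Y\mapsto \bigl(\Pi_\calR (YT^{-1})^\sigma C\bigr)T=\Pi_\calR Y^\sigma (T^{-1})^\sigma C T .
\]
Hence $(L,C)\simeq (LT,\,(T^{-1})^{\sigma}CT)$, and $(T^{-1})^\sigma C T$ lies in $\scrS(LT)$.

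Now fix one representative $L$ in each $\calR$-isomorphism class of lattices. Any isomorphism of modules between $(L,C)$ and $(L,C')$ is then given by some $T$ with $LT=L$, i.e.\ $T\in\grO_r(L)^\times$. It satisfies the intertwining condition $\Pi_\calR X^\sigma C\,T=\Pi_\calR (XT)^\sigma C'$ for all $X$. Taking $X=I_2$ after extending to $V$, this gives $CT=T^\sigma C'$, i.e.\ $C'=(T^{\sigma})^{-1}CT$. Replacing $T$ by $T^{-1}$, this is exactly the action $(T,C)\mapsto T^\sigma CT^{-1}$ of \eqref{eqn:Or-grp-act}. Conversely, every element of $\grO_r(L)^\times$ gives an isomorphism in this way.

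\textbf{Step 3: the bijection.} Combining Steps 1 and 2 yields \eqref{eq:e20}. The map sends a module to its lattice class $[L]$ together with the orbit of $C$ in $\ol{\scrS(L)}$. It is well defined because of the choice of representatives in Step 2, and it is bijective by the converse construction in Step 1.

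The one delicate point is bookkeeping. The action of $\GL_2(\Q_{p^2})$ on $V$ is by right multiplication. It is therefore $\calR$-linear, so lattices related by it are isomorphic as $\calR$-modules, and conversely every $\calR$-isomorphism between full lattices arises this way. One must also check that the change of identification already noted in the text (which replaces $C$ by a cohomologous element) is the same operation as the one in Step 2. Both reduce to the single displayed computation above, so no genuine obstacle is expected.
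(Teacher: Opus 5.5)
Your proposal is correct and follows the same route as the paper. An $\calR$-linear isomorphism $L_1\to L_2$ of lattices in $V$ is right multiplication by some $T\in\GL_2(\Q_{p^2})$ with $L_1T=L_2$, and the intertwining condition with $\rho_\varpi^{C}$ becomes $C_1=T^\sigma C_2T^{-1}$; for a fixed lattice this is exactly the $\grO_r(L)^\times$-action \eqref{eqn:Or-grp-act}. Your extra bookkeeping (fixing lattice representatives, and checking that changing the identification $\Z_p^8\otimes\Q_p\simeq V$ is the same right-multiplication operation) only makes explicit what the paper leaves to the discussion preceding the lemma.
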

\begin{proof}
For each $i\in \{1,2\}$, let $(L_i, \rho_\varpi^{C_i})$ be an ordered pair consisting of an $\calR$-lattice $L_i\subseteq V$ together with a $\Z_p$-linear map $\rho_\varpi^{C_i}: L_i\to L_i$ for some $C_i\in \scrS(L_i)$.  As discussed above, each $L_i$ is equipped with an $(\calR \oplus \calR\varpi)$-module structure via $\rho_\varpi^{C_i}$. A $\Z_p$-linear map $f: L_1\to L_2$ defines an $(\calR \oplus \calR\varpi)$-module isomorphism if and only if:
\begin{enumerate}[(i)]
\item $f$ is an isomorphism of $\calR$-lattices; and
\item  $\rho_\varpi^{C_2}\circ f=f\circ \rho_\varpi^{C_1}$.
\end{enumerate}
Since $\End_{\Mat_2(\Q_{p^2})}(V)^\opp=\Mat_2(\Q_{p^2})$, which acts on $V$
by right multiplication, an $\calR$-linear isomorphism $f: L_1\to L_2$ must be of the form $f_T:
X\mapsto XT$ for some  $T\in \GL_2(\Q_{p^2})$ such that
$L_1T=L_2$. Now condition (ii) above translates to $  C_1=T^\sigma C_2T^{-1}$.
In particular, for a fixed $\calR$-lattice $L\subseteq V$, two elements
$C_1, C_2\in \scrS(L)$ give rise to isomorphic
$(\calR \oplus \calR\varpi)$-module structures on $L$ if and only if
  $C_1=T^\sigma C_2T^{-1}$
for some   $T\in  \grO_r(L)^\times$.
\end{proof}

Recall that $\calR=
\begin{bmatrix} \Z_{p^2} & p\Z_{p^2}\\ \Z_{p^2} & \Z_{p^2}
 \end{bmatrix}$ is an Eichler order of level $p\Z_{p^2}$ in
 $\Mat_2(\Q_{p^2})$, so every $\calR$-lattice in $V$ is isomorphic to
 one of the following mutually non-isomorphic $\calR$-lattices:
 \begin{equation}\label{eq:e21}
 L_1\coloneqq \calR=\begin{bmatrix} \Z_{p^2} & p\Z_{p^2}\\ \Z_{p^2} & \Z_{p^2}
 \end{bmatrix},\quad   L_2\coloneqq \Mat_2(\Z_{p^2})=\begin{bmatrix} \Z_{p^2} & \Z_{p^2}\\ \Z_{p^2} & \Z_{p^2}
 \end{bmatrix},\quad  L_3\coloneqq \Pi_\calR L_2=\begin{bmatrix} p\Z_{p^2} & p\Z_{p^2}\\ \Z_{p^2} & \Z_{p^2}
 \end{bmatrix}.
 \end{equation}
\begin{lemma}\label{lem:mod-str-counting}
    We have
    $\lvert \overline{\scrS(L_1)} \rvert = 3$  and $\lvert \overline{\scrS(L_2)} \rvert
    =
     \lvert \overline{\scrS(L_3)} \rvert=1$.
\end{lemma}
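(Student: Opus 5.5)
The plan is to compute each orbit set $\overline{\scrS(L_i)}$ directly from the definition \eqref{eq:e10}. One first observes that each $L_i$ in \eqref{eq:e21} is stable under entry-wise $\sigma$, so the condition $\Pi_\calR X^\sigma C\in L$ for all $X\in L$ becomes $(\Pi_\calR L_i)\,C\subseteq L_i$. This is a linear condition on $C$, independent of the cocycle condition $C^\sigma C=I_2$. The right orders are $\grO_r(L_1)=\calR$ and $\grO_r(L_2)=\grO_r(L_3)=\Mat_2(\Z_{p^2})$. The latter holds because $L_3=\Pi_\calR L_2$ is a left translate of $L_2$.

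\emph{The cases $L_2$ and $L_3$.} We have $\Pi_\calR L_2=L_3$, whose second row ranges over all of $\Z_{p^2}^2$. Hence $L_3C\subseteq L_2$ forces $\Z_{p^2}^2C\subseteq \Z_{p^2}^2$, that is, $C\in \Mat_2(\Z_{p^2})$; the converse inclusion is clear. Similarly, $\Pi_\calR L_3=pL_2$, and the condition $pL_2C\subseteq L_3$ reads $L_2C\subseteq p^{-1}L_3$. The first-row constraint again gives $C\in\Mat_2(\Z_{p^2})$, and this suffices. In both cases $C^\sigma C=I_2$ then forces $C\in\GL_2(\Z_{p^2})$. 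So $\overline{\scrS(L_i)}$ is the Galois cohomology set $H^1(\Gal(\Q_{p^2}/\Q_p),\GL_2(\Z_{p^2}))$, with the action \eqref{eqn:Or-grp-act} being exactly cohomologous change. This set is trivial: $\GL_2$ is a smooth connected group over $\Z_p$, so Lang's theorem kills $H^1$ over $\F_p$, and Hensel's lemma (successive approximation modulo $p^n$) lifts this to $\Z_{p^2}$. Hence $|\overline{\scrS(L_2)}|=|\overline{\scrS(L_3)}|=1$.

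\emph{The case $L_1=\calR$.} Here $J\coloneqq\Pi_\calR\calR$ is the two-sided Jacobson radical, and $JC\subseteq\calR$ means $C\in J^{-1}=\calR\Pi_\calR^{-1}$. A direct computation gives
\[
J^{-1}=\begin{bmatrix}\Z_{p^2}&\Z_{p^2}\\ p^{-1}\Z_{p^2}&\Z_{p^2}\end{bmatrix}.
\]
The cocycle condition gives $\det(C)^\sigma\det(C)=1$, so $\det C\in\Z_{p^2}^\times$. I would split $\scrS(L_1)$ into the part $C\in\calR^\times$ and the part $C\notin\calR$; both are stable under $\sigma$-conjugation by $\calR^\times$.

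For $C\in\calR^\times$, the same Lang–Hensel argument applies. Indeed, $\calR^\sigma=\calR$, and its $\sigma$-fixed points form the standard Iwahori (Eichler level $p$) order of $\Mat_2(\Z_p)$, whose unit group is a smooth connected group scheme over $\Z_p$. Its $H^1$ is therefore trivial, and this part contributes exactly one class.

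For $C\notin\calR$, write $C=\begin{bmatrix}a&b\\ c/p&d\end{bmatrix}$ with $c\in\Z_{p^2}^\times$; the unit determinant then forces $ad\equiv 0$ modulo suitable powers of $p$. I would first use $\sigma$-conjugation by diagonal and unipotent elements of $\calR^\times$ to normalize $C$ to an antidiagonal-type matrix $\begin{bmatrix}0&pu\\ p^{-1}v&0\end{bmatrix}$ or a matrix with a unit diagonal entry. Then I would reduce $C$ modulo the filtration $\calR\supset J\supset p\calR$ to a semilinear datum over $\calR/J\simeq\F_{p^2}\times\F_{p^2}$. The claim to establish is that there are exactly two classes, distinguished by a discrete invariant. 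The candidate invariant is which diagonal idempotent of $\calR/J$ the map $X\mapsto X^\sigma C$ moves, equivalently the valuation pattern of $(a,d)$ after normalization. Once the reduction is normalized, the same Hensel-type lifting shows that each residual type lifts to a single orbit. Together with the class from $\calR^\times$, this gives $|\overline{\scrS(L_1)}|=3$.

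I expect this last normalization to be the main obstacle, namely pinning down exactly two orbits among the non-integral cocycles in $J^{-1}$. To carry it out I would explicitly solve $T^\sigma CT^{-1}=C'$ for $T\in\calR^\times$ entry by entry, reducing modulo $p$ at each stage. As a consistency check, the resulting three classes should match $\varphi_1$, $\varphi_2$ and one of $\varphi_3,\varphi_4,\varphi_5$ under Lemma~\ref{lem:conj-emb-bimodule}, for a total of five classes.
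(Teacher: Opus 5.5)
Your treatment of $L_2$, $L_3$ and of the piece $\scrS(L_1)\cap\calR^\times$ is correct and matches the paper. The paper obtains the vanishing of $H^1$ from Hilbert~90 and from the sequence $1\to\calK(p)\to\calR^\times\to\scrB\to 1$ together with Lang's theorem; your smooth-group-scheme-plus-Hensel argument is an acceptable substitute. The gap is in the non-$\calR$ part of $\scrS(L_1)$, which is exactly where the number $3$ comes from. You do not prove that this part consists of two orbits, and the set-up you propose for proving it is wrong.

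First, $C\notin\calR$ does not force the $(2,1)$-entry to have valuation $-1$. For instance, $\gamma=\begin{bmatrix}0&1\\1&0\end{bmatrix}$ lies in $\scrS(L_1)\smallsetminus\calR$ and has integral entries. Because $\det C\in\Z_{p^2}^\times$ and $C\in J^{-1}$, there are two mutually exclusive ways to leave $\calR$:
\begin{itemize}
\item the $(1,2)$-entry is a unit, and then $C\in\GL_2(\Z_{p^2})$;
\item the $(2,1)$-entry has valuation $-1$, and then the $(1,2)$-entry lies in $p\Z_{p^2}$ and $C\in\Pi_\calR\GL_2(\Z_{p^2})\Pi_\calR^{-1}$.
\end{itemize}
Your parametrization sees only the second piece. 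That piece is a \emph{single} orbit, namely that of $\Pi_\calR\gamma\Pi_\calR^{-1}$, so a search for two classes there cannot succeed.

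Second, the diagonal entries carry no orbit information. Take $u\in\Z_{p^2}^\times$ with $u^\sigma=-u$. Then $\begin{bmatrix}1&0\\-u/p&1\end{bmatrix}=\Pi_\calR\begin{bmatrix}1&u\\0&1\end{bmatrix}\Pi_\calR^{-1}$ lies in $\scrS(L_1)\smallsetminus\calR$ and has $ad=1$. Yet it is in the same orbit as $\Pi_\calR\gamma\Pi_\calR^{-1}$, whose diagonal vanishes. So neither ``$ad\equiv 0$'' nor the valuation pattern of $(a,d)$ can serve as the invariant.

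The missing idea is to split $\scrS(L_1)=\scrS^\dagger(L_1)\cup\scrS^\ddagger(L_1)$, where $\scrS^\dagger(L_1)=\scrS(L_1)\cap\GL_2(\Z_{p^2})$ and $\scrS^\ddagger(L_1)=\Pi_\calR\scrS^\dagger(L_1)\Pi_\calR^{-1}$. Their intersection is $\scrS(L_1)\cap\calR^\times$. Since $\Pi_\calR$ is $\sigma$-fixed and normalizes $\calR^\times$, conjugation by $\Pi_\calR$ matches the orbits on the two pieces. Inclusion--exclusion then gives $|\overline{\scrS(L_1)}|=2\,|\calR^\times\backslash\scrS^\dagger(L_1)|-1$.

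On $\scrS^\dagger(L_1)$, Hilbert~90 for $\GL_2(\Z_{p^2})$ writes every $C$ as $X^\sigma X^{-1}$. Here $X\in\GL_2(\Z_{p^2})$ is unique up to right multiplication by $\GL_2(\Z_p)$, and $\sigma$-conjugation by $T\in\calR^\times$ becomes $X\mapsto TX$. Hence
$\calR^\times\backslash\scrS^\dagger(L_1)\simeq\calR^\times\backslash\GL_2(\Z_{p^2})/\GL_2(\Z_p)\simeq\bbP^1(\F_{p^2})/\GL_2(\Z_p)$,
the last map taking the first row modulo $p$. This quotient has exactly two orbits, $\bbP^1(\F_p)$ and its complement.

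This yields $2\cdot 2-1=3$, with representatives $I_2$, $\gamma$ and $\Pi_\calR\gamma\Pi_\calR^{-1}$. The invariant separating the two non-$\calR$ classes is simply which of $\GL_2(\Z_{p^2})$ and $\Pi_\calR\GL_2(\Z_{p^2})\Pi_\calR^{-1}$ contains $C$.
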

From Lemmas \ref{lem:conj-emb-bimodule}, \ref{lem:e20},  and \ref{lem:mod-str-counting}, the total number of $\GL_2(\calO_p)$-conjugacy classes of embeddings $\{ \calO_p\hookrightarrow
\Mat_2(\calO_p)\}$ is $3+1+1=5$.
\begin{proof}
We treat the easier cases $i=2,3$ first and keep such an $i$
fixed.
Clearly, $\grO_r(L_i)=\Mat_2(\Z_{p^2})$.  On the other hand,
$L_i$ is stable under $\sigma$, and a direct computation shows that
\[\Pi_\calR L_i C\subseteq L_i\quad \Longleftrightarrow \quad C\in \Mat_2(\Z_{p^2}). \]
It follows that $\scrS(L_i)=\{C\in \GL_2(\Z_{p^2})\mid C^\sigma C=I_2\}$, which is
 identifiable with the set of 1-cocycles of
$\Gal(\Q_{p^2}/\Q_p)$ valued in $\GL_2(\Z_{p^2})$. Therefore,
\begin{equation}
  \label{eq:e12}
\forall i\in \{2, 3\}, \qquad  \ol{\scrS(L_i)} \simeq  H^1(\Gal(\Q_{p^2}/\Q_p),
\GL_2(\Z_{p^2}))=\{1\},
\end{equation}
where the last equality  follows from Lemma \ref{lem:h1} below.
The elements in
  $\ol{\scrS(L_2)}$ and $\ol{\scrS(L_3)}$ are  both represented by the
  identity matrix $I_2$.

Next, we treat the case $i=1$.
In this case, $L_1$ is a free left
$\calR$-module of rank one,  so
$\grO_r(L_1)=\calR$. A direct computation shows that
\[\Pi_\calR L_1 C\subseteq L_1\quad \Longleftrightarrow \quad
  C\in \begin{bmatrix}\Z_{p^2}& \Z_{p^2}\\\frac{1}{p}\Z_{p^2}& \Z_{p^2} \end{bmatrix}. \]
Given such a $C$, if we further require that $\det(C)\in
\Z_{p^2}^\times$, then $C$ belongs to either $\GL_2(\Z_{p^2})$ or $\Pi_\calR
\GL_2(\Z_{p^2}) \Pi_\calR^{-1}=\begin{bmatrix} \Z_{p^2} & p \Z_{p^2} \\
                           \frac{1}{p}\Z_{p^2} &
                                                 \Z_{p^2}\end{bmatrix}^\times$. We conclude that
\begin{equation}
  \label{eq:e13}
  \scrS(L_1)=\{C\in \GL_2(\Q_{p^2})\mid C^\sigma C=I_2, \quad C\in
  \GL_2(\Z_{p^2})\cup \Pi_\calR\GL_2(\Z_{p^2}) \Pi_\calR^{-1}\}.
\end{equation}
Let us put $\scrS^\dagger(L_1)\coloneqq   \scrS(L_1)\cap
\GL_2(\Z_{p^2})$ and $\scrS^\ddagger(L_1)\coloneqq   \scrS(L_1)\cap
(\Pi_\calR\GL_2(\Z_{p^2})\Pi_\calR^{-1})$. Then
\begin{equation}
  \label{eq:e14}
  \scrS(L_1)=  \scrS^\dagger(L_1)\cup \scrS^\ddagger(L_1), \quad
  \text{and}\quad \scrS^\dagger(L_1)\cap
  \scrS^\ddagger(L_1)=\scrS(L_1)\cap \calR^\times,
\end{equation}
where the last equality follows from the fact that
$\Mat_2(\Z_{p^2})\cap (\Pi_\calR \Mat_2(\Z_{p^2})\Pi_\calR^{-1})=\calR$. Note that
both $\scrS^\dagger(L_1)$ and $\scrS^\ddagger(L_1)$ are stable under $\sigma$-conjugation by
$\calR^\times$, and conjugation by $\Pi_\calR$ induces a bijection
$\scrS^\dagger(L_1)\to \scrS^\ddagger(L_1)$. Since $\Pi_\calR$ is
$\sigma$-invariant and normalizes  $\calR^\times$, this bijection $\scrS^\dagger(L_1)\to
\scrS^\ddagger(L_1)$ descends to a bijection
between the orbit spaces $\ol{\scrS^\dagger(L_1)}\coloneqq \calR^\times\backslash
\scrS^\dagger(L_1)$ and $\ol{\scrS^\ddagger(L_1)}\coloneqq \calR^\times\backslash
\scrS^\ddagger(L_1)$. Thus
\begin{equation}
  \label{eq:e15}
  |\ol{\scrS(L_1)}|=2
  |\ol{\scrS^\dagger(L_1)}|-|\calR^\times\backslash (\scrS(L_1)\cap
  \calR^\times)|.
  \end{equation}
By definition and Lemma \ref{lem:h1} below,  we have
\[ \calR^\times\backslash (\scrS(L_1)\cap
  \calR^\times)\simeq  H^1(\Gal(\Q_{p^2}/\Q_p), \calR^\times)=\{1\}.\]

It remains to compute $|\overline{\scrS^\dagger(L_1)}|$.
Since $H^1(\Gal(\Q_{p^2}/\Q_p),
\GL_2(\Z_{p^2}))=\{1\}$ by  Lemma \ref{lem:h1} below,
there is a surjective map
\begin{equation}
  \label{eq:e17}
  \GL_2(\Z_{p^2})\twoheadrightarrow \scrS^\dagger(L_1), \qquad
  X\mapsto X^\sigma X^{-1}.
\end{equation}
Moreover, two elements of $\GL_2(\Z_{p^2})$ have the same image if and
only if they differ on the right  by   an element of $\GL_2(\Z_p)$.
Thus, $\scrS^\dagger(L_1)\simeq  \GL_2(\Z_{p^2})/\GL_2(\Z_p)$, and hence
\begin{equation}
  \label{eq:e18} \ol{\scrS^\dagger(L_1)}\simeq
  \calR^\times\backslash\GL_2(\Z_{p^2})/\GL_2(\Z_p).
\end{equation}
Given $x\in \Z_{p^2}$,  we write $\bar{x}$ for its canonical image in
$\F_{p^2}=\Z_{p^2}/(p)$.
Then there is a
$\GL_2(\Z_p)$-equivariant bijection as follows
\[\calR^\times\backslash\GL_2(\Z_{p^2})\xrightarrow{\sim} \bbP^1(\F_{p^2}), \qquad \begin{bmatrix}x & y\\
                                                 z&
                                                    w\end{bmatrix}\mapsto
                                                  [\bar{x}:
                                                  \bar{y}]\in
                                                  \bbP^1(\F_{p^2}), \]
where each $\begin{bmatrix}a & b\\
                                                 c&
                                                    d\end{bmatrix}\in
                                                  \GL_2(\Z_p)$  acts
                                                  on
                                                  $\bbP^1(\F_{p^2})$
                                                  by sending  $[\bar{x}:
                                                  \bar{y}]$ to
                                                  $[\bar{a}\bar{x}+\bar{c}\bar{y}:
                                                  \bar{b}\bar{x}+\bar{d}\bar{y}]$. Under
                                                  this action,
                                                  $\bbP^1(\F_{p^2})$
                                                  separates into two
                                                  $\
                                                  \GL_2(\Z_p)$-orbits by  \cite[Exercise~6, \S II.2.1]{Serre-trees},
                                                  namely,
                                                  \[
                                                    \bbP^1(\F_{p^2})=\bbP^1(\F_p)\bigsqcup
                                                    \left(\bbP^1(\F_{p^2})\smallsetminus
                                                      \bbP^1(\F_p)\right),  \]
which then implies that
                                                  \begin{equation}
                                                    \label{eq:e19}
                                                    |\ol{\scrS^\dagger(L_1)}|=
  |\calR^\times\backslash\GL_2(\Z_{p^2})/\GL_2(\Z_p)|=|\bbP^1(\F_{p^2})/\GL_2(\Z_p)|=2.
                                                  \end{equation}
                                                  Combining this with
                                                  \eqref{eq:e15} and
                                                  \eqref{eq:e16}, we
                                                  obtain
                                                  $|\ol{\scrS(L_1)}|=
                                                  2\times 2-1=3$.
                                                  Note that
                                                  $\gamma\coloneqq \begin{bmatrix}
                                                    0 & 1\\ 1&
                                                    0\end{bmatrix}$ is
                                                  an element of
                                                  $\scrS^\dagger(L_1)$
                                                not belonging to
                                                $\calR^\times$. We
                                                find that
                                                $\ol{\scrS^\dagger(L_1)}$
                                                is represented by
                                                $\{\gamma, I_2\}$, and
                                                hence
                                                $\ol{\scrS(L_1)}$ is
                                                represented by
                                                $\{\gamma, I_2, \Pi_\calR
                                                \gamma \Pi_\calR^{-1}\}$.
\end{proof}

\begin{lemma}\label{lem:h1} Let $G \coloneqq \Gal(\Q_{p^2}/\Q_p)$.
Then we have
\begin{equation}
    \label{eq:e16}
    H^1(G, \calR^\times)=H^1(G,
\GL_2(\Z_{p^2}))=\{1\}.
  \end{equation}
\end{lemma}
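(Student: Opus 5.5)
The plan is to use a standard filtration argument on the nonabelian cohomology $H^1(G,\Gamma)$, where $\Gamma$ is either $\calR^\times$ or $\GL_2(\Z_{p^2})$ and $G=\langle\sigma\rangle$ acts entry-wise. In both cases $\Gamma$ is the unit group of a $\Z_p$-order $\Lambda_0\otimes_{\Z_p}\Z_{p^2}$, with $\Lambda_0$ an order over $\Z_p$ on which $\sigma$ acts through the second factor. For $\GL_2(\Z_{p^2})$ take $\Lambda_0=\Mat_2(\Z_p)$. For $\calR$, with the entry-wise action, take $\Lambda_0=\begin{bmatrix}\Z_p & p\Z_p\\ \Z_p&\Z_p\end{bmatrix}$. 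Since $\sigma$ is applied entry-wise, this is exactly the Galois action on $\Lambda_0\otimes\Z_{p^2}$.

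Consider the congruence filtration $\Gamma\supseteq\Gamma(1)\supseteq\Gamma(2)\supseteq\cdots$, where $\Gamma(n)=1+p^n\Lambda_0\otimes\Z_{p^2}$ for $n\ge1$. Each filtration piece is $G$-stable and normal in $\Gamma$, and $\Gamma=\varprojlim\Gamma/\Gamma(n)$.

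\textbf{Step 1 (graded pieces).} For $n\ge1$, $\Gamma(n)/\Gamma(n+1)\cong \Lambda_0/p\Lambda_0\otimes_{\F_p}\F_{p^2}$ as $G$-modules. This is an induced module, so its $H^1$ vanishes by additive Hilbert 90 (normal basis theorem). Similarly, $H^1(G,\Lambda\otimes\Z_{p^2})=0$ additively.

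\textbf{Step 2 (reduction).} The reduction $\Gamma/\Gamma(1)=(\Lambda_0/p\Lambda_0\otimes\F_{p^2})^\times$ is the group of $\F_{p^2}$-points of a connected smooth group over $\F_p$:
\begin{itemize}
\item For $\GL_2$ it is $\GL_2(\F_{p^2})$, and its $H^1$ is trivial by Lang's theorem (or by Hilbert 90 for $\GL_n$ over finite fields).
\item For $\calR$, the reduction of $\Lambda_0$ mod $p$ is the algebra of matrices $\begin{bmatrix}a&0\\c&d\end{bmatrix}$ modulo $p$. Its unit group is a Borel-type group $\mathbb{G}_m^2\ltimes\mathbb{G}_a$, which is connected, so Lang's theorem again gives trivial $H^1$ over the finite field $\F_p$.
\end{itemize}
Alternatively, one can check directly: the sequence $1\to\mathbb{G}_a\to\Gamma/\Gamma(1)\to(\F_{p^2}^\times)^2\to1$ has $H^1$ trivial at both ends, using multiplicative Hilbert 90.

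\textbf{Step 3 (lifting by successive approximation).} Given a cocycle $c$, meaning $C\in\Gamma$ with $C^\sigma C=1$, Step 2 lets us modify $C$ by $\sigma$-conjugation so that $C\in\Gamma(1)$. Inductively, suppose $C\in\Gamma(n)$. Its image in $\Gamma(n)/\Gamma(n+1)$ is a cocycle, and by Step 1 it is a coboundary. Choosing $T_n\in\Gamma(n)$ lifting the trivializing element, we get $T_n^\sigma CT_n^{-1}\in\Gamma(n+1)$. The infinite product $\cdots T_2T_1$ converges $p$-adically in $\Gamma$, which is compact and complete, and it trivializes $C$. Here one needs the exact sequence of pointed sets for the twisted action at each stage. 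This is standard because the graded pieces are abelian and $\Gamma(n)$ is normal.

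The main obstacle is the $\calR^\times$ case at the reduction step. One must correctly identify $\calR/\mathrm{rad}$ and the unit group mod $p$ with compatible $\sigma$-action. The first step I would try is to note that $\calR$ has Jacobson radical $J$ with $\calR/J\cong\F_{p^2}\times\F_{p^2}$, on which $\sigma$ acts factor-wise. Using $1+J$ instead of $\Gamma(1)$ as the first filtration step, its successive quotients $J^n/J^{n+1}$ are again $\F_{p^2}$-vector spaces with semilinear $\sigma$, so their $H^1$ vanishes. Then $H^1(G,(\F_{p^2}^\times)^2)$ is trivial by Hilbert 90, which finishes the argument.
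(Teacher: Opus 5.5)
Your argument is correct in substance, but it takes a different route from the paper. The paper never filters $\calR^\times$ directly. It takes $H^1(G,\GL_2(\Z_{p^2}))=\{1\}$ from the general (\'etale) form of Hilbert~90 for the unramified extension. It then uses the normal subgroup $\calK(p)=1+p\Mat_2(\Z_{p^2})$ of $\calR^\times$: surjectivity of $\GL_2(\Z_p)\to\GL_2(\F_p)$ and the long exact sequence give $H^1(G,\calK(p))=\{1\}$. Since $\calR^\times/\calK(p)$ is the Borel subgroup $\scrB$ of $\GL_2(\F_{p^2})$, Lang's theorem finishes. So the paper bootstraps the Eichler-order case from the $\GL_2$ case and never analyzes the pro-$p$ part itself.

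Your successive-approximation argument is self-contained, since it needs no \'etale cohomology. It treats both groups uniformly, and in fact proves $H^1(G,(\Lambda_0\otimes_{\Z_p}\Z_{p^2})^\times)=\{1\}$ for an arbitrary $\Z_p$-order $\Lambda_0$. The $(1+J)$-filtration variant in your last paragraph even dispenses with Lang's theorem, needing only additive and multiplicative Hilbert~90 for $\F_{p^2}/\F_p$.

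One factual slip in Step~2 should be fixed: $\Lambda_0/p\Lambda_0$ is not the lower-triangular algebra. Since $p\Lambda_0$ has $p^2\Z_p$ in the upper-right corner, $\Lambda_0/p\Lambda_0$ is $4$-dimensional over $\F_p$. Its radical is spanned by the images of the matrix units $pe_{12}$ and $e_{21}$, whose products lie in $p\Lambda_0$, so the radical squares to zero. Hence $\calR^\times/(1+p\calR)\cong(\F_{p^2}^\times)^2\ltimes\F_{p^2}^{2}$, not $\scrB$. The Borel $\scrB$ is the quotient by the larger group $\calK(p)=1+p\Mat_2(\Z_{p^2})$, which is exactly the paper's choice.

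The conclusion of Step~2 nonetheless survives. $(\Lambda_0/p\Lambda_0\otimes_{\F_p}\F_{p^2})^\times$ is the group of $\F_{p^2}$-points of the unit group of a finite-dimensional $\F_p$-algebra. That group is open in affine space, hence connected, so Lang's theorem still applies. Equivalently, replace $\mathbb{G}_a$ by $\mathbb{G}_a^2$ in your direct check. Your $J$-adic filtration avoids the issue altogether.
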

\begin{proof}   The second equality follows from a general form of Hilbert's
Theorem 90  \cite[Lemma~III.4.10]{Milne-etale} (together with \cite[Theorem~11.4]{milneLEC}) since $\Z_{p^2}$ is the ring of integers of the
unramified extension $\Q_{p^2}/\Q_p$.
Let $\calK(p)\coloneqq 1+p\Mat_2(\Z_{p^2})$ be  the kernel
of the reduction map  $\GL_2(\Z_{p^2})\to
\GL_2(\F_{p^2})$.
Since the $G$-invariant subgroup
$\GL_2(\Z_p)$ surjects  onto $\GL_2(\F_p)$, the associated long exact sequence in cohomology shows that the natural map
$H^1(G, \calK(p))\to H^1(G,
\GL_2(\Z_{p^2}))$ is injective.
Therefore,
$H^1(G, \calK(p))=\{1\}$.
Next, observe that  $\calK(p)$ is a
normal subgroup of  $\calR^\times$, and the quotient is given by the Borel subgroup $\scrB\coloneqq
\begin{bmatrix}
  \F_{p^2}^\times & 0 \\ \F_{p^2} & \F_{p^2}^\times
\end{bmatrix} \subseteq \GL_2(\F_{p^2}).
$
Thus we have a short  exact sequence $1\to \calK(p)\to \calR^\times\to
\scrB\to 1$, which induces the exact sequence
\[ \{1\}=H^1(G, \calK(p))\to H^1(G,
  \calR^\times)\to H^1(\Gal(\F_{p^2}/\F_p), \scrB).\]
By  Lang's Theorem
\cite[Theorem~6.1]{Platonov-Rapinchuk}, we have $H^1(\Gal(\F_{p^2}/\F_p),
\scrB)=\{1\}$.
It follows from the exact sequence
above that $H^1(G, \calR^\times)=\{1\}$, which proves the first equality.
\end{proof}

\subsection{Distinguishing conjugacy classes via \dieu modules}
We prove the second part of Theorem \ref{thm:5-emb}.
More precisely, we show that the embeddings $\varphi_j$ ($1\leq j \leq 5$) represent  distinct $\GL_2(\calO_p)$-conjugacy classes by comparing the $\calO_p$-\dieu module structures they induce on a superspecial \dieu
module.

Let $(M, \varphi)$ be an $\calO_p$-\dieu module over a perfect field $k_0 \supseteq \F_{p^2}$.
The  $\calO_p$-action on the Lie algebra $\Lie(M)$ clearly factors through $\calO_p/p\calO_p$.
By Definition~\ref{defn:O-dieu-mod} and \S\ref{ss:QM}, the \emph{Lie type} of $(M, \varphi)$ is defined to be  the Lie type of the $\Z_{p^2}$-Dieudonn\'e module obtained by restricting $\varphi$ to the subring $\Z_{p^2}\subseteq \calO_p$.
If the  Lie type of $(M, \varphi)$ is $(1, 1)$, that is, $\Lie(M)=\oplus_{i\in \Z/2\Z} \Lie(M)^i$ as in \eqref{eq:Lie-decomp} with $\dim_k\Lie(M)^0=\dim_k\Lie(M)^1=1$, then
we define its critical indices as follows.

\begin{defn}[{\cite[Definition 2.9]{zink:thesis}}]\label{defn:crit-index}
     We  put $\varepsilon \coloneqq \Pi \pmod p$ so that  $\calO_p/p \calO_p=\F_{p^2}[\varepsilon]$ with $\varepsilon^2=0$ and $\varepsilon a =a^p \varepsilon$ for every $a\in \F_{p^2}$.
  Let $(M, \varphi)$ be an $\calO_p$-\dieu module over a perfect field $k_0\supseteq \F_{p^2}$ with Lie type $(1, 1)$.
  We regard
    $\Lie(M)=\oplus_{i\in \Z/2\Z} \Lie(M)^i$ as an $\F_{p^2}[\varepsilon]\otimes_{\Fp} k_0$-module, where $\varepsilon$ maps $\Lie(M)^i$ into $\Lie(M)^{i+1}$.
    An index $i \in \Z/2\Z$ is called  {\it{critical}} if $\varepsilon(\Lie(M)^i)=0$.
 The set of critical indices of $(M, \varphi)$ is  denoted by $S(M,\varphi)$.
\end{defn}
The set $S(M,\varphi)$ is one of  $\{0,1\}$, $\{0\}$ or $\{1\}$.
If $S(M,\varphi)=\{0,1\}$, then  $M$ is superspecial by \cite[Proposition~4.2]{Ribet-Bimodules}.
Indeed, we have $\sfV M=\Pi M$ and hence  $\sfV^2 M=pM$.

Let $M_{1,1}$ be the \dieu module over $\F_{p^2}$ defined by
\begin{align}\label{eq:bbD}
      (M_{1,1}, \mathsf F, \mathsf V)
      \coloneqq
       \left( W(\F_{p^2})^{\oplus 2}, \quad \begin{bmatrix}
        0 & -p
         \\
         1 & 0
       \end{bmatrix}\sigma, \quad
       \begin{bmatrix}
        0 & p
         \\
         -1 & 0
       \end{bmatrix}\sigma^{-1}
       \right).
       \end{align}
     Then $M_{1,1}$ is superspecial.
     Moreover, it is isomorphic to
 the covariant \dieu module  of any supersingular elliptic curve $E$ over $\F_{p^2}$ with $\pi_E=[-p]$.

 We identify $W(\F_{p^2})$ with $\Z_{p^2}$.
 A direct computation shows that
 \begin{align}  \label{eq:bbD2}
  \End_{{\rm DM}}(M_{1,1})  \simeq \left \lbrace \begin{bmatrix}
            a & pb
            \\
            -b^{\sigma} & a^{\sigma}
            \end{bmatrix}
            \, \middle| \,
            a, b \in W(\F_{p^2}) \right\rbrace
        \simeq
            \calO_p, \qquad \begin{bmatrix}
                 a & pb
            \\
            -b^{\sigma} & a^{\sigma}
            \end{bmatrix}\mapsto a+b\Pi.
    \end{align}
Hence
 $\End_{\rm DM}(M_{1,1}^{\oplus 2}) \simeq
 \Mat_2(\calO_p)$.
This shows that giving an $\calO_p$-\dieu  module structure on $M_{1,1}^{\oplus 2}$
is equivalent to giving an embedding $\varphi: \calO_p \hookrightarrow \Mat_2(\calO_p)$. Moreover, two such embeddings induce isomorphic  $\calO_p$-\dieu  module structures on $M_{1, 1}^{\oplus 2}$ if and only if they are $\GL_2(\calO_p)$-conjugate.
\begin{prop}
\label{prop:Lie}
 The Lie types and critical indices (when defined) of
the $\O_p$-\dieu modules $(M_{1,1}^{\oplus 2}, \varphi_j)$ for $1\leq j\leq 5$ are given in  the following table.
 \begin{center}
   \renewcommand{\arraystretch}{1.3}
      \begin{tabular}{|c|c|c|}
      \hline
        Embeddings   & Lie type  & Critical indices \\ \hline
         $\varphi_1$  & (1, 1) & \{1\} \\ \hline
         $\varphi_2$  & (1, 1) & \{0\}\\ \hline
         $\varphi_3$  & (1, 1) & \{0, 1\}\\ \hline
         $\varphi_4$  & (0, 2) & \ \\ \hline
         $\varphi_5$  & (2, 0) & \ \\ \hline
      \end{tabular}
  \end{center}
  In particular, the embeddings $\{\varphi_j\mid 1\leq j\leq 5\}$ belong to distinct $\GL_2(\calO_p)$-conjugacy classes.
\end{prop}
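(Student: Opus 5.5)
The plan is to compute everything explicitly on $M_{1,1}^{\oplus 2}$. Both the Lie type and the set of critical indices are invariants of the isomorphism class of an $\calO_p$-\dieu module. By the remark preceding the proposition, $\GL_2(\calO_p)$-conjugate embeddings give isomorphic $\calO_p$-\dieu module structures on $M_{1,1}^{\oplus 2}$. So once the table is established, the five rows are pairwise distinct and the final assertion follows at once. It therefore suffices to compute $\Lie(M_{1,1}^{\oplus 2})$ together with the induced action of $\Mat_2(\calO_p)$ on it, and then restrict along each $\varphi_j$.

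\emph{Step 1: the Lie algebra of $M_{1,1}$ and the action of $\End_{\rm DM}(M_{1,1})$ on it.} Let $f_1,f_2$ be the standard basis of $M_{1,1}$. From \eqref{eq:bbD} we get $\sfV f_1=-f_2$ and $\sfV f_2=pf_1$, hence $\sfV M_{1,1}=pW f_1\oplus Wf_2$ and $\Lie(M_{1,1})=k\bar f_1$ is one-dimensional. Under the identification \eqref{eq:bbD2}, the element $a+b\Pi$ acts on $f_1$ by $f_1\mapsto af_1-b^\sigma f_2$. Modulo $\sfV M_{1,1}$ this becomes $\bar f_1\mapsto \bar a\,\bar f_1$. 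Consequently:
\begin{itemize}
\item an element $a\in\Z_{p^2}\subseteq\calO_p$ acts on $\Lie(M_{1,1})$ through the embedding $\tau_0$, so $\Lie(M_{1,1})$ sits in the $0$-component;
\item $\Pi$ acts on $\Lie(M_{1,1})$ as zero.
\end{itemize}
I would double-check the scalar convention here, namely that multiplication by $a$ on coordinates corresponds to $\tau_0$ rather than $\tau_1$, since the whole table depends on it.

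\emph{Step 2: the matrix action on $\Lie(M_{1,1}^{\oplus 2})$.} Here $\Lie(M_{1,1}^{\oplus 2})=k e_1\oplus k e_2$, where $e_1,e_2$ are the images of $f_1$ in the two summands. By Step 1, a matrix $(\alpha_{ij})\in\Mat_2(\calO_p)$ acts on this space by the reduced matrix $(\bar\alpha_{ij})\in\Mat_2(\F_{p^2})$, where $\calO_p\to\F_{p^2}$ kills $\Pi$, with scalars acting via $\tau_0$.

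\emph{Step 3: case-by-case reading of the table.}
\begin{itemize}
\item For $\varphi_5$, $a$ acts as $\mathrm{diag}(\bar a,\bar a)$, so the Lie type is $(2,0)$.
\item For $\varphi_4$, since $\Pi^{-1}a\Pi=a^\sigma$, $a$ acts as $\mathrm{diag}(\bar a^p,\bar a^p)$, so the Lie type is $(0,2)$.
\item For $\varphi_3$, $a$ acts as $\mathrm{diag}(\bar a,\bar a^p)$, so $ke_1=\Lie^0$ and $ke_2=\Lie^1$ and the type is $(1,1)$. Moreover $\varphi_3(\Pi)=\mathrm{diag}(\Pi,\Pi)$ reduces to $0$, so both indices are critical.
\item For $\varphi_1$, the Lie spaces are again $\Lie^0=ke_1$ and $\Lie^1=ke_2$. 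The element $\varphi_1(\Pi)=\begin{bmatrix}0&-p\\1&0\end{bmatrix}$ reduces to $\begin{bmatrix}0&0\\1&0\end{bmatrix}$, which sends $e_1\mapsto e_2$ and $e_2\mapsto 0$. Thus $\varepsilon(\Lie^0)\neq0$ and $\varepsilon(\Lie^1)=0$, giving $S=\{1\}$.
\item For $\varphi_2$, the roles are swapped: $\Lie^1=ke_1$ and $\Lie^0=ke_2$. The same $\varepsilon$ sends $e_1\mapsto e_2$, so $\varepsilon(\Lie^1)\neq 0$ and $S=\{0\}$.
\end{itemize}

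The only delicate point is the bookkeeping of conventions: which $\tau_i$ the diagonal entries act through, and whether matrices act on column coordinates. Both are fixed in Step 1, and the table then follows mechanically.
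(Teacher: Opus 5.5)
Your proposal is correct and follows the paper's proof: you compute $\Lie(M_{1,1})=M_{1,1}/\sfV M_{1,1}$ and check that, under the identification $\End_{\rm DM}(M_{1,1})\simeq\calO_p$, the subring $\Z_{p^2}$ acts on it through $\tau_0$ while $\Pi$ acts as zero. You then read off the Lie types and the action of $\varphi_j(\Pi)$ on the two summands, and distinctness follows because these invariants are preserved under $\GL_2(\calO_p)$-conjugation. The only difference is presentation: you use the reduction $\Mat_2(\calO_p)\to\Mat_2(\F_{p^2})$ acting on $\Lie(M_{1,1}^{\oplus 2})$, whereas the paper uses the auxiliary $\Z_{p^2}$-structures $\chi,\psi$ of Lie types $(1,0)$ and $(0,1)$ together with additivity. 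The convention you flagged in Step 1 does check out as you expected.
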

\begin{proof}
We write $M \coloneqq M_{1,1}$ and fix a $\Z_{p^2}$-basis of  $M$ as in \eqref{eq:bbD},  writing elements of $M$ as column vectors.
Then the Verschiebung $\sfV$ acts as
\begin{equation}\label{eq:V}
\sfV \begin{bmatrix}
    \Z_{p^2}
\\
0
\end{bmatrix}=\begin{bmatrix}
    0
    \\
    \Z_{p^2}
\end{bmatrix} \quad {\text{and}}   \quad \sfV \begin{bmatrix}
    0
    \\
    \Z_{p^2}
\end{bmatrix}=
\begin{bmatrix}
    p\Z_{p^2}
    \\
    0
\end{bmatrix}.
\end{equation}
We define an injective homomorphism  $\chi : \Z_{p^2} \to  \End_{{\rm DM}} (M)$
by $\chi(a)=
\begin{bmatrix}
    a & 0
    \\
    0 & a^{\sigma}
\end{bmatrix}$.
 As in \eqref{eq:dec},
the $\Z_{p^2}$-\dieu module $(M, \chi)$  decomposes as     $M=M^0\oplus M^1$ with
 $M^0=\begin{bmatrix}
    \Z_{p^2}
    \\
    0
\end{bmatrix}$ and
$M^{1}= \begin{bmatrix}
    0
    \\
    \Z_{p^2}
\end{bmatrix}$.
This and  \eqref{eq:V} imply that
$\sfV M^0=M^1$ and   $\sfV M^1=pM^0$.
Hence  $\Lie(M)=M/ \sfV M=(M/\sfV M)^0$, and therefore $(M,\chi)$ has Lie type $(1,0)$.
Similarly,  define
$\psi : \Z_{p^2}\to \End_{{\rm DM}}(M)$ by $\psi(a)=\begin{bmatrix}
    a^{\sigma} & 0
    \\
    0 & a
\end{bmatrix}$.  Then $(M, \psi)$ has Lie type $(0, 1)$.

By \eqref{eq:bbD2}, we have  $\calO_p \simeq \End_{\rm DM}(M)$, so that  $\Mat_2(\calO_p)\simeq \End_{\rm  DM}(M^{\oplus 2})$.
Under this identification, the  restrictions to $\Z_{p^2}$ of $\varphi_j$ listed in Theorem \ref{thm:5-emb} are
\begin{equation}\label{eq:rest}
\varphi_1=\varphi_3=\diag(\chi, \psi), \quad \varphi_2=\diag(\psi, \chi),
\quad   \varphi_4=\diag(\psi, \psi),
\quad \varphi_5=\diag(\chi, \chi).\end{equation}
The Lie types of $(M^{\oplus 2}, \varphi_j)$ follow by additivity with respect to direct sums.

It remains to compute the critical indices for $1 \le j \le 3$.
The case $j=3$ is immediate as $\varphi_3(\Pi)=\begin{bmatrix}
    \Pi & 0 \\ 0 & \Pi
\end{bmatrix}$ acts diagonally on $M^{\oplus 2}$.
Since $\Pi^2=-p$, the element $\Pi\in \calO_p=\End_{\mathrm{DM}}(M)$ vanishes on the one-dimensional $\F_{p^2}$-space $\Lie(M)$. Therefore,  $S(M^{\oplus 2}, \varphi_3)=\{0, 1\}$.  Next, consider the case $j=1$.
By \eqref{eq:rest}, the restriction of $\varphi_1$ to $\Z_{p^2}$ is given by $\diag(\chi,\psi)$.
Hence the decomposition
$\Lie(M^{\oplus2})=\Lie(M^{\oplus2})^0\oplus \Lie(M^{\oplus2})^1$ coincides with the canonical one $M^{\oplus 2}/\sfV M^{\oplus 2}=M/\sfV M\oplus M/\sfV M$.
From  $\varphi_1(\Pi)=\begin{bmatrix}
    0 & -p \\ 1 & 0
\end{bmatrix}$, we see that modulo $\sfV M^{\oplus 2}$,
the induced map on $\Lie(M^{\oplus 2})$ satisfies
\[
\Lie(M^{\oplus2})^0 \xrightarrow{\ \simeq\ } \Lie(M^{\oplus2})^1,
\qquad
\Lie(M^{\oplus2})^1 \xrightarrow{\ 0\ } \Lie(M^{\oplus2})^0.
\]
Hence, $S(M^{\oplus 2}, \varphi_1)=\{1\}$.
By a similar argument, $S(M^{\oplus 2}, \varphi_2)=\{0\}$.

As the Lie types and critical indices are preserved under $\mathrm{GL}_2(\mathcal O_p)$-conjugation,
it follows that the embeddings $\varphi_j$ represent distinct $\mathrm{GL}_2(\mathcal O_p)$-conjugacy classes.
\end{proof}

Recall from Proposition~\ref{prop:sp_Can} that every superspecial $O_B$-abelian surface $(A, \iota)$ over  an algebraically closed field  $k$ of characteristic $p$ admits a canonical model over $\F_{p^2}$ with relative Frobenius endomorphism $[-p]$. Thus we can apply the result of Proposition~\ref{prop:Lie} to the study of superspecial $O_B$-abelian surfaces over $k$.

\begin{cor}\label{cor:embed-act-Lie}
    Let $(A, \iota)$ be a superspecial $O_B$-abelian surface over $k$, and fix an identification of $O_D\otimes \Z_p=\calO_p$ as in \eqref{eq:orientation-ident}.  Put $\F_{p^2}[\varepsilon]\coloneqq \calO_p/p\calO_p$ as in Definition~\ref{defn:crit-index}.
Then the embedding $\varphi_{(A, \iota)}: \calO_p\hookrightarrow \Mat_2(\calO_p)$ attached to $(A, \iota)$ is uniquely determined up to $\GL_2(\calO_p)$-conjugacy by the induced action of $\F_{p^2}[\varepsilon]$  on $\Lie(A)$.
\end{cor}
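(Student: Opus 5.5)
The plan is to reduce the statement to Theorem~\ref{thm:5-emb} and Proposition~\ref{prop:Lie}, which together say that the five $\GL_2(\calO_p)$-conjugacy classes are separated by Lie type together with the set of critical indices. We therefore have to show that the action of $\F_{p^2}[\varepsilon]$ on $\Lie(A)$ determines these two invariants, and that these invariants coincide with those of the model $(M_{1,1}^{\oplus 2},\varphi_{(A,\iota)})$.

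First, pass to the canonical model. By Proposition~\ref{prop:sp_Can} and Corollary~\ref{cor:sp_fod}, $(A,\iota)$ descends to a superspecial $O_B$-abelian surface $\widetilde A$ over $\F_{p^2}$ with $\pi_{\widetilde A}=[-p]$. Its covariant Dieudonn\'e module is isomorphic to $M_{1,1}^{\oplus 2}$: indeed $\widetilde A\simeq \widetilde E^2$, where $\widetilde E$ is a supersingular elliptic curve over $\F_{p^2}$ with $\pi=[-p]$, and $M(\widetilde E)\simeq M_{1,1}$.

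Under the identification $\End_{\rm DM}(M_{1,1}^{\oplus 2})\simeq \Mat_2(\calO_p)$ from \eqref{eq:bbD2}, compatible with \eqref{eq:orientation-ident}, the induced $\calO_p$-Dieudonn\'e module is $(M_{1,1}^{\oplus 2},\varphi_{(A,\iota)})$ up to $\GL_2(\calO_p)$-conjugacy. Base change to $k$ gives
\[
\Lie(A)=\Lie(\widetilde A)\otimes_{\F_{p^2}}k=(M/\sfV M)\otimes_{\F_{p^2}}k .
\]
This identification is compatible with the $\calO_p/p\calO_p$-action and with the decomposition \eqref{eq:Lie-decomp}. Both the Lie type and the critical indices are invariant under this extension of scalars, since dimensions and vanishing of $\varepsilon$ on a component do not change.

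Next, observe that the $\F_{p^2}[\varepsilon]\otimes_{\F_p}k$-module $\Lie(A)$ determines the invariants:
\begin{itemize}
\item the $\F_{p^2}$-action alone gives the eigenspace decomposition $\Lie(A)=\Lie(A)^0\oplus\Lie(A)^1$, and hence the Lie type;
\item when the Lie type is $(1,1)$, the action of $\varepsilon$ determines which components it kills, and hence the critical index set $S$.
\end{itemize}
By the table in Proposition~\ref{prop:Lie}, the resulting pair takes the five values $((1,1),\{1\})$, $((1,1),\{0\})$, $((1,1),\{0,1\})$, $(0,2)$, $(2,0)$, one for each $\varphi_j$. Every embedding is conjugate to exactly one $\varphi_j$ by Theorem~\ref{thm:5-emb}, so the pair, and hence the $\F_{p^2}[\varepsilon]$-action on $\Lie(A)$, pins down the conjugacy class of $\varphi_{(A,\iota)}$.

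The main obstacle is making the first step precise. The bookkeeping must ensure that the fixed identification \eqref{eq:orientation-ident}, the isomorphism \eqref{eq:bbD2}, and the choice of $\tau_0$ (coming from $\F_{p^2}\subseteq k$) are all compatible. Otherwise the labels $0,1$ of Lie components could be swapped, which would interchange $\varphi_1\leftrightarrow\varphi_2$ and $\varphi_4\leftrightarrow\varphi_5$. Since the statement only asks that the class be determined by the action, this is harmless provided one fixed convention is used throughout. I would verify it by checking that $\chi$ from the proof of Proposition~\ref{prop:Lie} acts on $\Lie(M_{1,1})$ through the fixed inclusion $\F_{p^2}\subseteq k$.
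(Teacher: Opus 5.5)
Your proposal is correct and follows the paper's own (implicit) argument: pass to the canonical model over $\F_{p^2}$ with Frobenius $[-p]$ (Proposition~\ref{prop:sp_Can}), identify its $\calO_p$-Dieudonn\'e module with $(M_{1,1}^{\oplus 2},\varphi_{(A,\iota)})$, and read off the Lie type and critical indices, which separate the five classes by Proposition~\ref{prop:Lie}. The orientation issue you raise needs no checking. Any discrepancy between \eqref{eq:orientation-ident} and \eqref{eq:bbD2} is an automorphism of $\calO_p$, and applying it entrywise permutes the $\GL_2(\calO_p)$-conjugacy classes bijectively, so the map from classes to Lie-algebra invariants is injective whichever convention is used.
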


Corollary~\ref{cor:embed-act-Lie} can also be reformulated purely in terms of $(\calO_p, \calO_p)$-bimodules, without reference to superspecial $O_B$-abelian surfaces. The following corollary may be regarded as a generalization of \cite[Theorem~1.3]{Ribet-Bimodules} in the case where the $\Z_p$-rank is 8. It can be proved in exactly the same way as Proposition~\ref{prop:Lie}.
\begin{cor}
Let $L$ and $L'$ be two $(\calO_p, \calO_p)$-bimodules that both are free of rank $8$ over $\Z_p$.  Then $L$ and $L'$ are isomorphic if and only if the $(\F_{p^2}[\varepsilon], \F_{p^2})$-bimodules $L/L\Pi$ and $L'/L'\Pi$ are isomorphic.
\end{cor}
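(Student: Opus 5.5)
By Lemma~\ref{lem:conj-emb-bimodule}, every $(\calO_p,\calO_p)$-bimodule that is free of rank $8$ over $\Z_p$ is isomorphic to $L_\varphi\coloneqq \calO_p^{\oplus 2}$. Here the right action is the obvious one, and the left action comes from an embedding $\varphi:\calO_p\hookrightarrow \Mat_2(\calO_p)$. By Theorem~\ref{thm:5-emb}, $\varphi$ can be taken to be one of $\varphi_1,\dots,\varphi_5$. The plan is to mirror the proof of Proposition~\ref{prop:Lie}, with $L/L\Pi$ playing the role of $\Lie(M)=M/\sfV M$.

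First, check that $\bar L\coloneqq L/L\Pi$ is an $(\F_{p^2}[\varepsilon],\F_{p^2})$-bimodule. The right action of $\calO_p$ factors through $\calO_p/\Pi\calO_p=\F_{p^2}$. On the left, $pL=Lp=L\Pi^2\subseteq L\Pi$, so the left action factors through $\calO_p/p\calO_p=\F_{p^2}[\varepsilon]$. Any bimodule isomorphism $L\simeq L'$ carries $L\Pi$ onto $L'\Pi$, so it induces an isomorphism $\bar L\simeq \bar L'$. This gives the \emph{only if} direction.

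For the converse, attach to $\bar L$ two isomorphism invariants, analogous to the Lie type and the critical indices. Since $\F_{p^2}\otimes_{\F_p}\F_{p^2}\simeq \F_{p^2}\times\F_{p^2}$, we can split
\[
\bar L=\bar L^0\oplus \bar L^1,
\]
where on $\bar L^i$ left multiplication by $a\in\F_{p^2}$ agrees with right multiplication by $a^{p^i}$. The first invariant is the pair $(\dim_{\F_{p^2}}\bar L^0,\dim_{\F_{p^2}}\bar L^1)$. Because $\varepsilon a=a^p\varepsilon$, left multiplication by $\varepsilon$ maps $\bar L^i$ into $\bar L^{i+1}$. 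The second invariant is the set of indices $i$ with $\varepsilon\bar L^i=0$. Both are clearly preserved by isomorphisms of $(\F_{p^2}[\varepsilon],\F_{p^2})$-bimodules.

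Next, compute these invariants for $\bar L_{\varphi_j}=\F_{p^2}^{\oplus 2}$. On this space a matrix in $\Mat_2(\calO_p)$ acts through its reduction modulo $\Pi$, and since $\Pi$ is normal in $\calO_p$ this reduction lies in $\Mat_2(\F_{p^2})$.
\begin{itemize}
\item For $\varphi_5$: left $a$ acts as right $a$, and $\Pi$ reduces to $0$. So the type is $(2,0)$.
\item For $\varphi_4$: $\Pi^{-1}a\Pi=a^\sigma$, so the type is $(0,2)$.
\item For $\varphi_3$: the type is $(1,1)$, and $\varphi_3(\Pi)=\diag(\Pi,\Pi)$ reduces to $0$, so both indices are critical.
\item For $\varphi_1$ and $\varphi_2$: the type is $(1,1)$, and $\varphi_j(\Pi)=\begin{bmatrix}0&-p\\1&0\end{bmatrix}$ reduces to $\begin{bmatrix}0&0\\1&0\end{bmatrix}$. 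This is nonzero, maps the first coordinate isomorphically onto the second, and kills the second.
\end{itemize}
For $\varphi_1$ the first coordinate is $\bar L^0$ (left $a$ acts as $a$), so $\varepsilon\bar L^1=0\neq\varepsilon\bar L^0$ and the critical set is $\{1\}$. For $\varphi_2$ the roles of the coordinates are swapped, giving the critical set $\{0\}$.

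The five reductions are therefore pairwise non-isomorphic. Hence $\bar L\simeq\bar L'$ forces $L$ and $L'$ to correspond to the same $\varphi_j$, and so $L\simeq L'$.

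The only delicate step is the bookkeeping for $\varphi_1$ versus $\varphi_2$. One has to determine which coordinate lies in $\bar L^0$, and check that the diagonal $\Z_{p^2}$-action is compatible with right multiplication after reducing modulo $\Pi$. In particular, right multiplication by $a$ on $\F_{p^2}^{\oplus 2}$ is coordinatewise multiplication by $\bar a$, because $\Z_{p^2}$ is commutative. I would verify this explicitly before concluding that the critical sets are $\{1\}$ and $\{0\}$ respectively, matching the table in Proposition~\ref{prop:Lie}.
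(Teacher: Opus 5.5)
Your proposal is correct and is essentially the paper's argument. The paper views $L$ as the superspecial Dieudonn\'e module $M_{1,1}^{\oplus 2}$, with $\sfV$ acting as right multiplication by $\Pi$, so that $L/L\Pi=\Lie(M)$, and then quotes the Lie-type and critical-index table of Proposition~\ref{prop:Lie}. You instead compute those same two invariants directly on $L_{\varphi_j}/L_{\varphi_j}\Pi$ for the five representatives, and your bookkeeping for $\varphi_1$ versus $\varphi_2$ is right: their left $\Z_{p^2}$-actions reduce to $\diag(\bar a,\bar a^p)$ and $\diag(\bar a^p,\bar a)$ respectively, giving critical sets $\{1\}$ and $\{0\}$.
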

\begin{proof}
    Indeed, let $L\simeq \calO_p^{\oplus 2}$ be a  right $\calO_p$-module that is free of rank $8$ over $\Z_p$. We re-interpret the right $\calO_p$-module structure on $L$ as a Dieudonn\'e module structure over $\F_{p^2}$ in the following way. Consider the Dieudonn\'e ring
 $\grD_{\F_{p^2}}\coloneqq \Z_{p^2}[\sfF, \sfV]$, which is a non-commutative ring  subject to the relations $\sfF\sfV=\sfV\sfF=p$, $\sfF a=a^\sigma\sfF $ and $\sfV a^\sigma= a\sfV$ for all $a\in \Z_{p^2}$.  From the multiplication rule of $\calO_p$ in \eqref{eq:O_p}, it is clear that $\calO_p$ can be realized as a quotient ring of $\grD_{\F_{p^2}}$ as follows:
 \[\grD_{\F_{p^2}}/(\sfF+\sfV, \sfV^2+p) =\Z_{p^2}[\sfV]/(\sfV^2+p) \xrightarrow{\sim}\calO_p=\Z_{p^2}+\Z_{p^2}\Pi,\quad \sfV\mapsto \Pi, \quad a\mapsto a, \quad \forall a\in \Z_{p^2}. \]
Observe that the map $a+b\Pi\mapsto a+ \Pi b$ defines an isomorphism between $\calO_p$ and its opposite ring $\calO_p^\opp$. Therefore, we may rewrite the right $\calO_p$-module structure on $L$ as a left module over $\grD_{\F_{p^2}}/(\sfF+\sfV, \sfV^2+p)$ by defining $(a+\sfV b)x\coloneqq x (a+b\Pi)$ for all $x\in L$ and $a, b\in \Z_{p^2}$.
Thus, $L\simeq M_{1, 1}^{\oplus 2}$ as  Dieudonn\'e modules over $\F_{p^2}$.
Now an $(\calO_p, \calO_p)$-bimodule free over $\Z_p$ is simply a superspecial $\calO_p$-Dieudonn\'e module over $\F_{p^2}$, so the corollary follows directly from Proposition~\ref{prop:Lie}.
\end{proof}

\begin{remark}\label{rem:match-str-cls}
    There remains the  question of which isomorphism class of left
$(\calO_p\otimes_{\Z_p}\calO_p)$-module structure classified in
Lemma~\ref{lem:mod-str-counting} corresponds to which embedding
$\varphi_j$ as listed in Theorem~\ref{thm:5-emb}. Recall that a left
$(\calO_p\otimes_{\Z_p}\calO_p)$-module of $\Z_p$-rank $8$
is determined by an ordered pair $(L, C)$, where $L$ is  a left $\calR$-module
with $\rank_{\Z_p}(L)=8$, and $C$ is an element of $\scrS(L)$
as in \eqref{eq:e10}.
For each $1\leq i\leq 3$, let $L_i$  be the full $\calR$-lattice in $V=\Mat_2(\Q_{p^2})$ given in \eqref{eq:e21}.
Set $\gamma\coloneqq
\begin{bmatrix}
  0 & 1 \\ 1 & 0
\end{bmatrix}$ and $\delta\coloneqq\Pi_\calR \gamma \Pi_\calR^{-1}=\begin{bmatrix}
  0 & p \\ 1/p & 0
\end{bmatrix}$.
The correspondence is as follows:
\begin{align}
\varphi_1&\longleftrightarrow  (L_1, \delta),   \quad &\quad
                                                      \varphi_2&\longleftrightarrow
                                                                 (L_1,
                                                       \gamma),
\quad  & \quad
\varphi_3 &\longleftrightarrow (L_1, I),  \label{eq:e26}\\
\varphi_4&\longleftrightarrow  (L_2, I),   \quad &\quad
                                                 \varphi_5&\longleftrightarrow
                                                            (L_3,
                                                       I).  & & \label{eq:e27}
\end{align}
This  is not used elsewhere in the paper, so we omit the details.
\end{remark}

\section{Adelic descriptions of genera of superspecial $O_B$-abelian surfaces}\label{ss:as}

Keep $B$, $\Delta$, and $O_B$ as in Section \ref{ss:exist}. More explicitly, $B$ is an indefinite quaternion algebra  over $\Q$ with reduced discriminant  $\Delta$, and $O_B$ is a fixed maximal order in $B$. For each prime $\ell$, let  $B_{\ell}\coloneqq B\otimes_\Q \Q_{\ell}$ and $O_{B, \ell} \coloneqq O_B \otimes \Z_{\ell}$.
If $p\mid \Delta$, we further fix an identification of $\calO_p=O_B\otimes\Z_p$ with $O_D\otimes\Z_p$ as in \eqref{eq:orientation-ident}, where $O_D$ denotes a fixed maximal order in $D=D_{p, \infty}$.
In this section, we give an adelic description of each genus of superspecial $O_B$-abelian surfaces, which
sets up the preliminaries for the count the isomorphism classes within each genus.
Following the work of Zink \cite{zink:thesis}, Ribet \cite{Ribet-Bimodules}, and others, we explain the geometric meaning of these genera and their cardinalities.
The detailed calculation for the number of isomorphism classes will be carried out in Section~\ref{ss:count}.

Combining Proposition~\ref{prop:genus} and Theorem~\ref{thm:5-emb}, we see that
\begin{enumerate}
    \item if $p\nmid \Delta$, then there is a unique genus of superspecial $O_B$-abelian surfaces, all of which satisfy the determinant condition by Lemma~\ref{lem:det};
    \item if $p\mid \Delta$, then there are five genera of  superspecial $O_B$-abelian surfaces, corresponding to the five $\GL_2(\calO_p)$-conjugacy classes of embeddings $\calO_p\hookrightarrow \Mat_2(\calO_p)$.
\end{enumerate}
When $p\mid \Delta$, for each $1\leq j\leq 5$ we write $\Sigma_j$ for the set of isomorphism classes of $O_B$-abelian varieties $(A, \iota)$ over $\Fpbar$ such that the embedding $\varphi_{(A, \iota)}: \calO_p\hookrightarrow \Mat_2(\calO_p)$ attached to $(A, \iota)$ is $\GL_2(\calO_p)$-conjugate to $\varphi_j$ listed in Theorem~\ref{thm:5-emb}.

Let $X=X_{O_B}$ be the (coarse) moduli $\Z$-scheme parametrizing $O_B$-abelian surfaces satisfying the determinant condition, and $X^{\sp}$ be the superspecial locus of its special fiber $X_{\F_p}\coloneqq X\otimes_\Z\Fp$ as in Definition~\ref{defn:sp-ss-loci}.  By Lemma \ref{lem:det} (2),  an $O_B$-abelian surface
$(A, \iota)$ over $\Fpbar$  satisfies the determinant condition if and only if its $O_{B, p}$-Dieudonn\'e module $(M(A), \iota_p)$ has Lie type $(1,1)$.  Thus the set of $\Fpbar$-rational points $X^{\sp}(\Fpbar)$ is identified with the set of isomorphism classes of superspecial $O_B$-abelian $\Fpbar$-surfaces of Lie type $(1, 1)$. If $p\nmid \Delta$, such isomorphism classes of $O_B$-abelian surfaces form a single genus as explained above. If $p\mid \Delta$, then $X^{\sp}(\Fpbar)$ is separated into three genera according to Proposition~\ref{prop:Lie}, that is,
\begin{equation}\label{eq:bad}
 X^{\rm sp}(\Fpbar)=X^{\rm sp}(\F_{p^2})\simeq \Sigma_{1} \sqcup \Sigma_{2} \sqcup \Sigma_{3}.
 \end{equation}
It turns out that the remaining two genera $\Sigma_4$ and $\Sigma_5$ parametrize the irreducible components of $X_{\Fpbar}$, as explained below.

\subsection{Bad reduction of the Shimura curves}\label{ss:bad} Suppose that $p\mid \Delta$. From the work of Cerednik \cite{Cerednik-1976} and Drinfeld \cite{Drinfeld}, $X_{\Fpbar}$ is a projective curve whose normalization is a disjoint union of rational curves, and whose only singular points are ordinary double points. Zink \cite{zink:thesis} and Ribet \cite[Theorem 5.3]{Ribet-Bimodules} gave
 moduli interpretations  of the singular locus,  irreducible components, and their incidence relations.
The singular points are represented by superspecial $O_B$-abelian surfaces $(A, \iota)$ with Lie type $(1, 1)$ and critical indices $\{0, 1\}$.
By Proposition \ref{prop:Lie}, the singular locus $X^{\rm sing}$ coincides with $\Sigma_3$.
To classify the irreducible components ${\rm Irr}(X_{\Fpbar})$ of $X_{\Fpbar}$, we consider  a superspecial point $(A, \iota)$ with Lie type $(0,2)$ or $(2, 0)$, which belongs to $\Sigma_4 \sqcup \Sigma_5$.
Then every subgroup $H \subseteq A$  isomorphic to $\alpha_p$ is stable under the action of $O_B$.
Such subgroups are in bijection  with the points of $\mathbb P_1$ by \cite[Proposition~4.5]{Ribet-Bimodules}.
By dividing $(A, \iota)$ by these subgroups $H$, we obtain a family of $O_B$-abelian surfaces with Lie type $(1, 1)$.
This construction yields a bijection ${\rm Irr}(X_{\Fpbar})\simeq \Sigma_4 \sqcup \Sigma_5$.

Finally,  we explain the incidence relation due to Ribet between the superspecial points $X^\sp(\Fpbar)=\Sigma_1\sqcup \Sigma_2\sqcup \Sigma_3$ and the irreducible components using our present terminology.
We treat the singular locus $X^{\mathrm{sing}}=\Sigma_3$ first.
Let $x$ be a point in  $\Sigma_3$ represented by $(A, \iota)$.
From \cite[Proposition~4.4]{Ribet-Bimodules}, $A$ has precisely two $O_B$-stable subgroups that are isomorphic to $\alpha_p$, say $I_0$ and $I_1$.
Then
the two irreducible components of $X_{\Fpbar}$ passing through $x$ correspond to the points in $\Sigma_4 \sqcup \Sigma_5$ represented by $(({A/I_0})^\sigma, \bar{\iota}^\sigma)$ and $(({A/I_1})^\sigma, \bar{\iota}^\sigma)$, with exactly one of them lying in $\Sigma_4$ and the other in $\Sigma_5$ according to \cite[Proposition~4.7]{Ribet-Bimodules}.
Here $\bar{\iota}$ denotes the induced $O_B$-action on  the quotients.
Furthermore, for $i=1, 2$, let $y_i$  be a point in $\Sigma_i$ represented by $(A_i, \iota_i)$.
It is shown in \cite[Proposition~4.4]{Ribet-Bimodules} that $(A_i, \iota_i)$ has exactly one $O_B$-stable subgroup $H_i$ isomorphic to $\alpha_p$.
The unique irreducible component  of $X_{\Fpbar}$ passing through $y_i$ corresponds to $(({A_i/H_i})^\sigma, \bar{\iota}_i^\sigma)$, which belongs to $\Sigma_4$ if $i=1$,  and $\Sigma_5$ if $i=2$ according to \cite[Proposition~4.7]{Ribet-Bimodules} again.
Indeed, since $\dim_k\Lie(H_i)=1$, the action of $\calO_p/p\calO_p$ on $\Lie(H_i)$ factors through the quotient map $\calO_p/p\calO_p\twoheadrightarrow \F_{p^2}$, and the same holds for  the action of $\calO_p/p\calO_p$ on $\Lie({A_i/H_i})^\sigma$ by \cite[Proposition~4.7]{Ribet-Bimodules}.  Moreover, the same proposition shows that $\F_{p^2}$ acts on $\Lie(H_i)$ and $\Lie({A_i/H_i})^\sigma$ through the same embedding $\F_{p^2}\hookrightarrow k$. Therefore, the result $(({A_i/H_i})^\sigma, \bar{\iota}_i^\sigma)\in \Sigma_{i+3}$ for $i=1,2$ follows directly from the table of Proposition~\ref{prop:Lie}.

\subsection{Adelic descriptions of genera}
\label{subsec:adelic-desc}
We return to the general setting, where $p$ may or may not divide $\Delta$.
Let $\Sigma$ be the set of isomorphism classes of superspecial  $O_B$-abelian surfaces over $\Fpbar$ within one genus.  In other words,
\begin{equation}
    \Sigma=\begin{cases*}
         X^\sp(\Fpbar) & if $p\nmid \Delta$,\\
        \Sigma_j  & if $p\mid \Delta$
    \end{cases*}
\end{equation}
 for some $1\leq j\leq 5$.  We provide an adelic description of $\Sigma$ in terms of a double coset space of a linear algebraic group.

Fix $x=[(A, \iota)]\in \Sigma$.
By definition, the \emph{group $\calI$  of self-isogenies of $(A, \iota)$} is the group scheme over $\Z$ representing the functor sending a commutative ring $R$ to $(\End(A, \iota)\otimes R)^\times$. In particular, $\calI(\Z)=O^\times$, where $O$ denotes the endomorphism ring  $\End(A, \iota)$. Let $\calD\coloneqq O\otimes_\Z\Q$ be the endomorphism algebra of $(A, \iota)$,
which coincides with the centralizer of $\iota^0(B)$ in $\End^0(A)=\Mat_2(D_{p, \infty})$.
 The generic fiber $\calI_\Q\coloneqq \calI\otimes \Q$ of $\calI$ can be identified with the linear algebraic $\Q$-group $\underline{\calD}^\times$, i.e.~the multiplicative group of $\calD$.
From the double centralizer theorem \cite[Theorem, \S12.7]{Pierce-asso-alg}, $\calD$ is a definite quaternion $\Q$-algebra with reduced discriminant
\begin{equation}\label{eq:red-disc-centralizer}
    \Delta_{\calD}=\begin{cases*}
        -p\Delta  & if $p\nmid \Delta$;\\
        -\Delta/p & if $p\mid \Delta$.
    \end{cases*}
\end{equation}
  Let $\wbZ\coloneqq \varprojlim_n \Z/n\Z$ be the profinite completion of $\Z$, and $\wbQ\coloneqq \wbZ\otimes \Q$ be the ring of finite adeles of $\Q$.  By definition, we have $\calI(\wbZ)=\whO^\times$ and $\calI(\wbQ)=\whD^\times$, where $\widehat{O}\coloneqq O\otimes \wbZ$  denotes the profinite completion of
    $O$, and   $\whD\coloneqq \whO\otimes_\Z\Q$ denotes the ring of finite adeles of $\calD$.

\begin{prop}\label{prop:adelic-desc}
   There is a bijection
  \begin{equation}\label{eq:unram}
 \Sigma \simeq \calI(\Q)\backslash \calI(\wbQ)/\calI(\wbZ)=\calD^{\times}
  \backslash
    \whD^{\times}/\widehat{O}^{\times},
    \end{equation}
     sending the chosen point  $x\in \Sigma$ to the neutral double coset $\calD^{\times} \cdot 1\cdot \widehat{O}^{\times}$.  In particular, if we write  $\Cl(O)$ for the ideal class set of fractional right $O$-ideal classes, and $h(O)\coloneqq |\Cl(O)|$ for its cardinality (i.e.~the class number of $O$), then
     \begin{equation}\label{eq:genus-cls-no}
    \Sigma\simeq \Cl(O)\qquad \text{and}\qquad      |\Sigma|=h(O).
     \end{equation}
\end{prop}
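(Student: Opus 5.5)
The plan is to use the standard ``isogeny class = double coset'' argument in the style of Yu and Xue--Yu. The proof has three steps.

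First, I would show that every $(A',\iota')\in\Sigma$ admits an $O_B$-linear quasi-isogeny $\phi\colon A'\to A$. By Deligne--Ogus--Shioda, $A'\simeq A\simeq E^2$, so $\iota'$ and $\iota$ give two embeddings $B\hookrightarrow \Mat_2(D)$. By Skolem--Noether these are $\GL_2(D)$-conjugate, which yields such a $\phi$. Transporting by $\phi$, I get a $\wbZ$-lattice datum: for $\ell\neq p$ the image $\phi_\ell(T_\ell(A'))\subseteq V_\ell(A)$, and at $p$ the image $\phi_p(M(A'))\subseteq M(A)[1/p]$. Each is an $O_{B,\ell}$-stable (respectively $O_{B,p}$-stable Dieudonn\'e) lattice. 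Since $(A',\iota')$ lies in the same genus as $(A,\iota)$ (Definition~\ref{defn:genus}), each local lattice is isomorphic to the corresponding one for $A$ as a module with extra structure. The isomorphism is given by some $g_\ell\in \Aut(V_\ell(A),\iota)$, respectively $g_p\in\Aut_{\mathrm{DM}}(M(A)[1/p],\iota_p)$. By Tate's theorem (the $\ell$-adic one and the $p$-adic one \cite[Theorem~A.1.1.1]{Chai-Conrad-Oort}), these automorphism groups are $\calD_\ell^\times$, and $g_\ell\in \calI(\Z_\ell)$ for almost all $\ell$. The map $(A',\iota',\phi)\mapsto (g_\ell)_\ell\in \whD^\times$ is well defined modulo $\whO^\times$ on the right, since $\whO^\times$ is the stabilizer of the local lattices of $A$. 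Changing $\phi$ by an element of $\calD^\times$ changes the class by left multiplication.

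Second, I would check that the resulting map $\Sigma\to \calD^\times\backslash\whD^\times/\whO^\times$ is a bijection. For injectivity: if two points give the same double coset, then after adjusting one quasi-isogeny by an element of $\calD^\times$, the two quasi-isogenies carry the Tate modules and Dieudonn\'e modules of $A'$ and $A''$ onto the same lattices. The composite quasi-isogeny $A'\to A''$ therefore preserves all integral local lattices, so it and its inverse are genuine isogenies, hence an $O_B$-linear isomorphism.

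For surjectivity, given $g\in\whD^\times$, I would form the lattices $g_\ell T_\ell(A)$ and $g_pM(A)$. These are $O_B$-stable, and at $p$ the lattice is $\sfF,\sfV$-stable because $g_p$ commutes with $\sfF,\sfV$. Choosing $N$ with $N$ times these lattices contained in the original ones (all but finitely many agree), I obtain a finite $O_B$-stable subgroup scheme of $A$. The quotient of $A$ by it, rescaled by $N$, is $(A',\iota')$ with the prescribed lattices. This object is superspecial: its Dieudonn\'e module is $g_pM(A)\simeq M(A)$, so $\sfV^2=p$ holds by Theorem~\ref{thm:sp}. It lies in the same genus by construction, and it maps to $g$.

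Third, the identification $\calD^\times\backslash\whD^\times/\whO^\times\simeq \Cl(O)$ is the standard one via $g\mapsto g\whO\cap \calD$, using $\whO=O\otimes\wbZ$ and the local--global correspondence of lattices. This gives $|\Sigma|=h(O)$.

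The main obstacle is the $p$-adic part. There one must verify that $\Aut_{\mathrm{DM}}$ of the rational $O_{B,p}$-Dieudonn\'e module equals $\calD_p^\times$, and that the genus condition at $p$ is exactly the condition that $g_pM(A)$ is isomorphic to $M(A)$. This requires working over $\Fpbar$ with the $\F_{p^2}$-model of Proposition~\ref{prop:sp_Can} (so that $\pi=[-p]$), and using Corollary~\ref{cor:sp_fod} to pass between the two. I would first establish the $p$-adic Tate isomorphism $\End(A)\otimes\Z_p\simeq\End_{\mathrm{DM}}(M(A))$ over $\F_{p^2}$, and then take centralizers of $\iota_p(O_{B,p})$.
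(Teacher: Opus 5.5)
Your proposal is correct and follows essentially the same route as the paper. The paper sketches only the map $\whD^\times\to\Sigma$, by quotienting $A$ by $\prod_\ell\ker(Nh_\ell^{-1})$, which is your surjectivity step; it then defers bijectivity to Yu's simple mass formula and the identification with $\Cl(O)$ to Voight's Lemma~27.6.8, whereas you write out the inverse map and injectivity yourself.

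One cosmetic correction: to realize $g_\ell T_\ell(A)$ up to a scalar as the lattice of a quotient of $A$, choose $N$ with $Ng_\ell^{-1}$ integral, i.e.\ $T_\ell(A)\subseteq N^{-1}g_\ell T_\ell(A)$, as the paper does. Your condition $Ng_\ell T_\ell(A)\subseteq T_\ell(A)$ is the reverse inclusion. Either way the scalar is absorbed by $\Q^\times\subseteq\calD^\times$.
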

\begin{proof}
From \cite[Lemma~27.6.8]{voight-quat-book}, the ideal class set $\Cl(O)$ enjoys the same adelic description as the right hand side of  \eqref{eq:unram}, from which \eqref{eq:genus-cls-no} follows.
    The bijection in \eqref{eq:unram} is a variant of the \emph{simple mass formula} \cite[\S2]{yu:smf} for PEL-type abelian varieties, which is also explained in \cite[\S5]{xue-yu:counting-av}.
    For the reader's convenience, we provide a brief sketch by constructing the map   $\whD^{\times}
\to \Sigma$.
 For each prime $\ell$, let $A[\ell^{\infty}]$ denote the  $\ell$-divisible group of $A$  with its $O_{B, \ell}$-action.
 Let $(h_\ell)_\ell \in \whD^\times$.
 We can choose a positive integer $N$  such that
 $f_{\ell}\coloneqq N h_{\ell}^{-1}$ is an actual endomorphism of  $A[\ell^{\infty}]$ for all $\ell$.
 Furthermore, $f_\ell$ is an isomorphism for almost all $\ell$.
Let $g : A \to A' \coloneqq A\Big/\Bigl(\prod_\ell \ker f_\ell\Bigr)$  be the induced isogeny.
Each $\ker f_\ell$ is stable under $O_{B,\ell}$,
so the $O_B$-action on $A$ descends to $A'$ and $g$ is $O_B$-linear.
Since $f_{\ell}$ and $g_{\ell}$ have the same
kernel, there exists an $O_B$-linear isomorphism $\alpha_{\ell} : A[\ell^{\infty}] \to A'[\ell^{\infty}]$ such that $\alpha_{\ell} \circ f_{\ell} =g_{\ell}$.
In particular,   $A'$ is superspecial.
Thus we obtain a point of $\Sigma$.
 The proof that this map induces the bijection \eqref{eq:unram}  reduces to \cite[Theorem~2.2]{yu:smf}.
\end{proof}

To apply Proposition~\ref{prop:adelic-desc}, we need a good understanding of the quaternion order  $O=\End(A, \iota)$, which is the centralizer of $\iota(O_B)$ in $\End(A)$.

\begin{lemma}\label{lem:end-ring-unram}
    If $p\nmid \Delta$, then $O$ is a maximal order in  $\calD$.
\end{lemma}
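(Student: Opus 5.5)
Maximality is a local property, so I will show that $O_\ell\coloneqq O\otimes\Z_\ell$ is a maximal order in $\calD_\ell$ for every prime $\ell$. The basic tool is Tate's theorem (and its $p$-adic form \cite[Theorem~A.1.1.1]{Chai-Conrad-Oort}). It identifies $\End(A)\otimes\Z_\ell$ with $\End(T_\ell A)$ for $\ell\neq p$, and with $\End_{\rm DM}(M(A))$ for $\ell=p$. Under this identification, $O_\ell$ becomes the centralizer of $O_{B,\ell}$ in the corresponding local endomorphism ring. Taking centralizers commutes with the flat base change $\Z\to\Z_\ell$, so this description of $O_\ell$ is legitimate.

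\emph{The case $\ell\neq p$.} As noted before Proposition~\ref{prop:genus}, $T_\ell(A)$ is free of rank one over $O_{B,\ell}$. Hence $\End_{O_{B,\ell}}(T_\ell A)\simeq O_{B,\ell}^{\opp}$, which is a maximal order in $B_\ell^{\opp}\simeq \calD_\ell$. This holds whether or not $\ell\mid\Delta$.

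\emph{The case $\ell=p$.} Here $p\nmid\Delta$, so $O_{B,p}\simeq\Mat_2(\Z_p)$. Also $M\coloneqq M(A)$ is superspecial with $\End_{\rm DM}(M)\simeq\Mat_2(O_{D,p})$. Let $e_{11},e_{22}$ be the images of the standard idempotents of $\Mat_2(\Z_p)$. These split $M=e_{11}M\oplus e_{22}M$ into two Dieudonn\'e submodules, and $e_{21}$ gives an isomorphism $e_{11}M\simeq e_{22}M$. Morita equivalence then gives
\[
\End_{O_{B,p}\text{-}\rm DM}(M)\simeq \End_{\rm DM}(e_{11}M).
\]
Since $M$ is superspecial, so is $M_1\coloneqq e_{11}M$, which has $W(k)$-rank $2$. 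By Theorem~\ref{thm:sp} (a rank-2 superspecial module is that of a supersingular elliptic curve), $M_1\simeq M(E)$ and therefore $\End_{\rm DM}(M_1)\simeq O_D\otimes\Z_p$. This is the maximal order of the division algebra $D_p\simeq\calD_p$, consistent with $p\mid\Delta_{\calD}$ in \eqref{eq:red-disc-centralizer}. The same conclusion can be reached directly from the matrices: the centralizer of $\Mat_2(\Z_p)\otimes 1$ in $\Mat_2(\Z_p)\otimes O_{D,p}$ is $O_{D,p}$. That computation only needs the embedding to be $\GL_2(O_{D,p})$-conjugate to the standard one, which is the uniqueness statement proved before Proposition~\ref{prop:genus}.

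The step needing the most care is this $p$-adic one, in particular making sure that Morita equivalence respects the $\sfF,\sfV$-structure. It does, because the idempotents $e_{11},e_{22}$ are Dieudonn\'e endomorphisms. Combining the two cases, $O_\ell$ is maximal for every $\ell$, and so $O$ is a maximal order in $\calD$.
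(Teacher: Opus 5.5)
Your proposal is correct and is essentially the paper's proof: you check maximality locally, use that $T_\ell(A)$ is free of rank one over $O_{B,\ell}$ to get $O_\ell\simeq O_{B,\ell}$ for $\ell\neq p$, and at $p$ combine $\End_{\rm DM}(M(A))\simeq\Mat_2(\calO_p)$ with Morita equivalence for $O_{B,p}\simeq\Mat_2(\Z_p)$ to get $O_p\simeq\calO_p$. The one step you should add, as the paper does, is the reduction to the canonical model over $\F_{p^2}$ with $\pi_A=[-p]$ (Corollary~\ref{cor:sp_fod}): Tate's theorem is a finite-field statement and fails over $\Fpbar$ in general, and it is only because this Frobenius is the central scalar $-p$ that it yields the full rings $\End(T_\ell A)$ and $\End_{\rm DM}(M(A))$ on which your centralizer computations rely.
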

\begin{proof}
    In this case we have $B_p\simeq \Mat_2(\Q_p)$ and $O_{B, p}\simeq \Mat_2(\Z_p)$. By Corollary \ref{cor:sp_fod}, we may replace $(A, \iota)$ over $\Fpbar$ by  its canonical model over $\F_{p^2}$. To prove the maximality of $O$, it is enough to prove that $O$ is locally maximal everywhere.
    For each prime $\ell$, let  $O_{\ell} = O \otimes \Z_{\ell}$, which is the centralizer of $\iota(O_{B, \ell})$ in $\End(A)_{\Z_{\ell}}$.
It follows from Tate's theorem that $\End(A)_{\Z_{\ell}}
\simeq
\End_{\rm DM}(M(A))$
(resp. $\End_{\Z_{\ell}}(T_{\ell}(A))$) if $\ell=p$ (resp. $\ell \neq p$) where $M(A)$ is the \dieu module over $\F_{p^2}$ (resp. $T_{\ell}(A)$ is the Tate module).
By \eqref{eq:bbD} and \eqref{eq:bbD2},
we have $M(A)\simeq M_{1,1}^{\oplus2}$ and  $\End_{\rm DM}(M_{1,1}^{\oplus2}) \simeq
 \Mat_2(\calO_p)$, where $\calO_p$ denotes the maximal order of the unique division quaternion algebra over $\Q_p$.
Since
$O_{B,p}\simeq\Mat_2(\Z_p)$, the Morita equivalence implies
$O_p\simeq\calO_p$.
When $\ell \neq p$, the Tate module $T_{\ell}(A)$ is a
free $\iota(O_{B, \ell})$-module of rank one.
Hence $O_{\ell}   \simeq O_{B, \ell}$,
which is a maximal order in $B_{\ell}$.
\end{proof}

Next, we consider the case $p\mid \Delta$, where  $B_p$ is the division quaternion $\Q_p$-algebra,  and $O_{B, p}$ is the unique  maximal order $\calO_p$.
 Let $\varphi_j : \calO_p \hookrightarrow \Mat_2(\calO_p)$ be the embeddings listed in Theorem~\ref{thm:5-emb}.
Following \cite[Definition 24.3.2]{voight-quat-book}, a quaternion $\Z_p$-order $\scrO$ is called \emph{residually inert} if $\scrO$ modulo its Jacobson radical is $\F_{p^2}$.

  \begin{prop}\label{prop:end-DM-5cases}
    For $1 \leq j \leq 5$, let $C_j$ denote the centralizer of $\varphi_j(\calO_p)$ in $\Mat_2(\calO_p)$.
    Then $C_j$ is isomorphic to one of the following $\Z_p$-orders  in $\Mat_2(\Q_{p})$:
 \begin{itemize}
     \item[(a)]
     If $j=1$ or $2$,  a residually inert Bass order
 $
\Z_{p^2}+p\Mat_2(\Z_p),$
where $\Z_{p^2}$ is embedded in $\Mat_2(\Q_p)$ via the regular representation of $\Q_{p^2}$ over $\Q_p$;
\item[(b)]
If $j=3$, an Eichler order
      of level $p$;
\item[(c)]
If $j=4$ or  $5$, a    maximal order.
\end{itemize}
\end{prop}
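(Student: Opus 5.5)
We compute each centralizer $C_j$ directly from the explicit representatives of Theorem~\ref{thm:5-emb}, writing elements of $\Mat_2(\calO_p)$ as $X=\begin{bmatrix} x & y\\ z& w\end{bmatrix}$. Since $\varphi_j(\calO_p)$ is generated by $\varphi_j(\Z_{p^2})$ and $\varphi_j(\Pi)$, the commutation conditions are finitely many. Over $\Q_p$, each $\varphi_j$ extends to $D_p\hookrightarrow \Mat_2(D_p)$, and the centralizer of the image is a copy of $\Mat_2(\Q_p)$ by the double centralizer theorem. Hence each $C_j$ is a $\Z_p$-order in $\Mat_2(\Q_p)$, and the task is to identify which one.

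\emph{Cases $j=4,5$.} The image of $\varphi_5$ consists of scalar matrices $\alpha I_2$ with $\alpha\in \calO_p$, so $C_5=\Mat_2(Z(\calO_p))=\Mat_2(\Z_p)$, which is maximal. The embedding $\varphi_4$ is the composite of $\varphi_5$ with the automorphism $\alpha\mapsto \Pi^{-1}\alpha\Pi$ of $\calO_p$, so it has the same image and $C_4=C_5$.

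\emph{Case $j=3$.} Commuting with $\diag(\alpha,\Pi^{-1}\alpha\Pi)$ for all $\alpha$ forces $x,w\in Z(\calO_p)=\Z_p$. It also forces $\alpha y=y\,\Pi^{-1}\alpha\Pi$ and $\Pi^{-1}\alpha\Pi\, z=z\alpha$, so $y\in \calO_p\Pi^{-1}\cap \calO_p$, i.e.\ $y\in \Z_p\Pi$, and $z\in \Pi^{-1}\Z_p\cap\calO_p$, i.e.\ $z\in \Z_p\Pi$ (up to a unit). Thus
\[
C_3=\begin{bmatrix}\Z_p & \Z_p\Pi\\ \Z_p\Pi & \Z_p\end{bmatrix}.
\]
To identify it, we map $C_3\to \Mat_2(\Z_p)$ by $\begin{bmatrix} a & b\Pi\\ c\Pi & d\end{bmatrix}\mapsto \begin{bmatrix} a & -pb\\ c & d\end{bmatrix}$. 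Using $\Pi^2=-p$, this is a ring isomorphism onto $\begin{bmatrix}\Z_p & p\Z_p\\ \Z_p&\Z_p\end{bmatrix}$, the standard Eichler order of level $p$.

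\emph{Cases $j=1,2$.} This is the main computation. For $\varphi_1$, commuting with $\diag(a,a^\sigma)$ for all $a\in\Z_{p^2}$ means $x,w$ commute with $\Z_{p^2}$, so $x,w\in\Z_{p^2}$, and $ay=y a^\sigma$, $a^\sigma z=za$, so $y,z\in\Z_{p^2}\Pi$. Write $X=\begin{bmatrix} x & y\Pi\\ z\Pi & w\end{bmatrix}$ with $x,y,z,w\in\Z_{p^2}$. Imposing commutation with $P=\begin{bmatrix}0&-p\\1&0\end{bmatrix}$, i.e.\ $XP=PX$, gives $w=x$ and $-py\Pi=-pz\Pi$, so $y=z$; we will double-check the signs here. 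Hence
\[
C_1=\left\{\begin{bmatrix} x & y\Pi\\ y\Pi & x\end{bmatrix} : x,y\in\Z_{p^2}\right\},
\]
which has $\Z_p$-rank $4$. Its subring $\{x I_2\}$ is a copy of $\Z_{p^2}$, and the element $\eta=\begin{bmatrix}0&\Pi\\ \Pi&0\end{bmatrix}$ satisfies $\eta^2=-p$ and $\eta x=x^\sigma\eta$. So $C_1=\Z_{p^2}+\Z_{p^2}\eta$ looks like $\calO_p$. This cannot be right, since $C_1$ must lie in $\Mat_2(\Q_p)$ (where we need $\eta^2$ to be a norm, and $-p$ is not a norm from $\Q_{p^2}$). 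The discrepancy must be resolved by recomputing carefully: the sign in $P$ should make $\eta^2=p$, or else $y$ and $z$ are related with a twist such as $z=-y^\sigma$. With the correct sign, $C_1=\Z_{p^2}+\Z_{p^2}\eta$ with $\eta^2=p\in N(\Q_{p^2}^\times)$.

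Such an order is identified with $\Z_{p^2}+p\Mat_2(\Z_p)$ by realizing $\Q_{p^2}\subset\Mat_2(\Q_p)$ via the regular representation and choosing $\eta=pj$, where $j$ is a $\sigma$-semilinear unit in $\Mat_2(\Z_p)$ with $j^2=1$ (for example, the matrix of $\sigma$). Then $\Mat_2(\Z_p)=\Z_{p^2}+\Z_{p^2}j$, so $\Z_{p^2}+\Z_{p^2}pj=\Z_{p^2}+p\Mat_2(\Z_p)$. Its Jacobson radical quotient is $\F_{p^2}$, so it is residually inert, and it is Bass because every superorder is either $\Mat_2(\Z_p)$ or the order itself. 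The case $j=2$ follows from $j=1$ by the symmetric computation, or by conjugating by the swap matrix composed with $\sigma$ on $\Z_{p^2}$.

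The main obstacle is the sign and normalization bookkeeping in cases $j=1,2$, which must produce $\eta^2\in p\Z_p^\times$ with $\eta^2$ a norm. If we obtain $-p$, we have mis-solved the commutation equations, and the fix is to allow $z=\epsilon y^\sigma$ and solve for $\epsilon$ properly.
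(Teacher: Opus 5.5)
\paragraph{The case $j=1,2$ has a genuine gap.} Your direct computations for $j=3,4,5$ are correct, and more self-contained than the paper, which cites Ribet's Corollary~1.5 for those cases. The case $j=1,2$ is the only one the paper actually computes, and there you mis-solve the commutation equations. You then notice the resulting contradiction, but the repair you propose is itself false.

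With $X=\begin{bmatrix} x & y\Pi\\ z\Pi & w\end{bmatrix}$ ($x,y,z,w\in\Z_{p^2}$) and $P=\varphi_1(\Pi)=\begin{bmatrix}0&-p\\ 1&0\end{bmatrix}$, one has
\[
XP=\begin{bmatrix} y\Pi & -px\\ w & -pz\Pi\end{bmatrix},\qquad PX=\begin{bmatrix} -pz\Pi & -pw\\ x & y\Pi\end{bmatrix}.
\]
So the conditions are $w=x$ and $y=-pz$, not $y=z$. Hence
\[
C_1=\left\{\begin{bmatrix} a & -pb\Pi\\ b\Pi & a\end{bmatrix} : a,b\in\Z_{p^2}\right\}=\Z_{p^2}+\Z_{p^2}\eta,\qquad \eta=\begin{bmatrix}0&-p\Pi\\ \Pi&0\end{bmatrix},
\]
with $\eta a=a^\sigma\eta$ and $\eta^2=p^2$.

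Your guess ``$\eta^2=p\in N(\Q_{p^2}^\times)$'' fails for two reasons:
\begin{itemize}
\item Norms from the unramified extension $\Q_{p^2}$ have even valuation. So $p$ is not a norm, and $(\Q_{p^2}/\Q_p,\sigma,p)$ would again be the division algebra, the same contradiction you were trying to escape.
\item It is inconsistent with your own final step, since taking $\eta=pj$ with $j^2=1$ forces $\eta^2=p^2$.
\end{itemize}

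With the correct relation the argument closes up. Put $u\coloneqq\eta/p$. Then $u^2=1$ and $ua=a^\sigma u$, so $\Z_{p^2}+\Z_{p^2}u\simeq\Mat_2(\Z_p)$, the standard maximal order of $(\Q_{p^2}/\Q_p,\sigma,1)$. Consequently $C_1=\Z_{p^2}+p\Z_{p^2}u\simeq \Z_{p^2}+p\Mat_2(\Z_p)$. This is exactly what the paper obtains by conjugating with $\theta=\diag(1,\Pi)$. For $j=2$ nothing needs redoing: $\varphi_2=\varphi_1\circ\mathrm{Ad}(\Pi)$, because $\Pi a\Pi^{-1}=a^\sigma$. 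Therefore $\varphi_2(\calO_p)=\varphi_1(\calO_p)$ and $C_2=C_1$.

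\paragraph{The Bass property is not fully justified.} You show that the only orders containing $C_1$ are $C_1$ and $\Mat_2(\Z_p)$, but that is not enough. A Bass order is one all of whose superorders are Gorenstein, and you have not shown that $C_1$ itself is Gorenstein. The paper closes this step by invoking Brzezinski's result (Proposition~3.1 of his 1983 paper) that residually inert quaternion orders are Bass. You have already established residual inertness, so citing that result completes the argument.
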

\begin{proof}
The cases $j=3,4,5$ follow  from \cite[Corollary 1.5]{Ribet-Bimodules}.
For $j=1$ and $2$, the argument is identical, so we treat them simultaneously.
We write $\calO_p =\Z_{p^2} \oplus \Z_{p^2}\Pi$ as in \eqref{eq:O_p}.
The image of $\Z_{p^2}$ under $\varphi_j$ is  $\varphi_j(\Z_{p^2})=\left\lbrace
   \begin{bmatrix}
        a & 0
        \\
        0 & a^{\sigma}
    \end{bmatrix}
     \, \middle\vert \,
        a \in \Z_{p^2}
        \right \rbrace \subseteq \Mat_2(\calO_p).$
        Its centralizer in $\Mat_2(\calO_p)$ is $\begin{bmatrix}
        \Z_{p^2} & \Z_{p^2}\Pi
        \\
        \Z_{p^2}\Pi & \Z_{p^2}
    \end{bmatrix}$.
  Let  $\begin{bmatrix}
        x & y
        \\
        z & w
    \end{bmatrix}$ be an element of this ring and put $\theta\coloneqq \begin{bmatrix}
        1 & 0\\ 0 & \Pi
    \end{bmatrix}$.
    Commuting with $\varphi_j(\Pi)=\begin{bmatrix}
        0 & -p
        \\
        1 & 0
    \end{bmatrix}$ forces  $x=w$ and $y=-pz$, which yields
    \[C_j= \left \lbrace \begin{bmatrix}
        a & -pb\Pi
        \\
        b\Pi & a
    \end{bmatrix} \, \middle\vert \,  a, b  \in \Z_{p^2} \right\rbrace \xrightarrow[X\mapsto \theta X\theta^{-1}]{\simeq}
    \left\lbrace \begin{bmatrix}
        a & 0
        \\
         0 & a^{\sigma}
    \end{bmatrix}
    -p
    \begin{bmatrix}
        b & 0
        \\
         0 & b^{\sigma}
    \end{bmatrix}
    \begin{bmatrix}
        0 & 1
        \\
         1 & 0
    \end{bmatrix}
    \, \middle\vert \, a, b \in \Z_{p^2}\right \rbrace. \]
Thus,  $C_j\otimes_{\Z_p} \Q_p$ is the split cyclic algebra $(\Q_{p^2}/\Q_p, \sigma, 1) \simeq \Mat_2(\Q_{p})$, where the $\Z_p$-order  $\left\lbrace \begin{bmatrix}
        a & b
        \\
         b^\sigma & a^{\sigma}
    \end{bmatrix}\middle\vert\, a, b \in \Z_{p^2} \right\rbrace
\subseteq (\Q_{p^2}/\Q_p, \sigma, 1)$
is identified with the maximal order $\Mat_2(\Z_p)\subseteq \Mat_2(\Q_p)$.
Furthermore,  $C_j$ can be identified with the preimage of $\F_{p^2}$ under the reduction map $\Mat_2(\Z_p)\to\Mat_2(\F_{p})$, for a fixed embedding $\F_{p^2} \hookrightarrow \Mat_2(\F_p)$.
Therefore  $C_j$ is a residually inert $\Z_p$-order in $\Mat_2(\Q_p)$, and by \cite[Proposition 3.1]{Brzezinski-1983} it is a Bass order.
\end{proof}
We refer the interested reader to the works of Brzezinski \cite{Brzezinski-1983, Brzezinski-crelle-1990} or Voight \cite[Chapter~24]{voight-quat-book} for detailed expositions on quaternion Bass orders.  See also \cite[\S37]{curtis-reiner:1} for  Bass orders in more general settings.

\begin{cor}\label{cor:ram}
 Assume $p \mid \Delta$,  and let   $[(A_j, \iota_j)] \in \Sigma_{j}$ for $1 \leq j \leq 5$.
Then the centralizer $O_j \coloneqq \End(A_j, \iota_j)$  of  $\iota(O_B)$ in $\End(A_j)$ is
\begin{itemize}
    \item[(a)] a Bass order of reduced discriminant $p\Delta$ that is residually inert at $p$ if $j = 1$ or $2$,
    \item[(b)] an Eichler order of level $p$ if $j = 3$, or
    \item[(c)] a maximal order if $j = 4$ or $5$.
\end{itemize}
 \end{cor}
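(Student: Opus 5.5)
The plan is to determine $O_j$ one prime at a time, using that a $\Z$-order in the definite quaternion algebra $\calD$ is determined by its completions $O_{j,\ell}=O_j\otimes\Z_\ell$. By \eqref{eq:red-disc-centralizer}, $\calD$ has reduced discriminant $\Delta/p$ (up to the sign convention there). So for each prime $\ell$ we must identify $O_{j,\ell}$, namely the centralizer of $\iota_j(O_{B,\ell})$ in $\End(A_j)\otimes\Z_\ell$.

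For $\ell\neq p$ the argument is the one in Lemma~\ref{lem:end-ring-unram}. By Tate's theorem, $\End(A_j)\otimes\Z_\ell\simeq\End_{\Z_\ell}(T_\ell(A_j))$. The Tate module $T_\ell(A_j)$ is free of rank one over the maximal order $O_{B,\ell}$, so its $O_{B,\ell}$-linear endomorphisms form $O_{B,\ell}^{\opp}$. This is maximal in $B_\ell^{\opp}\simeq\calD_\ell$. Hence $O_{j,\ell}$ is maximal at every $\ell\neq p$: it is the maximal order of the division algebra when $\ell\mid\Delta/p$, and $\Mat_2(\Z_\ell)$ otherwise.

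At $\ell=p$, Corollary~\ref{cor:sp_fod} lets us pass to the canonical model over $\F_{p^2}$ with Frobenius $[-p]$. Its Dieudonn\'e module is $M_{1,1}^{\oplus 2}$, and the $p$-part of Tate's theorem together with \eqref{eq:bbD2} gives $\End(A_j)\otimes\Z_p\simeq\End_{\rm DM}(M_{1,1}^{\oplus2})\simeq\Mat_2(\calO_p)$. Under the identification \eqref{eq:orientation-ident}, the embedding $\iota_{j,p}$ becomes $\varphi_{(A_j,\iota_j)}$. By definition of $\Sigma_j$, this is $\GL_2(\calO_p)$-conjugate to $\varphi_j$, so $O_{j,p}\simeq C_j$. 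Proposition~\ref{prop:end-DM-5cases} then identifies $O_{j,p}$ as a residually inert Bass order $\Z_{p^2}+p\Mat_2(\Z_p)$ for $j=1,2$, an Eichler order of level $p$ for $j=3$, and a maximal order for $j=4,5$.

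It remains to assemble the local data into the global statements. For $j=4,5$, $O_j$ is locally maximal everywhere, hence maximal. For $j=3$, $O_j$ is maximal away from $p$ and Eichler of level $p$ at $p$, which is precisely an Eichler order of level $p$. For $j=1,2$, the order is Bass at every prime, since maximal orders are Bass, so $O_j$ is a Bass order, and it is residually inert at $p$. Its reduced discriminant is computed from the local index. The order $\Z_{p^2}+p\Mat_2(\Z_p)$ has index $p^2$ in $\Mat_2(\Z_p)$, since $\Z_{p^2}+p\Mat_2(\Z_p)$ modulo $p\Mat_2(\Z_p)$ is $\F_{p^2}$, of order $p^2$ inside $\Mat_2(\F_p)$ of order $p^4$. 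Index $p^2$ corresponds to local reduced discriminant $p^2$. Combined with the contribution $\Delta/p$ from the other primes, this gives reduced discriminant $p^2\cdot\Delta/p=p\Delta$.

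The main obstacle is the bookkeeping at $p$. One must check that the identification $\End(A_j)\otimes\Z_p\simeq\Mat_2(\calO_p)$ carries $\iota_{j,p}$ to an embedding conjugate to $\varphi_j$, uniformly in the choice of model. One must also check that the opposite-ring conventions do not change the isomorphism type of the centralizer. Both points are harmless because Bass, Eichler and maximal orders are stable under passing to opposite rings and under conjugation.
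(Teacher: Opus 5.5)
Your proposal is correct and follows the same route as the paper. The paper reruns the proof of Lemma~\ref{lem:end-ring-unram}: maximality at $\ell\neq p$ via the rank-one $O_{B,\ell}$-Tate module, and at $p$ the canonical $\F_{p^2}$-model with $\End_{\rm DM}(M_{1,1}^{\oplus 2})\simeq\Mat_2(\calO_p)$, where the local order is replaced by $C_j$ from Proposition~\ref{prop:end-DM-5cases}; your index computation $[\Mat_2(\Z_p):\Z_{p^2}+p\Mat_2(\Z_p)]=p^2$, which gives reduced discriminant $p^2\cdot\Delta/p=p\Delta$ for $j=1,2$, makes explicit a detail the paper leaves implicit.
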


 \begin{proof}
 The proof is identical to that of Lemma~\ref{lem:end-ring-unram}, except that
  the centralizer  $O_j \otimes \Z_p$ of $\iota(O_{B, p})$ in $\End_{\rm DM}(M(A_j))$ is isomorphic to $C_j$
 described in  Proposition \ref{prop:end-DM-5cases}.
\end{proof}

\subsection{Galois action}
Let  $\Gamma \coloneqq \Gal(\Fpbar/\F_p)$.
For each $\phi \in \Gamma$,  the $\phi$-twist $(A, \iota)\mapsto ({A}^\phi, {\iota}^\phi)$ induces a natural $\Gamma$-action on $X(\Fpbar)$.
The superspecial locus $X^{\rm sp} (\Fpbar)=X^{\rm sp}(\F_{p^2})$ is $\Gamma$-stable by \cite[Proposition 3.1]{Yu-Indiana-ArithSSlocus}, and $X^{\rm sp} (\Fpbar)^{\Gamma}=X^{\rm sp}(\Fp)$.
Each point in $X(\F_p)$ is represented by a model $(A, \iota)$ over $\F_p$ according to Lemma \ref{lm:fomfod}.
\begin{lemma}\label{lem:twist}
Assume that $p \mid \Delta$.

(1) Let  $\sigma=\sigma_p \in \Gamma$ be  the Frobenius element.
Then
the $\sigma$-twist $(A, \iota) \mapsto  ({A}^{\sigma}, {\iota}^{\sigma})$  induces bijections $\Sigma_{ 1} \xrightarrow{\sim} \Sigma_{2}$,  $\Sigma_{ 3}\xrightarrow{\sim} \Sigma_{ 3}$, and  $\Sigma_{ 4}\xrightarrow{\sim}\Sigma_{5}$, as well as their inverses.

(2) We have
$X(\F_p)=X^{\rm sp}(\F_p)=\Sigma_{3}^\Gamma$.
 In particular,   $\Sigma_j\cap X(\F_p)$ is empty for $j \in \{1,2,4,5\}$.
 \end{lemma}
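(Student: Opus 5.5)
The plan is to track how the Frobenius twist changes the invariants that separate the five genera, namely the Lie type and the critical indices of Proposition~\ref{prop:Lie}. By Corollary~\ref{cor:embed-act-Lie}, the genus of a superspecial $(A,\iota)$ is determined by the action of $\F_{p^2}[\varepsilon]=\calO_p/p\calO_p$ on $\Lie(A)$. So for part (1) it is enough to compute how this action changes under $\sigma$-twisting.

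\textbf{The $\sigma$-twist.} We have $\Lie(A^\sigma)=\Lie(A)\otimes_{k,\sigma}k$, with $O_B$ acting through $\iota^\sigma$.
\begin{itemize}
\item For $a\in\F_{p^2}$, the eigenspace on which $a$ acts by the scalar $a^{p^i}$ becomes, after twisting, the eigenspace on which $a$ acts by $(a^{p^i})^p=a^{p^{i+1}}$. Thus $\Lie(A^\sigma)^{i+1}=\Lie(A)^i\otimes_\sigma k$.
\item The operator $\varepsilon$ is twisted compatibly. Hence index $i$ is critical for $A$ if and only if $i+1$ is critical for $A^\sigma$.
\end{itemize}
Consequently the Lie type $(a_0,a_1)$ goes to $(a_1,a_0)$ and the critical set $S$ goes to $S+1$. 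Reading off the table of Proposition~\ref{prop:Lie}:
\begin{itemize}
\item $\{1\}\leftrightarrow\{0\}$, so $\Sigma_1\leftrightarrow\Sigma_2$;
\item $\{0,1\}$ is preserved, so $\Sigma_3\to\Sigma_3$;
\item $(0,2)\leftrightarrow(2,0)$, so $\Sigma_4\leftrightarrow\Sigma_5$.
\end{itemize}
Twisting is invertible (by $\sigma^{-1}$) and preserves superspeciality, so these maps are bijections. One caveat: the genus is determined via the fixed identification \eqref{eq:orientation-ident} and the canonical $\F_{p^2}$-model. The check needed is that twisting the canonical model by $\sigma$ gives the canonical model of the twist, which follows from the functoriality in Corollary~\ref{cor:sp_fod}. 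Then the reduction to Lie-algebra data in Corollary~\ref{cor:embed-act-Lie} is legitimate.

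\textbf{Part (2).}
\begin{itemize}
\item Proposition~\ref{prop:K_odd} with $a=1$ gives $X(\F_p)=X^{\rm sp}(\F_p)$.
\item As recalled before the lemma, $X^{\rm sp}(\F_p)=X^{\rm sp}(\Fpbar)^\Gamma$. The Galois group $\Gamma$ is topologically generated by $\sigma$, and $X^{\rm sp}(\Fpbar)=\Sigma_1\sqcup\Sigma_2\sqcup\Sigma_3$ by \eqref{eq:bad}.
\item Part (1) shows that $\sigma$ swaps $\Sigma_1$ and $\Sigma_2$, which are disjoint, so neither contains a fixed point. Hence $X^{\rm sp}(\F_p)=\Sigma_3^\Gamma$.
\item $\Sigma_4$ and $\Sigma_5$ are excluded for two reasons: they do not satisfy the determinant condition, and $\sigma$ swaps them.
\end{itemize}
As a consistency check, Lemma~\ref{lem:odd-degree-bdred} shows $\Pi$ kills $\Lie(A)$ for $A$ over $\F_p$, so both indices are critical and $A$ lies in $\Sigma_3$ directly.

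\textbf{Main obstacle.} The delicate step is the bookkeeping of the index shift under twisting. One must confirm that $\varepsilon$, being $\sigma$-semilinear with respect to the $\F_{p^2}$-grading, is transported so that criticality shifts by exactly one. This should be a direct check on the eigenspace decomposition.
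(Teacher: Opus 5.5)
Your proposal is correct and matches the paper's proof. Part (1) uses the same index-shift argument: the paper works with the $\tau_i$-components of $M(A^{\sigma})\simeq W\otimes_{W,\sigma}M(A)$, and you work with the eigenspaces of $\Lie(A^{\sigma})=\Lie(A)\otimes_{k,\sigma}k$, which is the same computation after passing to $M/\sfV M$. In part (2), your main route via Proposition~\ref{prop:K_odd}, \eqref{eq:bad} and part (1) is the alternative the paper itself mentions, and your ``consistency check'' via Lemma~\ref{lem:odd-degree-bdred} ($\Pi$ kills $\Lie(A)$, so both indices are critical) is the paper's primary argument.
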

 \begin{proof}
 (1)   The $\sigma$-twist swaps the $\tau_0$ and $\tau_1$-components of $M(A)$, as there is a canonical isomorphism $M({A}^{\sigma})\simeq W(\F_{p^2})\otimes _{W(\F_{p^2}), \sigma} M(A)$. Thus, it shifts the corresponding Lie types and critical indices by $1 \pmod 2$, which induces the above bijections.

 (2)
  It suffices to prove that  $X(\F_p)\subseteq \Sigma_{3}$.
 Let
 $x \in X(\F_p)$ be a point represented by a  model   $(A, \iota)$ over $\F_p$.
 By Lemma~\ref{lem:odd-degree-bdred} concerning $O_B$-abelian surfaces over finite fields of odd degree, the action of $\Pi\in \calO_p$ vanishes on $\Lie(A)$.
 Definition \ref{defn:crit-index} then implies that $(M(A\otimes_{\Fp}\Fpbar), \iota_p)$ has Lie type $(1, 1)$ and critical index $S(M(A\otimes_{\Fp}\Fpbar), \iota_p)=\{0, 1\}$.
 By Proposition \ref{prop:Lie}, this yields $x\in \Sigma_{3}$.
 Alternatively, this inclusion can be deduced from  Proposition~\ref{prop:K_odd}, Equation~\eqref{eq:bad}, and statement (1).
 \end{proof}

We return to the general setting with no restriction on the divisibility of $\Delta$ by $p$. Fix a superspecial  $O_B$-abelian surface $(A, \iota)/\F_p$, and  write $O\coloneqq \End_{\Fpbar}(A_{\Fpbar}, \iota)$ and $\calD\coloneqq O \otimes \Q$ as before.
For $\phi \in \Gamma$ and  $f \in O$, the $\phi$-twist     ${f}^\phi : ({A}^\phi_{\Fpbar}, {\iota}^\phi)  \to   ({A}^\phi_{\Fpbar}, {\iota}^\phi)$ again belongs to $O$ because $(A_{\Fpbar}, \iota)\simeq ({A}^\phi_{\Fpbar}, {\iota}^\phi)$.
Extending  to quasi-isogenies of the $\ell$-divisible groups  for all primes $\ell$, we obtain the natural  $\Gamma$-action  on $\whD^{\times}$.

Let $\pi_{A} \in  O$  denote the Frobenius endomorphism of $A$ over $\F_p$.
By Theorem~\ref{thm:Fq-pts}, we have
 \begin{equation}\label{eq:pi}
 \pi_A=\begin{cases*}
 \sqrt{p}\zeta_4
      \text{ or }
      \pm \sqrt{p}\zeta_{4p} &  if  $p=2, 3$, \\
     \sqrt{p}\zeta_4 & otherwise,
     \end{cases*}
     \end{equation}
     and further
 no prime $\ell \mid \Delta$ splits in $\Q(\pi_A)$.

\begin{prop}\label{prop:ssp-Fp-pts}
 Suppose  $X^{\rm sp}(\F_p) \neq \emptyset$, and let $x \in X^{\rm sp}(\F_p)$ be represented by a model  $(A, \iota)$ over $\F_p$. Write $\Sigma\coloneqq X^{\rm sp}(\Fpbar)$ if $p\nmid \Delta$ and $\Sigma\coloneqq \Sigma_3$ if $p\mid \Delta$ so that $[(A, \iota)\otimes\Fpbar]\in \Sigma$.
 Then the bijection in \eqref{eq:unram}  is  $\Gamma$-equivariant.
 Moreover, the Frobenius  $\sigma_p \in \Gamma$ acts on $\calD^\times\backslash
    \whD^\times/\whO^\times$ by   $[g] \mapsto [g\pi_A]$ for $g \in \whD^{\times}$.
 In particular,
     \begin{equation*}
    X^{\mathrm{sp}}(\F_p)\simeq (\calD^\times\backslash
    \whD^\times/\whO^\times)^{\pi_A}=\{[g]\in (\calD^\times\backslash
  \whD^\times/\whO^\times)\mid [g\pi_A]=[g]\}.
\end{equation*}
\end{prop}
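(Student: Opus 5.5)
We will make the construction in the proof of Proposition~\ref{prop:adelic-desc} explicit and track how the Frobenius twist $\phi=\sigma_p$ acts on it.

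Since $(A,\iota)$ is defined over $\F_p$, we have a canonical identification $(A^\phi,\iota^\phi)=(A,\iota)$. For each prime $\ell$, the twist acts on quasi-isogenies of $A[\ell^\infty]_{\Fpbar}$ by $f\mapsto f^\phi$, and this is how $\Gamma$ acts on $\whD^\times$. Take $h=(h_\ell)\in\whD^\times$ and set $f_\ell=Nh_\ell^{-1}$. The surface attached to $h$ is $A'=A/\prod_\ell\ker f_\ell$. Twisting the isogeny $g:A\to A'$ gives $g^\phi:A\to A'^\phi$, whose kernel is $\prod_\ell\ker f_\ell^\phi$. Hence $(A',\iota')^\phi$ is the point attached to $h^\phi$, and the bijection \eqref{eq:unram} is $\Gamma$-equivariant.

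It remains to compute $h^\phi$ in terms of $\pi_A$. The relative Frobenius $\pi_A:A\to A^{\phi}=A$ is an $O_B$-linear isogeny defined over $\F_p$. For any endomorphism or quasi-isogeny $f$ of $A_{\Fpbar}$ (or of its $\ell$-divisible groups) it satisfies $\pi_A\circ f=f^\phi\circ\pi_A$, by functoriality of relative Frobenius. Therefore $f^\phi=\pi_Af\pi_A^{-1}$, so the $\Gamma$-action on $\whD^\times$ is $h\mapsto \pi_A h\pi_A^{-1}$, with $\pi_A\in O\subseteq\calD^\times$ embedded diagonally. On double cosets this gives
\[
[h]\mapsto[\pi_Ah\pi_A^{-1}]=[h\pi_A^{-1}],
\]
because the left factor $\pi_A$ lies in $\calD^\times$. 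Moreover $\pi_A^2=-p$ or $\pm p\zeta$ is central up to a unit in $\whO$. In the main case $\pi_A^2=-p\in\Q^\times$, and this is central and absorbed into $\calD^\times$, so $[h\pi_A^{-1}]=[h\pi_A]$. In the exceptional cases $p=2,3$, $\pi_A^2=\pm p\zeta_{2p}$-type, and we note that $\pi_A^{-2}\in \calD^\times\cdot\whO^\times$ acts trivially, since $\zeta\in O^\times$ is a unit central in $\Q(\pi_A)$ and commutes appropriately. We will handle this by replacing $\pi_A$ by $\pi_A^{-1}$ modulo $\whO^\times$ and $\calD^\times$. The fixed-point description then follows from $X^{\rm sp}(\F_p)=\Sigma^\Gamma$, using Lemma~\ref{lem:twist}(2) when $p\mid\Delta$, and from the fact that $\Gamma$ is topologically generated by $\sigma_p$.

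The main obstacle is keeping the direction conventions consistent. One must fix which side $\whO^\times$ acts on, whether $h$ or $h^{-1}$ defines the kernel, and whether twisting corresponds to conjugation by $\pi_A$ or by $\pi_A^{-1}$. We will verify the sign on the neutral coset and on a single $\ell$-component, and then use the relation $[h\pi_A^{-1}]=[h\pi_A]$ to remove the ambiguity.
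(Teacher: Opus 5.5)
Your argument is correct and is essentially the paper's own proof: Frobenius functoriality gives $\sigma_p(h)=\pi_A h\pi_A^{-1}$, the left factor $\pi_A\in\calD^\times$ is absorbed, and $\pi_A^{2}=pu$ with $u=(t/p)\pi_A-1\in \Z[\pi_A]^\times\subseteq\whO^\times$ (this holds uniformly from the minimal polynomial, so no case split is needed) turns $[h\pi_A^{-1}]$ into $[h\pi_A]$. Your kernel-tracking argument for $\Gamma$-equivariance spells out what the paper only cites from Yu's earlier work; one small correction is that the operative fact is the centrality of $p^{-1}$, not of $\zeta$ — writing $h\pi_A^{-1}=p^{-1}\,h\pi_A\,u^{-1}$ makes the step immediate, whereas membership of $\pi_A^{-2}$ in $\calD^\times\cdot\whO^\times$ alone would not suffice.
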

\begin{proof}
The proof follows the same strategy as
 \cite[Theorem 1.1]{Yu-Indiana-ArithSSlocus}, where the case of principally polarized superspecial abelian varieties was treated.
In particular,  each quasi-isogeny $g_{\ell}\in \calD_{\ell}$ satisfies
$\sigma_p(g_{\ell})=\pi_A g_{\ell} \pi_A^{-1}$.
 We also have $\pi_A\in \calD^{\times}$, and in the completion, $\pi_A^2=pu$ for some  $u \in \whO^{\times}$ by \eqref{eq:pi}.
 Hence  for $g \in \whD^\times$,
\[\sigma_p([g])
=[\pi_A g\pi_A^{-1}]=[g (p^{-1}u^{-1}\pi_A)]=[p^{-1}g(\pi_A u^{-1})]=[g \pi_A].
\qedhere\]
\end{proof}

\section{Counting the superspecial points}\label{ss:count}

Keep the notation of Section~\ref{ss:as}. We explicitly determine the cardinalities of $X^\sp(\Fpbar)$ and $X^\sp(\Fp)$.
    In the bad reduction case where $p\mid \Delta$, we compute the cardinality of each $\Sigma_j$ for $1\leq j\leq 5$, which allows us to determine the numbers of irreducible components and singular points of the special fiber $X_{\Fpbar}$.

Let $d<0$ be a negative discriminant, and denote by
 $S(d)\coloneqq \Z[(d+\sqrt{d})/2] \subseteq \Q(\sqrt{d})$   the  quadratic order  of discriminant $d$.
 Write $d=f^2 d'$, where $d'$ is the fundamental discriminant of $\Q(\sqrt{d})$,  and $f \coloneqq [S(d'):S(d)]$ is
 the conductor of $S(d)$.
 Let $\left( \frac{\cdot}{\ell}\right)$ denote  the Kronecker symbol.
The class number  $h(d)=h(S(d))$ is  expressed in terms of the class number $h(d')$ of the maximal order $S(d')$ by Dedekind's formula \cite[\S III.5, p.~95]{vigneras}:
\begin{equation}\label{eq:dedekind}
  h(d)=\frac{h(d')f}{[S(d')^\times: S(d)^\times]}\prod_{\ell\mid f}\left(1-\frac{1}{\ell}\left(\frac{d'}{\ell}\right)\right).
\end{equation}
For each  prime  $\ell$, we  define the Eichler symbol \cite[p.~94]{vigneras} by
\begin{equation}\label{eq:eich} \left( \frac{S(d)}{\ell}\right)\coloneqq
\begin{dcases*}
    1 & if $\ell \mid f$;
    \\
    \left( \frac{d'}{\ell}\right) & otherwise.
\end{dcases*}
\end{equation}
For an integer $N$,
 we define the  modified Hurwitz class number
\begin{equation}\label{eq:mod-hurwitz-cln}
h_{N}(d) \coloneqq \frac{h(d)}{[S(d)^{\times}: \Z^{\times}]}
\prod_{ \ell \mid N}
\left(1- \left( \frac{S(d)}{\ell} \right) \right).
\end{equation}
If $d$ is not a discriminant, we set  $h_N(d)\coloneqq 0$.

The following theorem gives explicit formulas
for the numbers of superspecial points.
\begin{thm}\label{thm:main}
Put $\Delta_\calD\coloneqq -p\Delta$ if $p\nmid \Delta$, and $\Delta_\calD\coloneqq -\Delta/p$ if $p\mid \Delta$ as in \eqref{eq:red-disc-centralizer}.
We have the following formulas.
\begin{enumerate}[label=(\arabic*), align=right,   leftmargin=*]
    \item Assume that $p \nmid \Delta$.
    Then
    \[ \lvert X^{\rm sp}(\Fpbar) \rvert=
    \frac{1}{12}
    \prod_{\ell \mid \Delta_\calD}(\ell-1)+\frac{1}{4}\prod_{\ell \mid \Delta_\calD}
    \left( 1-\left( \frac{-4}{\ell}\right)\right)+\frac{1}{3}\prod_{\ell \mid \Delta_\calD}\left(
    1-
    \left(
    \frac{-3}{\ell}\right)\right).
    \]
\item
Assume that $p \mid \Delta$.
 Then
\begin{equation*}
 \lvert \Sigma_{ j} \rvert
 =
\begin{dcases*}
\frac{p-1}{12}\prod_{\ell \mid \Delta_{\calD}}(\ell-1)+ \frac{1}{2}\delta_4\prod_{\ell \mid \Delta_{\calD}}\left( 1- \left(\frac{-4}{\ell}\right)\right)
+
\frac{2}{3}
\delta_3
\prod_{\ell \mid \Delta_{\calD}}
\left( 1- \left( \frac{-3}{\ell}\right) \right)
& if $j=1, 2$;
\\
\begin{aligned}
\frac{p+1}{12}\prod_{\ell \mid \Delta_{\calD}}(\ell-1)
 &  +
\frac{1}{4}\left( 1+ \left(\frac{-4}{p}\right)\right)
\prod_{\ell \mid \Delta_{\calD}}
\left(1- \left( \frac{-4}{\ell}\right)\right) \\
   & + \frac{1}{3}\left(1+\left( \frac{-3}{p}\right)\right)
\prod_{\ell \mid \Delta_{\calD}}
\left( 1- \left(\frac{-3}{\ell}\right)\right)
\end{aligned}
& if
$j=3$;
\\
\frac{1}{12}\prod_{\ell \mid \Delta_{\calD}}(\ell-1)+
\frac{1}{4}\prod_{\ell \mid \Delta_{\calD}}\left( 1-\left( \frac{-4}{\ell}\right)\right) +
\frac{1}{3}\prod_{\ell \mid \Delta_{\calD}}
\left( 1-\left(\frac{-3}{\ell}\right)\right) &
if $j=4, 5$,
\end{dcases*}
\end{equation*}
where $\delta_4$ and $\delta_3$ are defined by
\[\delta_4=\begin{cases*}
    1 & if $p\equiv 3\pmod{4}$,\\
    0 & otherwise,
\end{cases*}\qquad\text{and}\qquad  \delta_3=\begin{cases*}
    1 & if $p\equiv 2\pmod{3}$,\\
    0 & otherwise.
\end{cases*} \]
\item Put $\Delta_0\coloneqq \Delta$ if $p\nmid \Delta$, and $\Delta_0\coloneqq \Delta/p$ if $p\mid \Delta$.
Then
\[ \lvert X^{\rm sp}(\F_p) \rvert =
    \begin{dcases*}
    \frac{1}{2}\prod_{\ell \mid \Delta_0}\left(1-\left(\frac{-2}{\ell}\right)\right)+\frac{1}{2}\prod_{\ell \mid \Delta_0}\left(1-\left(\frac{-1}{\ell}\right)\right)& if $p=2$;\\
    \prod_{\ell \mid \Delta_0} \left( 1-\left( \frac{-3}{\ell} \right)\right) &
    if $p=3$ and $2 \nmid \Delta_0$;
    \\
    \frac{1}{2}\prod_{\ell \mid \Delta_0} \left( 1-\left( \frac{-3}{\ell} \right)\right) &
    if $p=3$ and $2 \mid \Delta_0$;
    \\
       \frac{1}{2} \Big(h_{\Delta_0}(-p)+h_{\Delta_0}(-4p)\Big) & if $p >3$.
    \end{dcases*}\]
\end{enumerate}
\end{thm}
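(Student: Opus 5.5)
Parts (1) and (2) will be reduced, via Proposition~\ref{prop:adelic-desc}, Lemma~\ref{lem:end-ring-unram} and Corollary~\ref{cor:ram}, to class numbers $h(O)$ of definite quaternion orders $O$ in the algebra $\calD$ of discriminant $\Delta_\calD$. For these I will use Eichler's class number formula in the form
\[ h(O)=\mathrm{Mass}(O)+\sum_{d\in\{-3,-4\}}\frac{1}{2}\Big(1-\frac{1}{w(d)}\Big)\prod_\ell m_\ell(S(d),O_\ell), \]
where $w(d)=|S(d)^\times|/2$ and $m_\ell$ is the number of local optimal embeddings modulo $O_\ell^\times$. Concretely, $h(O)=\mathrm{Mass}+\tfrac14\prod_\ell m_\ell(\Z[i])+\tfrac13\prod_\ell m_\ell(\Z[\zeta_3])$. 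The orders $\Z[\sqrt{-3}]$ and $\Z[\sqrt{-1}]$ with nonmaximal conductor contribute no extra units, so only these two terms appear.

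For a maximal order, used for $p\nmid\Delta$ and for $j=4,5$, the mass is $\frac1{12}\prod_{\ell\mid\Delta_\calD}(\ell-1)$ and $m_\ell=1-(d/\ell)$ at ramified $\ell$. This gives (1) and the $j=4,5$ case of (2) directly. For $j=3$, the Eichler order of level $p$, the mass acquires the factor $(p+1)$ and $m_p=1+(d/p)$. For $j=1,2$, the residually inert Bass order $\Z_{p^2}+p\Mat_2(\Z_p)$ has index $p(p-1)$ in the maximal order and unit index $[\GL_2(\Z_p):\Z_p^\times\!\cdot\!\ldots]$. I will compute its local mass factor as $p-1$ by comparing $[\GL_2(\Z_p):C_j^\times]=|\GL_2(\F_p)|/|\F_{p^2}^\times|=p(p-1)$ with the index correction, and check it against the leading term $\frac{p-1}{12}$. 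Its local optimal embedding numbers for $\Z_p[i]$ and $\Z_p[\zeta_3]$ are $2$ when $p$ is inert in $K$ and $0$ otherwise, which produces the coefficients $\tfrac12\delta_4$ and $\tfrac23\delta_3$. I expect this local embedding computation for the Bass order to be the main technical point. I will do it by noting that an optimal embedding of $O_K\otimes\Z_p$ into $C_j$ exists only if it reduces to an embedding of $\F_p$-algebras into $\F_{p^2}$, so $K_p$ must be unramified, and then counting $C_j^\times$-orbits via the two orbits of $\GL_2(\Z_p)$ as in Lemma~\ref{lem:mod-str-counting}.

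For (3), Proposition~\ref{prop:ssp-Fp-pts} gives $|X^{\sp}(\F_p)|$ as the number of fixed points of right multiplication by $\pi_A$ on $\calD^\times\backslash\whD^\times/\whO^\times$, where $\whO$ is maximal at all $\ell$ if $p\nmid\Delta$ and Eichler of level $p$ if $p\mid\Delta$. This count is the trace of the operator $R(\varpi)$ with $\varpi=\pi_A$ on functions on this finite set. A class $[g]$ is fixed exactly when $g\pi_Ag^{-1}\in\calD^\times\!\cdot\!\widehat{O}_g^\times$, that is, when the right order $O_g$ contains an element $\gamma$ with $\gamma^2\in p\,\Q^\times$-unit class and reduced norm $p$. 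Applying the Selberg trace formula for the compact quotient $\calD^\times\Q_\infty^\times\backslash\calD_{\mathbb A}^\times$, the trace becomes a sum over $\calD^\times$-conjugacy classes of elements $\gamma$ of reduced norm $p$ normalizing the local orders, modulo $\Q^\times$. By \eqref{eq:pi}, these elements satisfy $\gamma^2=-p$ (up to units), or $\gamma^2=\pm p\zeta_{p}$-type when $p=2,3$. Each orbital integral factors as a global term $h(S)/[S^\times:\Z^\times]$ times local optimal embedding counts of $S=\Z[\gamma]$ or its overorders. The possible orders are $S(-4p)=\Z[\sqrt{-p}]$ and $S(-p)$ (when $p\equiv3\bmod4$). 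At $\ell\mid\Delta_0$ the local factor is $1-(S/\ell)$. At $p$ the element $\gamma$ normalizes the local order, and I expect it to contribute a local factor of $1$. Summing, with the factor $\tfrac12$ coming from identifying $\gamma$ with $-\gamma$, gives $\tfrac12(h_{\Delta_0}(-p)+h_{\Delta_0}(-4p))$.

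For $p=2,3$ I will list the extra Weil numbers $\pm\sqrt p\zeta_{4p}$ separately, redo the count of orders, and simplify the class numbers, which all equal $1$. The case of $p=3$ with $2\mid\Delta_0$ will be obtained from the vanishing of the $S(-12)$ contribution at $2$ together with the unit index. The hardest step is to justify the normalization of the orbital integrals at $p$, so that $\Sigma_3$ (respectively $X^\sp$) is matched with exactly the right Haar measures. I will first test the resulting formula against Deuring's formula in the degenerate case $\Delta=1$ as a check.
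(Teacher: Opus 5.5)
\textbf{Gap in the $j=1,2$ leading term.} Your overall route matches the paper's. For (1) and (2) you use Proposition~\ref{prop:adelic-desc} together with the Eichler class number and mass formulas and local optimal embedding numbers. For (3) you write $|X^{\rm sp}(\F_p)|=\Tr R(\varpi)$ and evaluate it by the Selberg trace formula. The step that fails is the leading term for $j=1,2$. There is no ``index correction'' to apply. For orders $O\subseteq O'$ in a definite quaternion algebra, orbit--stabilizer on the fibres of $\calD^\times\backslash\whD^\times/\widehat{O}^\times\to\calD^\times\backslash\whD^\times/\widehat{O'}^\times$ gives $\mathrm{Mass}(O)=[\widehat{O'}^{\times}:\widehat{O}^{\times}]\,\mathrm{Mass}(O')$. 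So the local factor at $p$ is exactly the unit index you already computed, $[\GL_2(\Z_p):C_j^\times]=|\GL_2(\F_p)|/|\F_{p^2}^\times|=p(p-1)$. The paper's own mass formula \eqref{eq:mass} gives the same factor: with $\Delta_O=p\Delta$ and $e_p(O_p)=-1$, the $p$-part is $p^2(1-p^{-1})=p(p-1)$.

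Hence the correct leading term of $|\Sigma_1|=|\Sigma_2|$ is $\frac{p(p-1)}{12}\prod_{\ell\mid\Delta_\calD}(\ell-1)$, and the coefficient $\frac{p-1}{12}$ in the displayed statement is a misprint. For $\Delta=6$, $p=3$ the printed formula gives $\frac{2}{12}+\frac12=\frac23$, and for $p=2$ it gives $\frac16+\frac23=\frac56$; neither is an integer. With $p(p-1)$ you get $1$ in both cases. This also matches the incidence picture of \S\ref{ss:bad}: the mass of $\Sigma_1$ is $(p^2-p)$ times that of $\Sigma_4$. Planning to ``check against $\frac{p-1}{12}$'' would push you to fudge a correct computation to fit a typo, so keep your $p(p-1)$. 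Everything else in (1) and (2) is correct and agrees with the paper: the Bass-order embedding numbers $m_p\in\{2,0\}$ (giving $\frac12\delta_4$ and $\frac23\delta_3$), the Eichler-order data for $j=3$, and the maximal-order cases.

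\textbf{Smaller points in (3).} First, Proposition~\ref{prop:ssp-Fp-pts} needs a base point $(A,\iota)$ over $\F_p$, so the case $X^{\rm sp}(\F_p)=\emptyset$ needs its own argument. The paper defines $R(\varpi)$ purely arithmetically via \eqref{eq:z13}. It then shows in Proposition~\ref{prop:trRw} that $\Tr R(\varpi)>0$ forces an embedding $\Q(\pi)\hookrightarrow B$. Alternatively, you can check directly that every term of your sum vanishes when some $\ell\mid\Delta_0$ splits in $\Q(\pi)$.

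Second, the local factor $1$ at $p$ does not come from ``$\gamma$ normalizes $O_p$'' by itself. It comes from two facts. By \eqref{eq:z14}, for $\gamma$ with minimal polynomial in $\scrP$, the condition $x^{-1}\gamma x\in U\varpi$ is equivalent to $x^{-1}\gamma x\in\whO$. And $P(T)$ is Eisenstein at $p$, so $S_p$ is the maximal order of a ramified extension, $\left(\frac{S}{p}\right)=0$, and $m_p(S)=1$ for both the division maximal order and the Eichler order of level $p$.

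Finally, in the paper's normalization the factor $\frac12$ is simply $1/|S^\times|$ versus $1/[S^\times:\Z^\times]$, that is, it comes from $\pm1$. Whatever normalization you use, apply it consistently: for $p=2,3$ the polynomials $T^2\mp pT+p$ give two non-conjugate classes, and each contributes.
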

The above formulas yield several geometric quantities for the bad reduction of the Shimura curve.
Suppose $p \mid \Delta$. First, recall from \eqref{eq:bad} that
$X^{\rm sp} (\Fpbar)=\Sigma_1\sqcup \Sigma_2 \sqcup \Sigma_3$, so the number  $\lvert X^{\rm sp} (\Fpbar) \rvert$ of  geometric superspecial points is obtained by summing up the corresponding formulas for $\Sigma_i$ with $1\leq i\leq 3$ in  part (2).
Next, part (3) and Proposition \ref{prop:K_odd} yield the number $\lvert X(\F_p) \rvert$ of
$\F_p$-rational points on
$X$.
Moreover, part (2) together with \S\ref{ss:bad} provides explicit formulas for the  numbers of singular points $X^{\rm sing}= \Sigma_3$ and irreducible components ${\rm Irr}(X_{\Fpbar}) \simeq \Sigma_4 \sqcup \Sigma_5$.

\begin{proof}[Proof of part (1) and (2) of Theorem \ref{thm:main}]

Fix a point $[(A, \iota)] \in X^{\rm sp}(\Fpbar)$ if $p \nmid \Delta$ (resp. $[(A, \iota)] \in \Sigma_{j}$ if $p\mid \Delta$).
By Proposition~\ref{prop:adelic-desc}, the set  $X^{\rm sp}(\Fpbar)$ (resp.~$\Sigma_{ j}$) is in bijection with
  the class set $\Cl(O)$ (\cite[p.~87]{vigneras}) of the quaternion  order $O=\End(A, \iota)$. From \eqref{eq:red-disc-centralizer}, $\calD\coloneqq O\otimes\Q$ is a definite quaternion $\Q$-algebra of reduced discriminant $\Delta_\calD$.
 The formulas are obtained by computing
 $h(O)\coloneqq \lvert \Cl(O)\rvert$
using the Eichler class number formula.

Let $K/\Q$ be an imaginary quadratic extension, and let $S$ be a $\Z$-order in $K$.
For each prime $\ell$,
let $\Emb(S_{\ell}, O_{\ell})$ denote  the set of optimal embeddings $\varphi : S_{\ell}\hookrightarrow O_{\ell}$, i.e. $\Z_{\ell}$-algebra  embeddings satisfying   $\varphi(S_{\ell})=O_{\ell} \cap\varphi(K_{\ell})$.
The group $O_{\ell}^{\times}$ acts from the right on  $\Emb(S_{\ell}, O_{\ell})$ by conjugation.
We define the local embedding number by
 \[m_\ell(S_\ell, O_\ell, O_\ell^\times) \coloneqq |\Emb(S_\ell, O_\ell)/O_\ell^\times|.\]
 For brevity, we simply write $m_\ell(S)\coloneqq m_\ell(S_\ell, O_\ell, O_\ell^\times)$ as $O$ will always be clear from the context.
 Assume that $O_{\ell}$ is a hereditary order in $\calD_{\ell}= O\otimes \Q_{\ell}$.
 Then, using the notation of \eqref{eq:eich}, the local embedding number is given by
  \cite[(3.1)]{Brzezinski-crelle-1990} (see also \cite[Theorem~3.2 and \S II.3]{vigneras}):
 \begin{equation}\label{eq:emb}
     m_\ell(S)
     =
     \begin{dcases*}
         1 & if $\calD_{\ell}$ is split and $O_{\ell}$ is a maximal order,
         \\
         1+\left( \frac{S}{\ell}\right) & if $\calD_{\ell}$ is split and $O_{\ell}$ is an Eichler order of level $\ell$,
         \\
         1-\left( \frac{S}{\ell}\right) & if $\calD_{\ell}$ is ramified.
         \end{dcases*}
     \end{equation}

The only imaginary quadratic orders $S(d)$ with $[S(d)^\times: \Z^\times]>1$ are those with $d\in \{-4, -3\}$. The orders $S(-4)=\Z[\sqrt{-1}]$ and $S(-3)=\Z[\zeta_6]$ are the rings of integers of
the two cyclotomic quadratic fields $\Q(\sqrt{-1})$ and $\Q(\sqrt{-3})$ respectively, and both  have class number one.
  By the Eichler class number formula  \cite[Main Theorem 30.8.6]{voight-quat-book}, we obtain
 \begin{equation}\label{eq:h(O)}
 h(O)=\sum_{[I] \in {\Cl(O)}}
 \frac{1}{\lvert \grO_{{l}}(I)^{\times}/\{ \pm 1\} \rvert}
 +\frac{1}{4}\prod_{\ell}m_{\ell}(S(-4))+
 \frac{1}{3}\prod_{\ell}m_{\ell}(S(-3)),\end{equation}
where $\grO_{l}(I)\coloneqq \{x\in \calD\mid xI\subseteq I\}$ denotes the left order of $I$, and the products are taken over all prime numbers $\ell$.
Moreover, by the Eichler mass formula
(
\cite[Main Theorem 25.3.19]{voight-quat-book}),
we have
\begin{equation}\label{eq:mass}
\sum_{[I] \in {\Cl(O)}}
 \frac{1}{\lvert \grO_{{l}}(I)^{\times}/\{ \pm 1\} \rvert}=\frac{| \Delta_O |}{12}\prod_{\ell \mid \Delta_O} \frac{1-{\ell}^{-2}}{1-e_{\ell}(O_{\ell})\ell^{-1}},
 \end{equation}
where $\Delta_O$ denotes the reduced discriminant of $O$, and $e_{\ell}(O_{\ell})$ denotes the Eichler invariant of $O_\ell$ \cite[\S 1]{Brzezinski-crelle-1990}.
By Lemma~\ref{lem:end-ring-unram}, Proposition~\ref{prop:end-DM-5cases}, and Corollary~\ref{cor:ram}, the local orders $O_{\ell}$ are classified as follows.
 If one of the following conditions holds: (i) $p\nmid \Delta$ and $\ell \nmid p\Delta$; (ii) $p \mid \Delta$ and $\ell \nmid \Delta$; (iii) $p\mid \Delta$, $j\in \{4, 5\}$, and  $\ell=p$, then $\calD_{\ell}$ is split and $O_{\ell}$ is maximal.
In this case,  $\ell \nmid \Delta_O$.
 If either $p \nmid \Delta$ and $\ell \mid p\Delta$, or $p \mid \Delta$ and $\ell \mid \Delta_{\calD}$, then $\calD_{\ell}$ is ramified and $O_{\ell}$ is maximal.
In this case,  $e_{\ell}(O_{\ell})=-1$.
 If $p \mid \Delta$ and  $j\in \{1, 2\}$, then $\calD_{p}$ is split and $O_p$ is a residually inert Bass order.
In this case, $e_p(O_p)=-1$, and by \cite[(3.1)]{Brzezinski-crelle-1990},
\begin{equation}\label{eq:embBass}
m_{p}(S(d))= \begin{dcases*}
    2 & if $p$ is inert in $\Q(\sqrt{d})$,
    \\
    0 & if $p$ is split or ramified.
    \end{dcases*}
    \end{equation}
 If $p \mid \Delta$ and $j=3$, then $\calD_p$ is split and $O_p$ is an Eichler order of level $p$.
In this case, $e_p(O_p)=1$.
Finally, combining the above classification with formulas \eqref{eq:emb} and \eqref{eq:embBass}
and substituting into \eqref{eq:h(O)} and \eqref{eq:mass}, we obtain the formulas in the theorem.
\end{proof}

Now we compute $\lvert X^{\rm sp}(\F_p) \rvert$.
Let $W_p^{\mathrm{ss}}(1)$ be the set of
$\Gal(\overline{\Q}/\Q)$-conjugacy classes of supersingular Weil
$p$-numbers $\pi$ with $d(\pi)=1$.
By
Proposition~\ref{prop:summ-Weil-num}, the set $W_p^{\mathrm{ss}}(1)$
can be identified with $\{ \pm \sqrt{p}\zeta_{4p}, \sqrt{-p} \}$ if
$p=2 $ or $3$, and with $\{\sqrt{-p}\}$ otherwise.
Equivalently, sending
each conjugacy class of Weil numbers to its  minimal polynomial over
$\Z$ establishes a bijection between  $W_p^{\mathrm{ss}}(1)$
and the set of quadratic polynomials
\begin{equation}
  \label{eq:z1}
 \scrP\coloneqq  \{T^2-tT+p\in \Z[T]\mid t\equiv
  0\pmod{p}\quad\text{and}\quad t^2-4p<0\}.
\end{equation}

By Propositions~\ref{prop:move-to-sp} and~\ref{prop:ssQMFq}, the set $X(\F_p)$ is
nonempty if and only if there exists $\pi\in W_p^{\mathrm{ss}}(1)$ such
that $B$ is split by $\Q(\pi)$; see  Theorem~\ref{thm:Fq-pts} for an explicit criterion.

Assume $X(\F_p)\neq \emptyset$.
Let $(A, \iota)$  be an
$O_B$-abelian surface over $\F_p$, and $\pi_A$ be the Frobenius
endomorphism of $A$ over $\F_p$.
By Lemma~\ref{lem:odd-degree-bdred}, $(A, \iota)$
 automatically represents a
point in $X(\F_p)$.  Let $\calI$ be the $\Z$-group scheme of
self-isogenies of $(A_{\Fpbar}, \iota)$ as in \S\ref{subsec:adelic-desc}, which  represents the functor sending a commutative ring $R$ to $(\End_{\Fpbar}(A_{\Fpbar}, \iota)\otimes R)^\times$.
As usual, we put $O\coloneqq \End_{\Fpbar}(A_{\Fpbar}, \iota)$, and $\calD\coloneqq O\otimes_\Z\Q$.
If $p\nmid \Delta$, then $O$ is a maximal order in $\calD$ by Lemma~\ref{lem:end-ring-unram}.
If $p\mid \Delta$, then $O$ is an Eichler order of reduced discriminant $\Delta_O=\Delta$ by Corollary~\ref{cor:ram} and Lemma \ref{lem:twist} (2).
More precisely,
$O_\ell$ is a maximal order in
$\calD_\ell$ for every prime $\ell\neq p$, and at the prime $p$ there
exists an identification of $\calD_p \simeq \Mat_2(\Q_p)$ such that $O_p=
\begin{bmatrix}
  \Z_p & p\Z_p\\ \Z_p & \Z_p
\end{bmatrix}$   by Proposition~\ref{prop:end-DM-5cases}.
Let $\grJ(O_p)$ denote the Jacobson radical of $O_p$.
If $p\nmid \Delta$, then $O_p$ is the maximal order in the quaternion division algebra $\calD_p$, so  $\grJ(O_p)$ is its unique maximal ideal.
If $p\mid \Delta$, then  $\grJ(O_p)=\begin{bmatrix}
  p\Z_p & p\Z_p\\ \Z_p & p\Z_p
\end{bmatrix}=O_p\begin{bmatrix}
    0 & p \\ 1 & 0
\end{bmatrix}$.
From the description of the minimal polynomial of the Frobenius endomorphism
$\pi_A\in O$ in \eqref{eq:z1}, we see that in both cases $\pi_A\in \grJ(O_p)$.
In fact,
\begin{equation}
  \label{eq:z2}
 \pi_AO_p=\grJ(O_p)\quad\text{since}\quad  [O_p:\pi_AO_p]=\Nr(\pi_A)^2=p^2=[O_p:\grJ(O_p)].
\end{equation}
For each prime $\ell\neq p$, we have $\pi_A\in
O_\ell^\times$ since its reduced norm $\Nr(\pi_A)=p$.

Let $\whO=O\otimes \wbZ$ be the profinite completion of $O$, and set
$\whD\coloneqq \whO\otimes_\Z\Q$, the ring of finite adeles of $\calD$.
 The
finite idele group $\whD^\times=\calI(\wbQ)$ is a locally compact
totally disconnected topological group equipped with a unimodular Haar
measure $dx$, which is normalized so that the open compact subgroup
$U\coloneqq \calI(\wbZ)=\whO^\times$ has volume $1$.
Since $\calD$ is
definite, the group $\calD^\times=\calI(\Q)$ is discrete and cocompact in
$\whD^\times$, so we equip it with the counting measure.
Let $L^2(\calD^\times\backslash \whD^\times/U)$ denote  the
space of functions $\psi: \whD^\times \to \bbC$ satisfying $\psi(\gamma x u)=\psi(x)$ for all $\gamma\in \calD^\times$ and
$u\in U$.
By definition,  $L^2(\calD^\times\backslash \whD^\times/U)$ can
be identified with the space of $\bbC$-valued functions on the finite
set $\calD^\times\backslash \whD^\times/U$, and hence
\begin{equation}\label{eq:L^2}
   \dim_\bbC L^2(\calD^\times\backslash
  \whD^\times/U)=| \calD^\times\backslash \whD^\times/\whO^\times |=h(O).
\end{equation}
By
\cite[Exercise~I.4.6]{vigneras}, $\pi_A$ normalizes\footnote{Geometrically, $O\coloneqq \End_{\Fpbar}(A\otimes \Fpbar, \iota)$ is stable under the $\Gal(\Fpbar/\Fp)$-action, so it is  normalized by $\pi_A$.} $U=\whO^\times$ since $\pi_A O$ is a two-sided ideal of $O$. We form a
Hecke operator $R(\pi_A)$ on $L^2(\calD^\times\backslash \whD^\times/U)$
by convolution with the characteristic function
$\mathbbm{1}_{U\pi_A}:\whD^\times\to \bbC$ of the double coset
$U\pi_A U=\pi_A U=U\pi_A$.
More explicitly,
\[  (R(\pi_A)\psi)(y)=\int_{\whD^\times}
  \mathbbm{1}_{U\pi_A }(x)\psi(yx)dx,  \qquad \forall\, \psi \in
  L^2(\calD^\times\backslash \whD^\times/U),\quad \forall\, y \in
  \whD^\times. \]
It is shown in \cite[\S8.1]{Yu-Indiana-ArithSSlocus}  that
$(R(\pi_A)\psi)(y)=\psi(y\pi_A)$, so the trace of $R(\pi_A)$ equals the number of fixed points of the right multiplicative action of $\pi_A$ on $\calD^\times\backslash
    \whD^\times/U$:
    \begin{equation}
      \label{eq:z3}
          \Tr(R(\pi_A))=|(\calD^\times\backslash
    \whD^\times/U)^{\pi_A}|=\#\{[y]\in (\calD^\times\backslash
  \whD^\times/U)\mid [y\pi_A]=[y]\}.
    \end{equation}
Therefore, by Proposition~\ref{prop:ssp-Fp-pts},
\begin{equation}\label{eq:z17}
    |X^{\mathrm{sp}}(\Fp)|=|(\calD^\times\backslash
    \whD^\times/\whO^\times)^{\pi_A}|=\Tr(R(\pi_A)).
\end{equation}
On the other hand, the formation of the Hecke operator $R(\pi_A)$ is purely arithmetic.
Let $\varpi=(\varpi_\ell)_\ell\in  \whD^\times$ be an element having similar properties to $\pi_A$ in the sense that
\begin{equation}\label{eq:z13}
    \varpi_p O_p=\grJ(O_p), \qquad\text{and}\qquad \varpi_\ell\in O_\ell^\times, \quad \forall \ell\neq p.
\end{equation}
Then we have
\begin{equation}\label{eq:z6}
 U\varpi=(O_p^\times\varpi_p)\times \prod_{\ell\neq p} O_\ell^\times= U\pi_A, \quad \text{where}\quad  O_p^\times\varpi_p=\{x_p\in \grJ(O_p)\mid \Nr(x_p)\in p\Z_p^\times\}.
\end{equation}
In particular, the two operators $R(\varpi)$ and $R(\pi_A)$ coincide, and
the value of $\Tr(R(\varpi))$ depends only on the ordered pair $(p, \Delta)$ and does not depend on the particular choice of $\varpi$ satisfying \eqref{eq:z13}.  Alternatively, the coset $O_p^\times \varpi_p$ can also be described by
\begin{equation}\label{eq:z14}
 O_p^\times\varpi_p=\{x_p\in O_p\mid \Nr(x_p)\in p\Z_p^\times \quad \text{and}\quad \Tr(x_p)\equiv 0\pmod{p}\}.
\end{equation}
Indeed, if $p\mid \Delta$, then this follows directly from  $\grJ(O_p)=\begin{bmatrix}
  p\Z_p & p\Z_p\\ \Z_p & p\Z_p
\end{bmatrix}$.
If $p\nmid \Delta$, then $O_p$ is the unique maximal order in the division quaternion $\Q_p$-algebra $\calD_p$, so every element $x_p\in O_p$ with $\Nr(x_p)\in p\Z_p^\times$ automatically lies in $\grJ(O_p)$ and satisfies  $\Tr(x_p)\equiv 0\pmod{p}$.

To compute $\Tr(R(\varpi))$, we apply the \emph{Selberg trace formula} for compact quotients,  as explained in \cite[\S8.2]{Yu-Indiana-ArithSSlocus} or \cite[\S1]{Arthur-Intro-trace-formula-Clay4-2005}.
For each $\gamma\in \calD^\times$, let $K_\gamma$ be the centralizer of $\gamma$ in $\calD$.
Let $\{\gamma\}$ denote the $\calD^\times$-conjugacy class of $\gamma\in \calD^\times$, and let $\{\calD^\times\}$ be the set of all $\calD^\times$-conjugacy classes in $\calD^\times$.
Applying the Selberg trace formula to $f=\mathbbm{1}_{U\varpi}$, we obtain
\begin{equation}\label{eq:z4}
    \Tr(R(\varpi))=\sum_{\{\gamma\}\in \{\calD^\times\}} \int_{K_\gamma^\times\backslash \whD^\times} \mathbbm{1}_{U\varpi}(x^{-1}\gamma x)\,dx.
\end{equation}
Here $K_\gamma^\times$ is equipped with the counting measure, and the homogeneous space $K_\gamma^\times\backslash \whD^\times$ is equipped with the induced right $\whD^\times$-invariant measure \cite[Corollary~4, \S III.4]{Nachbin-Haar-Integral},  which is still denoted by $dx$ by an abuse of notation.
For example, if $V$ is an  open compact subgroup of $\whD^\times$, then by \cite[(2.4.4)]{Kottwitz:ClayMathProc}  we have
\begin{equation}\label{eq:z9}
\int_{K_\gamma^\times\backslash \whD^\times} \mathbbm{1}_{gV}(x) \,dx=\frac{1}{|K_\gamma^\times\cap gVg^{-1}|}\int_{\whD^\times} \mathbbm{1}_{V}(x)\,dx, \qquad \forall g\in \whD^\times.
\end{equation}

To evaluate the orbital integrals in \eqref{eq:z4}, we introduce
some additional notation.
For each polynomial $P(T)$ in the set $\scrP$ in \eqref{eq:z1}, we write $K_P\coloneqq \Q[T]/(P(T))$ for the imaginary quadratic field defined by $P(T)$, and $S_P\coloneqq \Z[T]/(P(T))$ for the quadratic $\Z$-order in $K_P$ generated by the image of $T$.
\begin{prop}
For any element $\varpi\in \whD^\times$ satisfying the conditions in \eqref{eq:z13}, we have
    \begin{equation}\label{eq:z15}
     \Tr(R(\varpi))=\sum_{P\in \scrP}\,\sum_{S_P\subseteq S\subseteq K_P}\frac{h(S)}{|S^\times|}\prod_{\ell}m_\ell(S),
    \end{equation}
    where the second sum runs over all $\Z$-orders $S$ in $K_P$ containing $S_P$, and the product runs over all primes $\ell$ (including $\ell=p$).
\end{prop}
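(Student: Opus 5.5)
Start from the Selberg trace formula \eqref{eq:z4} and first determine which conjugacy classes $\{\gamma\}$ contribute. If the orbital integral of $\gamma$ is nonzero, then some conjugate $x^{-1}\gamma x$ lies in $U\varpi$. By \eqref{eq:z6} and \eqref{eq:z14}, this means $\gamma_\ell$ is conjugate into $O_\ell^\times$ for $\ell\neq p$ and into $O_p$ with $\Nr\in p\Z_p^\times$ and $\Tr\equiv 0\pmod p$. Since $\gamma\in\calD^\times$ and $\calD$ is definite, we get the following.
\begin{itemize}
\item $\Nr(\gamma)$ is a positive rational number that is an $\ell$-adic unit for $\ell\neq p$ and has $p$-adic valuation one, so $\Nr(\gamma)=p$.
\item $\Tr(\gamma)=t\in\Z$ with $p\mid t$, and $t^2-4p<0$ by definiteness.
\end{itemize}
In particular $\gamma$ is non-central, since $\gamma\in\Q^\times$ would force $\Nr(\gamma)=\gamma^2=p$. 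Hence the reduced characteristic polynomial of $\gamma$ is some $P\in\scrP$. Conversely, $\calD^\times$-conjugacy classes with characteristic polynomial $P$ correspond to $\Q$-embeddings $K_P\hookrightarrow\calD$ up to conjugacy. By Skolem–Noether these form a single class when an embedding exists. The two roots of $P$ give the same class, because conjugation by an element of $N(K)$ realizes the nontrivial automorphism. So for each $P$ there is at most one class, with $K_\gamma=K_P$.

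Next, compute the orbital integral for a fixed $\gamma$ with polynomial $P$. Identify $K_\gamma^\times\backslash\whD^\times$ and consider the set of $x$ with $x^{-1}\gamma x\in U\varpi$. Equivalently, $\gamma$ lies in the order $x\whO x^{-1}$ and satisfies the conditions of \eqref{eq:z14} locally. The key observation is that for an element $\gamma$ of norm $p$ and trace divisible by $p$ lying in $O'_p$ (a conjugate of $O_p$), the conditions in \eqref{eq:z14} are automatic. So the condition reduces to $\gamma\in x\whO x^{-1}$, i.e.\ $S_P\subseteq K_P\cap x\whO x^{-1}$. Stratify by the order $S\coloneqq K_P\cap x\whO x^{-1}$, which is an order in $K_P$ containing $S_P$. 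Then the standard computation, as in the proof of Eichler's trace formula, expresses
\[\int_{K_P^\times\backslash\whD^\times}\mathbbm 1_{\{x:\,K_P\cap x\whO x^{-1}=S\}}\,dx=\frac{|\Pic(S)|}{|S^\times|}\prod_\ell m_\ell(S).\]
This uses \eqref{eq:z9} and the decomposition of $\{x\}$ into $\wh K_P^\times$-orbits indexed by local optimal embeddings modulo $O_\ell^\times$. The volume of $K_P^\times\backslash\wh K_P^\times\wh S^\times\cdots$ produces $h(S)/|S^\times|$, the orbits contribute $\prod_\ell m_\ell(S)$, and the stabilizer is $x\whO^\times x^{-1}\cap \wh K_P^\times=\wh S^\times$. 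Summing over $S\supseteq S_P$ and over $P\in\scrP$ gives \eqref{eq:z15}.

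The main obstacle is the local analysis at $p$: checking that $\gamma\in O'_p$ forces $\gamma\in O'^\times_p\varpi'_p$, and that optimal embeddings of $S_p$ into $O_p$ are counted correctly by $m_p(S)$ even though $O_p$ is Eichler of level $p$ in the ramified case. I would handle this by using \eqref{eq:z14}, which reduces membership in $U\varpi$ to trace and norm conditions. These conditions are invariant under conjugation and are already satisfied by $\gamma$. The passage from orbital integral to $h(S)\prod m_\ell(S)/|S^\times|$ is the classical Eichler optimal-embedding count, and I would cite it in the form of \cite[Chapter~30]{voight-quat-book} rather than recompute it.
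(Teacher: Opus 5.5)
Your proposal is correct and follows essentially the same route as the paper. Both arguments restrict to classes whose minimal polynomial lies in $\scrP$, use \eqref{eq:z14} to replace the condition $x^{-1}\gamma x\in U\varpi$ by $x^{-1}\gamma x\in\whO$, stratify by $S=K_P\cap x\whO x^{-1}$, and evaluate each stratum by the optimal-embedding count. The only difference is bookkeeping: you split each stratum into $\wh{K}_P^\times$-orbits, indexed by $\prod_\ell \Emb(S_\ell,O_\ell)/O_\ell^\times$, each of volume $h(S)/|S^\times|$, whereas the paper quotes $N_S=h(S)\prod_\ell m_\ell(S)$ double cosets, each of volume $1/|S^\times|$ by \eqref{eq:z9}. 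Your concern about $m_p(S)$ at the Eichler order is moot, since $m_p(S)$ is defined directly as $|\Emb(S_p,O_p)/O_p^\times|$.
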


\begin{proof}
For each $\gamma\in \calD^\times$, consider  the left $K_\gamma^\times$-invariant set
\begin{equation}\label{eqn:z5}
\wh\calE(\gamma)\coloneqq \{x\in \whD^\times\mid x^{-1}\gamma x\in U\varpi\}.
\end{equation}
Since $\wh\calE(\gamma)$ is an open subset of $\whD^\times$, the orbital integral  $\int_{K_\gamma^\times\backslash \whD^\times} \mathbbm{1}_{U\varpi}(x^{-1}\gamma x)dx$ is nonzero if and only if $\wh\calE(\gamma)\neq \emptyset$.
From the description of $U\varpi$ in \eqref{eq:z6}, if $\wh\calE(\gamma)\neq \emptyset$, then necessarily $\Nr(\gamma)=p$ and hence $\gamma\not\in \Q^\times$, which further implies that the centralizer $K_\gamma$ of $\gamma$ in $\calD$ coincides with $\Q(\gamma)$.
Furthermore,
 $\wh\calE(\gamma)\neq \emptyset$ also implies that $\Tr(\gamma)\equiv 0\pmod{p}$ by \eqref{eq:z14}, so the minimal polynomial $\Min_\gamma(T)\in \Z[T]$ of $\gamma$ belongs to $\scrP$.
 Thus, we can simplify \eqref{eq:z4} into
\begin{equation}\label{eq:z11}
   \Tr(R(\varpi))=\sum_{\substack{\{\gamma\}\in \{\calD^\times\}\\\Min_\gamma(T)\in \scrP}} \int_{K_\gamma^\times\backslash \whD^\times} \mathbbm{1}_{U\varpi}(x^{-1}\gamma x)dx,
\end{equation}
which is a finite sum,  since each conjugacy class $\{\gamma\}$ above is uniquely determined by its minimal polynomial $\Min_\gamma(T)\in \scrP$ by  the Skolem-Noether theorem.

Now fix  $\gamma\in \calD^\times$  with minimal polynomial $P(T)\in \scrP$.
By \eqref{eq:z13} and \eqref{eq:z14}, the condition
$x^{-1}\gamma x\in U\varpi$ is equivalent to $x^{-1}\gamma x\in\whO$.
Hence we may rewrite $\wh\calE(\gamma)$ as
\begin{equation}\label{eq:z7}
 \wh\calE(\gamma)= \{x\in \whD^\times\mid x^{-1}\gamma x\in \whO\}.
\end{equation}
We identify $K_P=\Q[T]/(P(T))$ with $\Q(\gamma)$  by sending $T+(P(T))$ to  $\gamma$, so the centralizer $K_\gamma$ is identified with $K_P$ as well.
 For each $x\in \wh\calE(\gamma)$, the intersection $K_P\cap x\whO x^{-1}$ defines a $\Z$-order  in $K_P$ containing $S_P$.
Grouping together those $x$ giving rise to the same order in $K_P$, we obtain  a  partition $\wh\calE(\gamma)=\bigsqcup_{S_P\subseteq S \subseteq K_P}\wh\calE(S) $ of $\wh\calE(\gamma)$  into a disjoint union of $K_P^\times$-invariant open subsets,
where
\begin{equation}\label{eq:z12}
   \wh\calE(S)\coloneqq \{x\in \whD^\times\mid K_P\cap x \whO x^{-1}=S\}.
\end{equation}  Correspondingly, the orbital integral decomposes as
\begin{equation}\label{eq:z10}
     \int_{K_\gamma^\times\backslash \whD^\times} \mathbbm{1}_{U\varpi}(x^{-1}\gamma x)dx= \sum_{S_P\subseteq S\subseteq K_P} \int_{K_P^\times\backslash \whD^\times} \mathbbm{1}_{\wh\calE(S)}(x)dx.
\end{equation}
Note that each $\wh\calE(S)$ is also right invariant under $\whO^\times$.
In fact, it is shown in the proof of the classical \emph{trace formula for optimal embeddings} \cite[Theorem~III.5.11]{vigneras} \cite[Theorem~30.4.7]{voight-quat-book} that
\begin{equation}\label{eq:z8}
  N_S\coloneqq  |K_P^\times\backslash \wh\calE(S)/\whO^\times|=h(S)\prod_{\ell}m_\ell(S).
\end{equation}
If we decompose $\wh\calE(S)$ as a disjoint union of double cosets
$\wh\calE(S)=\bigsqcup_{i=1}^{N_S} K_P^\times g_i \whO^\times$, then
\[\begin{split}
  \int_{K_P^\times\backslash \whD^\times} \mathbbm{1}_{\wh\calE(S)}(x)dx
    &=\sum_{i=1}^{N_S} \int_{K_P^\times\backslash \whD^\times} \mathbbm{1}_{K_P^\times g_i \whO^\times}(x)dx \\&\xeq{\eqref{eq:z9}}  \sum_{i=1}^{N_S}\frac{1}{|K_P^\times\cap g_i\whO^\times g_i^{-1}|}\int_{\whD^\times} \mathbbm{1}_{\whO^\times}(x)dx \xeq{\eqref{eq:z12}}\sum_{i=1}^{N_S}\frac{1}{|S^\times|}\\&=\frac{N_S}{|S^\times|}\xeq{\eqref{eq:z8}}\frac{h(S)}{|S^\times|}\prod_{\ell}m_\ell(S).
\end{split}\]
Combining    \eqref{eq:z11} and \eqref{eq:z10} with the above computation, we obtain \eqref{eq:z15}.
\end{proof}
\begin{remark}
    Consider the open subset $U(p)\coloneqq \{x\in \whO \mid \Nr(x)\in p\Z_p^\times\}$ of $\whD^\times$, which is stable under left and right multiplication by $U$.
    Note that if $p\mid \Delta$, then $U\varpi$ is a proper subset of $U(p)$.
However, if $p\nmid \Delta$, then $U(p)=U\varpi$.
    We  form another Hecke operator $R(\mathbbm{1}_{U(p)})$ on  $L^2(\calD^\times\backslash \whD^\times/U)$
by convolution with the characteristic function
$\mathbbm{1}_{U(p)}:\whD^\times\to \bbC$ of $U(p)$.
It is shown in \cite[Theorem~4.2.1]{xue-yang-yu:ECNF} that the matrix of $R(\mathbbm{1}_{U(p)})$ with respect to a suitable basis of $L^2(\calD^\times\backslash \whD^\times/U)$ coincides with the Brandt matrix $T(p)$ as defined in \cite[\S41.2]{voight-quat-book}.
Hence its trace can be computed using the Eichler trace formula given in \cite[Proposition~V.2.4]{vigneras} or \cite[Main Theorem~41.5.2]{voight-quat-book}.
Therefore, when $p\nmid \Delta$,  the trace  $\Tr(R(\varpi))$ can be computed directly from the Eichler trace formula \cite[Theorem~41.1.13]{voight-quat-book}.
See also \cite[(3.18)]{xue-yu:spinor-class-no} for a refinement of the Eichler trace formula.
\end{remark}

\begin{prop}\label{prop:trRw}
    If $p \mid \Delta$, we have
    \[ |X(\Fp)|
    =|X^{\mathrm{sp}}(\Fp)|
    =|\Sigma_3^{\Gamma}|
    =\Tr R(\varpi), \]
    where $\varpi\in \whD^\times$ is an element satisfying the properties in \eqref{eq:z13}.
\end{prop}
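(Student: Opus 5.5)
The chain of equalities follows by assembling results already in place; no new computation is needed. I will prove the equalities from left to right, under the standing assumption that $X(\Fp)\neq\emptyset$. If $X(\Fp)$ is empty, the first three quantities are zero, and I must also check that $\Tr R(\varpi)=0$ in that case; I return to this at the end.

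First, Lemma~\ref{lem:twist}(2) (equivalently Proposition~\ref{prop:K_odd} with $a=1$) gives $X(\Fp)=X^{\rm sp}(\Fp)=\Sigma_3^\Gamma$. So it remains to identify $|\Sigma_3^\Gamma|$ with $\Tr R(\varpi)$. Fix a point of $X(\Fp)$. By Lemma~\ref{lm:fomfod} it is represented by an $O_B$-abelian surface $(A,\iota)$ over $\Fp$, and by Lemma~\ref{lem:twist}(2) its base change lies in $\Sigma_3$. Then Proposition~\ref{prop:ssp-Fp-pts} applies with $\Sigma=\Sigma_3$ and gives a $\Gamma$-equivariant bijection $\Sigma_3\simeq \calD^\times\backslash\whD^\times/\whO^\times$, under which Frobenius acts by right multiplication by $\pi_A$. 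Hence $|\Sigma_3^\Gamma|$ equals the number of fixed points of $[g]\mapsto[g\pi_A]$. By \eqref{eq:z3} this number is $\Tr R(\pi_A)$; that identity rests on $\pi_A$ normalizing $U$, which holds because $\pi_AO$ is a two-sided ideal.

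Next I check that $\pi_A$ itself satisfies \eqref{eq:z13}. Corollary~\ref{cor:ram}(b) and Proposition~\ref{prop:end-DM-5cases} show that $O_p$ is an Eichler order of level $p$. The minimal polynomial of $\pi_A$ lies in $\scrP$ by \eqref{eq:pi}, and \eqref{eq:z2} then gives $\pi_AO_p=\grJ(O_p)$. For $\ell\neq p$, $\pi_A$ is an $\ell$-adic unit because $\Nr(\pi_A)=p$. So \eqref{eq:z6} gives $U\varpi=U\pi_A$ for any $\varpi$ satisfying \eqref{eq:z13}, hence $R(\varpi)=R(\pi_A)$. This yields the final equality and shows it is independent of the choice of $\varpi$.

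The one delicate point is the empty case, and more generally making sure the order $O$ and the algebra $\calD$ in the definition of $R(\varpi)$ do not depend on having a rational point. I plan to define $O$ intrinsically as an Eichler order of level $p$ in the definite algebra of discriminant $\Delta/p$; by Corollary~\ref{cor:ram} all such orders are locally isomorphic to the endomorphism ring used above. If $X(\Fp)=\emptyset$, I would use formula \eqref{eq:z15}. A nonzero term requires some $P\in\scrP$ whose field $K_P$ embeds in $\calD$ with $m_\ell(S)\neq0$ for every $\ell$. This forces $K_P$ to split $B$ away from $p$, and $m_p(S)\neq 0$ at the Eichler level handles $p$. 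By Theorem~\ref{thm:Fq-pts}(2) and Proposition~\ref{prop:move-to-sp}, that would produce an $\Fp$-point, a contradiction. So every term vanishes and the trace is zero. Matching these embedding conditions exactly with the criterion in Theorem~\ref{thm:Fq-pts} is the step I expect to need the most care.
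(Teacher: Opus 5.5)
Your overall route matches the paper's proof. The first equalities come from Proposition~\ref{prop:K_odd} and Lemma~\ref{lem:twist}(2). The nonempty case is $\Tr R(\varpi)=\Tr R(\pi_A)=|\Sigma_3^\Gamma|$ via Proposition~\ref{prop:ssp-Fp-pts}, \eqref{eq:z3}, \eqref{eq:z2} and $U\varpi=U\pi_A$. The empty case (equivalently, $\Tr R(\varpi)>0\Rightarrow X(\Fp)\neq\emptyset$) goes by showing that a nonzero term of \eqref{eq:z15} forces an $\Fp$-point.

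There is one step whose justification fails: $m_p(S)\neq 0$ does not ``handle $p$''. Since $O_p$ is an Eichler order of level $p$ in $\calD_p\simeq\Mat_2(\Q_p)$, you have $m_p(S)=1+\left(\frac{S}{p}\right)$. This equals $2$ when $p$ splits in $K_P$. In that case $K_P\otimes\Q_p\simeq\Q_p\times\Q_p$ does not embed in the division algebra $B_p$, and the hypothesis of Theorem~\ref{thm:Fq-pts}(2) fails at $\ell=p$, since that hypothesis requires that no $\ell\mid\Delta$, including $p$, splits in $K$. Information about $\calD_p$ alone cannot give the needed condition at $p$, because $\calD_p$ is split while $B_p$ is not.

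The correct reason, used in the paper, is that every $P(T)=T^2-tT+p\in\scrP$ is Eisenstein at $p$. Hence $p$ ramifies in $K_P$, and $K_P\otimes\Q_p$ is a ramified quadratic field, which embeds in $B_p$; this is also why $m_p(S)=1$ for all relevant $S$.

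With this fix the argument closes as in the paper:
\begin{itemize}
\item for $\ell\neq p$ you have $B_\ell\simeq\calD_\ell$, so $m_\ell(S)\neq0$ gives $K_P\otimes\Q_\ell\hookrightarrow B_\ell$;
\item $B$ is indefinite at $\infty$;
\item the local--global principle then yields $K_P\hookrightarrow B$;
\item since the root of $P$ lies in $W_p^{\mathrm{ss}}(1)$, Propositions~\ref{prop:ssQMFq} and \ref{prop:move-to-sp} produce an $\Fp$-point.
\end{itemize}

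Your remark on defining $O$ intrinsically as an Eichler order of level $p$ is consistent with the paper's observation that $\Tr R(\varpi)$ depends only on $(p,\Delta)$.
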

\begin{proof}
 The equality $X(\Fp)=X^{\mathrm{sp}}(\Fp)$ has already been proved in
Proposition~\ref{prop:K_odd}.
 We show that the equality $|X(\F_p)|=\Tr(R(\varpi))$ holds true in all cases, even  when $X(\F_p)=\emptyset$.
 From \eqref{eq:z17},  if $X(\F_p)\neq\emptyset$, then $\Tr(R(\varpi))=\Tr(R(\pi_A))=|X^{\mathrm{sp}}(\F_p)|>0$.
 Conversely, suppose that $\Tr(R(\varpi))>0$.
 We show that $X(\F_p)\neq\emptyset$, or equivalently, that there exists an embedding $\Q(\pi)\hookrightarrow B$ for some supersingular Weil $p$-number $\pi\in W_p^{\mathrm{ss}}(1)$.
 From \eqref{eq:z15}, the assumption $\Tr(R(\varpi))>0$ implies that there exist a polynomial $P(T)\in \scrP$ and a quadratic order $S$ containing $S_P$ in $K_P=\Q[T]/(P(T))$ such that $\prod_\ell m_\ell(S)>0$, where the product runs over all primes  $\ell$.
Hence there exists an embedding of $K_P\otimes_\Q \Q_\ell=S_\ell\otimes_{\Z_\ell} \Q_\ell$ into $\calD_\ell=O_\ell\otimes_{\Z_\ell} \Q_\ell$ for every prime $\ell$.
Since $B_\ell\simeq \calD_\ell$ for every prime $\ell\neq p$, we get an embedding $K_P\otimes_\Q \Q_\ell\hookrightarrow B_\ell$ for every prime $\ell\neq p$ as well.
On the other hand, $P(T)$ is an Eisenstein polynomial at the prime $p$, so $K_P\otimes_\Q \Q_p$ is a ramified quadratic extension of $\Q_p$, and hence there exists an embedding $K_P\otimes_\Q \Q_p\hookrightarrow B_p$.
From the local-global principle \cite[Theorem~III.3.8]{vigneras}, there exists an embedding of $K_P$ into the indefinite quaternion $\Q$-algebra $B$.
Recall that $P(T)$ is the minimal polynomial of a supersingular Weil $p$-number $\pi\in W_p^{\mathrm{ss}}(1)$, so the above discussion shows the  existence of an embedding of $\Q(\pi)\simeq K_P$ into $B$ as desired.
In summary, we have shown that $X(\Fp)=\emptyset$ if and only if $\Tr(R(\varpi))=0$. Therefore, the equality  $|X(\F_p)|=\Tr(R(\varpi))$ holds unconditionally.
\end{proof}

\begin{proof}[Proof of part (3) of Theorem \ref{thm:main}]

By Proposition~\ref{prop:trRw}, it remains to write the right-hand side of \eqref{eq:z15} in more concrete terms.
For each prime  $\ell\nmid p\Delta_{\calD}$, the order $O_\ell$ is maximal
in the split quaternion algebra $\calD_\ell\simeq \Mat_2(\Q_\ell)$, and hence $m_\ell(S)=1$ for every quadratic $\Z$-order $S$ by \eqref{eq:emb}.
At the prime $p$,  if $p\nmid \Delta$, then $O_p$ is the unique maximal order in the quaternion division algebra $\calD_p$. If $p\mid \Delta$, then  $O_p$ is an Eichler order of level $p$ in the split quaternion algebra $\calD_p\simeq \Mat_2(\Q_p)$.
It follows from \eqref{eq:emb} that
\begin{equation}
    m_p(S)=\begin{dcases*}
        1-\left(\frac{S}{p}\right) & if $p\nmid \Delta$;\\
        1+\left(\frac{S}{p}\right) & if $p\mid \Delta$.
    \end{dcases*}
\end{equation}
On the other hand, since each $P(T)\in \scrP$ is an Eisenstein polynomial at $p$, the $\Z_p$-order $S_P\otimes \Z_p=\Z_p[T]/(P(T))$ is the maximal order in $K_P\otimes \Q_p=\Q_p[T]/(P(T))$.
Thus every  $\Z$-order $S$ in $K_P$ containing $S_P$ is maximal at $p$,  and hence $\left(\frac{S}{p}\right)=0$ since $K_P/\Q_p$ is a ramified quadratic extension.
In other words, we have
\begin{equation}
 m_p(S)=1\qquad \forall S_P\subseteq S\subseteq K_P, \quad \forall P(T)\in \scrP.
\end{equation}
Lastly, consider the remaining primes $\ell$, namely the prime divisors of $\Delta_0\coloneqq -\Delta_{\calD}/\gcd(\Delta_{\calD}, p)$.  At each prime $\ell\mid \Delta_0$, the order $O_\ell$  is the unique maximal order in the quaternion division algebra $\calD_\ell$, so we have
\begin{equation}
    m_\ell(S)=1-\left(\frac{S}{\ell}\right), \qquad \forall \ell\mid \Delta_0.
\end{equation}
The above discussion shows that \eqref{eq:z15} can be further simplified into
\begin{equation}\label{eq:z16}
      \Tr(R(\varpi))=\sum_{P\in \scrP}\,\sum_{S_P\subseteq S\subseteq K_P}\frac{h(S)}{|S^\times|}\prod_{\ell\mid \Delta_0}\left(1-\left(\frac{S}{\ell}\right)\right).
\end{equation}

Now we classify the quadratic $\Z$-orders $S$ containing $S_P$ for some $P\in \scrP$. It is conceptually more convenient to identify $K_P=\Q[T]/(P(T))$  with $\Q(\pi)$ by mapping $T+(P(T))$ to  the root $\pi \in W_p^{\mathrm{ss}}(1)$ of $P(T)$. In turn, the quadratic order $S_P$ is identified with $\Z[\pi]$.
Suppose that $p>3$.
Then $\scrP$ consists of the single polynomial $x^2+p$, so $K_P=\Q(\sqrt{-p})$ and $S_P=\Z[\sqrt{-p}]$, which has discriminant $-4p$.
The only roots of unity in $K_P$ are $\pm 1$, so  $S^\times=\{\pm 1\}$ for every $S$ in $K_P$.
There are two subcases to consider depending on whether $S_P=\Z[\sqrt{-p}]$ is  maximal  in $\Q(\sqrt{-p})$.
If $p\equiv 1\pmod{4}$, then $S_P=\Z[\sqrt{-p}]$ is already maximal.
  If $p\equiv 3\pmod{4}$, then $\Z[\sqrt{-p}]$ has index $2$ in the maximal order $\Z[(-1+\sqrt{-p})/2]$, which has discriminant $-p$.
Combining \eqref{eq:z16} with \eqref{eq:mod-hurwitz-cln}, we get
\begin{equation}
    \Tr(R(\varpi))=\frac{1}{2}(h_{\Delta_0}(-p)+h_{\Delta_0}(-4p)),  \qquad \forall\, p>3.
\end{equation}
Here $h_{\Delta_0}(-p)\coloneqq 0$ when $p\equiv 1\pmod{4}$ by our convention since $-p$ is not a fundamental discriminant in this case.

Next, suppose that $p\in \{2, 3\}$.  In these two cases $\scrP=\{x^2+p, x^2\mp px+p\}$, which corresponds bijectively to the set $W_p^{\mathrm{ss}}(1)=\{\sqrt{-p}, \pm \sqrt{p}\zeta_{4p}\}$. A straightforward computation shows that
$\Z[\pm \sqrt{p}\zeta_{4p}]=\Z[\zeta_{2p}]$ for both $p=2, 3$. In particular,
$h(\Z[\pm\sqrt{p}\zeta_{4p}])=1$ and $\Z[\pm\sqrt{p}\zeta_{4p}]^\times=\langle \zeta_{2p}\rangle$.
On the other hand, if $p=2$, then $\Z[\sqrt{-2}]$ is already the maximal order in $\Q(\sqrt{-2})$.  We have $h(\Z[\sqrt{-2}])=1$ and $\Z[\sqrt{-2}]^\times=\{\pm 1\}$. If $p=3$, then there are two $\Z$-orders containing $\Z[\sqrt{-3}]$ in $\Q(\sqrt{-3})$, namely, $\Z[\sqrt{-3}]$ itself and the maximal order $\Z[\zeta_6]$.  We have $\Z[\sqrt{-3}]^\times=\{\pm 1\}$ and $h(\Z[\sqrt{-3}])=1$ by \eqref{eq:dedekind}.

Substituting the above data into \eqref{eq:z16}, we obtain
\[ \Tr(R(\varpi))=
     \begin{dcases*}
         \frac{1}{2}\prod_{\ell\mid \Delta_0}\left(1-\left(\frac{-8}{\ell}\right)\right)+2\cdot\frac{1}{4}\prod_{\ell\mid \Delta_0}\left(1-\left(\frac{-4}{\ell}\right)\right), & if $p=2$;\\
         \frac{1}{2}\prod_{\ell\mid \Delta_0}\left(1-\left(\frac{\Z[\sqrt{-3}]}{\ell}\right)\right)+3\cdot \frac{1}{6}\prod_{\ell\mid \Delta_0}\left(1-\left(\frac{-3}{\ell}\right)\right) & if $p=3$.
     \end{dcases*}\]
The desired formulas are obtained by simplifying the above expressions  by noticing that $\gcd(2, \Delta_0)=1$ if $p=2$, and $\left(\frac{\Z[\sqrt{-3}]}{2}\right)=1$ if $p=3$ and $2\mid \Delta_0$.
\end{proof}
\begin{remark}\label{rem:orbits}
   Let $\Sigma \coloneqq X^{\sp}(\Fpbar)$ or $\Sigma_3$ according to whether $p \nmid \Delta$ or $p \mid \Delta$.
   The $\Gamma$-orbits $\Sigma/\Gamma$ correspond to the closed points of $\Sigma$, viewed as a subscheme of $X_{\F_p}$.
   Combining the orbit-counting relation $2\lvert \Sigma /\Gamma \rvert=\lvert \Sigma \rvert + \lvert \Sigma^{\Gamma} \rvert$ with the formulas \eqref{eq:z17},   $\lvert \Sigma \rvert=h(O)$, and $\lvert \Sigma^{\Gamma} \rvert =\lvert X^{\rm sp}(\Fp) \rvert$, we obtain
   \begin{equation}\label{eq:2t-h}
\lvert X^{\rm sp}(\F_p) \rvert = 2 \lvert \Sigma/\Gamma \rvert - h(O)= \Tr(R(\pi_A)).
\end{equation}
Furthermore,
  Proposition~\ref{prop:ssp-Fp-pts} implies  $\Sigma/\Gamma \simeq \calD^{\times} \backslash \whD^{\times}/ \whO^{\times} \<\pi_A \>$.

  This relation recovers the Deuring formula as follows.
  Suppose now that $B = \Mat_2(\Q)$, so that $X$ is identified with the standard modular curve via Morita equivalence.
  In this case, a computation shows    $\calD^{\times} \backslash \whD^{\times}/ \whO^{\times} \<\pi_A \> \simeq \calD^{\times} \backslash \whD^{\times}/ N_{\whD^{\times}}(\whO^{\times})$, where $N_{\whD^{\times}}(\whO^{\times})$ is the normalizer of $\whO^{\times}$ in $\whD^{\times}$.
    According to \cite[(27.6.24)]{voight-quat-book}, its cardinality equals the type number of $\calD$.
\end{remark}
\thank
The authors thank Tomoyoshi Ibukiyama for bringing his previous work to our attention and offering insightful suggestions.
Terakado is supported by JSPS KAKENHI Grant Number JP26K06747.
Xue is partially supported by the National Natural Science Foundation of China grants No.~12331002 and No.~12271410. Yu is partially supported by the National Science and Technology Council (NSTC) grant 115-2115-M-001-008-MY3 and the Academia Sinica IVA grant AS-IA-112-M01.

\def\cprime{$'$}

\end{document}